\DeclareRobustCommand{\SkipTocEntry}[4]{}
\def\@tocline#1#2#3#4#5#6#7{\relax
  \ifnum #1>\c@tocdepth 
  \else
    \par \addpenalty\@secpenalty\addvspace{#2}%
    \begingroup \hyphenpenalty\@M
    \@ifempty{#4}{%
      \@tempdima\csname r@tocindent\number#1\endcsname\relax
    }{%
      \@tempdima#4\relax
    }%
    \parindent\z@ \leftskip#3\relax \advance\leftskip\@tempdima\relax
    \rightskip\@pnumwidth plus4em \parfillskip-\@pnumwidth
    #5\leavevmode\hskip-\@tempdima
      \ifcase #1
       \or\or \hskip 1em \or \hskip 2em \else \hskip 3em \fi%
      #6\nobreak\relax
    \dotfill\hbox to\@pnumwidth{\@tocpagenum{#7}}\par
    \nobreak
    \endgroup
  \fi}
\def\subsubsection{\@startsection{subsubsection}{3}%
  \z@{.5\linespacing\@plus.7\linespacing}{-.5em}%
  {\normalfont\bfseries}}
\setlist[enumerate,1]{label=(\roman*)}
\setlist[enumerate,2]{label=(\alph*)}
\setlist[enumerate,3]{label=(\Roman*)}
\setlist[enumerate,4]{label=(\Alph*)}
\theoremstyle{definition}
\newtheorem{defn}{Definition}[section]
\newtheorem{ex}[defn]{Example}
\newtheorem{rmk}[defn]{Remark}
\theoremstyle{plain}
\newtheorem{theorem}[defn]{Theorem}
\newtheorem{lem}[defn]{Lemma}
\newtheorem{prop}[defn]{Proposition}
\newtheorem{cor}[defn]{Corollary}
\def\A{\ensuremath{\mathbb{A}}}
\def\C{\ensuremath{\mathbb{C}}}
\def\N{\ensuremath{\mathbb{N}}}
\def\P{\ensuremath{\mathbb{P}}}
\def\Q{\ensuremath{\mathbb{Q}}}
\def\R{\ensuremath{\mathbb{R}}}
\def\Z{\ensuremath{\mathbb{Z}}}
\def\FF{\ensuremath{\mathcal F}}
\def\HH{\ensuremath{\mathcal H}}
\def\II{\ensuremath{\mathcal I}}
\def\LL{\ensuremath{\mathcal L}}
\def\OO{\ensuremath{\mathcal O}}
\def\RR{\ensuremath{\mathcal R}}
\def\TT{\ensuremath{\mathcal T}}
\def\ch{\mathop{\mathrm{ch}}\nolimits}
\def\Coh{\mathop{\mathrm{Coh}}\nolimits}
\def\Conv{\mathop{\mathrm{Conv}}\nolimits}
\def\Db{\mathop{\mathrm{D}^{\mathrm{b}}}\nolimits}
\def\diag{\mathop{\mathrm{diag}}\nolimits}
\def\dim{\mathop{\mathrm{dim}}\nolimits}
\def\End{\mathop{\mathrm{End}}\nolimits}
\def\Ext{\mathop{\mathrm{Ext}}\nolimits}
\def\lExt{\mathop{\mathcal Ext}\nolimits}
\def\GL{\mathop{\mathrm{GL}}\nolimits}
\def\Gr{\mathop{\mathrm{Gr}}\nolimits}
\def\Hom{\mathop{\mathrm{Hom}}\nolimits}
\def\RHom{\mathop{\mathbf{R}\mathrm{Hom}}\nolimits}
\def\initial{\mathop{\mathrm{in}}\nolimits}
\def\Mov{\mathop{\mathrm{Mov}}\nolimits}
\def\mod{\mathop{\mathrm{mod}}\nolimits}
\def\min{\mathop{\mathrm{min}}\nolimits}
\def\Nef{\mathop{\mathrm{Nef}}\nolimits}
\def\PGL{\mathop{\mathrm{PGL}}}
\def\Pic{\mathop{\mathrm{Pic}}\nolimits}
\def\Rep{\mathop{\mathrm{Rep}}}
\def\SL{\mathop{\mathrm{SL}}\nolimits}
\def\State{\mathop{\mathrm{State}}\nolimits}
\def\Stab{\mathop{\mathrm{Stab}}\nolimits}
\def\into{\ensuremath{\hookrightarrow}}
\def\onto{\ensuremath{\twoheadrightarrow}}
\begin{document}

\title{
Variation of stability for moduli spaces of unordered points in the plane
}

\author{Patricio Gallardo}
\address{Department of Mathematics, University of California, Riverside, CA 92501, USA }
\email{pgallard@ucr.edu}
\urladdr{https://sites.google.com/site/patriciogallardomath/}

\author{Benjamin Schmidt}
\address{Gottfried Wilhelm Leibniz Universit\"at Hannover, Institut f\"ur Algebraische Geometrie, Welfengarten 1, 30167 Hannover, Germany}
\email{bschmidt@math.uni-hannover.de}
\urladdr{https://benjaminschmidt.github.io}

\keywords{Birational geometry, Geometric invariant theory, Derived categories, Hilbert schemes of points, Stability conditions}

\subjclass[2020]{14C05 (Primary); 14E30, 14F08, 14L24 (Secondary)}

\begin{abstract}
We study compactifications of the moduli space of unordered points in the plane via variation of GIT quotients of their corresponding Hilbert scheme. Our VGIT considers linearizations outside the ample cone and within the movable cone. For that purpose, we use the description of the Hilbert scheme as a Mori dream space, and the moduli interpretation of its birational models via Bridgeland stability. We determine the GIT walls associated with curvilinear zero-dimensional schemes, collinear points, and schemes supported on a smooth conic. For seven points, we study a compactification associated with an extremal ray of the movable cone, where stability behaves very differently from the Chow quotient. Lastly, a complete description for five points is given.
\end{abstract}

\maketitle

\section{Introduction}

A central insight of algebraic geometry is that a moduli space has many geometrically meaningful compactifications, and that much of their birational geometry can be understood via the degenerations of the objects we are parametrizing. Describing this interplay is one of the leading questions in moduli theory nowadays. Within this context, we focus on a case that has been inaccessible until very recently: the moduli of $n$ unlabelled points in the plane (that is zero dimensional schemes). 

Our work begins with the most natural compactifications of the moduli space of $n$ unlabelled points in $\P^2$, which are the GIT quotients of the Hilbert scheme $\P^{2[n]}$ of $n$ points in $\P^2$ by the automorphism group $\SL_3$. Typically, these quotients are dependent on the choice of an $\SL_3$-linearized ample line bundle in $\mathbb{P}^{2[n]}$. However, we introduce a new approach by using linearizations that are not within the ample cone, but rather in the movable cone. This is a significant challenge as GIT quotients that use linearizations outside the ample cone result in technical difficulties, such as the uncertainty of whether the resulting quotient is projective or has a meaningful moduli interpretation.

To achieve our goal, we use recent developments on the birational geometry of $\P^{2[n]}$ via derived category techniques as discussed in \cite{ABCH13:hilbert_schemes_p2, CHW17:effective_cones_p2, LZ18:stability_p2}. Specifically, we take advantage of the fact that $\mathbb{P}^{2[n]}$ is a Mori dream space (see Definition \ref{defn:mds}). All movable divisors on $\P^{2[n]}$ can be written in the form $D_m \coloneqq mH - \tfrac{\Delta}{2}$ where $H$ is induced by the ample divisor on $\P^2$ and $\Delta$ is the divisor of non-reduced subschemes on $\P^2$. According to \cite{LZ18:stability_p2} for each of these divisors there is a Bridgeland stability condition parametrized by real parameters $\alpha_m > 0$ and $\beta_m \in \R$ such that the moduli space of Bridgeland semistable objects with Chern character $(1, 0, -n)$, denoted by $M_{\alpha_m, \beta_m}(1, 0, -n)$, is the birational model of $\mathbb{P}^{2[n]}$ on which $D_m$ is ample. We define 
\[
    \textbf{C}(m,n) \coloneqq 
    M_{\alpha_m, \beta_m}(1,0,-n) / \! \! /_{D_m} \SL_3.
\]
For $m \gg 0$ it turns out that the moduli space $M_{\alpha_m, \beta_m}(1,0,-n)$ is simply $\mathbb{P}^{2[n]}$, but for smaller values of $m$ the situation changes. 




Varying $D_m$ across the movable cone unlocks a wealth of geometry. Within GIT, there is a finite wall and chamber decomposition, see Definition \ref{defn:git_wall}.
Our first motivating question is: What is the structure 
of these \emph{GIT walls}? The next result is the first one in this direction.

\begin{restatable}{theorem}{MainThmWall}
\label{thm:MainThmWall}
Given $n \geq 5$ and for each $l$ satisfying $\frac{n}{2} \leq l < \frac{2n}{3}$, there is a GIT wall at $m_l = \frac{3(n - l)(n - l - 1)}{2(2n - 3l)}$  such that:
\begin{enumerate}
    \item the union of a length $(n - l)$ generic curvilinear subscheme supported at a single point and $l$ generic points becomes unstable for $m > m_l$ and is semistable for $m = m_l$, and 
    \item the union of $l$ general collinear points and $(n - l)$ general collinear points in a different line becomes unstable for $m < m_l$ and semistable at $m = m_l$.
    \item Both cases above degenerate to a common strictly semistable configuration at $m = m_l$. 
\end{enumerate}
\end{restatable}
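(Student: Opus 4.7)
The plan is to apply the Hilbert--Mumford numerical criterion to the linearization $D_m$, exploiting the Mori-dream-space structure of $\P^{2[n]}$ --- and in particular the Bridgeland-moduli interpretation of its birational models --- to make sense of GIT (semi)stability for $D_m$ in the movable cone but outside the ample cone. The wall at $m_l$ should arise as the unique value of $m$ at which the numerical weight of a common strictly semistable orbit closure vanishes. The two configurations in (i) and (ii) degenerate to this common orbit along a single $1$-parameter subgroup, from opposite directions, which accounts for the asymmetry between them.

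Concretely, fix coordinates on $\P^2$ so that $p = [0{:}0{:}1]$ and $L = \{x = 0\}$, and consider the 1-PS $\lambda(t) = \diag(t^2, t^{-1}, t^{-1}) \subset \SL_3$, which fixes $p$ and acts trivially on $L$. Let $Z_0 \in \P^{2[n]}$ denote the union of $l$ generic points on $L \smallsetminus \{p\}$ and a length-$(n-l)$ curvilinear subscheme supported at $p$ tangent to $L$; then $Z_0$ is $\lambda$-fixed, and one checks that both (i) and (ii) degenerate to $Z_0$ under the flow of $\lambda$. In (i), as $t \to 0$ the $l$ generic points project onto $L$ while the curvilinear scheme stays fixed (after aligning its tangent direction with $L$ by conjugation); in (ii), the $(n-l)$ points on the line transverse to $L$ at $p$ collapse to a length-$(n-l)$ curvilinear scheme at $p$ as $t \to 0$, while the $l$ points on $L$ are preserved.

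Using Proposition \ref{prop:d_m} to write $D_m = \alpha(m) H + \beta(m) B$, where $H$ is pulled back from the ample generator of $\Sym^n \P^2$ and $B$ is half the exceptional divisor of the Hilbert--Chow morphism, bilinearity of the numerical function gives
\[
\mu^{D_m}(Z_0, \lambda) = \alpha(m) \, \mu^H(Z_0, \lambda) + \beta(m) \, \mu^B(Z_0, \lambda).
\]
Because all support points of $Z_0$ lie on $L$, where the $\lambda$-weight on $\OO_{\P^2}(1)$ is constant, the $H$-weight is a constant multiple of $n$. The $B$-weight is concentrated on the curvilinear part: the punctual Hilbert component of length-$(n-l)$ subschemes at $p$ carries a natural $\lambda$-action whose weight on the fiber of $B$ contributes a factor proportional to $\binom{n-l}{2}$ (from the length of the curvilinear jet) weighted by the $\lambda$-weight on the chosen tangent direction in $T_p \P^2$. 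Solving $\mu^{D_m}(Z_0, \lambda) = 0$ then produces the closed-form value $m_l = \tfrac{3(n-l)(n-l-1)}{2(2n-3l)}$: the factor $3\binom{n-l}{2}$ in the numerator combines the punctual-Hilbert contribution with a factor $3$ coming from $\P^2$, and the denominator $2n-3l$ records the balance of the $\lambda$-weights $(2,-1,-1)$ against the distribution of support.

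To conclude (i) and (ii) it remains to evaluate $\mu^{D_m}$ on generic representatives of each family: they approach $Z_0$ along opposite directions of the $\lambda$-orbit (contracting in (i), expanding in (ii)), so $\mu^{D_m}$ takes opposite signs on them, each vanishing at $m = m_l$ and being strictly positive on the side of the wall where the theorem predicts instability. The main technical obstacle is that the Hilbert--Mumford criterion for $D_m$ outside the ample cone is not automatic: GIT is carried out on the Bridgeland birational model $X_m$ of $\P^{2[n]}$, and the $\SL_3$-equivariant birational map $\P^{2[n]} \dashrightarrow X_m$ may send some test schemes onto the exceptional locus of the flop. The core step I expect to be hardest is to verify that $Z_0$ and the generic configurations of (i) and (ii) lie in the common open locus where $\P^{2[n]}$ and $X_m$ are isomorphic, so that the $\lambda$-weight of $D_m$ computed on $\P^{2[n]}$ really computes Hilbert--Mumford stability on $X_m$.
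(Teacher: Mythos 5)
Your overall strategy --- flow both families along a single one-parameter subgroup to a common torus-fixed configuration, compute the Hilbert--Mumford index there, and read off the wall from linearity in $m$ --- is exactly the paper's strategy (Lemmas \ref{lem:mu_for_fat_plus_line} and \ref{lem:wall_for_fat_plus_line}). But your candidate for the common limit is wrong, and this breaks the argument. You place the $l$ points on $L=\{x=0\}$ and the length-$(n-l)$ piece at $p\in L$ tangent to $L$, so the entire scheme is supported on one line. First, such a $Z_0$ is not fixed by $\lambda=\diag(t^2,t^{-1},t^{-1})$: that subgroup fixes $L$ pointwise but scales the normal direction, so the tangent curvilinear scheme flows to the collinear fat point inside $L$, and the true fixed limit is a fully collinear length-$n$ scheme --- which is GIT-unstable for every $m$ and, worse, Bridgeland-unstable at $\sigma_{m_l}$, so it does not even lie in the birational model being quotiented (by Corollary \ref{cor:ideals_between_-1_-2}, a collinear subscheme of length $>l$ is fatal once $m\geq l-1$, and $m_l\geq l-1$). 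The correct common limit is $\II_{Z_0}=(x,y^{n-l})\cap(z,r_l(x,y))$: the $l$ points sit on the line $\{z=0\}$ (fixed pointwise by $\lambda_1=(1,1,-2)$), while the length-$(n-l)$ part is a \emph{collinear} fat point contained in the \emph{other} line $\{x=0\}$, supported at $[0:0:1]\notin\{z=0\}$. The largest collinear subscheme then has length $l$, which is exactly what makes $\II_{Z_0}$ Bridgeland-semistable at $\sigma_{m_l}$; this, not the flop-locus issue you flag as hardest, is a one-line check once the configuration is right.

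The second gap is semistability at $m=m_l$. Your argument only evaluates $\mu^{D_m}$ against the single subgroup $\lambda$ and its inverse; that suffices to prove \emph{instability} off the wall (since $\mu_m(Z,\lambda)=\mu_m(Z_0,\lambda)$ for the limit $Z_0$), but semistability at $m_l$ requires $\mu_{m_l}(Z_0,\lambda')\leq 0$ for \emph{all} one-parameter subgroups $\lambda'$. The paper handles this with Kempf's instability theory (Proposition \ref{prop:tori_to_check} and Remark \ref{rmk:tori_to_check}): because $\Stab(Z_0)$ contains $(1,1,-2)$, it suffices to test diagonal subgroups after a unipotent substitution $\bar y=y+sx$, and then one needs the explicit state-polytope computation of $\mu_l$ and $\mu_{l+1}$ for all $\lambda(a,b)$ in the two cases that can occur after that substitution (Lemma \ref{lem:mu_for_fat_plus_line}). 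Relatedly, your derivation of the value $m_l$ from ``the $B$-weight is proportional to $\binom{n-l}{2}$'' is a heuristic reverse-engineered from the answer; the actual formula comes out of computing $\State_l(\II_{Z_0})$ and $\State_{l+1}(\II_{Z_0})$ and applying Lemma \ref{lemma:m0}, and without that computation neither the wall value nor the sign analysis is established.
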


We find more walls than these. Indeed, a similar statement for subschemes containing collinear points can be found in Section \ref{sec:WallsCollinear}. In Section \ref{sec:WallPtsConic} we establish a wall for subschemes contained in conics. The proof of Theorem \ref{thm:MainThmWall} is given in Section \ref{sec:ProofMainThmWall}. These GIT walls are increasing in $l$. While some of our walls are within the ample cone of $\P^{2[n]}$, many go well beyond that.

Our second result is motivated by the fact that our GIT quotients are Mori Dream Spaces by results of \cite{bak11:good_quotients}. Describing the largest GIT wall determines the nef cone for $\textbf{C}(m,n)$ with $m \gg 1$. 

\begin{restatable}{theorem}{largestWall}
\label{thm:largestWall}
If $3 \nmid n$, then the largest GIT wall is equal to $m_{l}$ for $l = \big\lfloor \frac{2n}{3} \big\rfloor$. This wall is in the interior of the ample cone of $\P^{2[n]}$ if and only if $n \geq 11$ or $n = 8$. In particular, for $m \gg 1$, the nef cone of $\textbf{C}(m,n)$ is generated by the divisors induced by $H$ and $D_{m_l}$. 
\end{restatable}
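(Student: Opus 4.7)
The plan is to verify in order: (a) among the walls from Theorem \ref{thm:MainThmWall}, the function $l \mapsto m_l$ is maximized over admissible integers at $l = \lfloor 2n/3 \rfloor$; (b) no other destabilizing configuration contributes a GIT wall at a larger value of $m$; (c) the resulting value $m_{\lfloor 2n/3 \rfloor}$ lies strictly in the ample cone of $\P^{2[n]}$ exactly when $n = 8$ or $n \geq 11$; and (d) from (a)--(c) and the Mori dream space structure, the nef cone of $\textbf{C}(m,n)$ for $m \gg 1$ is spanned by $H$ and $D_{m_{\lfloor 2n/3 \rfloor}}$.

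For (a), substituting $u = 2n - 3l$ rewrites $m_l$ as $\frac{(n+u)(n+u-3)}{6u}$, whose derivative in $u$ equals $\frac{u^2 - n(n-3)}{6u^2}$. On the admissible range $l \geq n/2$ we have $u \leq n/2 < \sqrt{n(n-3)}$ for $n$ sufficiently large, so $m_l$ is strictly decreasing in $u$, equivalently strictly increasing in $l$; the remaining small cases are verified by direct computation. The hypothesis $3 \nmid n$ ensures $\lfloor 2n/3 \rfloor$ is itself an integer in $[n/2, 2n/3)$, so it is the maximizer.

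Step (b) is where I expect the main obstacle to lie. The walls from subschemes with collinear components (Section \ref{sec:WallsCollinear}) and from subschemes on a smooth conic (Section \ref{sec:WallPtsConic}) have explicit formulas, each of which I would compare to $m_{\lfloor 2n/3 \rfloor}$ and verify is strictly smaller. To exclude hypothetical further walls, I would use that $\P^{2[n]}$ is a Mori dream space with an explicit Bridgeland wall structure by \cite{ABCH13:hilbert_schemes_p2, CHW17:effective_cones_p2, LZ18:stability_p2}: the one-parameter family $D_m$ crosses only finitely many chambers before leaving the movable cone, and one checks that past $m_{\lfloor 2n/3 \rfloor}$ no chamber boundary produces a nontrivial strictly semistable locus on the GIT side.

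For (c), the value $m^{\mathrm{amp}}(n)$ at which $D_m$ crosses the boundary of the ample cone of $\P^{2[n]}$ is read off from the Bridgeland wall description in \cite{ABCH13:hilbert_schemes_p2}. Reducing the inequality $m_{\lfloor 2n/3 \rfloor} > m^{\mathrm{amp}}(n)$ to the two cases $n = 3k+1$ and $n = 3k+2$ yields an elementary polynomial inequality in $k$, whose solution together with a finite check of the exceptional values $n \in \{4, 5, 7, 10\}$ isolates precisely $n = 8$ and $n \geq 11$. Finally for (d), by \cite{bak11:good_quotients} the nef cone of $\textbf{C}(m,n)$ is the image of the closure of the GIT chamber of $D_m$; for $m$ slightly above $m_{\lfloor 2n/3 \rfloor}$ this chamber is bounded by the ray through $D_{m_{\lfloor 2n/3 \rfloor}}$ and by the pullback $H$ of the hyperplane class from $\P^2$, which sits on a face of every chamber since it is $\SL_3$-invariant and produces no strictly semistable points.
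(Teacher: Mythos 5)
Your steps (a), (c), and (d) are fine in outline and agree with what the paper does: the monotonicity computation for $l \mapsto m_l$ is correct, the ample-cone criterion reduces to the elementary inequality $m_{\lfloor 2n/3\rfloor} > n-1$ (the nef cone of $\P^{2[n]}$ being spanned by $H$ and $D_{n-1}$), and the nef-cone conclusion for $\textbf{C}(m,n)$ follows from \cite{bak11:good_quotients}. The problem is step (b), which is the actual content of the theorem, and there you assume exactly what has to be proved. Comparing $m_{\lfloor 2n/3\rfloor}$ against the explicit wall formulas of Sections \ref{sec:WallsCollinear} and \ref{sec:WallPtsConic} cannot close the argument: those sections produce \emph{some} walls (the collinear ones only when $n-l$ is a triangular number), not a certified list of \emph{all} walls, and the finiteness of GIT walls from Theorem \ref{thm:wall_and_chamber_git} gives no handle on where they are. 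Appealing to the Bridgeland chamber structure does not help either --- the paper stresses that Bridgeland walls and GIT walls are essentially unrelated, so ``no chamber boundary produces a nontrivial strictly semistable locus on the GIT side'' is precisely the unproved assertion.

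The paper's route, which you are missing, is the following reduction. If $m_0$ is the largest GIT wall, there must exist a subscheme $Z$ that is strictly semistable at $m_0$ and unstable for all $m>m_0$, hence unstable at $m=\infty$. Because $3\nmid n$, Theorem \ref{thm:stability_chow} (Mukai's description of stability on $\P^{2(n)}$) says the $\infty$-unstable locus is exactly the closure of two families: subschemes with a length-$\lceil n/3\rceil$ point, and subschemes with $\lceil 2n/3\rceil$ collinear points. (This is the real role of the hypothesis $3\nmid n$ --- absence of strictly semistable Chow points --- not merely that $\lfloor 2n/3\rfloor$ is an admissible integer.) The first family becomes semistable exactly at $m_{\lfloor 2n/3\rfloor}$ by Theorem \ref{thm:MainThmWall}. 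For the second family one must show instability persists down to $m=\lceil 2n/3\rceil$; this is the genuinely nontrivial step, done by computing the flat limit of a generic configuration of $n-l$ points under $\lambda_{-1/2-\varepsilon}$ via the Onn--Sturmfels corner-cut theorem (Theorem \ref{thm:cutting_corners}, Lemma \ref{lem:limit_with_cutting_corners}), evaluating the Hilbert--Mumford index on the resulting monomial ideal (Lemma \ref{lem:numerical_criterion_wall_from_collinear}), and checking positivity separately for $n\equiv 1$ and $n\equiv 2 \pmod 3$; linearity of the index in $m$ together with instability at $m=\infty$ then extends this to all larger $m$. Without this reduction to Chow-unstable configurations and the explicit limit computation, your argument does not rule out a wall above $m_{\lfloor 2n/3\rfloor}$.
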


The proof of Theorem \ref{thm:largestWall} is given in Section \ref{sec:ProofThmLargestWall}. The restriction $3 \nmid n$ arises from the presence of strictly semistable points in the Chow quotient when $n$ is a multiple of $3$. For being able to describe the GIT stability, we rely on the interpretation of the birational models of $\P^{2[n]}$ as moduli spaces of Bridgeland semistable objects. Yet, it is remarkable that only ideals appear in the above statements. In Section \ref{subsec:bridgeland_stable_git_unstable} we show that Bridgeland-semistable objects that are not ideals are GIT-unstable in the first few birational models:

\begin{theorem}
\label{thm:git_unstable_bridgeland_stable}
For $n \geq 7$ any Bridgeland-semistable object that is not an ideal is GIT-unstable as long as $m > n - 4$. For $n = 6$ any Bridgeland-semistable object that is not an ideal is GIT-unstable as long as $m  > \tfrac{5}{2}$. Finally, for $n = 5$ any Bridgeland-semistable object that is not an ideal is GIT-unstable regardless of polarization.
\end{theorem}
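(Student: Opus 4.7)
The plan is to combine the classification of non-ideal Bridgeland-semistable objects on the first few birational models of $\P^{2[n]}$ from \cite{ABCH13:hilbert_schemes_p2, CHW17:effective_cones_p2, LZ18:stability_p2} with Hilbert-Mumford calculations parallel to those underlying Theorem \ref{thm:MainThmWall} and its analogues for collinear points and for points on a conic.

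First, I would recall that the non-ideal Bridgeland-semistable objects arising in the birational models of $\P^{2[n]}$ adjacent to the Gieseker chamber are extensions whose Jordan-H\"older factors involve sheaves supported on curves of low degree: typically $\OO_L(-a)$ for a line $L$ (the Brill-Noether walls) or $\OO_C(-b)$ for a smooth conic $C$. Each such object $E$ is $S$-equivalent in the Bridgeland sense to a polystable direct sum, and the associated length-$n$ subscheme is concentrated on the curve appearing in its Jordan-H\"older filtration.

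Second, I would perform a Hilbert-Mumford computation. For $E$ concentrated on a line $L$, I would choose a one-parameter subgroup $\lambda \subset \SL_3$ fixing $L$ pointwise and scaling the transverse direction; the weight of $\lambda$ on the $S$-equivalent polystable representative under $D_m$ is then controlled by the same combinatorics as in Theorem \ref{thm:MainThmWall} and Section \ref{sec:WallsCollinear}. The analogous calculation for $E$ concentrated on a conic $C$ uses a $\lambda$ fixing $C$ and appeals to Section \ref{sec:WallPtsConic}. Solving the inequality ``weight $< 0$'' across the finitely many relevant cases should yield exactly the claimed numerical thresholds: $m > n - 4$ for $n \geq 7$, $m > 5/2$ for $n = 6$, and no condition for $n = 5$, where the relevant walls already lie at or beyond the end of the movable cone so that a direct case-by-case enumeration suffices.

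The main obstacle I anticipate is handling non-split extensions and verifying that a single destabilizing 1-PS works for every object in a Bridgeland $S$-equivalence class, not just the polystable representative. This should follow because instability is an open condition and non-split extensions specialize to the polystable representative, but it requires careful bookkeeping at each wall. One also has to check that no exotic non-ideal semistable object appears at walls deeper in the movable cone beyond those already covered by the collinear or conic walls of Theorem \ref{thm:MainThmWall} and Section \ref{sec:WallPtsConic}, so that the thresholds $n-4$ and $5/2$ are genuinely sufficient for \emph{every} non-ideal Bridgeland-semistable object rather than merely for the first-wall ones.
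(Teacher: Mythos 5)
There is a genuine gap at the heart of your plan: the Hilbert--Mumford computations of Theorem \ref{thm:MainThmWall} and Sections \ref{sec:WallsCollinear}--\ref{sec:WallPtsConic} are carried out on state polytopes of homogeneous ideals, i.e.\ on $m$-th Hilbert points, and a non-ideal Bridgeland-semistable object has no Hilbert point to which that machinery could apply. Your assertion that such an object has ``an associated length-$n$ subscheme concentrated on the curve'' is false: the objects in question are non-trivial extensions such as $0 \to \OO_L(-l) \to E \to \II_W(-1) \to 0$ (see Proposition \ref{prop:wall_between_-1_-2}), which are rank-one sheaves with torsion, living precisely in the exceptional loci where the birational models differ from $\P^{2[n]}$; Lemma \ref{lem:localHM} does not let you transport the index there. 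The paper's proof supplies the missing ingredient you do not mention: it realizes the relevant birational models as King moduli of quiver representations for the exceptional collection $\OO(-2)[2], \Omega[1], \OO(-1)$, writes the matrices of $E[1]$ explicitly (Lemmas \ref{lem:oo_L_quiver}, \ref{lem:matrices_locus2}, \ref{lem:matrices_locus3}), exhibits a one-parameter subgroup of $\GL_n \times \GL_{n-1} \times \GL_{n-3} \times \SL_3$ fixing those matrices, and pairs it with the two boundary characters $\theta_1, \theta_2$; linearity of the index together with Lemma \ref{lem:estimate_effective_cone} then yields instability for every polarization in the effective cone (Propositions \ref{prop:quiver_locus1}, \ref{prop:quiver_locus2}, \ref{prop:quiver_locus3}). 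Without this (or an equivalent explicit linearized model of the flipped spaces), your step ``compute the weight of $\lambda$ on the polystable representative under $D_m$'' cannot be carried out.

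Two further points. First, the thresholds $m > n-4$ and $m > \tfrac{5}{2}$ do not come from ``solving the inequality weight $< 0$'': the three loci above are GIT-unstable for \emph{every} effective polarization, and the thresholds merely record that only the first three Bridgeland walls have been crossed, so only these three families of non-ideal objects occur; deeper in the movable cone new non-ideal objects appear that the theorem deliberately does not address. Second, your classification is slightly off: the Jordan--H\"older factors that occur at these walls are $\OO_L(-l)$ and $\II_W(-1)$, not structure sheaves of conics (the conic wall's destabilizer is $\OO(-2)$, and it lies at $m = \tfrac{n-1}{2} \leq n-4$ for $n \geq 7$, hence outside the stated range).
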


We remark that Bridgeland walls (see Definition \ref{defn:bridgeland_wall}) and GIT walls are usually unrelated. Indeed, many walls from Theorem \ref{thm:MainThmWall} do not overlap with Bridgeland walls. On the other hand, in Proposition \ref{prop:wall_conic} we establish a wall where some strictly Bridgeland-semistable ideals are strictly GIT-semistable.

\subsection{Explicit description for five points}
\label{sec:Intro5pts}

In the case of five points, the largest GIT wall turns out to be the only GIT wall in the interior of the movable cone, see Figure \ref{fig:walls}.

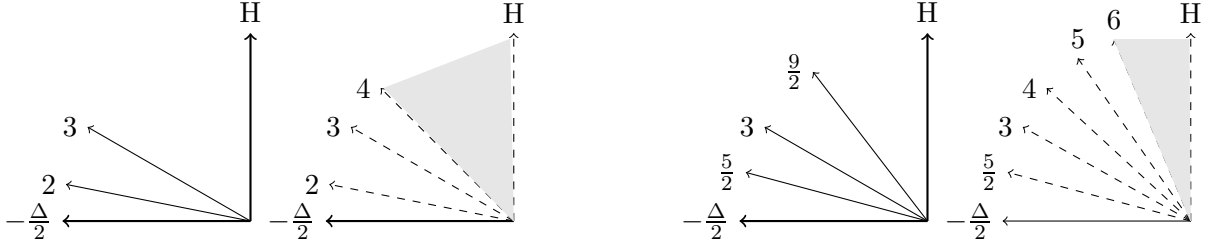
\begin{figure}[h!]
    \centering
\def\ra{1.5}    
\def\ca{0.5}    
\begin{tikzpicture}
\draw[thick,->] (0,0) -- (0,2.5) node[above] {H};
\draw[<-, thick] (-2.5,0) 
node[left] {$-\frac{\Delta}{2}$} -- (0,0); 
\draw[->] (0 ,0) -- 
( {0  - (2.5)*cos ((1/8)*pi/2 r)},
{2.5*sin( (1/8)*pi/2 r)}
) node[left] {$\tiny{2}$};
\draw[ ->] (0 ,0) -- 
( {0- (2.5)*cos ((2/6)*pi/2 r)},
{2.5*sin( (2/6)*pi/2 r)}
) node[left] {$\tiny{3}$};
\draw[dashed,->] (3 + \ca,0) -- (3+\ca,2.5) node[above] {H};
\draw[<-, thick] (0.5 +\ca,0) node[left] {$-\frac{\Delta}{2}$} -- (3+\ca,0);
\draw[dashed, ->] (3 + \ca ,0) -- 
( {3 + \ca - (2.5)*cos ((1/8)*pi/2 r)},
{2.5*sin( (1/8)*pi/2 r)}
) node[left] {$\tiny{2}$};
\draw[dashed, ->] (3 + \ca ,0) -- 
( {3 + \ca - (2.5)*cos ((2/6)*pi/2 r)},
{2.5*sin( (2/6)*pi/2 r)}
) node[left] {$\tiny{3}$};
\draw[dashed, ->] (3 +  \ca ,0) -- 
( {3 + \ca - (2.5)*cos ((3/6)*pi/2 r)},
{2.5*sin( (3/6)*pi/2 r)}
) node[left] {$\tiny{4}$};
\filldraw[gray!20] 
(3 +  \ca  ,0) -- 
( {3 + \ca +0.05 - (2.5)*cos ((3/6)*pi/2 r)},
{2.5*sin( (3/6)*pi/2 r)}) 
--
( {3 + 1.72 + \ca - (2.5)*cos ((3/6)*pi/2 r)},
{ 0.65 + 2.5*sin( (3/6)*pi/2 r)});
\draw[thick,->] (7.5 + \ra,0) -- (7.5 + \ra,2.5)node[above] {H};
\draw[thick, <-] (5+\ra,0) node[left] {$-\frac{\Delta}{2}$}
-- (7.5 + \ra,0);
\draw[ ->] (7.5 + \ra  ,0) -- 
( {7.5 + \ra - (2.5)*cos ((1/6)*pi/2 r)},
{2.5*sin( (1/6)*pi/2 r)}
)node[left] {$\tiny{\frac{5}{2}}$};
\draw[ ->] (7.5 + \ra ,0) -- 
( {7.5 + \ra - (2.5)*cos ((2/6)*pi/2 r)},
{2.5*sin( (2/6)*pi/2 r)}
) node[left] {$\tiny{3}$};
\draw[->] (7.5 + \ra ,0) -- 
( {7.5 + \ra - (2.5)*cos ((7/12)*pi/2 r)},
{2.5*sin( (7/12)*pi/2 r)}
) node[left] {$\tiny{\frac{9}{2}}$};
\draw[dashed, ->] (10.5 + \ra +\ca,0) -- (10.5+\ra + \ca,2.5)node[above] {H};
\draw[->] (10.5+\ra+\ca,0) -- 
(8+\ra+\ca,0) node[left] {$-\frac{\Delta}{2}$};
\draw[dashed, ->] (10.5 + \ra + \ca ,0) -- 
( {10.5 + \ra - (2.0)*cos ((1/6)*pi/2 r)},
{2.5*sin( (1/6)*pi/2 r)}
)node[left] {$\tiny{\frac{5}{2}}$};
\draw[dashed, ->] (10.5 + \ra + \ca ,0) -- 
( {10.5 + \ra - (2.0)*cos ((2/6)*pi/2 r)},
{2.5*sin( (2/6)*pi/2 r)}
) node[left] {$\tiny{3}$};
\draw[dashed, ->] (10.5 + \ra + \ca ,0) -- 
( {10.5 + \ra - (2.0)*cos ((3/6)*pi/2 r)},
{2.5*sin( (3/6)*pi/2 r)}
) node[left] {$\tiny{4}$};
\draw[dashed, ->] (10.5 + \ra + \ca ,0) -- 
( {10.5 + \ra - (2.0)*cos ((4/6)*pi/2 r)},
{2.5*sin( (4/6)*pi/2 r)}
) node[above] {$\tiny{5}$};
\draw[dashed, ->] (10.5 + \ra + \ca ,0) -- 
( {10.5 + \ra - (2.0)*cos ((5/6)*pi/2 r)},
{2.5*sin( (5/6)*pi/2 r)})
node[above] {$\tiny{6}$};
\filldraw[gray!20] 
(10.5 + \ra + \ca ,0)  -- 
( {10.5 + \ra - (2.0)*cos ((5/6)*pi/2 r)},
{2.5*sin( (5/6)*pi/2 r)})
--
( {10.5 +1 + \ra - (2.0)*cos ((5/6)*pi/2 r)},
{2.5*sin( (5/6)*pi/2 r)});
\end{tikzpicture}
    \caption{GIT walls (solid lines) and Bridgeland walls (dotted lines) within the movable cone of the Hilbert scheme for $n=5$ on the left, and for $n=7$ on the right. The numerical labels indicate their $m$ value.
    The gray area denotes the ample cone. 
    }
    \label{fig:walls}
\end{figure}

Next, we give a complete description of the GIT stability in this first non-trivial example. 

\begin{restatable}{theorem}{mainNFive}
\label{thm:main_n_5}
The VGIT decomposition of the movable cone for $\SL_3$ acting on $\P^{2[5]}$ and its birational models contains two chambers. All Bridgeland-stable objects that are not ideals are GIT-unstable independently of the polarization.
\begin{enumerate}
    \item If $3 < m \leq \infty$, then $Z \in \P^{2[5]}$ is GIT-stable if and only if $Z$ is reduced and no four points lie in a single line. Moreover, there are no strictly GIT-semistable points. In particular, the quotient of the symmetric product, and the quotient of the Hilbert scheme for $m \gg 0$ are isomorphic.
    \item If $2 < m < 3$, then $Z \in \P^{2[5]}$ is GIT-stable if and only if no subscheme of length $3$ lies in a single line and there is no subscheme of length $3$ supported at a single point. Moreover, there are no strictly GIT-semistable points.
\end{enumerate}
The quotients in both chambers and at $m = 3$ are isomorphic to the weighted projective plane $\P(1,2,3)$ but parametrize different orbits. For $m = 2$ the quotient is a single point. 
\end{restatable}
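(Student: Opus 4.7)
The plan begins by observing that, by Theorem \ref{thm:git_unstable_bridgeland_stable}, for $n=5$ every Bridgeland-semistable object that is not an ideal is GIT-unstable throughout the movable cone; thus it suffices to analyze GIT stability for ideals of length-$5$ subschemes $Z\subset\P^2$. The next task is to enumerate the interior GIT walls. Applied with $n=5$, Theorem \ref{thm:MainThmWall} forces $l=3$ (the unique integer satisfying $5/2\leq l<10/3$) and produces a single wall at $m_3=3$, which by Theorem \ref{thm:largestWall} is the largest interior wall. I would then verify that Sections \ref{sec:WallsCollinear} and \ref{sec:WallPtsConic} contribute no additional interior walls when $n=5$. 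Consequently the VGIT decomposition has only the two open chambers $m>3$ and $2<m<3$, bounded at $m=2$ by the last Bridgeland wall (which coincides with the boundary of the movable cone).

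In each chamber I would apply the Hilbert--Mumford numerical criterion. For a 1-PS $\lambda=\diag(t^{a_1},t^{a_2},t^{a_3})$ of $\SL_3$, the $\lambda$-weight of $(Z,D_m)$ is linear in $m$ with coefficients determined by the multiplicities of $Z$ along the $\lambda$-fixed flag. Up to conjugation only two maximal 1-PS types are needed: the \emph{point} type $\diag(t^2,t^{-1},t^{-1})$, fixing the point $[1:0:0]$ and the opposite line $\{x_0=0\}$, and the \emph{line} type $\diag(t,t,t^{-2})$. Working out the resulting linear inequalities shows that for $m>3$, destabilization occurs exactly when $Z$ is non-reduced or contains four collinear points, while for $2<m<3$ the inequalities flip at $m=3$ and destabilization occurs exactly when $Z$ contains a length-$3$ subscheme supported at a single point or a length-$3$ subscheme on a line. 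At the wall $m=3$, both newly destabilized families degenerate under the point 1-PS to the common closed orbit predicted by Theorem \ref{thm:MainThmWall}(3): a length-$2$ curvilinear subscheme at $[1:0:0]$ together with three distinct points on $\{x_0=0\}$. At the boundary $m=2$, only one polystable orbit remains, so the quotient reduces to a point.

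To identify the two chamber quotients with $\P(1,2,3)$, I would first handle the Chow chamber $m>3$ by the classical description of the GIT quotient of five unordered points in $\P^2$: a generic configuration lies on a unique conic, and the resulting binary-quintic invariants realize the quotient as $\P(1,2,3)$. For the chamber $2<m<3$ the quotient is obtained from the Chow quotient via the wall-crossing at $m=3$; since the exchanged loci have codimension $\geq 2$ in the $2$-dimensional quotient, the induced birational map is an isomorphism of underlying schemes even though the parametrized orbits differ. The main obstacle in executing this plan is the Hilbert--Mumford bookkeeping for configurations with length-$2$ curvilinear subschemes, in particular tracking the dependence on the tangent direction and confirming that the polystable representative at $m=3$ is exactly the one identified above, so that no other 1-PS destabilizes it.
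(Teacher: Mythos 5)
Your overall skeleton (reduce to ideals via Theorem \ref{thm:git_unstable_bridgeland_stable}, locate the unique interior wall at $m=3$, run Hilbert--Mumford in each chamber, identify the Chow chamber with $\P(1,2,3)$ via binary quintics) matches the paper, but two steps as you describe them would fail.

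First, the claim that ``up to conjugation only two maximal 1-PS types are needed'' is false, and it is exactly the stability (positive) direction that it breaks. Every one-parameter subgroup of $\SL_3$ is conjugate to some $\lambda_r=(1,r,-1-r)$ with $r\in[-\tfrac12,1]$, a continuous family, and after conjugation the ideal changes too; the paper's own computations genuinely need intermediate values (e.g.\ $\lambda_{-3/4}$ destabilizes four collinear points, and the state polytopes of the relevant ideals depend on $r$ throughout). Exhibiting a destabilizing 1-PS of ``point'' or ``line'' type suffices for the instability statements, but to prove that the remaining configurations are \emph{stable} you must rule out all $\lambda_r$ in all coordinate frames. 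The paper avoids this head-on computation by two indirect arguments you would need a substitute for: for $m\gg 3$ it pulls stability back from the classical description of $\P^{2(5)}/\!\!/\SL_3$ via the relative GIT theorem (Theorem \ref{thm:RelativeGIT}) and then shows the unstable locus does not change on $(3,\infty)$; for $2<m<3$ it deduces semistability of the new configurations from properness of the quotient together with a complete classification of the orbits degenerating to each minimal orbit at $m=3$ (Lemmas \ref{lem:n_5_destabilized} and \ref{lem:n_5_destabilized_to_Z^0}).

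Second, your identification of the lower-chamber quotient with $\P(1,2,3)$ rests on the assertion that the exchanged loci have codimension $\geq 2$ in the $2$-dimensional quotient. This is not the case: the closed strictly semistable orbits at $m=3$ form a one-parameter family $Z^0_{(a:b)}$, $(a:b)\in\P^1$ (plus the special orbit $Z^0$), so the strictly semistable locus in the surface $\textbf{C}(3,5)$ is a curve --- codimension one --- and a naive ``isomorphism in codimension two'' argument does not apply. The paper instead proves that both wall-crossing morphisms $f^{\pm}\colon \textbf{C}(3\pm\varepsilon,5)\to\textbf{C}(3,5)$ are bijective (each minimal orbit is the limit of exactly one stable orbit from either side) and concludes they are isomorphisms by normality. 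Without that fiber analysis --- which is the bulk of Section \ref{subsec:main_thm_five_points} --- the claim that the two chambers give isomorphic quotients parametrizing different orbits is unproved. Relatedly, your description of the wall as producing ``the common closed orbit'' understates the situation: there is a whole $\P^1$ of distinct closed orbits at $m=3$, and tracking which stable orbits on each side degenerate to which of them is precisely the content of the paper's argument.
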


\subsection{Explicit description for seven points}
\label{sec:Intro7pts}

By Theorem \ref{thm:largestWall}, the first wall is at $m = \frac{9}{2}$, where the stability of configurations with a curvilinear triple point changes from unstable to semistable. This wall is outside the ample cone, see Figure \ref{fig:walls}. Moreover, other triple points with certain collinearity conditions also become semistable, and they stay semistable for lower $m$, see Example \ref{ex:SpecialTriplePoint7}. As a consequence, $\textbf{C}(m, 7)$ is a geometric quotient for $m > \frac{9}{2}$, but it is not for $m \in [\frac{9}{2}, \frac{5}{2}]$. Our next wall is at $m = 3$. Here, two types of configurations change from stable to unstable: schemes with four collinear points, or seven points supported on a conic. In contrast, schemes with a non-reduced triple point change from unstable to semistable, see  
Propositions \ref{prop:wall_conic} and \ref{prop:walls_collinear_points}.

We also describe a new remarkable compactification for $n=7$, see Section \ref{sec:FinalModel7}. We recall that the Chow quotient does not keep track of any scheme structures, and it is associated to the extremal ray $m = \infty$ of the movable cone of $\P^{2[7]}$. Therefore, we denote it as $\textbf{C}(\infty, 7)$. On the opposite end of the movable cone at $m = \frac{5}{2}$, we find a compactification $\textbf{C}(\frac{5}{2}, 7)$ that parametrizes objects with a rich scheme structure. If $D_{m_0}$ lies on the boundary of the movable cone, where $m_0 \neq \infty$, and $\dim(\textbf{C}(m_0,n)) = 2n - 8$, we refer to $\textbf{C}(m_0, n)$ as the \emph{final model}. If $\dim(\textbf{C}(m_0, n)) < 2n - 8$, then the \emph{final model} is $\textbf{C}(m_0 + \varepsilon, n)$ for small enough $\varepsilon$. As a consequence of \cite[Section 10.6]{ABCH13:hilbert_schemes_p2} the final model for $n = 7$ is a quotient of $\P(H^0(\Omega(4))) = \P^{14}$, where $\Omega$ is the cotangent bundle on $\P^2$. This is due to the fact that the Bridgeland-semistable objects are quotients of non-trivial morphisms $\OO(-5) \to \Omega(-1)$. 
Since $\Omega$ is $\SL_3$-equivariant, we obtain a natural $\SL_3$ action on this $\P^{14}$. There are three important subloci:

\begin{enumerate}
    \item The locus $X_1$ is the closure of ideal sheaves of zero-dimensional length seven subschemes that contain two general reduced points and a third point of length $5$ projectively equivalent to the one cut out by the ideal $(x^2, xy^2, y^3 + xyz + xz^2)$.
    \item The locus $X_2$ is the closure of ideal sheaves of zero-dimensional length seven subschemes that contain three general reduced points and a point of length $4$ projectively equivalent to the one cut out by the ideal $(x^2, y^2)$. 
    \item The locus $X_3$ is the closure of ideal sheaves of zero-dimensional length seven subschemes that contain a curvilinear triple point and four general reduced points such that there is a line that intersects the triple point in length two and one more of the reduced points.
\end{enumerate}

It turns out that $X_1 \cup X_2 \subset X_3$.

\begin{restatable}{theorem}{stabilityFinalModelNSeven}
\label{thm:stability_final_model_n_7}
GIT-stability for $\SL_3$ acting on $\P(H^0(\Omega(4))) = \P^{14}$ is given as follows. The locus of semistable points is $\P^{14} \backslash (X_1 \cup X_2)$ and the locus of stable points is $\P^{14} \backslash X_3$.
\end{restatable}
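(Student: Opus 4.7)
The plan is to apply the Hilbert--Mumford numerical criterion to the $\SL_3$-action on $\P^{14} = \P(H^0(\Omega(4)))$. First, I would fix a maximal torus $T \subset \SL_3$ diagonal in coordinates $(x:y:z)$ on $\P^2$, and describe $H^0(\Omega(4))$ explicitly via the twisted Euler sequence
\[
0 \to \Omega(4) \to \OO(3)^{\oplus 3} \to \OO(4) \to 0,
\]
which identifies sections with triples $(f_1,f_2,f_3)$ of cubics satisfying $xf_1 + yf_2 + zf_3 = 0$. This yields a basis of $T$-weight vectors for $H^0(\Omega(4))$, so that for any one-parameter subgroup $\lambda = \diag(t^{a_1}, t^{a_2}, t^{a_3})$ with $a_1 \geq a_2 \geq a_3$ and $a_1 + a_2 + a_3 = 0$, a section $[\sigma]$ is destabilized by $\lambda$ precisely when every basis vector appearing in $\sigma$ has strictly positive $\lambda$-weight.

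The easy direction is to verify instability for $X_1 \cup X_2$ and strict semistability for $X_3$ by exhibiting explicit 1-PSs. For $X_2$, whose generic point parametrizes a length four subscheme with ideal $(x^2, y^2)$ at $(0:0:1)$ together with three generic reduced points, the 1-PS with weights $(1, 1, -2)$ preserves the ideal, and a direct computation of weights on the resulting section $\sigma \in H^0(\Omega(4))$ should show all weights strictly positive, certifying instability. A similar but more intricate argument applies to $X_1$ using a 1-PS adapted to the ideal $(x^2, xy^2, y^3 + xyz + xz^2)$. For $X_3$, the collinearity condition between the curvilinear triple point and one of the four reduced points forces the existence of a 1-PS achieving minimum weight exactly zero, so such configurations are strictly semistable but never stable.

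The main obstacle is the converse: proving that every $[\sigma] \notin X_3$ is GIT-stable and every $[\sigma] \notin X_1 \cup X_2$ is GIT-semistable. My plan is to enumerate 1-PSs up to $\SL_3$-conjugation by triples $(a_1, a_2, a_3)$ as above, and for each such $\lambda$ describe the linear subspace $V_\lambda^{\geq 0} \subset H^0(\Omega(4))$ spanned by basis vectors of non-negative $\lambda$-weight (and analogously $V_\lambda^{> 0}$). Geometrically, $V_\lambda^{\geq 0}$ consists of sections whose underlying subscheme is suitably concentrated along the flag $\{x=0\} \supset \{x=y=0\}$ fixed by $\lambda$, with multiplicity prescribed by $(a_1, a_2, a_3)$. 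The non-semistable locus then equals the $\SL_3$-orbit of $\bigcup_\lambda \P(V_\lambda^{> 0})$, and the non-stable locus the analogous union with $V_\lambda^{\geq 0}$; I would then match these unions to the geometric descriptions of $X_1 \cup X_2$ and $X_3$.

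Controlling this union over all $\lambda$ is the most delicate part of the argument, since while each $V_\lambda$ is linear the union is not, and matching it back to the geometric loci requires tracking precisely which subscheme structures at the fixed flag force all weights positive versus merely non-negative. The inclusion $X_1 \cup X_2 \subset X_3$ and the fact that $\textbf{C}(\tfrac{5}{2}, 7)$ has Picard rank two both serve as useful consistency checks, suggesting that only a small number of extremal 1-PSs (likely those with weights $(1,1,-2)$, $(2,-1,-1)$, and their refinements) actually contribute, reducing the enumeration to a finite case analysis.
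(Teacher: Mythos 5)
Your plan is essentially the paper's proof: the paper fixes the same weight basis $e_0,\dots,e_{14}$ of $H^0(\Omega(4))$ coming from the Euler sequence, normalizes every one-parameter subgroup to $\lambda_r=(1,r,-1-r)$ with $r\in[-\tfrac{1}{2},1]$, and shows in Lemma \ref{lem:stability_final_n7} that the non-semistable and non-stable loci are exactly the $\SL_3$-orbits of two explicit coordinate subspaces --- your $\bigcup_\lambda\P(V_\lambda^{>0})$ and $\bigcup_\lambda\P(V_\lambda^{\geq 0})$, with, as you predicted, only the two families $r>0$ and $r<0$ (plus the boundary case $r=0$) contributing. The one step you flag as delicate --- matching these orbits to the geometric loci $X_1,X_2,X_3$ --- is closed in the paper not by tracking subscheme structure along the fixed flag, but by computing (partly computer-assisted, Lemmas \ref{lem:simplification_x1}--\ref{lem:simplification_x3}) the ideals generated by representative sections to get one containment $\widetilde X_i\subset X_i$, and then deducing equality from irreducibility of both sides together with a dimension count via explicit stabilizer computations.
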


Note that this description appears completely geometric despite the fact that $\P^{14}$ contains points that do not correspond to subschemes. This does not mean that all semistable points are given by ideal sheaves and indeed, counterexamples occur among minimal orbits:

\begin{prop}[See Proposition \ref{prop:n_7_final_model_closed_orbits}]
The closed strictly semistable orbits of $\SL_3$ acting on $\P(H^0(\Omega(4)))$ are given by subschemes $Z_w$ cut out by the ideal
\[
I_w = (xy, x^2, (w + 1)y^2 + xz) \cap (yz, z^2, wy^2 + xz) \cap (x, z)
\]
for $w \in \C \backslash \{ 0, -1 \}$ plus two further orbits that are not representing zero-dimensional subschemes. In the quotient $\P(H^0(\Omega(4))) /\!\!/ \SL_3$ these extra orbits are represented by the limits of $\overline{Z_w}$ for $w \to 0$ and $w \to \infty$. We have $Z_{-w - 1} \in \SL_3 \cdot Z_w$, $\lim_{w \to 0} Z_w \in \SL_3 \cdot \lim_{w \to -1} Z_w$, and no other pair of points lies in the same orbit. In particular, the image of the strictly semistable locus in the quotient is isomorphic to $\P^1$.
\end{prop}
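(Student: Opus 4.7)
My plan is to use the classical GIT principle that a closed orbit in $\P^{14,ss}$ is strictly semistable precisely when it has positive-dimensional reductive stabilizer; after conjugation this stabilizer contains a fixed $1$-parameter subgroup of $\SL_3$. The support pattern of $I_w$ at the three coordinate vertices $(1{:}0{:}0)$, $(0{:}1{:}0)$, $(0{:}0{:}1)$ dictates the natural candidate $\lambda(t) = \diag(t, 1, t^{-1}) \subset T \subset \SL_3$. A direct check shows that the three generators of $(xy, x^2, (w+1)y^2 + xz)$ have $\lambda$-weights $1, 2, 0$, those of $(yz, z^2, wy^2 + xz)$ have weights $-1, -2, 0$, and those of $(x, z)$ have weights $1, -1$; hence each component ideal is $\lambda$-stable and $Z_w$ is $\lambda$-fixed. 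The associated section $s_w \in H^0(\Omega(4))$ produced by the Bridgeland wall-crossing map of \cite{ABCH13:hilbert_schemes_p2} is therefore a $\lambda$-eigenvector in $\P^{14}$.

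The next step is to determine the $\lambda$-weight subspace $W \subset H^0(\Omega(4))$ containing all $s_w$ and to perform GIT on $\P(W)$ with respect to the residual group. Decomposing $H^0(\Omega(4)) \subset V^* \otimes \Sym^3 V^*$ by $\lambda$-weights and intersecting with the kernel of the multiplication map $V^* \otimes \Sym^3 V^* \to \Sym^4 V^*$ shows that the $\lambda$-weight-zero component $W$ is $3$-dimensional. The residual group acting on $\P(W) \cong \P^2$ is $N_{\SL_3}(\lambda)$ modulo the subgroup acting trivially on $W$, which turns out to be $\G_m \rtimes \Z/2$, where $\G_m = T/\lambda$ acts with weights $-1, -1, 2$ and the $\Z/2$ is generated by the Weyl reflection $\sigma$ exchanging $X \leftrightarrow Z$. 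A direct computation then gives $\P(W)^{ss}/(\G_m \rtimes \Z/2) \cong \P^1$.

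The parameter $w$ is read off from the $\G_m$-orbit coordinates in $\P(W)$, and the involution $w \mapsto -w - 1$ is realized by $\sigma$ composed with a rescaling $Y \mapsto iY$ (absorbed into $\SL_3$ via a compensating scalar on $X$ and $Z$): a short calculation shows that this is the unique combination that exchanges the coefficient $(w+1)$ in the first component of $I_w$ with the coefficient $w$ in the second, forcing $c^2 = -1$ and therefore $w' = -w - 1$. The two boundary points $w = 0$ and $w = \infty$ correspond to the two $\G_m$-unstable directions in $\P(W)$ that become semistable closed orbits of $\P^{14}$ under the full $\SL_3$-action; at these parameters the ideal $I_w$ fails to define a length-$7$ subscheme (for instance $\lim_{w \to 0}(yz, z^2, wy^2 + xz) = (yz, z^2, xz) = z \cdot (x, y, z)$), so the corresponding points of $\P^{14}$ are not in the image of $\P^{2[7]}$.

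Finally, to exclude other closed strictly semistable orbits, one observes that every $1$-PS in $\SL_3$ is conjugate into $T$ and then enumerates the remaining $\lambda$-weight-zero weight subspaces of $H^0(\Omega(4))$; those not conjugate to $W$ must, by Theorem~\ref{thm:stability_final_model_n_7}, lie in the unstable locus $X_1 \cup X_2$ and hence contribute nothing. The main obstacle is the explicit identification of $s_w$ inside $H^0(\Omega(4))$ as a morphism $\OO(-5) \to \Omega(1)$ through the first few Bridgeland walls, together with the careful verification that the two boundary points of the family are closed, semistable, and outside the image of $\P^{2[7]}$.
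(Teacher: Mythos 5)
Your skeleton is close to the paper's: both arguments funnel everything through the $\lambda_0$-weight-zero subspace $W = \langle e_4, e_8, e_9\rangle$ and the involution $w \mapsto -w-1$. The differences are in how you get there and how you separate orbits. The paper starts from the explicit description of the non-stable locus (Lemma \ref{lem:stability_final_n7}), degenerates any strictly semistable point via $\lim_{t\to 0}\lambda_0(t)\cdot s = a_4e_4 + a_8e_8 + a_9e_9$, and then separates orbits by a direct stabilizer computation (Lemma \ref{lem:n_7_final_model_stabilizers}); you instead invoke the positive-dimensional-reductive-stabilizer criterion for closed strictly semistable orbits, enumerate weight-zero eigenspaces over all normalized one-parameter subgroups, and propose a residual-group quotient $\P(W)^{ss}/(\G_m\rtimes\Z/2)$. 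Both replacements are legitimate, and your route is arguably cleaner for locating where the closed orbits must live, but neither replacement is free: the enumeration must actually be carried out (the only other nontrivial weight-zero eigenspaces occur for $r=-1/3$ and $r=1/2$ and are the lines $\langle e_2\rangle$ and $\langle e_{12}\rangle$, both of which have $a_4=0$ and hence are unstable by Lemma \ref{lem:stability_final_n7} --- true, but you only gesture at it), and the descent from $N_{\SL_3}(\lambda_0)$-orbits on $\P(W)$ to $\SL_3$-orbits on $\P^{14}$ requires knowing that the generic stabilizer of $[s_w]$ is one-dimensional with identity component exactly $\mathrm{im}\,\lambda_0$, so that any $g$ with $g\cdot[s_w]=[s_{w'}]$ normalizes $\lambda_0$. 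That is precisely the content of the paper's Lemma \ref{lem:n_7_final_model_stabilizers}, so the explicit computation is deferred rather than avoided.

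There is one genuine error: the points $w=0$ and $w=\infty$ are \emph{not} the ``$\G_m$-unstable directions'' of $\P(W)$. The points $[e_4+e_8]$ and $[e_4+e_9]$ have $a_4\neq 0$ and $(a_8,a_9)\neq(0,0)$, so they are perfectly semistable for the residual $\G_m$ (and indeed for $\SL_3$, as the statement requires --- they are two of the closed orbits). What singles them out is not GIT on $\P(W)$ but the failure of the corresponding section $\OO(-5)\to\Omega(-1)$ to have torsion-free cokernel, which your own observation $(yz,z^2,0\cdot y^2+xz)=z\cdot(x,y,z)$ already establishes and which the paper records in Lemma \ref{lem:n_7_final_model_primary_decomposition}. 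Relatedly, you do not address the identification $\lim_{w\to 0}Z_w\in\SL_3\cdot\lim_{w\to-1}Z_w$, which is part of the statement and needs the explicit normalizer element (your $\sigma$ composed with the sign twist, evaluated at $w=-1$); without it your parameter count of closed orbits at the boundary of the $w$-line would be off by one. With these three points repaired --- the boundary analysis, the stabilizer computation underpinning the residual-group descent, and the enumeration over the remaining one-parameter subgroups --- your argument closes.
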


Finally, there is an unexplored relationship between $\textbf{C}(m,n)$ and the GIT of plane curves. A starting point is Proposition \ref{prop:plane_curves_vs_points}. 

\subsection{Ingredients in the proofs}

An important element of the proof of Theorem \ref{thm:MainThmWall} is the use of the Hilbert-Mumford numerical criterion on ideals with a positive dimensional stabilizer. This is inspired by the approach in \cite{AFS13:hilbert_stability_bicanonical} based on results in \cite{Kem78:instability}. In our case, stabilizers are not multiplicity free, and we have to do  more than just diagonal one-parameter subgroups. We also rely on the moduli interpretation for birational models of $\P^{2[n]}$ due to \cite{ABCH13:hilbert_schemes_p2, LZ18:stability_p2}. More precisely, we need to know that for a given wall $m = m_l$ the ideal sheaves of strictly-semistable points are Bridgeland-semistable for any stability condition that induces a multiple of the divisor $D_{m_l}$. This leads to the inequality $l \geq \tfrac{n}{2}$. The other condition $l < \tfrac{2n}{3}$ ensures that $D_{m_l}$ is in the movable cone.

The proof of Theorem \ref{thm:largestWall} is done by showing that the set of GIT-unstable objects is not changing between $m = m_{\lfloor 2n/3 \rfloor}$ and $m = \infty$. To handle the case of $\lceil 2n/3 \rceil$ points in a line the crucial non-trivial ingredient is understanding limiting behavior of generic configurations of points. We use fundamental results about this problem from \cite{OS99:cutting_corners}. A similar, but simpler and more concrete argument allows us to show that there are no further walls for $n = 5$.

The proof of Theorem \ref{thm:git_unstable_bridgeland_stable} uses different techniques. Here, we rely on the fact that 
moduli spaces of Bridgeland-semistable objects in $\P^2$ can be written as moduli spaces of quiver representations with relations, so we study those. 

Finally, the proof of Theorem \ref{thm:stability_final_model_n_7} combines all of our tools. By the use of Bridgeland stability, the final model for $n = 7$ is $\P^{14}$. We obtain an explicit description of the action of $\SL_3$ on this $\P^{14}$. From here  GIT-techniques are used to describe the semistable loci. A key difficulty is to translate the abstract description in $\P^{14}$ back to loci that can be understood in terms of the Hilbert scheme. We use computer calculations to deal with some of the technicalities. They are accessible at Zenodo, a general-purpose open repository operated by CERN, see \cite{code}.

\subsection{Related work}
Compactifying the moduli spaces of points in the plane (labeled and unlabelled) via invariant theory can be traced back to the work of Coble in the 1910's \cite{Cob15:point_sets} - therefore the name of our spaces, see also \cite{DO88:point_sets}. 
Recent compactifications of the moduli of labeled points in the plane \cite{ST21:projectivity} and \cite{GR17:wonderful} yield birational compactifications to our space after quotienting them by the permutation group. However, these approaches do not consider the scheme structure of the points - which is a central feature of our work. In fact, compactifications based on the Hilbert scheme are largely unexplored with the exception of early work due to N. Durgin on $\textbf{C}(m,6)$ with $m \gg 1$, \cite{Dur15:quotient_hilb_six}, and applications to the moduli of polarized del Pezzo surfaces \cite{Ish82:moduli_del_pezzo}.

\subsubsection*{Acknowledgements}
We thank Natalie Durgin and C\'esar Lozano Huerta for useful discussions and remarks on preliminary versions of this article.
We also thank the referees for many remarks that greatly improved the exposition of the paper.  
B.S. was supported by an AMS-Simons travel grant during part of this work. P. Gallardo was supported by the University of California, Riverside and Washington University at St Louis.

\tableofcontents

\section{Background}

\subsection{Hilbert schemes of points}

We start by recalling basic properties of Hilbert schemes of points in $\P^2$. 
By $\P^{2[n]}$ we denote the \emph{Hilbert scheme} parametrizing subschemes $Z \subset \P^2$ of dimension zero and length $n$. The symmetric product is denoted by $\P^{2(n)}$. By work of Fogarty \cite{Fog68:hilbert_schemeI} the Hilbert scheme is a desingularization of the symmetric product via the \emph{Hilbert--Chow morphism}
$\P^{2[n]} \to \P^{2(n)}$
that maps a subscheme to its underlying cycle. The exceptional divisor of this morphism is denoted by $\Delta$ and consists of all non-reduced subschemes. A second divisor can be defined as follows. The hyperplane $H \subset \P^2$ induces an $S_n$-equivariant divisor on $(\P^{2})^n$ which descends to a divisor $H^{(n)}$ on $\P^{2(n)}$. The pullback of this divisor via the Hilbert--Chow morphism is denoted by $H^{[n]}$. By abuse of notation we will still call this divisor $H$. Further work by Fogarty \cite{Fog73:hilbert_schemeII} shows
\[
\Pic(\P^{2[n]}) = \Z H \oplus \Z \frac{\Delta}{2}.
\]

Next we need to study the induced maps of various divisors in $\P^{2[n]}$. We follow the account in \cite[Section 3]{ABCH13:hilbert_schemes_p2}. These results originated in \cite{BS88:k_very_ample, CG90:d_very_ample, LQZ03:nef_cone_p2}. For any positive integer $m$ with
$
n < \binom{m + 2}{2}
$
and subscheme $Z \in \P^{2[n]}$ we have an inclusion $H^0(\II_Z(m)) \subset H^0(\OO(m))$. This induces a rational map to the Grassmannian. We compose it with the Pl\"ucker embedding to get
\[
\phi_m: \P^{2[n]} \dashrightarrow \Gr\left(\binom{m + 2}{2} - n, \binom{m + 2}{2} \right) \into \P^{N_n}
\]
for appropriate $N_n$. The image $Z_m \coloneqq \phi_m(Z)$ of a point $Z \in \P^{2[n]}$ is called the \emph{$m$-th Hilbert point} of $Z$. If $Z$ is in the indeterminacy loci of $\phi_m$, then we say that the $m$-th Hilbert point of $Z$ is not well-defined. Whenever, we talk about $Z_m$ we will suppose that it is well defined. One can define divisors $D_m$ via the pullback $\OO(D_m) = \phi_m^* \OO(1)$ which have the following two properties.

\begin{prop}[{\cite[Proposition 3.1]{ABCH13:hilbert_schemes_p2}}]
\label{prop:d_m}
The class of $D_m$ is given by
$
D_m = mH - \frac{\Delta}{2}.
$
\end{prop}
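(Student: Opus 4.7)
The plan is to identify $\OO(D_m)$ with the determinant of a tautological bundle on $\P^{2[n]}$, and then compute its class in the basis $\{H,\Delta/2\}$ by intersection with two standard test curves. Let $\Xi \subset \P^{2[n]} \times \P^2$ be the universal subscheme, with projections $p$ and $q$ to the two factors, and set $E_m := p_*(q^*\OO(m)|_{\Xi})$. This is a rank-$n$ vector bundle with fiber $H^0(Z,\OO_Z(m))$ over $[Z]$. Twisting the defining sequence $0 \to \II_{\Xi} \to \OO_{\P^{2[n]} \times \P^2} \to \OO_{\Xi} \to 0$ by $q^*\OO(m)$ and pushing forward along $p$ yields
\[
0 \to K_m \to H^0(\P^2,\OO(m)) \otimes \OO_{\P^{2[n]}} \to E_m \to 0,
\]
where $K_m$ is the subbundle whose fiber at $[Z]$ is $H^0(\II_Z(m))$. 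By construction $\phi_m$ is the classifying map of $K_m$, and the Pl\"ucker polarization pulls back to $\det(K_m)^{\vee}=\det(E_m)$. Hence $D_m = c_1(E_m)$.

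Writing $c_1(E_m) = aH + b(\Delta/2)$, I would pin down $a$ and $b$ using two classical test curves. For $a$, take $C_1 \subset \P^{2[n]}$ obtained by fixing $n-1$ general points and letting the remaining point move along a general line $L \cong \P^1 \subset \P^2$. Then $H \cdot C_1 = 1$ and $\Delta \cdot C_1 = 0$, while $E_m|_{C_1}$ decomposes as $\OO^{\oplus(n-1)} \oplus \OO_L(m)$, giving $a = c_1(E_m)\cdot C_1 = m$. For $b$, take $C_2 \subset \P^{2[n]}$ parametrizing a length-two subscheme supported at a fixed general point $p_0$ with tangent direction varying over $\P(T_{p_0}\P^2) \cong \P^1$, together with $n-2$ fixed general points elsewhere. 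Then $H \cdot C_2 = 0$ and $(\Delta/2)\cdot C_2 = -1$ by the standard self-intersection of an exceptional line of the Hilbert--Chow blowup. The restriction $E_m|_{C_2}$ splits as a trivial rank-$(n-2)$ summand together with a rank-two bundle $F_m$; the dual $F_m^{\vee} \subset H^0(\P^2,\OO(m))^{\vee} \otimes \OO_{\P^1}$ is spanned fiberwise by the $t$-independent evaluation $\mathrm{ev}_{p_0}$ and the directional-derivative functional $\phi_t\colon f \mapsto (\partial_t f)(p_0)$, which depends linearly on $t$. This yields an exact sequence $0 \to \OO \to F_m^{\vee} \to \OO_{\P^1}(-1) \to 0$, so $\deg F_m = 1$ for every $m$, and therefore $c_1(E_m)\cdot C_2 = 1$ and $b = -1$.

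Combining the two intersection numbers gives $D_m = mH - \Delta/2$. The step I expect to require the most care is the degree computation on $C_2$: one must verify that the rank-two quotient $F_m$ is independent of $m$ up to isomorphism. I would make this concrete by choosing affine coordinates $(u,v)$ around $p_0$, writing each functional $\phi_t = t_1\partial_u + t_2\partial_v$ explicitly at $p_0$, and observing that the induced map $\P^1 \to \P(H^0(\OO(m))^{\vee}/\C\cdot \mathrm{ev}_{p_0})$ is a linear embedding regardless of $m$. This forces the quotient line bundle in the sequence above to be $\OO_{\P^1}(-1)$ uniformly in $m$.
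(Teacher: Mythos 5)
The paper offers no proof of this statement---it is quoted directly from \cite[Proposition 3.1]{ABCH13:hilbert_schemes_p2}---so there is nothing internal to compare against; your argument is a correct, self-contained proof and coincides with the standard one, namely identifying $\phi_m^*\OO(1)$ with $\det p_*\bigl(q^*\OO(m)|_{\Xi}\bigr)$ and pairing against the two usual test curves, with the intersection numbers $H\cdot C_1=1$, $\Delta\cdot C_1=0$, $H\cdot C_2=0$, $\tfrac{\Delta}{2}\cdot C_2=-1$ and the degrees $m$ and $1$ all checking out. The two delicate points are handled correctly: the exact sequence defining $K_m$ is only exact over the domain of definition of $\phi_m$, which is exactly where the Pl\"ucker identification is made, and the derivative functional $\phi_t$ is only defined modulo $\mathrm{ev}_{p_0}$ after choosing a local trivialization of $\OO(m)$, which affects neither the subbundle $F_m^{\vee}$ nor the resulting quotient $\OO_{\P^1}(-1)$, so $\deg F_m=1$ independently of $m$ as you argue.
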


We will write $D_m$ for $mH - \frac{\Delta}{2}$ for all $m \in \R$. Additionally, we define $D_{\infty} \coloneqq H$. This is motivated by the fact that the ray $\R_{\geq 0} \cdot D_m$ converges to $\R_{\geq 0} \cdot H$ for $m \to \infty$.

\begin{prop}[\cite{LQZ03:nef_cone_p2}]
The divisor $D_{n-1}$ is globally generated, but not ample. In particular, the nef cone of $\P^{2[n]}$ is generated by $D_{\infty} = H$ and $D_{n-1}$.
\end{prop}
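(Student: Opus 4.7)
The plan is to prove the three parts of the statement in sequence: global generation of $D_{n-1}$, its failure to be ample, and the resulting two-ray nef cone. For global generation, the goal is to show that the rational map $\phi_{n-1}$ from Proposition~\ref{prop:d_m} is everywhere defined. This reduces to the equality $h^0(\II_Z(n-1)) = \binom{n+1}{2} - n = \binom{n}{2}$ for every length-$n$ subscheme $Z \subset \P^2$, equivalently $h^1(\II_Z(n-1)) = 0$, equivalently the surjectivity of the restriction map $H^0(\OO_{\P^2}(n-1)) \to H^0(\OO_Z)$. The cleanest way to obtain this is to invoke the classical $(n-1)$-regularity of length-$n$ subschemes of $\P^2$, equivalently the fact that $\OO_{\P^2}(1)$ is $(n-1)$-very ample on zero-dimensional subschemes of length $n$. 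One may alternatively argue by induction on $n$: peel off a colength-one subscheme $Z' \subset Z$, use the inductive hypothesis to separate $Z'$, and multiply by a linear form that does not vanish at the residue point of $Z$ modulo $Z'$.

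For the failure of ampleness, the plan is to exhibit a positive-dimensional subvariety collapsed by $\phi_{n-1}$. Fix a line $L \subset \P^2$ with equation $\ell \in H^0(\OO_{\P^2}(1))$; then $\Sym^n L \cong \P^n$ sits naturally in $\P^{2[n]}$ as the locus of subschemes supported on $L$. For any $Z \subset L$ of length $n$ and any plane curve $C$ of degree $n-1$ containing $Z$, a proper intersection would force $L \cdot C = n - 1 < n = \mathrm{length}(Z)$, so $L$ must be a component of $C$. Consequently $H^0(\II_Z(n-1)) = \ell \cdot H^0(\OO_{\P^2}(n-2))$ inside $H^0(\OO_{\P^2}(n-1))$, independently of the choice of $Z \subset L$; hence $\phi_{n-1}$ collapses the whole $\P^n$ to a single point, so $D_{n-1}$ is not ample.

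For the nef cone description, I would use that $\Pic(\P^{2[n]}) \otimes \R$ is two-dimensional by Fogarty's theorem, so the nef cone has at most two extremal rays. Both $H$ and $D_{n-1}$ are nef, with $H$ pulled back from the ample $H^{(n)}$ on $\P^{2(n)}$ and $D_{n-1}$ by the first step above. To show $H$ is extremal, take $C_1$ to be a curve inside a non-trivial fibre of the Hilbert--Chow map, e.g., fix $n - 2$ distinct reduced points and vary the tangent direction of a length-two scheme supported at an additional point; then $H \cdot C_1 = 0$ while $\Delta \cdot C_1 < 0$, which rules out nefness of any $aH + b \tfrac{\Delta}{2}$ with $b > 0$. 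To show $D_{n-1}$ is extremal, take $C_2$ to be a line inside the $\P^n$ of collinear configurations from the previous step: then $D_{n-1} \cdot C_2 = 0$ while $H \cdot C_2 > 0$, which via the relation $D_{n-1} = (n-1) H - \tfrac{\Delta}{2}$ forces $\Delta \cdot C_2 = 2(n-1) H \cdot C_2 > 0$ and hence $D_m \cdot C_2 = (m - (n-1)) H \cdot C_2 < 0$ for every $m < n - 1$, ruling out nefness below the $D_{n-1}$-ray. The main obstacle is the first step: global generation of $D_{n-1}$ is the only non-formal input and relies on a genuine regularity theorem for zero-dimensional subschemes of $\P^2$, while the remaining arguments reduce to the two explicit intersection-number checks just outlined.
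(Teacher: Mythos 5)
Your argument is correct and is essentially the standard proof from the sources the paper cites (\cite{LQZ03:nef_cone_p2}, with the global-generation input being exactly the $(n-1)$-very ampleness of $\OO_{\P^2}(1)$ from \cite{BS88:k_very_ample, CG90:d_very_ample}); the paper itself states the proposition with a citation and gives no proof, so there is nothing to diverge from. The only nitpick is that the contracted $\P^n$ should be described as the locus of subschemes scheme-theoretically \emph{contained in} $L$ (i.e.\ $L^{[n]} \cong \Sym^n L$), not merely set-theoretically supported on $L$, which is how your Bezout step actually uses it.
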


\subsection{Geometric invariant theory}

Geometric invariant theory (GIT) was introduced by Mumford. For full details we refer to \cite{Dol03:invariant_theory, MFK94:git}. Let $G$ be a reductive group, $X$ be an irreducible variety with a $G$-action that is either affine or projective, and $\LL \in \Pic^G(X)$ be an $G$-linearized line bundle. Note that this theory is often laid out for $X$ projective only, but we will require the case $X$ affine as well. Moreover, we will eventually also deal with the situation where $X$ is affine, but not irreducible. 

For any one-parameter subgroup $\lambda: \C^* \to G$ and $x \in X$, we can take the limit
$x_0 \coloneqq \lim_{t \to 0} \lambda(t) \cdot x$.
Note that $x_0$ is a fixed point by the action of $\lambda$. Therefore, $\lambda$ induces a $\C^*$-action on the fiber $\LL_{x_0} = \C$. If $\mu$ is the weight of this action, then the \emph{Hilbert Mumford index} is defined to be
$\mu^{\LL}(x, \lambda) \coloneqq -\mu$. The following proposition and lemma are immediate consequences of the definition.

\begin{prop}
The Hilbert Mumford index is linear in the line bundle, i.e., for two line bundles $\LL_1$ and $\LL_2$
\[
\mu^{\LL_1 \otimes \LL_2}(x, \lambda) = \mu^{\LL_1}(x, \lambda) + \mu^{\LL_2}(x, \lambda).
\]
\end{prop}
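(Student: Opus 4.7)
The plan is to unwind the definition of the Hilbert--Mumford index and reduce the statement to the elementary fact that tensor products of one-dimensional $\C^*$-representations add their weights. First, I would observe that the limit point $x_0 = \lim_{t \to 0} \lambda(t) \cdot x$ depends only on $x$ and $\lambda$, not on the chosen line bundle. Hence the same fixed point $x_0$ is used to compute all three indices $\mu^{\LL_1}(x,\lambda)$, $\mu^{\LL_2}(x,\lambda)$, and $\mu^{\LL_1 \otimes \LL_2}(x,\lambda)$, and the only thing to compare is the $\C^*$-weight on the three one-dimensional fibers at $x_0$.

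Next, I would invoke the compatibility of the fiber functor with tensor products of $G$-linearized line bundles: by construction of $\LL_1 \otimes \LL_2$ in $\Pic^G(X)$, the natural isomorphism
\[
(\LL_1 \otimes \LL_2)_{x_0} \;\cong\; (\LL_1)_{x_0} \otimes_{\C} (\LL_2)_{x_0}
\]
is $G$-equivariant, and in particular $\C^*$-equivariant with respect to $\lambda$. If $\lambda$ acts on $(\LL_i)_{x_0}$ by the character $t \mapsto t^{\mu_i}$, then on the tensor product it acts by $t \mapsto t^{\mu_1 + \mu_2}$. Applying the sign convention $\mu^{\LL}(x,\lambda) = -\mu$ from the definition yields
\[
\mu^{\LL_1 \otimes \LL_2}(x,\lambda) \;=\; -(\mu_1 + \mu_2) \;=\; \mu^{\LL_1}(x,\lambda) + \mu^{\LL_2}(x,\lambda),
\]
which is the desired identity.

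There is no serious obstacle here; the claim is essentially bookkeeping. The only point worth checking carefully is the equivariance of the fiber isomorphism, which is automatic from the definition of the tensor product in $\Pic^G(X)$, so the proof will amount to a few lines.
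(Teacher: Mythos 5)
Your proof is correct and is exactly the argument the paper has in mind: the paper gives no explicit proof, stating only that the proposition is an immediate consequence of the definition, and your unwinding (same limit point $x_0$, tensor product of fibers, additivity of $\C^*$-weights, sign convention) is precisely that consequence spelled out.
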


\begin{lem}
\label{lem:localHM}
Assume that $X$ and $Y$ are birational, $G$-equivariantly isomorphic in codimension two, and let $U$ be the maximal open subset on which $X$ and $Y$ are isomorphic. Let $L \in \Pic^G(X) = \Pic^G(Y)$ and $x \in U$. If $\lambda$ is a one-parameter subgroup in $G$ such that $\lim_{t \to 0} \lambda(t) \cdot x \in U$, then $\mu^{\LL}(x, \lambda)$ is independent of whether we compute it on $X$ or $Y$.
\end{lem}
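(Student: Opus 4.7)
The plan is to decompose the Hilbert--Mumford index into two manifestly local pieces of data and observe that each one is determined inside the shared open subset $U$. By definition, $\mu^{\LL}(x, \lambda)$ is built from (a) the limit point $x_0 \coloneqq \lim_{t \to 0} \lambda(t) \cdot x$, and (b) the weight of the induced $\C^*$-action of $\lambda$ on the one-dimensional fiber $\LL_{x_0}$. As a preliminary I would check that $U$ is $G$-stable: the graph of the $G$-equivariant birational map $X \dashrightarrow Y$ is $G$-invariant, so both its domain of definition and its locus of invertibility are $G$-stable, hence so is their intersection $U$. In particular $\lambda$ acts coherently on $U$ viewed either inside $X$ or inside $Y$.

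For piece (a), the orbit map $t \mapsto \lambda(t) \cdot x$ extends to a morphism $f \colon \A^1 \to X$ with $f(0) = x_0 \in U$. Openness of $U$ gives a neighborhood $V \subset \A^1$ of $0$ with $f(V) \subset U$. Restricted to $V$, the morphism $f$ is determined purely by the $G$-action on a point of $U$, and hence agrees with the analogous orbit map computed inside $Y$ under the identification $U \subset X$, $U \subset Y$. By the universal property of the extension, the limit $\lim_{t \to 0} \lambda(t) \cdot x$ computed in $Y$ exists and also equals $x_0$, so both definitions of $\mu^{\LL}$ pivot on the same fixed point.

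For piece (b), the character by which $\C^*$ acts on the one-dimensional vector space $\LL_{x_0}$ is determined by the $G$-equivariant line bundle $\LL$ on any open $\lambda$-invariant neighborhood of $x_0$: concretely, picking a local non-vanishing section $s$ of $\LL$ near $x_0$ and writing $\lambda(t)^* s = t^{\mu} s + (\text{higher order terms})$ recovers $\mu = -\mu^{\LL}(x, \lambda)$. The hypothesis $\LL \in \Pic^G(X) = \Pic^G(Y)$ means the two $G$-linearizations restrict to the same $G$-equivariant line bundle on $U$, so this local computation yields the same weight whether carried out on $X$ or $Y$. The only real subtlety I anticipate is this last identification of $G$-linearizations on $U$; it is built into the hypothesis via the fact that $G$-equivariant line bundles on a normal variety extend uniquely across codimension-two closed subsets. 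Granting it, both ingredients of $\mu^{\LL}(x, \lambda)$ are computed from the same local data near $x_0 \in U$, and the lemma follows.
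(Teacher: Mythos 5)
Your proof is correct and matches the paper's reasoning: the paper offers no written argument, declaring the lemma an immediate consequence of the definition, and your decomposition of $\mu^{\LL}(x,\lambda)$ into the limit point $x_0$ and the weight on the fiber $\LL_{x_0}$ --- both determined by data on the $G$-stable open set $U$ where the two linearizations agree --- is exactly the reason it is immediate.
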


If the limit in the above lemma lies outside of $U$, then the Hilbert-Mumford indices might differ. This can lead to issues regarding the linearity of the Hilbert-Mumford criterion in the above proposition. As a convention, if $\LL$ is ample on $X$, then we compute $\mu^{\LL}(x, \lambda)$ on $X$ and not on $Y$.

\begin{defn}[Hilbert-Mumford criterion]
\label{def:HMcrit}
Assume that $\LL \in \Pic^G(X)$ is ample.
\begin{enumerate}
\item A point $x \in X$ is $\LL$-stable if for all one-parameter subgroups $\lambda$ we have $\mu^{\LL}(x, \lambda) < 0$. The set of all stable points is denoted by $X^{s}(\LL)$.
\item A point $x \in X$ is called $\LL$-semistable if for all one-parameter subgroups $\lambda$ we have $\mu^{\LL}(x, \lambda) \leq 0$. The set of all semistable points is denoted by $X^{ss}(\LL)$.
\item The point $x$ is called  \emph{$\LL$-unstable} if it is not $\LL$-semistable.
\item Two points $x, y \in X^{ss}(\LL)$ are called \emph{S-equivalent} if their orbit closures in $X^{ss}(\LL)$ intersect.
\end{enumerate}
\end{defn}

Note that this is commonly not used as the definition, but proved. We refer to \cite[Theorem 9.1]{Dol03:invariant_theory} for the case where $X$ is projective and to \cite[Proposition 2.5]{Kin94:moduli_quiver_reps} for the affine case.

The sets $X^s(\LL)$ and $X^{ss}(\LL)$ are both open $G$-invariant subsets of $X$. If $X$ is projective or affine, then the sets $X^s$ are always affine, and we will never use any other set-up.

\begin{theorem}[{\cite[Section 8]{Dol03:invariant_theory}}]
There is a quasi-projective variety $X /\!\!/_{\LL} G$ together with a surjective morphism $\pi: X^{ss}(\LL) \onto X /\!\!/_{\LL} G$. This morphism is universal in the sense that all $G$-invariant morphisms $X^{ss}(\LL) \to Y$ factor through $\pi$. The fibers of $\pi$ are precisely the S-equivalence classes of semistable points. Moreover, if $X$ is projective, then so is $X /\!\!/_{\LL} G$.
\end{theorem}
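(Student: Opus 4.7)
The plan is to construct $X /\!\!/_{\LL} G$ as a $\Proj$ or $\Spec$ of invariant (or semi-invariant) sections and then read each asserted property directly off the construction.

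First I would treat the projective case. Form the graded $\C$-algebra $R \coloneqq \bigoplus_{k \geq 0} H^0(X, \LL^{\otimes k})^G$. The crucial algebraic input is Nagata's theorem: because $G$ is reductive, $R$ is a finitely generated $\C$-algebra. Set $X /\!\!/_{\LL} G \coloneqq \Proj(R)$. For a nonzero $s \in H^0(X, \LL^{\otimes k})^G$ let $X_s \subset X$ be the affine open where $s$ is nonvanishing. From Definition \ref{def:HMcrit}, together with the standard reformulation of semistability as the existence of an invariant section not vanishing at $x$, one gets $X^{ss}(\LL) = \bigcup_s X_s$, which matches the affine cover $(\Proj R)_s = \Spec(R_{(s)})$ with $R_{(s)} = \OO_X(X_s)^G$. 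The morphisms $X_s \to \Spec(R_{(s)})$ then glue to a surjective $G$-invariant morphism $\pi$.

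The affine case follows the same pattern using semi-invariants, as in \cite[Proposition 2.5]{Kin94:moduli_quiver_reps}: cover $X^{ss}(\LL)$ by the open sets $X_s$ where a section $s$ transforming by the character attached to $\LL$ is nonzero, and glue the affine quotients $\Spec(\OO_X(X_s)^G)$. In both cases the universal property is local on the target: on each affine chart it is tautological that a $G$-invariant morphism from $X_s$ factors uniquely through $\Spec(\OO_X(X_s)^G)$, and these factorizations glue canonically. For the fiber description I would invoke the orbit-closure lemma on each affine piece: two $G$-orbits in $X_s$ are identified by $\OO_X(X_s)^G$ precisely when their closures meet, and each fiber contains a unique closed orbit. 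Hence $\pi(x) = \pi(y)$ if and only if $\overline{G \cdot x} \cap \overline{G \cdot y} \cap X^{ss}(\LL) \neq \emptyset$, which is the definition of S-equivalence. Projectivity when $X$ is projective is then immediate: after a Veronese reindexing, $R$ is generated in degree one, so $\Proj(R)$ embeds in a projective space.

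The main obstacle is granting Nagata's theorem on the finite generation of $R$; this is where the reductivity of $G$ is essential and is the only genuinely non-formal input. Once finite generation is in hand, the identification $X^{ss}(\LL) = \bigcup_s X_s$ is a direct translation of the Hilbert--Mumford definition, and both the universal property and the fiber description reduce to well-known affine facts about reductive group actions.
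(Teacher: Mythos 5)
The paper offers no proof of this statement; it is quoted as background from Dolgachev's book, and your construction is precisely the standard one from that source (take $R=\bigoplus_{k\geq 0}H^0(X,\LL^{\otimes k})^G$, invoke Nagata for finite generation, set $X/\!\!/_{\LL}G=\Proj(R)$, and glue the affine quotients $X_s\to\Spec(R_{(s)})$), so the approach matches and is correct. One point worth making explicit: because the paper takes the Hilbert--Mumford numerical criterion as the \emph{definition} of semistability (Definition \ref{def:HMcrit}), your identification $X^{ss}(\LL)=\bigcup_s X_s$ is not a direct translation of that definition but requires the Hilbert--Mumford theorem itself, i.e.\ the equivalence of the numerical and section-theoretic notions of semistability, which is a second genuinely non-formal input alongside Nagata and which the paper cites separately (Dolgachev, Theorem~9.1, for $X$ projective and King, Proposition~2.5, for $X$ affine); as long as you quote it rather than claim it follows from the definition, the argument is complete.
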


\begin{rmk}
\label{rmk:minNumerical}
We need a way to compute the Hilbert-Mumford index in concrete situations (see \cite[Section 9.1]{Dol03:invariant_theory} for more details). Let $G$ act on a projective variety $X \subset \P^n$ via a representation $G \to \SL_{n+1}$. Let $\LL$ be the restriction of $\OO(1)$ to $X$. For a point $x$, we can find an affine lift $x^* = (x_0, \ldots, x_ n) \in \A^{n+1}$. If $\lambda$ is a one-parameter subgroup, then we can find a change of coordinates such that $\lambda$ embeds diagonally into $\SL_{n+1}$. For simplicity, assume that $\lambda$ is diagonal in the first place. Then there are integers $\lambda_0, \ldots, \lambda_n$ such that $\lambda(t) \cdot x^* = (t^{\lambda_0} x_0, \ldots, t^{\lambda_n} x_n)$ and thus, 
\begin{align}\label{eq:numericalCriterion}
\mu^{\LL}(x, \lambda) = \min \{\lambda_i : x_i \neq 0, i = 0, \ldots, n\}.    
\end{align}
The proof is based on the fact that the line bundle corresponding to the locally free sheaf $\OO_{\P^n}(-1)$ is given by
\[
\{ (x, L) \in \A^n \times \P^n : x \in L \}.
\]
\end{rmk}

The fact that the GIT quotient depends on the choice of a $G$-linearized line bundle is not a bug but a feature. Understanding how the quotient varies can lead to interesting insights to the birational geometry of the quotients. The study of this aspect was started in \cite{DH98:vgit} and \cite{Tha96:git_flips}.

\begin{theorem}[{\cite[Theorems 2.3 \& 2.4]{Tha96:git_flips}}]
\label{thm:wall_and_chamber_git}
There is a finite set of codimension one subspaces in $\Pic^G(X)$ such that the semistable set $X^{ss}(\LL)$, $X^{s}(\LL)$, and the quotient $X /\!\!/_{\LL} G$ remain fixed  on connected components of their complement. 
\end{theorem}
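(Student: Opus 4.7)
The plan is to exploit the Hilbert--Mumford numerical criterion together with the linearity of $\mu^{\LL}$ in $\LL$ to identify a family of candidate walls, and then to establish finiteness by reducing to data controlled by the fixed locus of a maximal torus $T \subset G$.

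First I would observe that by Definition \ref{def:HMcrit}, the point $x \in X$ is $\LL$-stable (resp.\ $\LL$-semistable) if and only if $\mu^{\LL}(x, \lambda) < 0$ (resp.\ $\leq 0$) for every one-parameter subgroup $\lambda$ of $G$. Since $\mu^{\LL}(x, \lambda)$ is linear in $\LL \in \Pic^G(X)$ by the linearity proposition, the locus $\{ \LL : \mu^{\LL}(x, \lambda) = 0 \}$ inside $\Pic^G(X) \otimes \R$ is either a codimension-one linear subspace $W_{x,\lambda}$ or all of $\Pic^G(X)\otimes \R$. On each connected component of the complement of $\bigcup_{x,\lambda} W_{x, \lambda}$ the signs of all $\mu^{\LL}(x, \lambda)$ remain constant as $\LL$ varies, so the semistable and stable loci $X^{ss}(\LL)$ and $X^s(\LL)$ do not change; since the quotient is universal for $G$-invariant morphisms from $X^{ss}(\LL)$, the variety $X /\!\!/_{\LL} G$ is then constant as well.

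The crux is to establish finiteness of the collection $\{W_{x, \lambda}\}$. I would fix a maximal torus $T \subset G$ and use that every one-parameter subgroup is $G$-conjugate to one factoring through $T$, while $G$-conjugation preserves the Hilbert--Mumford data. By Noetherianity, the fixed locus $X^T$ has finitely many connected components $F_1, \ldots, F_k$, and for each $i$ the $T$-action on $\LL|_{F_i}$ decomposes into finitely many characters $\chi_{i,1}, \ldots, \chi_{i,r_i}$ of $T$, each depending linearly on $\LL$. Via the formula in Remark \ref{rmk:minNumerical}, for any $x$ the value $\mu^{\LL}(x, \lambda)$ equals the minimum of $-\langle \chi_{i,j}, \lambda \rangle$ over the weights appearing in the fiber at $x_0 = \lim_{t \to 0} \lambda(t) x \in F_i$. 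As $(\chi_{i,j}, \lambda)$ ranges over this finite list of pairings, one obtains only finitely many distinct hyperplanes in $\Pic^G(X) \otimes \R$, corresponding to the conditions that two such weights become equal, or that one of them vanishes.

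The main obstacle I anticipate is making this finiteness rigorous for nonabelian $G$ and possibly singular $X$: one must verify that non-conjugate 1-PSes in $T$ only contribute walls from the finite family above, which requires a cone subdivision of the cocharacter lattice of $T$ compatible with the finitely many weight systems $\{\chi_{i,j}\}_j$. This is essentially the content of Thaddeus' argument in \cite{Tha96:git_flips}, combined with Kirwan's stratification of the unstable locus. In our setting of $\SL_3$ acting on $\P^{2[n]}$ (Theorem \ref{thm:MainThmWall}), $T$ is three-dimensional and these weights admit explicit descriptions in terms of Hilbert functions of ideals, which will ultimately make the abstract finiteness here effective.
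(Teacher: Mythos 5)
The paper offers no proof of this statement -- it is quoted directly from Thaddeus -- so your sketch has to stand on its own. Its first half is fine: linearity of $\mu^{\LL}(x,\lambda)$ in $\LL$ shows that each condition $\mu^{\LL}(x,\lambda)=0$ cuts out a linear subspace of $\Pic^G(X)_{\R}$, and \emph{if} only finitely many such subspaces arose, then the signs of all Hilbert--Mumford indices, hence $X^{ss}(\LL)$ and $X^{s}(\LL)$, hence the good quotient, would indeed be constant on the chambers.

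The gap is exactly where you locate it, but it is not a technical detail to be outsourced: the family $\{W_{x,\lambda}\}$ is genuinely infinite, so your candidate wall system does not prove the theorem. Concretely, for $\lambda$ in a maximal torus $T$ the index $\mu^{\LL}(x,\lambda)$ equals minus the pairing of $\lambda$ with a character $\chi_C(\LL)$ of $T$, where $C$ is the component of the $\lambda$-fixed locus containing $\lim_{t\to0}\lambda(t)\cdot x$ and $\LL\mapsto\chi_C(\LL)$ is linear. The vanishing locus $W_{x,\lambda}$ is then the preimage under $\chi_C$ of the hyperplane $\lambda^{\perp}$ in the character space of $T$; as $\lambda$ ranges over the cocharacter lattice these hyperplanes form an infinite, dense family whenever $\chi_C$ has rank at least two, so the complement of $\bigcup_{x,\lambda}W_{x,\lambda}$ has empty interior and its ``chambers'' are not the GIT chambers. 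The true walls are not the individual vanishing loci of Hilbert--Mumford indices but the loci where the set $X^{ss}(\LL)$ actually changes; a given $x$ can have $\mu^{\LL}(x,\lambda)=0$ for some $\lambda$ while being destabilized by another one-parameter subgroup on both sides. Establishing finiteness therefore requires the genuinely different argument of Thaddeus and Dolgachev--Hu: one shows that only finitely many subsets of $X$ occur as $X^{ss}(\LL)$ (via the finitely many weight polytopes, or states, of the torus action, equivalently the finitely many possible strata of the unstable locus), and that for each such subset the set of linearizations realizing it is a convex polyhedral cone; the walls are the facets of these finitely many cones. A secondary slip to correct along the way: $\lim_{t\to0}\lambda(t)\cdot x$ is fixed by $\lambda$ but generally not by $T$, so the relevant weights are indexed by components of $X^{\lambda}$, not of $X^{T}$, and one needs the fan decomposition of the cocharacter space into cones on which $X^{\lambda}$ is constant before the weight data becomes finite.
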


\begin{defn}
\label{defn:git_wall}
A set of subspaces as in Theorem \ref{thm:wall_and_chamber_git} is called \emph{minimal} if we cannot remove any of them without contradicting the conclusion of Theorem \ref{thm:wall_and_chamber_git}. 
The elements of such a minimal set are called \emph{GIT walls} (or sometimes just \emph{walls}) and the connected components of the complement are called \emph{chambers}.
\end{defn}

Note that if $G$ has no non-trivial characters, e.g., $G = \SL(n)$, then the quotient only depends on the ray $\R_{> 0} \cdot [\LL] \in N^1(X)_{\R}$ (see \cite[Section 2]{Tha96:git_flips}). An important question when we vary $\LL$ to non-ample line bundles is how GIT-stability behaves under $G$-equivariant morphisms. Let $\pi: Y \to X$ be a $G$-equivariant morphism of projective varieties. Let $\LL_X$, $\LL_Y$ be $G$-linearized ample divisors on $X$, $Y$, respectively.
 
\begin{theorem}[{\cite[Theorem 2.1]{Rei89:relative_git}}]
\label{thm:RelativeGIT}
Assume we have points $y \in Y$ and $x = \pi(y) \in X$.
\begin{enumerate}
    \item If $x$ is $\LL_X$-unstable, then $y$ is $(\LL_Y \otimes \pi^* \LL_X^d)$-unstable for $d \gg 0$.
    \item If $x$ is $\LL_X$-stable, then $y$ is $(\LL_Y \otimes \pi^* \LL_X^d)$-stable for $d \gg 0$.
\end{enumerate}
\end{theorem}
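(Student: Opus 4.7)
The plan is to exploit the linearity of the Hilbert--Mumford index (Proposition~2.8) together with the $G$-equivariance of $\pi$. First I would observe that for any one-parameter subgroup $\lambda$,
\[
\mu^{\LL_Y \otimes \pi^* \LL_X^d}(y, \lambda) = \mu^{\LL_Y}(y, \lambda) + d \cdot \mu^{\pi^* \LL_X}(y, \lambda).
\]
Because $\pi$ is $G$-equivariant, the limit $y_0 := \lim_{t \to 0} \lambda(t) \cdot y$ maps under $\pi$ to $x_0 := \lim_{t \to 0} \lambda(t) \cdot x$, and the induced $\C^*$-action on the fiber $(\pi^*\LL_X)_{y_0} = (\LL_X)_{x_0}$ has the same weight when computed upstairs as it does downstairs. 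Hence $\mu^{\pi^*\LL_X}(y, \lambda) = \mu^{\LL_X}(x, \lambda)$. Note also that $\LL_Y \otimes \pi^* \LL_X^d$ is ample for every $d \geq 0$ since $\LL_Y$ is ample and $\pi^*\LL_X$ is nef, so the Hilbert--Mumford criterion from Definition~2.10 applies.

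Part (i) is then immediate: if $\lambda_0$ destabilizes $x$, so that $\mu^{\LL_X}(x, \lambda_0) > 0$, then for $d$ large enough
\[
\mu^{\LL_Y \otimes \pi^* \LL_X^d}(y, \lambda_0) = \mu^{\LL_Y}(y, \lambda_0) + d \cdot \mu^{\LL_X}(x, \lambda_0) > 0,
\]
so the same $\lambda_0$ destabilizes $y$.

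Part (ii) is where the real work lies, since the required bound on $d$ must be uniform over \emph{all} one-parameter subgroups. My plan is to fix a maximal torus $T \subset G$ together with a Weyl-invariant norm $\|\cdot\|$ on the cocharacter space $X_*(T)_\R$. After $G$-conjugation, which leaves the Hilbert--Mumford index invariant, we may assume $\lambda \in X_*(T)$. From the weight description in Remark~2.9 applied to $G$-equivariant projective embeddings of $X$ and $Y$, the functions $\lambda \mapsto \mu^{\LL_X}(x, \lambda)$ and $\lambda \mapsto \mu^{\LL_Y}(y, \lambda)$ are piecewise linear and homogeneous of degree one, determined by finitely many weights. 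Restricting to the unit sphere in $X_*(T)_\R$, the $\LL_X$-stability of $x$ gives a uniform bound $\mu^{\LL_X}(x, \lambda) \leq -\epsilon$ for some $\epsilon > 0$, while $\mu^{\LL_Y}(y, \lambda)$ is bounded above by some constant $C$. Choosing $d > C/\epsilon$ makes $\mu^{\LL_Y \otimes \pi^* \LL_X^d}(y, \lambda) < 0$ for all non-trivial $\lambda$, proving that $y$ is $(\LL_Y \otimes \pi^*\LL_X^d)$-stable.

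The main obstacle is precisely this uniform estimate in part (ii): without it, the choice of $d$ would depend on $\lambda$ and the argument would collapse. The technical input required is the piecewise-linearity and finiteness statement for the Hilbert--Mumford function on $X_*(T)_\R$, which is essentially the content of the Kempf numerical criterion. Once that is in hand, the finiteness of the chamber decomposition on $X_*(T)_\R$ cut out by the relevant weights makes the uniform bound automatic, and the remainder of the proof is the linearity computation carried out above.
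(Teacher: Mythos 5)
The paper offers no proof of this statement---it is quoted directly from Reichstein's article---so there is no internal argument to compare against. Your proposal is, in substance, the standard proof (it reconstructs Mumford's numerical function $M(x)=\sup_{\lambda}\mu(x,\lambda)/\|\lambda\|$ and its finiteness), and most of it is sound: the identity $\mu^{\pi^*\LL_X}(y,\lambda)=\mu^{\LL_X}(x,\lambda)$ via equivariance of $\pi$, the linearity in the line bundle (legitimate here because both bundles live on the single variety $Y$, so the caveat after Lemma \ref{lem:localHM} does not intervene), the ampleness of $\LL_Y\otimes\pi^*\LL_X^d$, and part (i), where no uniformity is needed since a single destabilizing $\lambda_0$ suffices.

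The step that does not hold as written is the reduction to a fixed maximal torus in part (ii). Conjugation does \emph{not} leave the Hilbert--Mumford index of a fixed point invariant: the correct identity is $\mu^{\LL}(x,g\lambda g^{-1})=\mu^{\LL}(g^{-1}\cdot x,\lambda)$. Consequently, bounding $\mu^{\LL_X}(x,\lambda)$ only for $\lambda\in X_*(T)$ and the \emph{fixed} point $x$ does not control all one-parameter subgroups of $G$; you must let the point range over the orbit $G\cdot x$ while $\lambda$ ranges over $X_*(T)$, and then your ``finitely many weights'' are not a single set but depend on the point. The repair is routine and preserves your strategy: in a fixed $G$-equivariant embedding, the state of a point of $G\cdot x$ (the set of $T$-weights with nonvanishing coordinate) takes only finitely many values; each value gives a concave piecewise-linear function of $\lambda$ that is strictly negative away from the origin by stability (negativity at integral $\lambda$ extends to all real $\lambda$ because the locus where the minimum is nonnegative is a rational polyhedral cone), so compactness of the unit sphere yields a single $\epsilon>0$ with $\mu^{\LL_X}(g^{-1}\cdot x,\lambda)\le-\epsilon\|\lambda\|$ for all $g\in G$ and $\lambda\in X_*(T)$, and likewise a uniform upper bound $C\|\lambda\|$ for the $\LL_Y$-indices over $G\cdot y$. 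With that correction, any $d>C/\epsilon$ gives part (ii), and the rest of your argument goes through.
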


\subsection{Bridgeland stability}
\label{subsec:bridgeland}

The birational geometry of the Hilbert scheme of points $\P^{2[n]}$ has been extensively studied with Bridgeland stability. We need to recall some basics about stability before mentioning the main results about $\P^{2[n]}$.

\begin{defn}
The classical \emph{slope} for a coherent sheaf $E \in \Coh(\P^2)$ is defined as
\[
\mu(E) \coloneqq \frac{\ch_1(E)}{\ch_0(E)},
\]
where division by zero is interpreted as $+\infty$.
\end{defn}

\begin{defn}
A coherent sheaf $E$ is called \emph{slope-(semi)stable} if for any non-trivial proper subsheaf $F \into E$ the inequality $\mu(F) < (\leq) \mu(E/F)$ holds.
\end{defn}

All ideal sheaves of points are slope-stable, and have Chern character $(1, 0, -n)$. 
Therefore, it is not difficult to show that $\P^{2[n]}$ can be interpreted as the moduli space of slope-stable sheaves with Chern character $(1,0,-n)$. The notion of Bridgeland stability is required to describe other birational models of the Hilbert scheme. The key idea due to Bridgeland is to change the category of coherent sheaves for another heart of a bounded t-structure inside the bounded derived category. This was first done for K3 surface in \cite{Bri08:stability_k3}, and then later generalized to all surfaces in \cite{AB13:k_trivial}.

Let $\beta$ be an arbitrary real number. Then the twisted Chern character $\ch^{\beta}$ is defined as
\[
\ch^{\beta}_0 =  \ch_0, \ \ch^{\beta}_1 = \ch_1 - \beta \ch_0, \ \ch^{\beta}_2 = \ch_2 - \beta \ch_1 + \frac{\beta^2}{2} \ch_0.
\]
Note that for $\beta \in \Z$, we simply have $\ch^{\beta}(E) = \ch(E(-\beta))$.

\begin{defn}
\begin{enumerate}
\item A torsion pair is defined by
\begin{align*}
\TT_{\beta} &\coloneqq \{E \in \Coh(\P^2) : \text{any quotient $E \onto G$ satisfies $\mu(G) > \beta$} \}, \\
\FF_{\beta} &\coloneqq  \{E \in \Coh(\P^2) : \text{any subsheaf $0 \neq F \subset E$ satisfies $\mu(F) \leq \beta$} \}.
\end{align*}
The heart of a bounded t-structure is given as the extension closure $\Coh^{\beta}(\P^2) \coloneqq \langle \FF_{\beta}[1],\TT_{\beta} \rangle$. 
\item Let $\alpha > 0$ be a positive real number. The \emph{Bridgeland-slope} is defined as
\[
\nu_{\alpha, \beta} \coloneqq \frac{\ch^{\beta}_2 - \frac{\alpha^2}{2} \ch^{\beta}_0}{\ch^{\beta}_1}.
\]
\item An object $E \in \Coh^{\beta}(\P^2)$ is called \emph{Bridgeland-(semi)stable} (or \emph{$\nu_{\alpha,\beta}$-(semi)stable}) if for any non trivial proper subobject $F \subset E$ the inequality $\nu_{\alpha, \beta}(F) < (\leq) \nu_{\alpha, \beta}(E/F)$ holds.

\item We call the pair $(\Coh^{\beta}(\P^2), \nu_{\alpha, \beta})$ a \emph{Bridgeland stability condition}. 

\item The moduli space of $\nu_{\alpha, \beta}$-semistable objects of a fixed class $v \in K_0(\P^2)$ will be denoted by $M_{\alpha, \beta}(v)$. 
\end{enumerate}
\end{defn}

The following theorem is due to \cite{Bog78:inequality} in the case of sheaves. We also refer to \cite[Theorem 12.1.1]{HL10:moduli_sheaves} for a modern proof. For the case of Bridgeland stability we refer to \cite[Corollary 7.3.2]{BMT14:stability_threefolds}, but suspect that it was already implicitly contained in \cite{Bri08:stability_k3} and \cite{AB13:k_trivial}.

\begin{theorem}[Bogomolov's inequality]
If $E$ is slope-semistable or Bridgeland-semistable, then
\[
\Delta(E) \coloneqq \ch_1(E)^2 - 2\ch_0(E) \ch_2(E) \geq 0.
\]
\end{theorem}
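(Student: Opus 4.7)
The proof splits into the slope-semistable and Bridgeland-semistable cases, and I would handle the former directly and deduce the latter from it via deformation of the Bridgeland stability condition. For a slope-semistable sheaf $E$ on $\P^2$, the plan is to reduce to the slope-stable case and apply Riemann--Roch. A direct Chern character computation shows that for an exact sequence $0 \to A \to E \to B \to 0$ with $\mu(A) = \mu(B)$ the additivity identity
\[
\frac{\Delta(E)}{\ch_0(E)} = \frac{\Delta(A)}{\ch_0(A)} + \frac{\Delta(B)}{\ch_0(B)}
\]
holds. A Jordan--H\"older induction then reduces us to $E$ slope-stable of rank $r \geq 1$, since the rank-zero case is immediate from $\Delta(E) = \ch_1(E)^2 \geq 0$ on $\P^2$. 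For such $E$ one computes $\ch(\lEnd(E)) = (r^2,\, 0,\, -\Delta(E))$, and Riemann--Roch on $\P^2$ yields $\chi(\lEnd(E)) = r^2 - \Delta(E)$. Slope-stability forces $\hom(E, E) = 1$, while Serre duality together with stability gives $\ext^2(E, E) = \hom(E, E(-3)) = 0$, because $\mu(E(-3)) < \mu(E)$ contradicts stability of $E$ and $E(-3)$. Hence $\chi(\lEnd(E)) \leq 1$, so $\Delta(E) \geq r^2 - 1 \geq 0$.

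For the Bridgeland-semistable case, I would deform the pair $(\alpha, \beta)$ inside the upper half-plane parametrizing $\nu_{\alpha,\beta}$. For a fixed Chern character the set of Bridgeland-semistable objects is constant on chambers of a locally finite wall-and-chamber decomposition, and in the large-volume chamber $\alpha \gg 0$ the Bridgeland-semistable objects in $\Coh^\beta(\P^2)$ are shifts of (suitably twisted) slope-semistable sheaves, for which Bogomolov is already established by the first case. At any wall, a strictly Bridgeland-semistable object has Jordan--H\"older factors of equal Bridgeland slope but strictly smaller numerical invariants (e.g.\ smaller $|\ch_1^\beta|$), so by induction each such factor satisfies $\Delta \geq 0$. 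The additivity identity for $\Delta$ under equal-slope extensions, exactly as in the sheaf case, then propagates the inequality across the wall.

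The main obstacle is making this Bridgeland induction rigorous: one needs to know that the Jordan--H\"older factors at any wall have strictly smaller numerical invariants than $E$ so that the induction terminates, and this requires a support-property-type estimate for the central charge of $\nu_{\alpha,\beta}$. On $\P^2$ such a quadratic form, negative semidefinite on the kernel of the central charge and non-negative on semistable objects, is available from the references cited in the excerpt, so in practice one would quote that construction rather than reproving it. With that input in hand, the two cases combine to yield $\Delta(E) \geq 0$ in both settings of the theorem.
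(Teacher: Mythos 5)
The paper does not actually prove this statement: it quotes Bogomolov and Huybrechts--Lehn for the sheaf case and \cite[Corollary 7.3.2]{BMT14:stability_threefolds} for the Bridgeland case, so there is no in-paper argument to measure you against. Your slope-semistable half is a correct proof on $\P^2$: the additivity of $\Delta/\ch_0$ for extensions with equal classical slope is a valid computation, the Jordan--H\"older reduction works if you take saturated subsheaves so all factors stay torsion-free, and the estimate $\Delta(E) \geq r^2 - 1$ from $\chi(\lEnd(E)) = r^2 - \Delta(E)$, $\hom(E,E) = 1$ and $\ext^2(E,E) = \hom(E, E(-3))^{\vee} = 0$ is the standard argument. (It relies on $K_{\P^2}$ being anti-ample and would not transfer to surfaces of general type, but that is all that is needed here.)

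The Bridgeland half has a genuine gap in the wall-crossing step. At a wall for $\nu_{\alpha,\beta}$ the Jordan--H\"older factors share the \emph{Bridgeland} slope, not the classical slope $\mu$: for instance, an ideal sheaf of points is destabilized by factors $\II_W(-1)$ and $\OO_L(-l)$, which have $\mu = -1$ and $\mu = +\infty$. The identity you invoke holds only when $\mu(A) = \mu(B)$; in general
\[
\frac{\Delta(E)}{\ch_0(E)} = \frac{\Delta(A)}{\ch_0(A)} + \frac{\Delta(B)}{\ch_0(B)} - \frac{\ch_0(A)\ch_0(B)}{\ch_0(E)}\bigl(\mu(A) - \mu(B)\bigr)^2,
\]
and for positive ranks the correction term has the wrong sign for propagating $\Delta \geq 0$ from the factors to $E$ (and ranks need not be positive in $\Coh^{\beta}(\P^2)$ anyway). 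So ``exactly as in the sheaf case'' does not cross the wall. The correct mechanism is the one you gesture at in your final paragraph but assign the wrong role: one checks directly that $\Delta$ is negative definite on $\ker Z_{\alpha,\beta}$ (a kernel vector is $v = r\left(1, \beta, \tfrac{\beta^2 + \alpha^2}{2}\right)$, with $\Delta(v) = -\alpha^2 r^2$), and then applies the linear-algebra lemma that a quadratic form which is $\leq 0$ on $\ker Z$ and $\geq 0$ on classes $v_i$ with $Z(v_i)$ on a common ray is $\geq 0$ on $\sum v_i$. That lemma, not additivity, is what carries the inequality across a wall; it is also what, together with discreteness of $\ch_1^{\beta}$ for rational $\beta$, terminates the induction. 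As written, your plan to quote ``a quadratic form non-negative on semistable objects'' is circular, since that non-negativity is precisely the assertion being proved; what you may legitimately quote is the negative semi-definiteness on $\ker Z$ and the convexity lemma.
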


For $v, w \in K_0(\P^2)$ we define
\[
W(v, w) \coloneqq \{(\alpha, \beta) \in \R_{> 0} \times \R : \nu_{\alpha, \beta}(v) = \nu_{\alpha, \beta}(w) \}.
\]
If $W(v, w)$ is neither $\emptyset$ nor all of $\R_{> 0} \times \R$, then we call it a \emph{numerical wall}. Moreover, if the set of $\nu_{\alpha, \beta}$-semistable objects for a given $v$ changes for points on different sides of the wall, then we call it an \emph{actual wall} for $v$.

The structure of walls is rather simple. It is usually called Bertram's Nested Wall Theorem and appeared in \cite{Mac14:nested_wall_theorem}. It roughly says that walls come in two sets of nested non-intersecting semicircles, potentially separated by a linear wall parallel to the $\alpha$-axis.

\begin{theorem}
\label{thm:Bertram}
Let $v \in K_0(\P^2)$ be a fixed class. 
All numerical walls in the following statements are with respect to $v$.
\begin{enumerate}
  \item Numerical walls in Bridgeland stability are either semicircles centered on the $\beta$-axis or rays parallel to the $\alpha$-axis. If $v_0 \neq 0$, there is a unique numerical vertical wall given by $\beta = v_1/v_0$.
  \item If $v_0 \neq 0$, then the curve $\nu_{\alpha, \beta}(v) = 0$ is given by a hyperbola, which may be degenerate. Moreover, this hyperbola intersects all semicircular walls at their top point.
  \item If two numerical walls given by classes $w,u \in K_0(\P^2)$ intersect, then $v$, $w$ and $u$ are linearly dependent. In particular, the two walls are completely identical.
  \item There is a largest semicircular wall.
\end{enumerate}
\end{theorem}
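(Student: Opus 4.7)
The plan is to expand $\nu_{\alpha,\beta}(v) = \nu_{\alpha,\beta}(w)$ into a polynomial identity in $\alpha,\beta$ and read each geometric claim off the resulting form. Substituting the definitions of $\ch^\beta$ and clearing denominators, the $\beta^3$ terms cancel automatically and the $\alpha$-dependence collapses to a single $\alpha^2$ term. A short bookkeeping rewrites the wall equation as
\[
a(\alpha^2 + \beta^2) + 2b\beta - 2c = 0,
\]
where $a = \ch_0(v)\ch_1(w) - \ch_0(w)\ch_1(v)$, $b = \ch_2(v)\ch_0(w) - \ch_2(w)\ch_0(v)$, and $c = \ch_2(v)\ch_1(w) - \ch_2(w)\ch_1(v)$. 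For $a \neq 0$ this is a circle centered on the $\beta$-axis; for $a = 0$ it degenerates to the vertical line $\beta = c/b$. When $v_0 \neq 0$, the condition $a = 0$ is $\mu(v) = \mu(w)$, which forces the vertical wall to sit at $\beta = v_1/v_0$ independently of $w$. This establishes (1) and the uniqueness of the vertical wall.

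For (2), the equation $\nu_{\alpha, \beta}(v) = 0$ rearranges (when $v_0 \neq 0$) to
\[
\alpha^2 = (\beta - v_1/v_0)^2 - \Delta(v)/v_0^2,
\]
which is a (possibly degenerate) hyperbola. The top of a semicircular wall sits at $(\beta, \alpha) = (-b/a, \sqrt{b^2/a^2 + 2c/a})$, and checking that this point satisfies the hyperbola equation reduces to the algebraic identity $v_0 c - v_1 b - v_2 a = 0$, which is immediate from the definitions of $a$, $b$, $c$.

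For (3), I would view the assignment $x \mapsto (a_x, b_x, c_x)$ as a linear map $T_v \colon K_0(\P^2) \otimes \R \to \R^3$, with the triple produced by the recipe above with $w$ replaced by $x$. By construction $v \in \ker T_v$, and two walls coincide precisely when their coefficient triples are proportional. Suppose now that $W(v, w)$ and $W(v, u)$ share a point $(\alpha, \beta)$, so $\nu_{\alpha,\beta}(v) = \nu_{\alpha,\beta}(w) = \nu_{\alpha,\beta}(u) = s$. Then the linear functional $\ell \coloneqq \ch_2^\beta - \tfrac{\alpha^2}{2}\ch_0 - s\,\ch_1^\beta$ on $K_0(\P^2) \otimes \R$ vanishes on each of $v$, $w$, $u$; since the coefficient of $\ch_2$ in $\ell$ is $1$, the functional $\ell$ is non-zero, so its kernel is two-dimensional, forcing $v, w, u$ to be linearly dependent. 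Writing $u = \lambda v + \mu w$ and using linearity of $T_v$ together with $T_v(v) = 0$ yields $T_v(u) = \mu\, T_v(w)$, so the two walls are identical.

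For (4), the bound on the radius of an actual semicircular wall comes from Bogomolov's inequality: along the wall both the destabilizing subobject of class $w$ and its quotient of class $v - w$ represent $\nu_{\alpha,\beta}$-semistable objects, so $\Delta(w), \Delta(v-w) \geq 0$. Combined with the explicit wall equation and the integrality of the Chern character, these inequalities restrict $w$ to a finite subset of the lattice, producing only finitely many actual walls; by (3) they are nested, so a largest one exists. I expect (4) to be the main obstacle: making the boundedness precise requires exploiting both Bogomolov inequalities in tandem with the wall equation to extract a genuine integer bound on $(\ch_0(w), \ch_1(w), \ch_2(w))$, whereas (1)--(3) are essentially algebraic identities once the wall equation is in the displayed form above.
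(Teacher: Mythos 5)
The paper does not prove this theorem at all; it is quoted from \cite{Mac14:nested_wall_theorem}, so there is no in-paper argument to compare against and your proposal has to stand on its own. Parts (i)--(iii) do: expanding $\nu_{\alpha,\beta}(v)=\nu_{\alpha,\beta}(w)$ really does collapse to $a(\alpha^2+\beta^2)+2b\beta-2c=0$ with your $a,b,c$; when $a=0$ one gets $c=\tfrac{v_1}{v_0}b$, so the vertical wall sits at $\beta=v_1/v_0$ independently of $w$; the identity $v_0c-v_1b-v_2a=0$ placing the top point on $\nu_{\alpha,\beta}(v)=0$ checks out; and the linear-functional argument for (iii), combined with linearity of $x\mapsto(a_x,b_x,c_x)$ and $T_v(v)=0$, is clean and correct. (You should add the degenerate case $\nu=\infty$ in (iii), where $\ell$ is undefined but $\ch_1^{\beta}$ itself is a non-zero functional vanishing on $v,w,u$; this is cosmetic.)

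Part (iv) contains a genuine gap. First, note the statement can only be about \emph{actual} walls: for $v=(1,0,-n)$ the classes $w=(0,1,-N)$ produce non-empty semicircular numerical walls $\alpha^2+(\beta+N)^2=N^2-2n$ of unbounded radius, so there is no largest numerical semicircular wall; you implicitly make this reading, which is the right one. Second, the inequalities $\Delta(w)\geq 0$ and $\Delta(v-w)\geq 0$ together with integrality do \emph{not} ``restrict $w$ to a finite subset of the lattice'': all $w=(0,0,k)$ satisfy both, as do the classes $(0,1,-N)$ above, so the finiteness you assert does not follow from what you have written. The missing input is the requirement that the destabilizing sequence $0\to F\to E\to G\to 0$ lie in $\Coh^{\beta}(\P^2)$ along the whole wall, i.e. $0\leq\ch_1^{\beta}(F)\leq\ch_1^{\beta}(v)$ for every $\beta$ in the wall's range. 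Combining that constraint with the wall equation and Bogomolov for both $F$ and $G$ is what produces an upper bound on the radius of an actual wall in terms of $\Delta(v)$ and $\ch_0(F)$ when $\ch_0(F)$ is outside $[0,\ch_0(v)]$ --- this is exactly Proposition \ref{prop:structure_sequences}(i), quoting \cite{MS20:space_curves} --- and, for the finitely many remaining ranks, pins $\ch_1(F)$ to finitely many values and sandwiches $\ch_2(F)$ between the two Bogomolov inequalities. Your sketch names the right ingredients but omits the heart condition, which is the one that actually does the bounding.
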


Short exact sequences that induce walls in Bridgeland stability satisfy further properties:

\begin{prop}[Structure of destabilizing sequences]
\label{prop:structure_sequences}
Let $0 \to F \to E \to G \to 0$ be a short exact sequence in $\Coh^{\beta}(\P^2)$ for $(\alpha, \beta) \in W(F, E)$ with $\ch_0(E) > 0$. All of the following statements hold with $F$ and $G$ exchanged.
\begin{enumerate}
    \item If $\ch_0(F) \geq \ch_0(E)$, then the radius $\rho(F, E)$ of $W(F, E)$ satisfies
    \[
    \rho(F, E)^2 \leq \frac{\Delta(E)}{4\ch_0(F)(\ch_0(F) - \ch_0(G))}
    \]
    
    \item If one of $\ch^{\beta}_1(F) = 0$, $\ch^{\beta}_1(F) = \ch^{\beta}_1(E)$, $\mu(F) = \mu(E)$, or $\mu(E) = \mu(G)$ holds, where $(\alpha, \beta) \in W(F, E)$, then we are dealing with a vertical wall. In the case of a semicircular wall, we must have $0 < \ch^{\beta}_1(F) < \ch^{\beta}_1(E)$ for any $(\alpha, \beta) \in W(F, E)$.
    
    \item Let $W(F, E)$ be a semicircular wall and $\ch_0(F) > 0$. Then $\nu_{\alpha, \beta}(F) > \nu_{\alpha, \beta}(E)$ below $W(F, E)$ if and only if $\mu(F) < \mu(E)$.
    
    \item Let $\ch_0(F) > 0$ and $\mu(F) < \mu(E)$. Then the center of $W(F, E)$ is decreasing in $\ch_2(F)$. In particular, the radii of these walls are increasing as long as they are to the left of the vertical wall.
    
    \item Assume that the wall is semicircular. Then we have either $\ch_0(F) > 0$ and $\mu(F) < \mu(E)$, or $\ch_0(G) > 0$ and $\mu(G) < \mu(E)$.
    
    \item Assume that $\ch_0(F) > 0$ and that $F$ and $G$ are Bridgeland-stable along the wall. Then there is a neighborhood of the wall in which any non-trivial extension
    \[
    0 \to F \to E \to G \to 0
    \]
    is $\nu_{\alpha, \beta}$-stable if and only if $\nu_{\alpha, \beta}(F) < \nu_{\alpha, \beta}(E)$.
\end{enumerate}
\end{prop}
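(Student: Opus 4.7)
The plan is to derive the explicit equation of the numerical wall $W(F, E)$ once and then extract each of the six assertions from it. Writing $v = \ch(F)$, $w = \ch(E)$, and setting
\[
a = v_0 w_1 - w_0 v_1, \quad b = v_0 w_2 - w_0 v_2, \quad c = v_1 w_2 - w_1 v_2,
\]
a short expansion of $\ch^\beta$ turns $\nu_{\alpha,\beta}(F) = \nu_{\alpha,\beta}(E)$ into the conic $a(\alpha^2 + \beta^2) - 2b\beta + 2c = 0$, which for $a \neq 0$ is a semicircle of center $s = b/a$ and squared radius $\rho^2 = (b^2 - 2ac)/a^2$, and for $a = 0$ degenerates to a vertical line. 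From here item (ii) is direct inspection: $\mu(F) = \mu(E)$ and $\mu(E) = \mu(G)$ each force $a = 0$, and $\ch^\beta_1(F) \in \{0, \ch^\beta_1(E)\}$ makes the wall equation linear in $\alpha^2$; the complementary strict inequalities on $\ch^\beta_1(F)$ along a semicircular wall follow from $F, G \in \Coh^\beta(\P^2)$, whose semistable objects have non-negative $\ch^\beta_1$.

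Item (v) is a case analysis combining $\ch_0(E) > 0$, the slope identity $(\mu(E) - \mu(F))\ch_0(F) = (\mu(G) - \mu(E))\ch_0(G)$, and the positivity constraint from (ii). Item (iv) is calculus: $\partial b/\partial \ch_2(F) = -\ch_0(E)$ while $a$ is independent of $\ch_2(F)$, so $s = b/a$ is monotonic in $\ch_2(F)$; the effect on $\rho^2$ is obtained by differentiating $b^2 - 2ac$ and comparing $s$ with the vertical wall at $\beta = \mu(E)$. For (iii), substituting $(\alpha, \beta)$ slightly below the wall into $\nu_{\alpha,\beta}(F) - \nu_{\alpha,\beta}(E)$ shows the sign is governed by $a$, whose sign is that of $\mu(E) - \mu(F)$ when $\ch_0(F) > 0$. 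For (i), I would combine Bogomolov's inequality for $F, G, E$ (all Bridgeland-semistable along the wall as Jordan--H\"older factors of the destabilizing sequence) with the identities
\[
a^2 = \ch_0(G)\ch_0(E)\Delta(F) + \ch_0(F)\ch_0(E)\Delta(G) - \ch_0(F)\ch_0(G)\Delta(E)
\]
and $b^2 - 2ac = \tfrac{1}{4}(\Delta(E) - \Delta(F) - \Delta(G))^2 - \Delta(F)\Delta(G)$; rearranging $\rho^2 = (b^2 - 2ac)/a^2$ under the hypothesis $\ch_0(F) \geq \ch_0(E)$ yields the claimed bound.

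The main obstacle is (vi). The easy direction is immediate: if $\nu_{\alpha,\beta}(F) \geq \nu_{\alpha,\beta}(E)$ near the wall, then $F \subset E$ destabilizes $E$. For the converse, I would use the standard see-saw argument. Let $(\alpha,\beta)$ lie just below $W(F,E)$ with $\nu_{\alpha,\beta}(F) < \nu_{\alpha,\beta}(E)$ and let $A \subsetneq E$ be any non-zero subobject in $\Coh^\beta(\P^2)$. The short exact sequence gives $0 \to A \cap F \to A \to A' \to 0$ with $A \cap F \subseteq F$ and $A' \subseteq G$. Openness of Bridgeland stability and the stability of $F, G$ on the wall imply they remain stable in a neighborhood, so $\nu_{\alpha,\beta}(A \cap F) \leq \nu_{\alpha,\beta}(F)$ and $\nu_{\alpha,\beta}(A') \leq \nu_{\alpha,\beta}(G)$. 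Since $\nu_{\alpha,\beta}(E)$ is the $\ch^\beta_1$-weighted average of $\nu_{\alpha,\beta}(F)$ and $\nu_{\alpha,\beta}(G)$, the inequality $\nu_{\alpha,\beta}(F) < \nu_{\alpha,\beta}(E)$ forces $\nu_{\alpha,\beta}(G) > \nu_{\alpha,\beta}(E)$. A case analysis of when $\nu_{\alpha,\beta}(A) \geq \nu_{\alpha,\beta}(E)$ is possible, together with simpleness of the stable $F$ and $G$, rules out everything except $A = E$ (which is not proper) and $A' = G$ providing a splitting section $G \to A \subset E$ (contradicting non-triviality); hence $E$ is stable.
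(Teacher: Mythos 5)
Your route is necessarily different from the paper's, which disposes of this proposition almost entirely by citation (part (i) to \cite[Lemma 2.4]{MS20:space_curves}, part (v) to \cite[Proposition 2.7]{Sch21:low_rank_p3}, part (vi) to \cite[Lemma 3.11]{Sch20:stability_threefolds}) and declares (ii)--(iv) straightforward computations from $\ch^{\beta}_1(F), \ch^{\beta}_1(G) \geq 0$. Your explicit wall equation $\tfrac{a}{2}(\alpha^2+\beta^2) - b\beta + c = 0$ with center $b/a$ and squared radius $(b^2-2ac)/a^2$ is correct, and your treatments of (ii), (iii), (iv) are exactly the intended computations. The two identities you quote for (i) are also correct (the first follows by direct expansion using $\ch(G) = \ch(E) - \ch(F)$), and combining them with Bogomolov's inequality is indeed how the cited lemma proceeds; just note that Bogomolov applies to $F$ and $G$ only after reducing to the case where they are semistable along the wall, and the final ``rearranging'' is where the actual work of that lemma sits, so for (i) you have a correct plan rather than a proof.

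The genuine gap is in (vi). The see-saw bounds $\nu_{\alpha,\beta}(A\cap F) \le \nu_{\alpha,\beta}(F)$ and $\nu_{\alpha,\beta}(A') \le \nu_{\alpha,\beta}(G)$ do not rule out the case $A' = G$ with $0 \ne A\cap F \subsetneq F$ (nor the dual case $A\cap F = 0$, $0 \ne A' \subsetneq G$): there $\nu_{\alpha,\beta}(A)$ is a $\ch^{\beta}_1$-weighted average of a quantity strictly below $\nu_{\alpha,\beta}(F)$ and of $\nu_{\alpha,\beta}(G) > \nu_{\alpha,\beta}(E)$, and when $\ch^{\beta}_1(A\cap F)$ is small this average can exceed $\nu_{\alpha,\beta}(E)$. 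What is true is that \emph{on} the wall one has $\nu(A) < \nu(E)$ strictly, hence also nearby --- but that neighborhood depends on $A$, and there are a priori infinitely many candidates, so the uniform neighborhood claimed in the statement does not follow. The missing ingredient is local finiteness of walls (equivalently, the support property): only finitely many classes $\ch(A)$ of subobjects can destabilize $E$ in a fixed compact neighborhood of the wall point; each such class either defines a numerical wall different from $W(F,E)$, which one avoids by shrinking the neighborhood, or satisfies $W(A,E) = W(F,E)$, in which case $A$ is semistable on the wall with Jordan--H\"older factors among $\{F, G\}$ and your simpleness/splitting argument does apply. With that finiteness step inserted, your case analysis closes; without it, the argument does not establish the uniform neighborhood.
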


\begin{proof}
Part (i) is proved in \cite[Lemma 2.4]{MS20:space_curves} inspired by a similar statement in \cite[Proposition 8.3]{CH16:ample_cone_plane}. The definition of $\Coh^{\beta}(\P^2)$ implies that $\ch^{\beta}_1(F) \geq 0$, and $\ch^{\beta}_1(G) \geq 0$. With this in mind, part (ii), (iii), and (iv) are straightforward computations.

A proof of part (v) can be found in \cite[Proposition 2.7]{Sch21:low_rank_p3}. Note that we assume that the wall is semicircular which excludes $\mu(F) \neq \mu(E)$ and $\mu(G) \neq \mu(E)$. For a proof of (vi) we refer to \cite[Lemma 3.11]{Sch20:stability_threefolds}.
\end{proof}

The next proposition allows to relate Bridgeland stability to the Hilbert scheme. It appeared in the case of K3 surfaces in \cite[Proposition 14.2]{Bri08:stability_k3}, but the proof works on other surfaces as well.

\begin{lem}
\label{lem:large_volume_limit}
If $E \in \Coh^{\beta}(\P^2)$ is $\nu_{\alpha, \beta}$-semistable
for all $\alpha \gg 0$, then it satisfies one of the following conditions:
\begin{enumerate}
\item $\HH^{-1}(E)=0$ and $\HH^0(E)$ is a torsion-free slope semistable sheaf,
\item $\HH^{-1}(E)=0$ and $\HH^0(E)$ is a torsion sheaf, or
\item $\HH^{-1}(E)$ is a torsion-free slope semistable sheaf, and $\HH^0(E)$ is either $0$ or a torsion sheaf supported in dimension zero.
\end{enumerate}
Conversely, assume that $E \in \Coh(\P^2)$ is a torsion-free slope stable sheaf and $\beta < \mu(E)$. Then $E \in \Coh^{\beta}(\P^2)$ is $\nu_{\alpha,\beta}$-stable for $\alpha \gg 0$.
\end{lem}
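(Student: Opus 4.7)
The plan is to analyze $E$ through its two possibly nonzero cohomology sheaves $\HH^{-1}(E) \in \FF_\beta$ and $\HH^0(E) \in \TT_\beta$, using the short exact sequence
\[
0 \to \HH^{-1}(E)[1] \to E \to \HH^0(E) \to 0
\]
in $\Coh^\beta(\P^2)$. The key technical input is the asymptotic formula
\[
\nu_{\alpha,\beta}(F) = -\frac{\alpha^2}{2} \cdot \frac{\ch_0(F)}{\ch_1^\beta(F)} + O(1) \quad \text{as } \alpha \to \infty,
\]
valid whenever $\ch_1^\beta(F) \neq 0$. Together with the identity $\ch_0(F)/\ch_1^\beta(F) = 1/(\mu(F)-\beta)$ for torsion-free $F$, this means positive-rank objects have slope tending to $-\infty$, shifted sheaves from $\FF_\beta[1]$ have slope tending to $+\infty$, and objects with $\ch_0 = 0$ but $\ch_1^\beta > 0$ have constant slope.

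First I would dispose of case (iii). If $\HH^{-1}(E) \neq 0$, then $\ch_0(\HH^{-1}(E)[1]) < 0$, so $\nu_{\alpha,\beta}(\HH^{-1}(E)[1]) \to +\infty$; semistability of $E$ via this subobject forces $\ch_0(E) < 0$ as well. Using the quotient $E \twoheadrightarrow \HH^0(E)$ and the same asymptotics rules out $\HH^0(E)$ having positive rank (otherwise $\nu_{\alpha,\beta}(\HH^0(E)) \to -\infty$) and rules out a nonzero one-dimensional torsion part (otherwise its slope is constant while $\nu_{\alpha,\beta}(E) \to +\infty$). This leaves $\HH^0(E)$ either zero or supported in dimension zero. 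To show $\HH^{-1}(E)$ is slope-semistable, I would take a hypothetical maximal destabilizing subsheaf $F \subset \HH^{-1}(E)$ with $\mu(F) > \mu(\HH^{-1}(E))$; then $F \in \FF_\beta$ and $F[1] \subset E$ in $\Coh^\beta$. Using the reciprocal identity (with careful sign bookkeeping, since $\mu(F), \mu(\HH^{-1}(E)) \leq \beta$), one verifies $\nu_{\alpha,\beta}(F[1]) > \nu_{\alpha,\beta}(E)$ for $\alpha \gg 0$, a contradiction.

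The case $\HH^{-1}(E) = 0$ is handled in parallel. If $E = \HH^0(E)$ simultaneously has nonzero one-dimensional torsion and nonzero torsion-free part, then comparing the constant slope on the torsion subsheaf against $\nu_{\alpha,\beta}(E) \to -\infty$ yields a contradiction, so $E$ is either pure torsion (case (ii)) or torsion-free. For the torsion-free case, if $F \subset E$ is the maximal slope-destabilizer, then $F$ is slope-semistable with $\mu(F) > \mu(E) > \beta$, hence every quotient of $F$ has slope at least $\mu(F) > \beta$ and so $F \in \TT_\beta \subset \Coh^\beta$. The same asymptotic inequality $\nu_{\alpha,\beta}(F) > \nu_{\alpha,\beta}(E)$ then contradicts semistability, completing case (i).

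For the converse, if $E$ is torsion-free slope-stable with $\mu(E) > \beta$, then every nonzero quotient has slope at least $\mu(E) > \beta$, so $E \in \TT_\beta \subset \Coh^\beta$. By Theorem \ref{thm:Bertram}(iv), the set of numerical walls for $\ch(E)$ has a largest semicircular element, whose size is controlled by Bogomolov's inequality via Proposition \ref{prop:structure_sequences}(i); moreover, the vertical wall sits at $\beta = \mu(E)$, which we are strictly to the left of. Hence for $\alpha$ above this largest wall, no Bridgeland-destabilizing subobject can exist, and the only way $\nu_{\alpha,\beta}$-stability could fail would be through a slope-destabilizer of the underlying sheaf, which is excluded by hypothesis. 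The main obstacle throughout is the bookkeeping in the asymptotic comparisons: the sign of $\mu - \beta$ and hence of $\ch_1^\beta$ flips between the $\TT_\beta$ and $\FF_\beta[1]$ pieces, and the reciprocal $1/(\mu - \beta)$ reverses the ordering of slope inequalities, so one must separately track the cases $\mu \gtrless \beta$ to see the contradiction in each subcase.
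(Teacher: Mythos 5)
The paper does not actually prove this lemma: it cites \cite[Proposition 14.2]{Bri08:stability_k3} and remarks that the argument carries over from K3 surfaces to $\P^2$. Your proposal is essentially a reconstruction of that standard ``large volume limit'' argument, and its overall structure --- the cohomology sequence $0 \to \HH^{-1}(E)[1] \to E \to \HH^0(E) \to 0$, the asymptotics $\nu_{\alpha,\beta} \sim -\tfrac{\alpha^2}{2(\mu - \beta)}$, and the case split on the sign of $\ch_0$ --- is correct and is what a written-out proof would look like. Two steps deserve more care than you give them. First, the assertion that $F[1] \subset E$ in $\Coh^{\beta}(\P^2)$ for a destabilizing subsheaf $F \subset \HH^{-1}(E)$ is not automatic: the map $F[1] \to \HH^{-1}(E)[1]$ is a monomorphism in the tilted heart only if the sheaf quotient $\HH^{-1}(E)/F$ lies in $\FF_{\beta}$; this does hold for your choice of $F$ (the first Harder--Narasimhan factor, since then $\mu_{\max}(\HH^{-1}(E)/F) < \mu(F) \leq \beta$), but that is exactly why the maximal destabilizer must be used, and it should be said. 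Second, in the converse the phrase ``the only way stability could fail would be through a slope-destabilizer of the underlying sheaf'' compresses a genuine reduction: a destabilizing subobject $A \hookrightarrow E$ in $\Coh^{\beta}$ is a sheaf in $\TT_{\beta}$ (since $\HH^{-1}(A) \subset \HH^{-1}(E) = 0$), but the map $A \to E$ need not be injective as a map of sheaves; one must observe that its kernel $\HH^{-1}(E/A)$ lies in $\FF_{\beta}$, so that $\mu(A)$ is a mediant of $\mu(\mathrm{im}(A \to E)) \leq \mu(E)$ and a quantity $\leq \beta < \mu(E)$, forcing $\mu(A) < \mu(E)$ unless $A = E$, and one must also rule out torsion subobjects (which inject into the torsion-free $E$ and hence vanish). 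Your appeal to the largest wall from Theorem \ref{thm:Bertram} correctly supplies the uniformity in $\alpha$. With these two points filled in, the argument is complete.
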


\subsection{Mori dream spaces and Hilbert schemes}
\label{subsec:mds_hilbert_scheme}

The idea in \cite{ABCH13:hilbert_schemes_p2} is that crossing walls in Bridgeland stability for $v = (1,0,-n)$ should provide different birational models for $\P^{2[n]}$.
By \cite[Cor 1.3.2]{BCHM10:mmp},  $\P^{2[n]}$ is a \emph{Mori dream space}. 
We introduce a few definitions before explaining this statement in more detail. Let $X$ be a normal projective variety. By $\Mov(X)$ we denote the closed \emph{cone of movable divisors (or movable cone)} in $N^1(X)_{\Q}$.

\begin{defn}[{\cite[Definition 1.10]{HK00:mds}}]
\label{defn:mds}
The variety $X$ is called a Mori dream space if the following three things hold.
\begin{enumerate}
\item \label{item:mds1} The variety $X$ is $\Q$-factorial, and $\Pic(X)_{\Q} = N^1(X)_{\Q}$.
\item \label{item:mds2} The nef cone of $X$ is spanned by finitely many semi-ample divisors.
\item There is a finite collection of birational maps $f_i: X \dashrightarrow X_i$ that are isomorphisms in codimension two such that the $X_i$ also satisfy (\ref{item:mds1}), (\ref{item:mds2}), and
\[
    \Mov(X) = \bigcup_{i} f_i^* \Nef(X_i).
\]
\end{enumerate}
The maps $f_i$ are called \emph{small $Q$-factorial modifications (SQM)}.
\end{defn}

\begin{theorem}{\cite{LZ18:stability_p2}}
\label{thm:moduli_spaces_tilt_stability}
For any $\alpha > 0$, $\beta < 0$ not lying on a wall, the moduli space of Bridgeland-semistable objects $M_{\alpha, \beta}(1,0,-n)$ is either empty or a smooth projective variety birational to $\P^{2[n]}$. Moreover, all the SQM's in the Mori dream space structure of $\P^{2[n]}$ are given by precisely these birational models except possibly one at the end that is not isomorphic in codimension two.
\end{theorem}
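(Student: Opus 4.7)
The plan is to combine the large--volume limit of Bridgeland stability with the projectivity/positivity machinery of Bayer--Macrì. First, for $\alpha \gg 0$ with $\beta < 0$ fixed, Lemma \ref{lem:large_volume_limit} identifies the $\nu_{\alpha,\beta}$-(semi)stable objects of class $v = (1,0,-n)$ with the slope-(semi)stable torsion-free sheaves of that class on $\P^2$. Since every ideal sheaf $\II_Z$ is automatically slope-stable of rank one, the outermost chamber in $\{\alpha > 0,\ \beta < 0\}$ realizes $\P^{2[n]}$ itself. Consequently, every other moduli $M_{\alpha,\beta}(v)$ in $\{\alpha > 0,\ \beta < 0\}$ will be birational to $\P^{2[n]}$ as soon as the wall-crossing modifications are shown to be codimension-two surgeries.

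To construct $M_{\alpha,\beta}(v)$ as a projective variety, I would exploit the strong exceptional collection $(\OO(-2),\OO(-1),\OO)$ on $\P^2$, under which $\Db(\P^2)$ becomes the derived category of representations of the Beilinson quiver with relations. Each heart $\Coh^{\beta}(\P^2)$ can be further tilted into a finite-length heart whose simple objects are twists of the exceptional collection, and $\nu_{\alpha,\beta}$-semistability translates into King's slope semistability for quiver representations. The moduli space is then the projective GIT quotient of the corresponding representation variety. Smoothness at a stable $[E]$ comes from the standard fact that on the surface $\P^2$, whose anticanonical class $3H$ is ample, Serre duality gives
\[
\Ext^2(E,E) \cong \Hom(E,E(-3))^* = 0,
\]
the vanishing following because $E(-3)=E\otimes\omega_{\P^2}$ sits in strictly smaller Bridgeland slope than $E$.

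Next I would run the wall-crossing using Theorem \ref{thm:Bertram} and Proposition \ref{prop:structure_sequences}. Bogomolov's inequality together with Proposition \ref{prop:structure_sequences}(i) bounds the finitely many numerical destabilizing classes, and part (vi) gives the precise local picture of how semistable objects are exchanged across a wall. For a generic wall the locus of strictly semistable ideal sheaves $\II_Z$ has codimension at least two in $\P^{2[n]}$, since the extension data $\Ext^1(G,F)\oplus\Ext^1(F,G)$ parametrizing modifications is dominated by the $2n$-dimensional tangent space of $\P^{2[n]}$ at a generic $[Z]$. Crossing each such wall therefore yields an SQM $\P^{2[n]}\dashrightarrow M_{\alpha,\beta}(v)$ of smooth projective varieties. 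To see that these exhaust the SQMs in the Mori dream space structure, I would invoke the Bayer--Macrì map $\ell:\Stab(\P^2)\to N^1(M_{\alpha,\beta}(v))_{\R}$, which sends each stability condition to a nef class on the associated moduli and maps Bridgeland walls bijectively to nef-boundary walls. Combined with Proposition \ref{prop:d_m}, this identifies the rays swept out by $\{\alpha>0,\ \beta<0\}$ with the interior rays of $\Mov(\P^{2[n]})$.

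The main obstacle is the behavior at the largest semicircular wall guaranteed by Theorem \ref{thm:Bertram}(iv). There the destabilizing object can have rank zero or produce a codimension-one locus of modifications, so the induced transformation need not be an SQM but can be a divisorial contraction onto a lower-dimensional base. This final wall accounts for the ``one at the end'' excluded from the theorem, and its analysis reduces to identifying the maximal destabilizing class and describing the target of the contraction by hand. Once that case is isolated, the preceding wall-crossings provide exactly the remaining SQMs in the MDS decomposition of $\P^{2[n]}$.
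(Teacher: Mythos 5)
First, note that the paper does not prove this statement at all: it is quoted verbatim from Li--Zhao \cite{LZ18:stability_p2} (building on \cite{ABCH13:hilbert_schemes_p2}), so there is no internal proof to compare against. Your outline does follow the route taken in those references --- large-volume limit to identify the outer chamber with $\P^{2[n]}$, projectivity via the Beilinson quiver and King's GIT, the Bayer--Macr\`i positivity map to match Bridgeland chambers with nef cones of birational models --- but two of the steps you treat as one-liners are precisely where the real content of the theorem lives, and as written they do not go through.

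The smoothness argument is the first gap. You claim $\Ext^2(E,E)\cong\Hom(E,E(-3))^*=0$ ``because $E(-3)$ sits in strictly smaller Bridgeland slope than $E$.'' But a slope comparison only forces vanishing of $\Hom$ between two objects that are semistable \emph{in the same heart for the same stability condition}, and $E(-3)$ is $\nu_{\alpha,\beta}$-semistable only when $E$ is $\nu_{\alpha,\beta+3}$-semistable --- a different point of the $(\alpha,\beta)$-plane, possibly separated from $(\alpha,\beta)$ by actual walls. Worse, for $-3<\beta<0$ and $\ch(E)=(1,0,-n)$ one has $\ch_1^{\beta}(E(-3))=-3-\beta<0$, so $E(-3)$ does not even lie in $\Coh^{\beta}(\P^2)$ and the slope inequality you invoke is not defined. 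Making this vanishing rigorous (by deforming the stability condition, or by working in the finite-length quiver heart) is a genuine piece of the Li--Zhao argument, not a formality. The second gap is the codimension-two claim for wall-crossings. Saying the extension data ``is dominated by the $2n$-dimensional tangent space'' does not bound the codimension of the exceptional loci; what is actually needed is that for every destabilizing decomposition $F,G$ at every interior wall, the locus of objects fitting in such sequences --- a $\P(\Ext^1(G,F))$-bundle over $M(F)\times M(G)$, and likewise with $F$ and $G$ exchanged --- has dimension at most $2n-2$ on \emph{both} sides. This is a concrete Euler-characteristic/dimension count special to $\P^2$ and the class $(1,0,-n)$ (it is exactly what fails at the final wall, which can be a divisorial contraction or a fibration), and it cannot be replaced by a genericity remark: \emph{all} interior walls must be checked, not generic ones. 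Your proposal correctly isolates the last wall as exceptional, but the reason every earlier wall is an SQM is asserted rather than proved.
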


To understand the divisors better they use a tool developed in \cite{BM14:projectivity} called the \emph{Positivity Lemma}. In this case, it says that for any $(\alpha, \beta)$ there is a nef divisor class on the moduli space of $\nu_{\alpha, \beta}$-stable objects with Chern character $(1,0,-n)$. Moreover, unless $(\alpha, \beta)$ lies on a wall the divisor is ample. One can express the divisor in terms of our chosen basis of $\P^{2[n]}$. If $(\alpha, \beta)$ lies on a semicircular wall with center $s$, then up to scale
\[
    D_{\alpha, \beta} = \left(-s - \frac{3}{2}\right)H - \frac{\Delta}{2}.
\]
A key ingredient in the proof of Theorem \ref{thm:moduli_spaces_tilt_stability} is that this construction maps the wall and chamber decomposition in Bridgeland stability to the Mori dream space decomposition
\[
    \Mov(\P^{2[n]}) = \bigcup_{i} f_i^* \Nef(X_i).
\]

\begin{defn}
\label{defn:bridgeland_wall}
We say that a ray $\R_{> 0} \cdot D_m$ is a \emph{Bridgeland wall} for $\P^{2[n]}$ if there is a wall in Bridgeland stability for $(1, 0, -n)$ such that the divisor induced by the Positivity Lemma for any stability conditions along this wall lies on this ray as well.
\end{defn}

\subsection{Moduli of quiver representations}

Next, we discuss GIT for affine varieties as studied in \cite{Kin94:moduli_quiver_reps}. If $X$ is affine, then the trivial line bundle is ample. In contrast to the projective case, choosing the linearization has consequences. It is the same as the choice of a character of $G$. Let $\theta$ be such a character. To be consistent with our previous sign convention (which differs from \cite{Kin94:moduli_quiver_reps}), we linearize $\OO_X$ via the action of $-\theta$.

Let $Q$ be a quiver with vertices $Q_0$ and arrows $Q_1$. The maps $s,t: Q_1 \to Q_0$ determine the source and target of each arrow. Let $\RR(Q, d)$ be the space of all representations of the quiver with a fixed dimension vector $d = (d_i)_{i \in Q_0}$. This space has a natural group action of $\GL(d)$ by change of basis. More explicitly:
\[
    \RR(Q, d) \coloneqq \bigoplus_{a \in Q_1} \Hom(\C^{d_{s(a)}}, \C^{d_{t(a)}}), \ \GL(d) \coloneqq \prod_{i \in Q_0} \GL(d_i).
\]
Note that the one-dimensional long diagonal $\Delta \subset \GL(d)$ acts trivially on all of $\RR(Q, d)$. Therefore, we take quotients with respect to $\widetilde{\GL}(d) \coloneqq \GL(d)/\Delta$. A character $\theta$ of $\GL(d)$ is given as a product of powers of determinants. We will generally write $\theta = (\theta_i)_{i \in Q_0}$ to mean the character that maps an element $g \in \GL(d_i)$ to $\det(g)^{\theta_i}$. 

For any $e = (e_i)_{i \in Q_0} \in \mathbb{Z}^{Q_0}$ we define $\theta(e) \coloneqq \sum_{i \in Q_0} \theta_i e_i$ and obtain a pairing that is linear in both $\theta$ and $e$. 

\begin{rmk}
A character of $\widetilde{\GL}(d)$ is a character of $\GL(d)$ such that the product of its determinants is trivial. Therefore, we will always assume $\theta(d) = 0$. 
\end{rmk}

\begin{defn}
Let $V$ be a representation of $Q$ with dimension vector $d_V$, and let $\theta$ be a character of $\widetilde{\GL}(d)$ with $\theta(d_V)=0$. We say that $V$ is \emph{$\theta$-(semi)stable} if for all subrepresentations $W \subset V$ with dimension vector $d_W$, the inequality $\theta(d_W) < (\leq) 0$ holds. 
\end{defn}

The key is that GIT-stability matches with this notion.

\begin{prop}[{\cite[Proposition 3.1]{Kin94:moduli_quiver_reps}}]
Let $\OO_{\RR(Q, d)}$ be linearized with the character $-\theta$. Then a representation $V \in \RR(Q, d)$ is $\theta$-(semi)stable if and only if it is GIT-(semi)stable.
\end{prop}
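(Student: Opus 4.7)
The plan is to apply the affine Hilbert--Mumford numerical criterion (the affine analog of Definition \ref{def:HMcrit}, cf.\ \cite[Proposition 2.5]{Kin94:moduli_quiver_reps}) and translate it into the language of subrepresentations of $Q$. First, I would match one-parameter subgroups of $\widetilde{\GL}(d)$ with $\Z$-filtrations of $V$ by subrepresentations. Since the diagonal $\Delta \subset \GL(d)$ acts trivially on $\RR(Q,d)$, every such $\lambda$ lifts to $\GL(d)$ and determines a weight decomposition $\C^{d_i} = \bigoplus_{n \in \Z} V_i(n)$ at each vertex $i$. For an arrow $a: i \to j$, the component of $V_a$ from $V_i(n)$ to $V_j(m)$ is scaled by $t^{m-n}$ under $\lambda(t)$, so $\lim_{t \to 0} \lambda(t) \cdot V$ exists in $\RR(Q, d)$ if and only if $W(n) \coloneqq \bigoplus_{m \geq n} V(m)$ is a subrepresentation of $V$ for every $n$. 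Conversely, any such filtration arises from a suitable $\lambda$ after choosing splittings, and a non-trivial filtration corresponds to a non-trivial 1-PS of $\widetilde{\GL}(d)$.

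Next, I would compute the Hilbert--Mumford index. Linearizing the trivial bundle $\OO_{\RR(Q,d)}$ with the character $-\theta$ makes $\lambda(t)$ act on the fiber over $\lim \lambda(t) \cdot V$ with weight $-\theta(\lambda(t)) = -\sum_n n \langle \theta, d_{V(n)} \rangle$, where $d_{V(n)} = (\dim V_i(n))_{i \in Q_0}$. Hence
\[
\mu^{\OO(-\theta)}(V, \lambda) \;=\; \sum_{n \in \Z} n \, \langle \theta, d_{V(n)} \rangle \;=\; \sum_{n \in \Z} \langle \theta, d_{W(n)} \rangle,
\]
where the second equality is Abel summation using $d_{V(n)} = d_{W(n)} - d_{W(n+1)}$ together with the normalization $\langle \theta, d_V \rangle = 0$ and the boundary conditions $W(n) = V$ for $n \ll 0$ and $W(n) = 0$ for $n \gg 0$.

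Finally, I would deduce the equivalence. Feeding a two-step filtration $0 \subsetneq W \subsetneq V$ into the above formula yields $\mu^{\OO(-\theta)}(V, \lambda) = \langle \theta, d_W \rangle$, so GIT-semistability (resp.\ stability) forces $\theta(d_W) \leq 0$ (resp.\ $< 0$) for every proper nonzero subrepresentation $W$. Conversely, if $\theta(d_W) \leq 0$ (resp.\ $< 0$) holds for every proper nonzero $W$, then the displayed sum is non-positive (resp.\ strictly negative on every non-trivial filtration), which gives GIT-semistability (resp.\ stability). The \emph{main subtlety}---the step worth most care in the writeup---is the reduction of GIT-(semi)stability to testing only 1-PS whose limit actually lies in the affine variety $\RR(Q,d)$; this is the substance of the affine Hilbert--Mumford criterion and relies on producing semi-invariant functions of weight a positive multiple of $-\theta$ that separate the origin from the orbit closure, precisely matching the chosen sign convention for the linearization.
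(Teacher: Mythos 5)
Your argument is correct and is essentially the proof of \cite[Proposition 3.1]{Kin94:moduli_quiver_reps}, which the paper cites rather than reproves: one-parameter subgroups with existing limits correspond to filtrations by subrepresentations, the Hilbert--Mumford index equals $\sum_n \langle\theta, d_{W(n)}\rangle$ by Abel summation, and the affine numerical criterion (King's Proposition 2.5) justifies testing only such subgroups. The sign bookkeeping matches the paper's convention of linearizing by $-\theta$ and setting $\mu^{\LL}(x,\lambda)=-\mu$, so nothing further is needed.
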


We will mostly deal with quivers with relations. The relations define a closed subvariety of $\RR(Q, d)$ whose quotient we can take instead. The previous proposition holds without any differences in this slightly modified set-up.

\subsection{Relation between quivers and Bridgeland stability}
\label{subsec:quiver_to_stable_complexes}

We finish this section by explaining the relation between moduli spaces of quiver representations and Bridgeland stability on $\P^2$. Let $E = \OO(-1) \oplus \OO \oplus \OO(1)$. Then we have a functor 
\[
R\Hom(E, \cdot): \Db(\P^2) \to \Db(\mod-\End(E)).
\]
A combination of \cite{Bei78:exceptional_collection_pn} and \cite{Bon89:derived_equivalence_quivers} shows that this functor is an equivalence of triangulated categories. This is all based on the theory of full strong exceptional collections. We also refer to \cite{Huy06:fm_transforms} for the unfamiliar reader. Moreover, $\mod-\End(E)$ is equivalent to the category of representations of the quiver $Q$

\centerline{
\xygraph{
!{<0cm,0cm>;<1cm,0cm>:<0cm,1cm>::}
!{(0,0) }*+{\circ}="1"
!{(4,0) }*+{\circ}="2"
!{(8,0) }*+{\circ}="3"
"1"-@{>}@/_0.4cm/^z"2"
"1"-@{>}^y"2"
"1"-@{>}@/^0.4cm/^x"2"
"2"-@{>}@/_0.4cm/^{z'}"3"
"2"-@{>}^{y'}"3"
"2"-@{>}@/^0.4cm/^{x'}"3"
}} \ \\

together with the three commutation relations $y'x = x'y$, $z'x = x'z$, and $z'y = y'z$. If $F \in \Db(\P^2)$, then the corresponding element of $\Db(\Rep-Q)$ under these equivalences can be computed as follows. As complexes of vector spaces we get
\[
    \RHom(E, F) = \RHom(\OO(1), F) \oplus \RHom(\OO, F) \oplus \RHom(\OO(-1), F)).
\]
The algebra $\End(E)$ is given by
\[
    \C^{\oplus 3} \oplus \Hom(\OO(-1), \OO) \oplus \Hom(\OO, \OO(1)) \oplus \Hom(\OO(-1), \OO(1).
\]
The action of $\End(E)$ on $\RHom(E, F)$ is given as follows. The three copies of $\C$ each act via scalar multiples of the identity. The morphisms in $\Hom(\OO(-1), \OO)$ induce maps $\RHom(\OO, F) \to \RHom(\OO(-1), F)$ and morphisms in $\Hom(\OO, \OO(1))$ induce maps $\RHom(\OO(1), F) \to \RHom(\OO, F)$. Finally, all morphisms in $\Hom(\OO(-1), \OO(1))$ can be written as a combination of compositions of morphism from $\Hom(\OO(-1), \OO)$ and $\Hom(\OO, \OO(1))$. Therefore, the action is completely determined by understanding the action of $\Hom(\OO(-1), \OO) \oplus \Hom(\OO, \OO(1))$, i.e., the action of $x$, $y$, and $z$.

Multiplication by $x$, $y$, and $z$ induces morphisms $x, y, z: \RHom(\OO(1), F) \to \RHom(\OO, F)$ and $x', y', z': \RHom(\OO, F) \to \RHom(\OO(-1), F)$. This induces a complex in $\Db(\Rep-Q)$. Note that if $\Ext^i(\OO(-1) \oplus \OO \oplus \OO(1), F) = 0$ for $i \neq 0$, then we simply obtain a quiver representation.

In order to avoid writing down six matrices for each representation, we will use the following notation. Assume we have a representation given by

\centerline{
\xygraph{
!{<0cm,0cm>;<1cm,0cm>:<0cm,1cm>::}
!{(0,0) }*+{
\C^a 
}="1"
!{(4,0) }*+{\C^b}="2"
!{(8,0) }*+{\C^c}="3"
"1"-@{>}@/_0.4cm/^{A_x}"2"
"1"-@{>}^{A_y}"2"
"1"-@{>}@/^0.4cm/^{A_z}"2"
"2"-@{>}@/_0.4cm/^{B_{z'}}"3"
"2"-@{>}^{B_{y'}}"3"
"2"-@{>}@/^0.4cm/^{B_{x'}}"3"
}} \ \\
where $\C^a = \RHom(\mathcal{O}(1), F)$, $\C^b = \RHom(\mathcal{O}, F)$, and $\C^c = \RHom(\mathcal{O}(-1), F)$.
Then the representation is determined by the matrices $A(F) = xA_x + yA_y + zA_z$ and $B(F) = x'B_{x'} + y'B_{y'} + z'B_{z'}$ with linear entries in $\C[x,y,z]$, respectively $\C[x',y',z']$. It should be understood that these matrices are only well defined up to base change.

\begin{ex}
\label{ex:matrices}
Let $\II_Z = (x, y^2)$ and $F = \II_Z(-1)[1]$. A direct calculation shows that 
\begin{align*}
    \RHom(\OO(-1), \II_Z(-1)[1]) &= \C, \\
    \RHom(\OO, \II_Z(-1)[1]) &= H^0(\OO_Z) = \C^2, \\
    \RHom(\OO(1), \II_Z(-1)[1]) &= H^0(\OO_Z) = \C^2.
\end{align*}
Therefore, we have a representation with dimension vector $(2,2,1)$.  Let $\{1, \bar{y}\}$ be a basis of $H^0(\OO_Z)$. Then, multiplication by $z$ induces the identity map, multiplication by $y$ induces the map $1 \mapsto \bar{y}$ and $\bar{y} \mapsto 0$, and multiplication by $x$ is trivial. We obtain 
\[
    A(F) = \begin{pmatrix}
    z & 0 \\ 
    y & z
    \end{pmatrix}.
\]
To calculate the matrix $B(F)$, we observe that $\Hom(\OO(-1), \II_Z(-1)[1])$ is the cokernel of the evaluation morphism $H^0(\OO) \to H^0(\OO_Z)$. This means that $\Hom(\OO(-1), \II_Z(-1)[1])$ is the quotient of $H^0(\OO_Z)$ by $\C \cdot \bar{1}$ and the matrix is
\[
    B(F) = \begin{pmatrix}
        y & z
    \end{pmatrix}.
\]
\end{ex}

To finish this section, we explain the identification of moduli spaces of Bridgeland stable objects on $\P^2$ with moduli spaces of certain quiver representation, see \cite[Proposition 7.5]{ABCH13:hilbert_schemes_p2}. We will use that the equivalence $\RHom(E, \cdot)$ identifies
\[
\Rep-Q = \langle \OO(-2)[2], \Omega[1], \OO(-1) \rangle,
\]
where the brackets denote the smallest additive full subcategory containing these three objects and all extensions (but not shifts). This is based on the fact that $\OO(-2)$, $\Omega$, $\OO(-1)$ is the so-called dual exceptional collection to $\OO(-1)$, $\OO$, $\OO(1)$. We refer to \cite[Section 3]{Bri05:t-structures_local_cy} for more details.

Note that we are not using the same exceptional collection as in \cite{ABCH13:hilbert_schemes_p2}, but a slightly different one from \cite{LZ18:stability_p2}.

\begin{prop}
\label{prop:quiver_moduli_iso}
Choose $\alpha$, $\beta$ such that 
$
\alpha^2 + \left(\beta - \frac{3}{2}\right)^2 < \frac{1}{4}.
$
We can define a torsion pair
\begin{align*}
    \TT_{\gamma} &\coloneqq \{E \in \Coh^{\beta}(\P^2) : \text{any quotient $E \onto G$ satisfies $\nu_{\alpha, \beta}(G) > \gamma$} \}, \\
    \FF_{\gamma} &\coloneqq  \{E \in \Coh^{\beta}(\P^2) : \text{any subobject $0 \neq F \into E$ satisfies $\nu_{\alpha, \beta}(F) \leq \gamma$} \}.
\end{align*}
There is a choice of $\gamma \in \R$ such that
\[
    \langle \TT_{\gamma}, \FF_{\gamma}[1] \rangle = \langle \OO(-2)[2], \Omega[1], \OO(-1) \rangle.
\]
In particular, moduli spaces of Bridgeland stable objects for these choices of stability conditions are the same as moduli spaces of representations of finite-dimensional algebras as defined in \cite{Kin94:moduli_quiver_reps}. This means they have projective good moduli spaces.
\end{prop}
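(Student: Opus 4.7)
The plan is to realize the tilt $\langle \TT_\gamma, \FF_\gamma[1] \rangle$ explicitly as the quiver heart $\Rep-Q = \langle \OO(-2)[2], \Omega[1], \OO(-1) \rangle$ by matching simples, and then to deduce the moduli statement via King's construction. The Bogomolov inequality together with the general formalism of weak stability conditions guarantees that $\nu_{\alpha,\beta}$ admits Harder--Narasimhan filtrations on $\Coh^\beta(\P^2)$; in particular $(\TT_\gamma, \FF_\gamma)$ is a torsion pair for every $\gamma \in \R$, and the Happel--Reiten--Smal\o{} formalism then makes $\langle \TT_\gamma, \FF_\gamma[1] \rangle$ the heart of a bounded t-structure in $\Db(\P^2)$.

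To fix $\gamma$, I would compute $\ch^\beta$ and $\nu_{\alpha,\beta}$ for each of $\OO(-2)$, $\Omega$, and $\OO(-1)$, shifting by $[1]$ as needed so that each representative lies in $\Coh^\beta$. The disk hypothesis is calibrated so that these objects sit in the correct $\Coh^\beta$-cohomological positions relative to the target $\OO(-2)[2], \Omega[1], \OO(-1)$: two of them should appear in the tilt via the $[1]$-shifted part $\FF_\gamma[1]$, and the remaining one unshifted via $\TT_\gamma$. The radius of the disk is precisely what ensures that some $\gamma \in \R$ strictly separates the three $\nu_{\alpha,\beta}$-slopes in exactly this way.

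The slope computation only places each simple in the correct $\TT_\gamma$ or $\FF_\gamma$ if that simple is $\nu_{\alpha, \beta}$-semistable in $\Coh^\beta$ throughout the disk, so that every subobject and every quotient has slope bounded by the simple's own slope. By Bertram's Nested Wall Theorem (Theorem \ref{thm:Bertram}) together with the structure theorem for destabilizing sequences (Proposition \ref{prop:structure_sequences}), this semistability reduces to showing that no semicircular numerical wall for the Chern character of any of the three simples enters the disk. I expect this to be the main obstacle; it is a direct but tedious numerical check exploiting the radius bound in part (i) of Proposition \ref{prop:structure_sequences}.

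With the three simples placed inside $\langle \TT_\gamma, \FF_\gamma[1] \rangle$, the identification with $\Rep-Q$ follows from the uniqueness principle for bounded hearts: two bounded hearts of $\Db(\P^2)$ that share the same simples and are both generated under extensions by those simples must coincide. For the moduli statement, observe that the central charge $Z_{\alpha,\beta}$ restricts to a $\Z$-linear function on dimension vectors of the resulting quiver representations and hence defines a character $\theta$ of $\widetilde{\GL}(d)$; under this dictionary $\nu_{\alpha,\beta}$-stability matches $\theta$-stability as defined in \cite{Kin94:moduli_quiver_reps}, and King's construction then supplies the projective good moduli spaces.
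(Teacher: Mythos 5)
Your outline is essentially the argument the paper intends: the paper gives no proof of its own here, deferring to \cite[Proposition 7.5]{ABCH13:hilbert_schemes_p2} with the remark that ``the proof is the same,'' and that proof proceeds as you describe --- verify that the three dual exceptional objects are, up to shift, $\nu_{\alpha,\beta}$-stable objects of $\Coh^{\beta}(\P^2)$ throughout the region, choose $\gamma$ strictly separating their slopes, and compare hearts. Three remarks. First, the concluding step should invoke the standard lemma that a heart of a bounded t-structure contained in another such heart equals it: you only need the inclusion $\langle \OO(-2)[2], \Omega[1], \OO(-1) \rangle \subseteq \langle \TT_{\gamma}, \FF_{\gamma}[1] \rangle$, whereas your ``uniqueness principle'' presupposes that the right-hand side is also generated under extensions by the three simples, which is part of what is being proven. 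Second, your claim that ``the disk hypothesis is calibrated'' is false for the hypothesis as printed: on the disk $\alpha^2 + (\beta - \tfrac{3}{2})^2 < \tfrac{1}{4}$ one has $\beta \in (1,2)$, so $\mu(\OO(-1)) = -1 \leq \beta$ places $\OO(-1)$ in $\FF_{\beta}$, and then $\OO(-1)$ cannot lie in $\langle \TT_{\gamma}, \FF_{\gamma}[1] \rangle$ for any $\gamma$. The intended region is $\alpha^2 + (\beta + \tfrac{3}{2})^2 < \tfrac{1}{4}$, which is exactly the interior of the numerical wall $W(\OO(-1), \OO(-2))$ and is consistent with the rest of the paper working between $\beta = -1$ and $\beta = -2$; the calibration you assert needs to be this explicit computation rather than an expectation. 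Third, on the semistability of the generators: $\OO(-1)$ and $\OO(-2)$ satisfy $\Delta = 0$, so they admit no actual semicircular walls at all and are stable (up to shift) wherever they lie in the heart; only $\Omega$ genuinely requires the wall-radius estimate from Proposition \ref{prop:structure_sequences} that you flag as the main numerical check. With these adjustments your argument is the standard one and is correct.
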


Later, we will mostly be interested in the case of ideal sheaves of points or other Bridgeland-semistable objects with the same numerical invariants. In that case, the relevant dimension vector is $(n, n - 1, n - 3)$ as will be shown in Section \ref{subsec:bridgeland_stable_git_unstable}. This will be the only point in which we actually use the dual exceptional collection $\OO(-2)$, $\Omega$, $\OO(-1)$. Otherwise, the matrices will suffice.

\section{Set up and general stability results.}

\subsection{The setup}

The group $G = \SL_3$ acts on the Hilbert scheme $\P^{2[n]}$ via its action on $\P^2$. The usual action of $\SL_3$ on $\C[x, y, z]$ is given by $(A \cdot f) (\mathbf{x}) = f(A^{-1}\cdot \mathbf{x})$ with $\mathbf{x} = (x,y,z)$. In order to avoid keeping inverses everywhere, we act via the transpose instead
\[
(A \cdot f) (\mathbf{x}) \coloneqq f(A^t \cdot \mathbf{x}).
\]
For consistency, we act on points via $(A^{-1})^t$. Note that the transpose does not really matter because the vast majority of computations in this article are with diagonal one-parameter subgroups. Since $A \mapsto (A^{-1})^t$ is an automorphism of $\SL_3$, this does not affect the quotients either.

If $D_m$ is an ample divisor on $\P^{2[n]}$, then we define
\[
    \textbf{C}(m, n) \coloneqq \P^{2[n]} /\!\!/_{D_m}\SL_3.
\]
Our goal is to obtain a GIT quotient for any divisor in the movable cone. Usually, GIT is done with ample divisors only. By Theorem \ref{thm:moduli_spaces_tilt_stability} we know that $\P^{2[n]}$ is a Mori dream space. This means that for any divisor $D_m$ in the interior of the movable cone there is a birational model on which $D_m$ is actually ample. Essentially, the same picture holds on the boundary of the movable cone except that this model might not be birational, but of lower dimension. The appropriate definition of $\textbf{C}(m,n)$ is to quotient this model instead of the Hilbert scheme itself.

\begin{defn}
\label{def:spaces}
For any movable divisor $D_m$, let $(\Coh^{\beta_m}(\P^2), \nu_{\alpha_m, \beta_m})$ be the Bridgeland stability condition that induces $D_m$ via the Positivity Lemma. Then we define
\[
    \textbf{C}(m,n) \coloneqq M_{\alpha_m, \beta_m}(1, 0, -n) /\!\!/_{D_m} \SL_3.
\]
\end{defn}

This definition works out since the action of $\SL_3$ on $\P^2$ induces a natural action on these moduli spaces of Bridgeland-semistable objects. Moreover, Theorem \ref{thm:RelativeGIT} shows that these quotients behave as expected when birational models change at flips or the boundary of the movable cone.

A well-known special case is given by the quotient of the symmetric product
\[
    \textbf{C}(\infty,n) = \mathbb{P}^{2(n)}/ \!\!/ \SL_3.
\]

\begin{theorem}[{\cite[Proposition 7.27]{Muk03:introduction_invariants_moduli}}]
\label{thm:stability_chow}
A point $\Lambda \in \P^{2(n)}$ is GIT-semistable if and only if at most $\tfrac{2n}{3}$ points are collinear and any point occurs with multiplicity at most $\tfrac{n}{3}$. It is GIT-stable if and only if these inequalities are strict.
\end{theorem}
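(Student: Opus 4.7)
The plan is to apply the Hilbert--Mumford numerical criterion (Remark~\ref{rmk:minNumerical}) and use that $\mu$ is additive over the unordered product. Every one-parameter subgroup of $\SL_3$ is conjugate to a diagonal one, so it suffices to treat $\lambda(t) = \diag(t^{r_0}, t^{r_1}, t^{r_2})$ with $r_0 \geq r_1 \geq r_2$, $r_0 + r_1 + r_2 = 0$, and $(r_0, r_1, r_2) \neq 0$, and then let the basis vary. Fix such a $\lambda$ and set $q \coloneqq [0{:}0{:}1]$ and $L \coloneqq \{x_0 = 0\}$, so $q \in L$. Using the paper's convention $A \cdot p = (A^{-1})^t p$, an affine lift of $p = [p_0{:}p_1{:}p_2]$ transforms as $\lambda(t) \cdot (p_0, p_1, p_2) = (t^{-r_0} p_0, t^{-r_1} p_1, t^{-r_2} p_2)$, so Remark~\ref{rmk:minNumerical} yields $\mu(p, \lambda)$ equal to $-r_0$, $-r_1$, or $-r_2$ according to whether $p \notin L$, $p \in L \setminus \{q\}$, or $p = q$. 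Letting $k$ count the points of $\Lambda$ on $L$ and $a$ the number of points equal to $q$ (with $a \leq k$),
\[
\mu(\Lambda, \lambda) = -(n-k)\,r_0 - (k-a)\,r_1 - a\,r_2.
\]

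Next I would identify the two extremal rays of the closed cone $\{r_0 \geq r_1 \geq r_2,\ \sum r_i = 0\}$: they are generated by $\lambda^L \coloneqq (2, -1, -1)$, which fixes $L$ pointwise, and $\lambda^q \coloneqq (1, 1, -2)$, which fixes $q$. Every $\lambda$ in the cone decomposes uniquely as $\lambda = s\lambda^L + t\lambda^q$ with $s = (r_0 - r_1)/3 \geq 0$ and $t = (r_1 - r_2)/3 \geq 0$. Direct substitution gives
\[
\mu(\Lambda, \lambda^L) = 3k - 2n, \qquad \mu(\Lambda, \lambda^q) = 3a - n.
\]
The key observation is that the weight stratum of each point $p_j$ (off $L$, on $L \setminus \{q\}$, or at $q$) is determined by the flag $(q \in L)$ alone and not by the individual $r_i$. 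Consequently $\mu(\Lambda, \cdot)$ is genuinely linear on this closed cone and $\mu(\Lambda, \lambda) = s\,\mu(\Lambda, \lambda^L) + t\,\mu(\Lambda, \lambda^q)$. Since $s, t \geq 0$ are arbitrary and not both zero, $\mu(\Lambda, \lambda) \leq 0$ for every $\lambda$ in the cone if and only if both $k \leq \tfrac{2n}{3}$ and $a \leq \tfrac{n}{3}$.

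To finish, I would let the basis vary over all of $\SL_3$, so that the flag $(q, L)$ ranges over every incident point-line pair in $\P^2$; every point lies on some line and every line contains any given point, so this produces exactly the two conditions in the statement for semistability. The stable case follows by strictness, since equality in either extremal inequality is witnessed by the corresponding $\lambda^L$ or $\lambda^q$ with $\mu = 0$, making $\Lambda$ strictly semistable rather than stable. The only real obstacle is bookkeeping the action and sign conventions, and verifying that the piecewise-linear function $\lambda \mapsto \mu(\Lambda, \lambda)$ becomes linear once the flag is fixed; this linearity is what reduces the infinite family of one-parameter subgroups to the two extremal tests above.
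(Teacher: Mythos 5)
Your argument is correct and is essentially the standard one: the paper gives no proof of this statement but cites it to Mukai, and the proof there is exactly this Hilbert--Mumford computation --- reduce to a diagonal one-parameter subgroup with ordered weights, observe that $\mu(\Lambda,\cdot)$ is linear on the closed Weyl chamber once the flag $(q\in L)$ is fixed, evaluate on the two extremal rays $(2,-1,-1)$ and $(1,1,-2)$ to get $3k-2n$ and $3a-n$, and let the flag vary. The only step worth making explicit is the additivity $\mu(\Lambda,\lambda)=\sum_j\mu(p_j,\lambda)$, which is cleanest to justify on $(\P^2)^n$ via the Segre embedding (the nonzero coordinates there form a product set, so the minimal weight is the sum of the minimal weights) and then transferred to $\P^{2(n)}$ because the $S_n$-quotient is finite and the linearization descends.
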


In particular, note that the existence of strictly GIT-semistable points implies $3 \mid n$.  By $\lambda(a, b) \coloneqq (a, b, -a - b)$ we denote the diagonal one-parameter subgroup
\[
    \lambda(t) = \begin{pmatrix}
    t^{a} & 0 & 0 \\
    0 & t^{b} & 0 \\
    0 & 0 & t^{-a - b}
    \end{pmatrix}.
\]
Since the numerical criterion is independent of taking positive multiples of $\lambda$, we will include the cases $a, b \in \Q$, even though that is not a one-parameter subgroup in a strict sense. For $r \in \Q$, we write $\lambda_r \coloneqq  (1, r, -1-r)$, and if $r \in [-\tfrac{1}{2}, 1]$, we call these one-parameter subgroups \emph{normalized}.

\begin{lem}
Up to change of coordinates and up to positive multiples any non-trivial one-parameter subgroup of $\SL(3)$ is of the form $\lambda_r$ for $r \in [-\tfrac{1}{2}, 1]$. 
\end{lem}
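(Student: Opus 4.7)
The plan is to reduce an arbitrary non-trivial one-parameter subgroup $\lambda : \C^* \to \SL_3$ to the claimed form through three successive normalizations: diagonalization, reordering of weights, and a positive rescaling.

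First, I would invoke the standard fact that every one-parameter subgroup of $\GL_n$ lies in a maximal torus, and any two maximal tori are conjugate. Applied to $\SL_3$, this gives $g \in \SL_3$ such that
\[
g \lambda(t) g^{-1} = \diag(t^{a_1}, t^{a_2}, t^{a_3})
\]
for some integers $a_i$ with $a_1 + a_2 + a_3 = 0$. Since $\lambda$ is non-trivial, the triple $(a_1, a_2, a_3)$ is non-zero.

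Second, because permutation matrices (rescaled to have determinant one if necessary) lie in $\SL_3$, I may further conjugate to arrange $a_1 \geq a_2 \geq a_3$. Combined with $a_1 + a_2 + a_3 = 0$ and non-triviality, this forces $a_1 > 0$ and $a_3 < 0$. Finally, since the statement is up to positive multiples, dividing the triple by $a_1$ gives $(1, r, -1 - r)$ with $r = a_2/a_1 \in \Q$. The inequalities $1 \geq r$ and $r \geq -1 - r$ translate directly into $r \in [-\tfrac{1}{2}, 1]$, matching the claim.

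There is no real obstacle in this argument; the only point worth flagging is that ``change of coordinates'' here means conjugation by an element of $\SL_3$, and I should verify that both the diagonalizing element and the reordering permutation can be chosen in $\SL_3$ (the latter possibly after rescaling one row by a cube root of unity), so that the reduction is legitimate. Once this is noted, the rest is elementary linear arithmetic.
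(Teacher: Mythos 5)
Your proof is correct and follows essentially the same route as the paper's: diagonalize, order the weights by a permutation, and rescale; the paper simply starts from a diagonal one-parameter subgroup and says ``up to a change of coordinates'' where you spell out the conjugation. The only nit is your parenthetical about fixing the determinant of an odd permutation matrix: you would scale a row by $-1$ (or the whole matrix by a cube root of $-1$), not by a cube root of unity --- but this is immaterial, since conjugating by a scalar multiple of the permutation matrix has the same effect on the torus.
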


\begin{proof}
Let $\lambda = (q, r, s)$ be a diagonal one-parameter subgroup. Since we are in $\SL(3)$, we have $q + r + s = 0$. Up to a change of coordinates, we can assume $q \geq r \geq s$. Since $\lambda$ is non-trivial, this implies $q > 0$, and we can divide by $q$ to reduce to $q = 1$. Then $s = -1 - r$. The fact $r \in [-\tfrac{1}{2}, 1]$ follows from $q = 1 \geq r \geq s =-1 - r$.
\end{proof}

Let $Z \subset \P^2$ be a subscheme of dimension zero and $\lambda$ be a one-parameter subgroup in $\SL_3$. To simplify notation, we will from now on write $\mu_m(Z, \lambda) \coloneqq  \mu^{D_m}(Z, \lambda)$. We will also denote $D_m$-(semi)stable subschemes $Z$ as $m$-(semi)stable. 

\begin{lem}
\label{lemma:m0}
Let $d$ be any fixed rational number. If $\mu_d(Z, \lambda) \neq \mu_{d+1}(Z, \lambda)$, then
\[
    m_0(Z, \lambda) = d + \frac{\mu_d(Z, \lambda)}{\mu_d(Z, \lambda) - \mu_{d+1}(Z, \lambda)}
\]
is the unique $m$ such that $\mu_m(Z, \lambda) = 0$. If $\mu_d(Z, \lambda) = \mu_{d+1}(Z, \lambda) \neq 0$, then $m_0(Z, \lambda) = \infty$.
\end{lem}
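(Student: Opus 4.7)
The plan is to exploit that $\mu_m(Z,\lambda)$ depends affinely on $m$. Since $D_m = mH - \tfrac{1}{2}\Delta$ and both $H$ and $\Delta$ are $\SL_3$-linearized divisors on $\P^{2[n]}$, linearity of the Hilbert--Mumford index in the line bundle gives
\[
\mu_m(Z,\lambda) \;=\; m\,\mu^H(Z,\lambda) \;-\; \tfrac{1}{2}\,\mu^{\Delta}(Z,\lambda).
\]
The limit $\lim_{t\to 0}\lambda(t)\cdot Z$ is computed in $\P^{2[n]}$, which is projective, so the two weights on the right are well-defined numbers independent of $m$, and the identity extends by $\Q$-linearity to every rational $m$.

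Next, I would evaluate this identity at $m=d$ and at $m=d+1$ and subtract, obtaining $\mu^H(Z,\lambda) = \mu_{d+1}(Z,\lambda) - \mu_d(Z,\lambda)$. Re-expressing the affine function via its value at $d$ and its slope then yields
\[
\mu_m(Z,\lambda) \;=\; \mu_d(Z,\lambda) \;+\; (m-d)\bigl(\mu_{d+1}(Z,\lambda) - \mu_d(Z,\lambda)\bigr).
\]

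Finally, I would split into the two announced cases. If $\mu_d(Z,\lambda)\neq \mu_{d+1}(Z,\lambda)$, the slope is nonzero, so the linear function has a unique root; solving the displayed equation for $m$ produces exactly $m_0(Z,\lambda) = d + \mu_d(Z,\lambda)/(\mu_d(Z,\lambda)-\mu_{d+1}(Z,\lambda))$. If instead $\mu_d(Z,\lambda) = \mu_{d+1}(Z,\lambda)\neq 0$, the slope vanishes and $\mu_m(Z,\lambda)$ is the nonzero constant $\mu_d(Z,\lambda)$ for all $m$, so no finite zero exists and $m_0(Z,\lambda)=\infty$ by convention. There is essentially no obstacle here beyond justifying that the linearity formula is valid for non-ample $D_m$; this is automatic because $H$ and $\Delta$ are honest divisors on the projective variety $\P^{2[n]}$, and the Hilbert--Mumford index is defined purely in terms of the weight at the (unambiguously existing) limit point on the Hilbert scheme.
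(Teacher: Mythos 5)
Your proof is correct and follows essentially the same route as the paper: both arguments rest on linearity of the Hilbert--Mumford index in the linearization, so that $\mu_m(Z,\lambda)$ is an affine function of $m$ determined by its values at $d$ and $d+1$. The only cosmetic difference is that you expand $D_m$ in the basis $\{H,\tfrac{\Delta}{2}\}$ while the paper writes $D_{m_0}=c_1D_d+c_2D_{d+1}$ with $c_1+c_2=1$ and solves for $c_2$; the computation is identical.
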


\begin{proof}
If $\mu_d(Z, \lambda) \neq \mu_{d+1}(Z, \lambda)$, then
\[
    D_{m_0} = m_0 H - \frac{\Delta}{2} = c_1D_d + c_2 D_{d+1} = (c_1 + c_2)d H + c_2 H - (c_1 + c_2) \frac{\Delta}{2},
\]
and $c_1$ and $c_2$ must satisfy $c_1 + c_2 = 1$. On the other hand,
\[
    0 = \mu_{m_0}(Z, \lambda) = c_1\mu_{d}(Z, \lambda) + c_2 \mu_{d+1}(Z, \lambda) = \mu_{d}(Z, \lambda) - c_2\left( \mu_{d}(Z, \lambda) - \mu_{d+1}(Z, \lambda) \right)
\]
implies 
\[
    c_2 = \frac{\mu_{d}(Z, \lambda)}{\mu_{d}(Z, \lambda) - \mu_{d+1}(Z, \lambda)}.
\]
We obtain
\[ 
    D_{m_0} = c_1 D_{d} + c_2 D_{d+1} = \left(d + \frac{\mu_{d}(Z, \lambda)}{ \mu_{d}(Z, \lambda) - \mu_{d+1}(Z, \lambda)} \right) H -\frac{\Delta}{2}.
\]
If $\mu_d(Z, \lambda) = \mu_{d+1}(Z, \lambda) \neq 0$, then the result follows from $D_{d+1} - D_d = H$.
\end{proof}

Checking diagonal one-parameter subgroups in the numerical criterion will usually be simple in our setting. Of course, every one-parameter subgroup can be normalized to a diagonal one-parameter subgroup $\lambda_r$. However, when working with a single ideal, the change of basis will also change the ideal. The following proposition will allow us to reduce the amount of tori from which one-parameter subgroups have to be checked. We were inspired by \cite[Proposition 2.4]{AFS13:hilbert_stability_bicanonical}

\begin{prop}
\label{prop:tori_to_check}
Let $G$ be acting on $X$ and for $x \in X$ let $T'$ be a torus of $\Stab_G(x)$. Let $B \subset G$ be a Borel subgroup containing $T'$. If $x$ is $\LL$-unstable, then there is a maximal torus $T \subset B$ containing $T'$ and a one-parameter subgroup $\lambda$ in $T$ such that $\mu^\LL(x, \lambda) > 0$.
\end{prop}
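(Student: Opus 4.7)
The plan is to combine Kempf's instability theorem with the Mostow (Levi) decomposition of the stabilizer $\Stab_G(x)$, following \cite[Proposition 2.4]{AFS13:hilbert_stability_bicanonical}. The goal is to exhibit a destabilizing one-parameter subgroup that commutes with $T'$ and lies inside a maximal torus of $B$.

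First I would apply Kempf's canonical destabilizing parabolic theorem: since $x$ is $\LL$-unstable, there exists a parabolic subgroup $P^* \subset G$ with $\Stab_G(x) \subset P^*$ (in particular $T' \subset P^*$), a Levi subgroup $L^* \subset P^*$, and a Kempf-optimal one-parameter subgroup $\lambda^*$ lying in the center of $L^*$ with $\mu^{\LL}(x, \lambda^*) > 0$. By construction $\lambda^*$ commutes with every element of $L^*$.

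Next, using Mostow's theorem I would write $\Stab_G(x) = S \ltimes U_S$ with $S$ reductive and $U_S$ its unipotent radical; since $\Stab_G(x) \subset P^*$, the group $U_S$ sits inside the unipotent radical $U^*$ of $P^*$. Because any two Levi factors of $P^*$ are conjugate by an element of $U^*$, there is $u \in U_S \subset \Stab_G(x)$ with $u^{-1} T' u \subset L^*$. Conjugation by $u$ preserves the Hilbert--Mumford index, since
\[
\mu^{\LL}(x, u\lambda^* u^{-1}) \;=\; \mu^{\LL}(u^{-1} x, \lambda^*) \;=\; \mu^{\LL}(x, \lambda^*),
\]
so after replacing $\lambda^*$ and $L^*$ by their conjugates $u\lambda^* u^{-1}$ and $u L^* u^{-1}$, I may assume $T' \subset L^*$ while $\lambda^*$ remains central in $L^*$ and still destabilizes $x$. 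In particular $\lambda^*$ commutes with $T'$.

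Finally, to place $\lambda^*$ inside $B$, I would choose a maximal torus $T \subset L^*$ that is contained in $B$ and contains $T'$. The existence of such a $T$ will follow from the fact that the two parabolics $B$ and $P^*$ always share some maximal torus of $G$, combined with the fact that any maximal torus of $P^*$ lies in a Levi of $P^*$: by selecting $L^*$ (and thus coordinating the Mostow conjugation) so that $L^*$ contains a maximal torus of $B$ through $T'$, centrality of $\lambda^*$ in $L^*$ forces $\lambda^* \in T$, while $T \supset T'$ and $T \subset B$ by construction. The hard part will be synchronizing the two choices, namely arranging the Mostow conjugation by $U_S$ so that $T'$ enters $L^*$ while simultaneously keeping $L^*$ compatible with a maximal torus of $B$ containing $T'$; this requires a careful initial selection of $L^*$ based on the intersection $B \cap P^*$ before applying the Mostow adjustment.
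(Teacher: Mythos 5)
Your overall strategy (Kempf's instability theory plus the fact that $\Stab_G(x)$ lies in the optimal parabolic) is the right starting point, but the proposal has a genuine gap at the conjugation step, and you essentially admit as much in your final paragraph. Two specific problems. First, the claim that $U_S \subset U^*$ is false in general: a unipotent subgroup of $P^*$ need not lie in the unipotent radical of $P^*$, since a Levi factor $L^*$, while reductive, contains plenty of unipotent elements (e.g.\ $\Stab_G(x)$ could be a unipotent subgroup of $L^*$ itself). Second, and more seriously, even granting that $T'$ lies in some Levi $L'$ of $P^*$ and that $L'$ and $L^*$ are conjugate by an element $u$ of the unipotent radical of $P^*$, there is no reason this $u$ can be chosen inside $\Stab_G(x)$. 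Without $u \in \Stab_G(x)$ the identity $\mu^{\LL}(x, u\lambda^* u^{-1}) = \mu^{\LL}(u^{-1}x, \lambda^*)$ does not reduce to $\mu^{\LL}(x,\lambda^*)$, so you lose control of the Hilbert--Mumford index precisely when you try to make $\lambda^*$ commute with $T'$. The ``synchronization'' you defer to the end is not a technicality; it is the whole content of the statement, and fixing a single optimal $\lambda^*$ and trying to transport it is the wrong move.

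The way out is to use the stronger form of Kempf's result that the paper invokes: Theorem 3.4 of \cite{Kem78:instability} says not merely that an optimal destabilizing one-parameter subgroup exists, but that \emph{every} maximal torus of the optimal parabolic $P$ contains one (the optimal class meets each maximal torus of $P$ exactly once). Combined with \cite[Corollary 3.5]{Kem78:instability}, which gives $\Stab_G(x) \subset P$ and hence $T' \subset B \cap P$, the problem reduces to producing a maximal torus $T$ of $G$ with $T' \subset T \subset B \cap P$. This follows because the intersection of any two Borel subgroups of $G$ contains a maximal torus of $G$ (\cite[Corollary 28.3]{Hum75:linear_algebraic_groups}), so $B \cap P$ contains a maximal torus of $G$, and the torus $T'$ of the solvable group $B \cap P \subset B$ can be extended to one. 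Then Theorem 3.4 hands you a destabilizing $\lambda \in T$ directly, with no conjugation of a fixed $\lambda^*$ and no appeal to Mostow's decomposition needed.
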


\begin{proof}
By Theorem 3.4 in \cite{Kem78:instability} there is a parabolic subgroup $P \subset G$ such that any maximal torus in $P$ contains a destabilizing one-parameter subgroup. Moreover, \cite[Corollary 3.5]{Kem78:instability} says $\Stab_G(x) \subset P$. By \cite[Corollary 28.3]{Hum75:linear_algebraic_groups} the intersection of any two Borel subgroups of $G$ contains a maximal torus of $G$. In particular, the intersection of $P$ and $B$ contains a maximal torus of $G$. Since $T' \subset B \cap P$ we can extend it to a maximal torus $T \subset B \cap P$ of $G$.
\end{proof}

We will use Proposition \ref{prop:tori_to_check} in the following special cases. 

\begin{rmk}
\label{rmk:tori_to_check}
Let $Z \in \P^{2[n]}$ be a subscheme of dimension zero, and let $T$ be the diagonal torus.
\begin{enumerate}
    \item If $\Stab_{\SL_3}(Z)$ contains a one-parameter subgroup $\lambda = (a, b, c)$ for pairwise different $a, b, c$, then GIT-semistability of $Z$ can be checked only with one-parameter subgroups in $T$. Indeed, the only torus containing $\lambda$ is the diagonal one.
    
    \item If $\Stab_{\SL_3}(Z)$ contains the one-parameter subgroup $(1, 1, -2)$, then GIT-semistability of $Z$ can be checked only with one-parameter subgroups in tori of the form $g^{-1} T g$ for
    \[
    g = \begin{pmatrix}
        1 & a & 0 \\
        0 & 1 & 0 \\
        0 & 0 & 1
    \end{pmatrix}
    \]
    for some $a \in \C$. Indeed, we can choose $B \subset \SL_3$ to be the Borel subgroup of upper-triangular matrices, and $T'$ the image of $\lambda_1$. Then any maximal torus of $B$ containing $T'$ is of the above form.
    
    \item If $\Stab_{\SL_3}(Z)$ contains the one-parameter subgroup $(2, -1, -1)$, then GIT-semistability of $Z$ can be checked only with one-parameter subgroups in tori of the form $g^{-1} T g$ for
    \[
    g = \begin{pmatrix}
        1 & 0 & 0 \\
        0 & 1 & a \\
        0 & 0 & 1
    \end{pmatrix}.
    \]
\end{enumerate}
\end{rmk}

In order to compute instability of certain ideals $\II$, we will use the definition of the \emph{state polytope} $\State_m(\II)$. We refer to 
\cite[Section 9.4]{Dol03:invariant_theory} where it is called the weight polytope. Our definition is equivalent, but serves our purposes better.

\begin{defn}
Let $\II \subset \C[x,y,z]$ be a homogeneous ideal, let $m \in \Z$, and let $T \subset \SL_3(\C)$ be the maximal torus that is diagonal with respect to the coordinates $x, y, z$. Then $H^0(\II(m)) \subset H^0(\OO(m))$ is a subvectorspace, and the Pl\"ucker embedding induces a point $x_{\II} \in \P^N$ for appropriate $N$. The torus $T$ acts on $\P^N$ and this action lifts to $\C^{N+1}$. The \emph{state polytope} of $\II$, denoted by $\State_m(\II)$, is the convex hull of the characters of the action of $T$ on a lift of $x_{\II}$ to $\C^{N+1}$.
\end{defn}

The following lemma is an immediate consequence of the definition of state polytopes.

\begin{lem}
\label{lem:numericalFunction}
Let $Z \in \P^{2[n]}$ and $m \in \Z$ such that the $m$-th Hilbert point of $Z$ is well-defined. Then for any one-parameter subgroup $\lambda = (a, b, c)$ we have
\[
\mu_m(Z, \lambda) = \min \{ ai + bj + ck : (i, j, k) \in \State_m(\II_Z) \}.
\]
\end{lem}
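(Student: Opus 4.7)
The plan is to apply the diagonal form of the numerical criterion (Remark \ref{rmk:minNumerical}) after unpacking the Pl\"ucker construction of the Hilbert point, and then to invoke the elementary fact that a linear functional on a polytope attains its minimum at a vertex. There are essentially three steps: (1) reduce the computation from $\P^{2[n]}$ to the ambient projective space $\P^{N_n}$ containing $[Z]_m$; (2) exhibit a simultaneous $T$-eigenbasis of $\C^{N_n+1}$ compatible with the definition of $\State_m(\II_Z)$; and (3) apply Remark \ref{rmk:minNumerical} and pass from the resulting finite set of weights to the full state polytope.

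For the first step, note that $D_m = \phi_m^*\OO(1)$ by construction, and that $\phi_m$ is $\SL_3$-equivariant since the action on $\P^{2[n]}$ is induced from that on $\P^2$. Consequently the $\SL_3$-linearization of $D_m$ is pulled back from the standard linearization of $\OO(1)$ on $\P^{N_n}$, and by the elementary functoriality of the Hilbert-Mumford index under equivariant pullback (the relevant fiber of $D_m$ at the $\lambda$-fixed limit of $[Z]$ is the same as the fiber of $\OO(1)$ at the $\lambda$-fixed limit of $[Z]_m$) we have $\mu_m(Z,\lambda) = \mu^{\OO(1)}([Z]_m,\lambda)$. This reduces the problem to a direct computation on $\P^{N_n}$.

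Next I would fix a simultaneous $T$-eigenbasis in which everything diagonalizes. The monomials $x^iy^jz^k$ with $i+j+k=m$ form a $T$-eigenbasis of $H^0(\OO(m))$ with weight $(i,j,k)$; setting $r = \binom{m+2}{2}-n$, the induced wedge basis of $\bigwedge^r H^0(\OO(m))$ is then a $T$-eigenbasis whose vector $e_I$, indexed by an $r$-subset $I$ of monomials, has weight $\sum_{(i_\ell,j_\ell,k_\ell)\in I}(i_\ell,j_\ell,k_\ell)\in\Z^3$. In these coordinates the Pl\"ucker embedding sends $H^0(\II_Z(m))\subset H^0(\OO(m))$ to a point of the form $\sum_I p_I e_I$, and this is precisely the affine lift of $[Z]_m$ used in the definition of $\State_m(\II_Z)$.

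For the final step, Remark \ref{rmk:minNumerical} applied to $\lambda(t) = \diag(t^a,t^b,t^c)$ in this diagonal basis immediately gives
\[
\mu^{\OO(1)}([Z]_m,\lambda) = \min\bigl\{ai+bj+ck : (i,j,k) \text{ is the weight of some } e_I \text{ with } p_I\neq 0\bigr\}.
\]
The finite set of weights on the right is, by definition, the generating set whose convex hull is $\State_m(\II_Z)$, so the minimum over this finite set agrees with the minimum of the same linear functional over the entire polytope, yielding the claimed formula. The only real obstacle is cosmetic: one must ensure that the sign convention of the $\SL_3$-action (via $A^t$, as fixed at the start of the section) is used consistently both for the weights of $T$ on $H^0(\OO(m))$ and for the characters defining the state polytope, so that the two minima are taken with respect to the same linear form $(i,j,k)\mapsto ai+bj+ck$.
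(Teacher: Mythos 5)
Your proposal is correct and follows exactly the route the paper intends: the paper offers no written argument beyond declaring the lemma ``an immediate consequence of the definition of state polytopes,'' and your three steps (equivariance of $\phi_m$ reducing to $\P^{N_n}$, the monomial/wedge $T$-eigenbasis identifying the nonzero Pl\"ucker weights with the generators of $\State_m(\II_Z)$, and Remark \ref{rmk:minNumerical} plus the fact that a linear functional on a polytope is minimized on the generating set) are precisely the details being suppressed. Nothing further is needed.
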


\subsection{Bridgeland-semistable objects}
\label{subsec:bridgeland_stable_git_unstable}

The goal of this section is to prove Theorem \ref{thm:git_unstable_bridgeland_stable} that says that certain Bridgeland semistable complexes will never be GIT-semistable for any $m$. Our strategy is to study them via quiver representations. A description of the relevant Bridgeland-semistable objects will be given in Proposition \ref{prop:wall_between_-1_-2}. Their GIT-unstability will then be established in Propositions \ref{prop:quiver_locus1}, \ref{prop:quiver_locus2}, and \ref{prop:quiver_locus3}.

The strategy to show that a Bridgeland-semistable object $E$ is GIT-unstable repeats as follows:
\begin{enumerate}
    \item Determine the matrices $A(E[1])$ and $B(E[1])$ that describe $E[1]$ as a quiver representation in $\Rep-Q$ where $Q$ is the quiver described in Section \ref{subsec:quiver_to_stable_complexes} (see Example \ref{ex:matrices}). When $E[1]$ is an extension between two simpler objects, we will understand the quiver representations of these simpler objects first and then determine all possible extensions that still satisfy the relations of $Q$.
    \item Use these matrices to determine the Hilbert-Mumford index for some well chosen one-parameter subgroup $\lambda$, the point corresponding to $E$ in the moduli space, and two different movable divisors.
    \item Use linearity of the Hilbert-Mumford index to show that $\lambda$ destabilizes $E$ where claimed.
\end{enumerate}

Bridgeland stability on $\P^2$ was studied in various works such as \cite{ABCH13:hilbert_schemes_p2, CHW17:effective_cones_p2, LZ18:stability_p2}. We need some specific examples that are not explicitly spelled out in those articles. For the convenience of the readers, we will prove the following statement.

\begin{prop}
\label{prop:wall_between_-1_-2}
Let $n \in \Z$ with $n \geq 5$ and let $E \in \Db(\mathbb{P}^2)$ with $\ch(E) = (1, 0, -n)$.
\begin{enumerate}
\item The walls for objects with Chern character $(1, 0, -n)$ that are strictly larger than the wall $W(E, \OO(-2))$ are given by $W_l \coloneqq W(E, \II_Y(-1))$ where $Y \subset \P^2$ is a zero-dimensional subscheme of length $n - l$ for $\tfrac{n + 1}{2} < l \leq n$. The divisor induced along $W_l$ is given by $D_{l - 1}$. If $E$ is strictly semistable along $W_l$, then it is an extension between $\II_Y(-1)$ and $\OO_L(-l)$ for a line $L \subset \P^2$. Such an extension $E$ is stable above the wall if it is a non-trivial extension
\[
0 \to \II_Y(-1) \to E \to \OO_L(-l) \to 0.
\]
Such an extension $E$ is stable below the wall if it is a non-trivial extension
\[
0 \to \OO_L(-l) \to E \to \II_Y(-1) \to 0.
\]

\item Let $W_{(n + 1)/2} \coloneqq W(E, \OO(-2))$. In the special case where $n$ is odd, we have $W_{(n + 1)/2} = W(E, \II_{\widetilde{Y}}(-1))$ where $\widetilde{Y} \subset \P^2$ is a zero-dimensional subscheme of length $\tfrac{n - 1}{2} = n - \tfrac{n + 1}{2}$. The subschemes $Z \subset \P^2$ such that $E = \II_Z$ is strictly semistable along this wall are precisely those where either
\begin{enumerate}
    \item $Z$ is completely contained in a conic $C \subset \P^2$ (possibly singular) and no more than $\tfrac{n + 1}{2}$ points lie on a single line, or
    \item there is a line $L \subset \P^2$ containing exactly $\tfrac{n + 1}{2}$ points of $Z$ (only if $n$ is odd).
\end{enumerate}
The divisor corresponding to this wall is $D_{\tfrac{n - 1}{2}}$.
\end{enumerate}
\end{prop}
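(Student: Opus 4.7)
The plan is to classify Bridgeland walls for $E$ with $\ch(E)=(1,0,-n)$ by enumerating possible destabilizing subobjects, then translating the numerical data into the geometric statement.

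Any destabilizing sequence $0\to F\to E\to G\to 0$ in $\Coh^\beta(\P^2)$ along a semicircular wall satisfies the constraints of Proposition \ref{prop:structure_sequences}. Since $\ch_0(E)=1$, part (v) forces one of $F,G$ to have positive rank and negative slope; by symmetry, I may assume it is $F$. If $\ch_0(F)=r\geq 2$, then $\ch_0(G)=1-r\leq -1$, so the radius bound in Proposition \ref{prop:structure_sequences}(i) applies and gives $\rho^2\leq\tfrac{n}{2r(2r-1)}$. A direct computation of $W(E,\OO(-2))$ yields the semicircle $2n+(n+2)\beta+\beta^2+\alpha^2=0$, of radius $(n-2)/2$. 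Comparing these, for $n\geq 5$ and any $r\geq 2$, the candidate wall cannot be strictly larger than $W(E,\OO(-2))$. Therefore $r=1$, and integrality of $\ch_1(F)$ together with $\mu(F)<0$ (and the definition of the heart $\Coh^\beta$) force $F=\II_W(-k)$ for some zero-dimensional subscheme $W$ and some integer $k\geq 1$.

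A direct computation of $W(E,\II_W(-k))$ gives a semicircle with center $s=-\tfrac{2n+k^2-2|W|}{2k}$; by Theorem \ref{thm:Bertram}(ii), walls for $E$ are ordered by $|s|$. Imposing $|s|>(n+2)/2$ (i.e.~strictly larger than $W(E,\OO(-2))$) forces $k=1$ and $|W|=n-l$ with $l>(n+1)/2$, giving the identification $W_l=W(E,\II_W(-1))$ claimed in part (i). Evaluating the divisor formula $D_{\alpha,\beta}=(-s-\tfrac32)H-\tfrac{\Delta}{2}$ from Section \ref{subsec:mds_hilbert_scheme} at $s=-(2l+1)/2$ produces exactly $D_{l-1}$. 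For the description of strictly semistable objects along the wall, a Chern-character computation pins down the quotient as a torsion sheaf of class $(0,1,-l-\tfrac12)$, which together with semistability along the wall forces $G\cong\OO_L(-l)$ for some line $L\subset\P^2$. Proposition \ref{prop:structure_sequences}(vi) then determines which of the two non-trivial extensions is $\nu_{\alpha,\beta}$-stable on each side of the wall.

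For part (ii), the key observation is the linear dependence $[E]+[\OO(-2)]=2[\II_W(-1)]$ in the numerical Grothendieck group when $|W|=\tfrac{n-1}{2}$, which is immediate from a Chern-character computation. By Theorem \ref{thm:Bertram}(iii), this forces $W(E,\OO(-2))$ and $W(E,\II_W(-1))$ to coincide as numerical walls, so $W_{(n+1)/2}=W(E,\OO(-2))$. Strict semistability of $\II_Z$ along this wall then reduces to the existence of an honest inclusion of a destabilizer: either $\OO(-2)\hookrightarrow\II_Z$, equivalent to $H^0(\II_Z(2))\neq 0$, i.e.~to $Z$ lying on a conic (case (a)); or $\II_W(-1)\hookrightarrow\II_Z$ for some zero-dimensional $W$ of length $\tfrac{n-1}{2}$, equivalent to $Z$ having exactly $\tfrac{n+1}{2}$ collinear points with $W$ the residual subscheme (case (b)). The ``at most $\tfrac{n+1}{2}$ collinear points'' clause in case (a) is forced by part (i): more collinear points would destabilize $\II_Z$ along a strictly larger wall $W_l$, contradicting semistability at $W_{(n+1)/2}$.

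The main obstacle I anticipate is the final case-splitting step in part (i), where one must rule out all other rank-one destabilizers. The two-parameter family $(k,|W|)$ admits a clean closed-form comparison against the center bound, but I also have to verify that the candidate subobject really is an ideal sheaf of a zero-dimensional subscheme (rather than an arbitrary torsion-free rank-one sheaf with the matching Chern character); this uses that both $F$ and $G$ must be $\nu_{\alpha,\beta}$-semistable along the wall, combined with the classification of slope-semistable torsion-free rank-one sheaves on $\P^2$.
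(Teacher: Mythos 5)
Your overall strategy coincides with the paper's: use Proposition \ref{prop:structure_sequences}(i) to reduce to rank-one destabilizers, identify them as twisted ideal sheaves, compute centers to locate the walls and the induced divisors, use part (vi) for the above/below stability, and for (ii) observe that $W(E,\OO(-2))$ and $W(E,\II_W(-1))$ coincide and classify the two possible destabilizers. The use of the linear dependence $[E]+[\OO(-2)]=2[\II_W(-1)]$ together with Theorem \ref{thm:Bertram}(iii) in part (ii) is a clean way to phrase what the paper calls a ``straightforward computation.''

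However, there is a genuine gap in part (i): the claim that imposing $|s|>(n+2)/2$ on $s=-\tfrac{2n+k^2-2|W|}{2k}$ forces $k=1$ is false. Solving the inequality for $k\geq 3$ gives $n<k-\tfrac{2|W|}{k-2}$, so every $F=\II_W(-k)$ with $k\geq n+1$ and $|W|$ small enough produces a \emph{numerical} wall strictly larger than $W(E,\OO(-2))$; for instance $F=\OO(-n-1)$ has $|s|=\tfrac{n^2+4n+1}{2(n+1)}>\tfrac{n+2}{2}$ for all $n\geq 2$. These candidates are not \emph{actual} walls, but the reason is not the center comparison: it is that the destabilizing sequence must lie in the heart along the wall, which by Proposition \ref{prop:structure_sequences}(ii) forces $0<\ch_1^{\beta}(F)<\ch_1^{\beta}(E)$ at every point of a semicircular wall. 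This is exactly how the paper closes the case: any wall strictly larger than $W(E,\OO(-2))$ must cross the vertical lines $\beta=-2$ and $\beta=-n$ (the endpoints of that semicircle) at positive $\alpha$, whence $\ch_1(F)+2>0$ and $\ch_1(F)+n<n$, i.e.\ $\ch_1(F)=-1$ and $k=1$. You need to insert this heart/torsion-pair argument; the purely numerical ordering of centers does not suffice. The same remark applies implicitly to your part (ii), where the reduction of the possible destabilizers along $W_{(n+1)/2}$ to $\OO(-2)$ and $\II_W(-1)$ again uses the (now non-strict) bounds $\ch_1(F)+2\geq 0$ and $\ch_1(F)+n\leq n$ at the endpoints, followed by Bogomolov's inequality to pin down $F=\OO(-2)$ in the case $\ch_1(F)=-2$; your write-up asserts the dichotomy of destabilizers without this justification.
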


\begin{proof}
The wall $W(E, \OO(-2))$ has center $s = -\tfrac{n}{2} - 1$ and radius $\rho^2 = \tfrac{n^2}{4} - n + 1$. Since $n \geq 5$,
\[
\frac{n^2}{4} - n + 1 > \frac{n}{4} = \frac{\Delta(E)}{8}
\]
and by Proposition \ref{prop:structure_sequences} we only have to deal with destabilizing subobjects or quotients of rank one. Assume that $E$ is destabilized via a short exact sequence
$0 \to F \to E \to G \to 0$
with $\ch_0(F) = 1$. The case where $G$ has rank one is completely analogous and we will not spell it out.

To prove (i) assume that $W(E, F)$ is larger than $W(E, \OO(-2))$. The closure of $W(E, \OO(-2))$ contains the two points $(0, -2)$ and $(0, -n)$. Therefore, the short exact sequence is in $\Coh^{\beta}(\P^2)$ for both $\beta = -2$ and $\beta = -n$. We get $\ch_1^{-2}(F) = \ch_1(F) + 2 > 0$ and $\ch_1^{-n}(F) = \ch_1(F) + n < \ch_1^{-n}(E) = n$, i.e., $\ch_1(F)  = -1$. If $l \coloneqq n + \ch_2(F \otimes \OO(1))$, then $\ch(F) = (1, 0, -n + l) \otimes \ch(\OO(-1))$. This means $F = \II_Y(-1)$ for a zero-dimensional subscheme 
$Y \subset \P^2$ of length $n - l$. The fact that $W(E, F)$ is strictly larger than $W(E, \OO(-2))$ is equivalent to $l > \tfrac{n + 1}{2}$. The quotient satisfies $\ch(G) = (0, 1, -l - \tfrac{1}{2})$ and this means $G = \OO_L(-l)$ for a line $L \subset \P^2$.

The center of $W(E, F)$ is $s = -l - \tfrac{1}{2}$ and this implies the description of the corresponding divisor. The description of which objects are stable above and below the wall follows from Proposition \ref{prop:structure_sequences}.

The fact that the walls are identical is a straightforward computation and the description of the corresponding divisor follows from the same calculation. Assume that $E = \II_Z$ for a subscheme $Z \subset \P^2$ is strictly semistable along this wall. If $\ch_1(F) = -1$, then the same argument as in (i) leads to $F = \II_{\widetilde{Y}}(-1)$ and $G = \OO_L(-\tfrac{n + 1}{2})$. However, the short exact sequence
\[
0 \to \II_{\widetilde{Y}}(-1) \to \II_Z \to \OO_L\left(-\frac{n + 1}{2}\right) \to 0
\]
implies that $Z$ must contain $\tfrac{n + 1}{2}$ points in $L$.

Since the closure of $W(E, \OO(-2))$ contains the points  $(0, -2)$ and $(0, -n)$, we get $\ch_1^{-2}(F) = \ch_1(F) + 2 \geq 0$, and $\ch_1^{-n}(F) = \ch_1(F) + n \leq \ch_1^{-n}(E) = n$. We can rule out $\ch_1(F) = 0$ since that would lead to the vertical wall. We already dealt with $\ch_1(F) = -1$ and thus, are only left with $\ch_1(F) = -2$. Bogomolov's inequality says $\ch_2(F) \leq 2$, but if $\ch_2(F) < 2$, then we are obtaining a smaller wall, i.e., $\ch_2(F) = 2$, and we must have $F = \OO(-2)$. The existence of a non-trivial morphism $\OO(-2) \into \II_Z$ means that $Z$ is contained in a conic. The only way that $\II_Z$ would not be strictly-semistable along this wall is if it was already destabilized at one of the walls $W_l$ we described in (i). But that is equivalent to more than $\tfrac{n + 1}{2}$ points lying on a line.
\end{proof}

Next, we translate what the contents of this proposition imply about ideal sheaves:
\begin{cor}
\label{cor:ideals_between_-1_-2}
Let $Z$ be a zero-dimensional subscheme $Z \subset \P^2$ of length $n$ with ideal sheaf $\II_Z$,  
and let $(\Coh^{\beta_m}(\P^2), \nu_{\alpha_m, \beta_m})$ be a stability condition whose induced divisor is a positive multiple of $D_m$.
\begin{enumerate}
    \item Assume that $n - 1 < m < \infty$. Then any $\II_Z$ is $\nu_{\alpha_m, \beta_m}$-stable.
    \item Assume that $l - 2 < m < l - 1$ for an integer $l$ with $\tfrac{n + 3}{2} \leq l \leq n$. Then $\II_Z$ is $\nu_{\alpha_m, \beta_m}$-stable if and only if $Z$ does not contain a collinear subscheme of length greater than or equal to $l$.
    \item Assume that $n$ is even and $\tfrac{n - 1}{2} < m < \tfrac{n}{2}$. Then $\II_Z$ is $\nu_{\alpha_m, \beta_m}$-stable if and only if $Z$ does not contain a collinear subscheme of length greater than or equal to $\tfrac{n}{2} + 1$.
    \item 
    Assume that $\tfrac{n - 1}{2} \leq m < \tfrac{n - 1}{2}$. Then, the ideal $\II_Z$ is $\nu_{\alpha_m, \beta_m}$-stable if and only if both of the following conditions hold: $Z$ is not contained in a conic, and $Z$ does not contain a collinear subscheme of length greater than or equal to $\tfrac{n + 1}{2}$.
\end{enumerate}
\end{cor}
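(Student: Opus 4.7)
\emph{Proof plan.} The plan is to combine Proposition \ref{prop:wall_between_-1_-2} with the large volume limit (Lemma \ref{lem:large_volume_limit}) and walk down the walls. Each of the four $m$-intervals is strictly between two adjacent walls, so $\sigma_m$ lies in the interior of a chamber, and for any $\II_Z$ with $\ch = (1,0,-n)$, $\sigma_m$-stability is equivalent to $\II_Z$ never having been destabilized at any wall above $\sigma_m$. The explicit dictionary between wall centers and $m$-values---namely $W_l$ (center $-l - \tfrac{1}{2}$) corresponds to $D_{l-1}$ and $W(E,\OO(-2))$ (center $-\tfrac{n}{2} - 1$) corresponds to $D_{(n-1)/2}$---tells us exactly which walls sit above $\sigma_m$ in each case.

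The next step is to translate ``$\II_Z$ is destabilized at the wall $W_l$'' into a geometric condition on $Z$. Given a line $L \subset \P^2$ meeting $Z$ in a length-$l$ subscheme, the restriction map yields a surjection $\II_Z \onto \II_{Z \cap L, L} \cong \OO_L(-l)$, and a Chern character computation forces its kernel to be of the form $\II_W(-1)$ with $|W| = n - l$; this produces the destabilizing sequence from Proposition \ref{prop:wall_between_-1_-2}(i). Conversely, any subobject of the form $\II_W(-1) \subset \II_Z$ with $|W| = n - l$ realizes $Z \cap L$ in length at least $l$ for the line $L = \supp(\OO_L(-l))$. A cumulative argument then shows that just below $W_l$, the ideal $\II_Z$ is $\sigma$-unstable iff $Z$ contains a collinear subscheme of length $\geq l$. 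In the same spirit, $\II_Z$ is destabilized at $W(E, \OO(-2))$ iff $\Hom(\OO(-2), \II_Z) \neq 0$, i.e., iff $Z$ lies on a (possibly singular) conic.

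With these two translations, parts (i)--(iv) follow by case analysis. For (i), $m > n - 1$ is above the largest wall $W_n$, so Lemma \ref{lem:large_volume_limit} yields stability of every ideal sheaf. For (ii), the interval $(l-2, l-1)$ lies between $W_l$ (above) and $W_{l-1}$ (below) for $\tfrac{n+3}{2} \leq l \leq n$, and stability amounts to surviving all walls $W_k$ with $k \geq l$, giving the condition ``no collinear subscheme of length $\geq l$''. For (iii) with $n$ even, $(\tfrac{n-1}{2}, \tfrac{n}{2})$ sits between $W_{n/2+1}$ and the conic wall $W(E,\OO(-2))$, so only the collinearity condition is needed and it becomes ``length $\geq n/2 + 1$''. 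For (iv), we add the conic obstruction coming from $W(E, \OO(-2))$; when $n$ is odd this wall coincides with $W_{(n+1)/2}$, so both obstructions appear simultaneously at that wall, whereas for $n$ even the collinearity obstruction ``length $\geq n/2 + 1 = \lceil (n+1)/2 \rceil$'' was already in force above.

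The main obstacle is the bookkeeping around the conic wall, because for $n$ odd it coincides with $W_{(n+1)/2}$ while for $n$ even it is distinct; one has to check that in both parities the final condition matches the uniform phrasing ``collinear subscheme of length $\geq \tfrac{n+1}{2}$'', and that no additional destabilizing subobjects are introduced beyond those listed in Proposition \ref{prop:wall_between_-1_-2}. Everything else is a direct unwinding of the dictionary between $m$ and wall centers.
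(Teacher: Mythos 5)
Your proposal is correct and matches the paper's approach: the paper states this corollary as a direct translation of Proposition \ref{prop:wall_between_-1_-2} (whose wall $W_l$ carries the divisor $D_{l-1}$ and whose destabilizing sequences involve $\OO_L(-l)$ and $\OO(-2)$), and your argument simply spells out that translation, including the correct cumulative wall-crossing logic and the parity bookkeeping at the conic wall.
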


We need the following crude estimate for an extremal ray of the effective cone of $\P^{2[n]}$:

\begin{lem}
\label{lem:estimate_effective_cone}
The divisor $D_2$ is outside the effective cone for $n \geq 7$. If $n = 5$ or $n = 6$, then $D_2$ is on the boundary of the effective cone.
\end{lem}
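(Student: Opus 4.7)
$\textbf{Proof proposal.}$
The plan is to appeal to the complete characterization of the effective cone of $\P^{2[n]}$ established in \cite{CHW17:effective_cones_p2}, according to which $\Eff(\P^{2[n]})$ is a closed polyhedral cone spanned by $H$ together with one additional extremal class of the form $D_{m_{\text{eff}}(n)}$, associated to a distinguished exceptional bundle on $\P^2$. The lemma then reduces to showing $m_{\text{eff}}(5) = m_{\text{eff}}(6) = 2$ and $m_{\text{eff}}(n) > 2$ for $n \geq 7$.

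For $n = 5$, every length-$5$ subscheme satisfies $h^0(\II_Z(2)) \geq 6 - 5 = 1$, so the Pl\"ucker map $\phi_2$ is a well-defined morphism $\P^{2[5]} \to \P^5$, and $D_2 = \phi_2^\ast \OO_{\P^5}(1)$ is globally generated. Its fibres are the $5$-dimensional families of length-$5$ subschemes of a fixed conic, so $D_2$ lies on the boundary of $\Nef(\P^{2[5]})$, and the classification of \cite{CHW17:effective_cones_p2} confirms that it also spans the extremal effective ray. For $n = 6$, the locus $B_2 = \{Z : h^0(\II_Z(2)) > 0\}$ has dimension $\dim |\OO(2)| + \dim \Hilb^6(\P^1) = 11 = \dim \P^{2[6]} - 1$, so it is an irreducible effective divisor. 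I would check that $[B_2]$ is a positive multiple of $D_2$ by pairing against the two test curves that generate $N_1(\P^{2[6]})_\Q$ --- one point moving along a line while $n-1$ points are fixed generically off the line, and a rational curve in $\Delta$ arising from two points colliding with a fixed tangent direction --- and using the known intersection numbers of $H$ and $\Delta$ against these curves.

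For $n \geq 7$, the same dimension count yields $\dim\{Z : h^0(\II_Z(2)) > 0\} = n + 5 < 2n$, so conics produce no effective divisor of class $D_2$. I would then invoke the formula in \cite{CHW17:effective_cones_p2}, which associates to each $n$ an exceptional bundle $E_n$ on $\P^2$ whose Brill--Noether divisor $D_{m_{\text{eff}}(n)}$ generates the extremal effective ray, and whose output for $n \geq 7$ satisfies $m_{\text{eff}}(n) > 2$. Consequently $D_2$ lies strictly outside $\Eff(\P^{2[n]})$.

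The main obstacle is the last step. The movable cone is in general strictly smaller than the effective cone --- as the $n = 6$ case already illustrates, where the conic divisor $B_2 \sim D_2$ is rigid and sits outside $\Mov(\P^{2[6]})$ --- so one cannot conclude $D_2 \notin \Eff$ from the Bridgeland-wall description of Proposition~\ref{prop:wall_between_-1_-2} or Corollary~\ref{cor:ideals_between_-1_-2} alone. A direct argument via a moving curve $\gamma \subset \P^{2[n]}$ with $D_2 \cdot \gamma < 0$ does not seem to be accessible from the standard pencil constructions, so the full exceptional-bundle classification of \cite{CHW17:effective_cones_p2} appears to be essential.
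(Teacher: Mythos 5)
There is a genuine gap, and it sits exactly at the main content of the lemma. For $n \geq 7$ you write that you would ``invoke the formula in \cite{CHW17:effective_cones_p2} \ldots whose output for $n \geq 7$ satisfies $m_{\text{eff}}(n) > 2$,'' but you never derive this inequality; the whole lemma reduces to it, so asserting it is not a proof. The CHW description of the extremal effective ray is genuinely involved (one must locate the associated exceptional slope), and nothing in your write-up extracts the bound $> 2$ from it. The paper avoids this entirely with a two-line estimate: every semicircular Bridgeland wall for $(1,0,-n)$ has its top point on the hyperbola $\nu_{0,\beta}(1,0,-n)=0$, whose negative branch meets the $\beta$-axis at $\beta=-\sqrt{2n}$, so every wall has center $s<-\sqrt{2n}$ and hence induces a divisor $D_{-s-3/2}$ with $-s-3/2>\sqrt{2n}-3/2>2$ for $n\geq 7$; combined with the CHW/ABCH description of the extremal ray this rules out $D_2$, and the cases $n=5,6$ are the worked examples in \cite{ABCH13:hilbert_schemes_p2}. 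If you want to keep your route, you must actually compute $m_{\text{eff}}(n)$ (or a lower bound for it) from the CHW formula.

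Two further points need repair. First, $\Eff(\P^{2[n]})$ is spanned by $\Delta$ and the extremal class, not by $H$ and the extremal class: $H$ is nef and big, hence interior, while $\Delta$ is a rigid effective divisor and is always an extremal ray. (Your reduction to ``$m_{\text{eff}}>2$'' happens to survive this slip, but the statement as written is false.) Second, for $n=5$ the map $\phi_2$ is \emph{not} a morphism on $\P^{2[5]}$ and $D_2$ is not nef there --- collinear length-five subschemes lie on a two-dimensional family of conics, and the nef cone of $\P^{2[5]}$ is spanned by $D_4$ and $H$. The ``nef but not big'' argument does work, but only on the final birational model $M_{\sigma}(1,0,-5)$ (which agrees with $\P^{2[5]}$ in codimension one, so effectivity is unaffected). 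Similarly, for $n=6$ your test-curve computation only identifies the class of $B_2$ as a multiple of $D_2$ and hence shows $D_2\in\Eff$; to place it on the \emph{boundary} you still need an extremality or non-bigness argument (e.g.\ that $B_2$ is covered by curves $\gamma$ with $D_2\cdot\gamma<0$, such as five fixed points plus one moving along the conic, for which $H\cdot\gamma=2$ and $\tfrac{\Delta}{2}\cdot\gamma=5$).
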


\begin{proof}
At the end of Section \ref{subsec:mds_hilbert_scheme} we recalled that the induced divisor along a wall with radius $s$ is given by $D_{-s-3/2}$. If $D_2$ is in the effective cone, then there must be a numerical wall in Bridgeland stability for objects with Chern character $(1, 0, -n)$ with center $s = -\tfrac{7}{2}$ along which there are semistable objects. The equation $\nu_{0, \beta}(1, 0, -n) = 0$ is equivalent to $\beta = \pm \sqrt{2n}$. This means all walls for objects with Chern character $(1, 0, -n)$ have center $s < -\sqrt{2n}$. This immediately rules out $s = -\tfrac{7}{2}$ for $n \geq 2$. The cases $n = 5$ and $n = 6$ were treated as examples in \cite{ABCH13:hilbert_schemes_p2}.
\end{proof}

Most walls described in Proposition \ref{prop:wall_between_-1_-2} are associated to configurations of points supported on a line $L \subset \mathbb{P}^2$. Therefore, we start by describing related matrices for our chosen quiver between $\beta = -2$ and $\beta = -1$. Recall Example \ref{ex:matrices} for how to compute these matrices. 
\begin{lem}
\label{lem:oo_L_quiver}
Let $L$ be the line cut out by $z = 0$, and fix an integer $n \geq 2$. Then $\OO_L(-n)[1]$ is in $\Rep-Q$ with dimension vector $(n, n - 1, n - 2)$ and the matrices are given by
\[
    A(\OO_L(-n)[1]) = \begin{pmatrix}
    x      & y      & 0      & 0      & \dots  &  0      \\ 
    0      & x      & y      & 0      & \dots  &  0      \\
    0      & 0      & x      & y      & \dots  &  0      \\
    \vdots & \vdots & \vdots & \ddots & \ddots &  \vdots \\
    0      & 0      & \dots  & 0      & x      &  y      \\
    \end{pmatrix}, \
    B(\OO_L(-n)[1]) = \begin{pmatrix}
    x'      & y'      & 0       & \dots   &  0      \\ 
    0       & x'      & y'      & \dots   &  0      \\
    \vdots  & \vdots  & \ddots  & \ddots  &  \vdots \\
    0       & 0       & \dots   & x'      &  y'
    \end{pmatrix},
\]
\end{lem}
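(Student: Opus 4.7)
My plan is to directly apply the equivalence $\RHom(E, -): \Db(\P^2) \xrightarrow{\sim} \Db(\Rep-Q)$ from Section 2.5 to the object $\OO_L(-n)[1]$ and push along the closed embedding $\iota: L \hookrightarrow \P^2$. Because $L$ is cut out by $z = 0$, every computation reduces to cohomology of line bundles on $\P^1$, and the fact that $z$ restricts to $0$ on $\OO_L$ is exactly what forces $A_z = 0$ and $B_{z'} = 0$ in the displayed matrices.

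First I would establish membership in $\Rep-Q$ and the dimension vector. For $k \in \{-1, 0, 1\}$ the projection formula identifies
\[
\RHom_{\P^2}(\OO(k), \OO_L(-n)[1]) \cong R\Gamma(\P^1, \OO_{\P^1}(-n-k))[1].
\]
Since $n \geq 2$ and $k \geq -1$, we have $-n - k < 0$, so $H^0$ vanishes; $H^2$ vanishes on a curve, so only $H^1$ contributes. By Serre duality $h^1(\P^1, \OO(-n-k)) = h^0(\P^1, \OO(n+k-2)) = n + k - 1$. This yields the claimed vertex dimensions $n, n-1, n-2$ and, since all higher $\Ext$'s vanish, places $\OO_L(-n)[1]$ in the heart $\Rep-Q$.

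Next I would compute the matrices. Under the equivalence, the arrow labelled $x$ from vertex $1$ to vertex $2$ is precomposition with $x \in \Hom(\OO, \OO(1))$, which becomes multiplication by the restricted coordinate $\bar{x} \in H^0(\P^1, \OO(1))$ on $H^1(\P^1, \OO(-n-k))$; the analogous statement holds for $y$, $z$, $x'$, $y'$, $z'$. Because $\bar{z} = 0$ on $L$, the $z$- and $z'$-components vanish automatically. Fixing on each $H^0(\P^1, \OO(n+k-2))$ the monomial basis $\{x^{n+k-2-j} y^{j}\}_{j=0}^{n+k-2}$ and taking dual bases on the $V_k$ via Serre duality, the map $V_1 \to V_2$ induced by $x$ is the transpose of
\[
H^0(\P^1, \OO(n-2)) \to H^0(\P^1, \OO(n-1)), \qquad x^{n-2-j} y^{j} \mapsto x^{n-1-j} y^{j}.
\]
So $A_x$ has $1$'s on the main diagonal and zeros elsewhere; the same computation for $y$ places $1$'s on the superdiagonal of $A_y$, recovering the displayed form of $A(\OO_L(-n)[1])$. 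The identical argument at the pair of vertices $(2, 3)$, shifted in degree by one, produces $B(\OO_L(-n)[1])$.

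The main obstacle is purely bookkeeping: pinning down the direction of the arrows under Serre duality (so that the precomposition map becomes the naive multiplication map in the correct direction) and matching the chosen monomial ordering with the convention that the main diagonal of $A$ carries the $x$-entries rather than the $y$-entries. Once these conventions are fixed, every step reduces to the elementary observation that multiplication by $x$ or $y$ shifts the chosen monomial basis on $\P^1$ in the expected way.
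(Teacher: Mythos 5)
Your argument is correct and follows essentially the same route as the paper's proof: identify the three $\RHom$ spaces via restriction to $L \cong \P^1$ and Serre duality, note that $z$ restricts to zero so the $z$- and $z'$-components vanish, and read off $A_x$, $A_y$, $B_{x'}$, $B_{y'}$ as transposes of the multiplication maps in the monomial basis. The only difference is that you spell out the adjunction/vanishing bookkeeping that the paper leaves implicit.
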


\begin{proof}
Using Serre duality on $L \cong \P^1$, we get 
\begin{align*}
    \RHom(\OO(-1), \OO_L(-n)[1]) &= H^0(\OO_L(n-3))^{\vee}, \\
    \RHom(\OO, \OO_L(-n)[1]) &= H^0(\OO_L(n-2))^{\vee}, \\
    \RHom(\OO(1), \OO_L(-n)[1]) &= H^0(\OO_L(n-1))^{\vee}.
\end{align*}
We choose the basis $x^k, x^{k-1}y, \ldots, y^k$ on $H^0(\OO_L(k))$ for any positive integer $k$. Then the multiplication morphisms $x, y: H^0(\OO_L(k)) \to H^0(\OO_L(k + 1)$ are given by the matrices
\[
    M_{x, k} = \begin{pmatrix}
    1      & 0      & \dots  & 0      \\ 
    0      & 1      & \dots  & 0      \\
    \vdots & \vdots & \ddots & \vdots \\
    0      & 0      & \dots  & 1      \\
    0      & 0      & \dots  & 0
    \end{pmatrix}, \
    M_{y, k} = \begin{pmatrix}
    0      & 0      & \dots  &  0     \\ 
    1      & 0      & \dots  &  0     \\
    0      & 1      & \dots  &  0     \\
    \vdots & \vdots & \ddots & \vdots \\
    0      & 0      & \dots  &  1      \\
    \end{pmatrix}.
\]
Since $L$ is cut out by $z = 0$, the matrix $M_{z, k}$ corresponding to multiplication by $z$ is trivial. On the vector spaces $H^0(\OO_L(k))^{\vee}$ we choose the dual basis. Then 
\begin{align*}
    A(\OO_L(-n)[1]) &= x M_{x, n - 2}^{t} + y M_{y, n - 2}^{t} + z M_{y, n - 2}^{t}, \\
    B(\OO_L(-n)[1]) &= x M_{x, n - 3}^{t} + y M_{y, n - 3}^{t} + z M_{y, n - 3}^{t}
\end{align*}
as claimed.
\end{proof}

The next step is to understand the characters $\theta$ corresponding to the walls $W(\OO(-1), (1,0,-n))$ and $W(\OO(-2), (1,0,-n))$. The divisors corresponding to these two walls will be those for which we will compute the Hilbert-Mumford index. Let $n \geq 4$. Since
\[
-(1, 0, -n) = n \ch(\OO(-2)[2]) + (n - 1) \ch(\Omega[1]) + (n-3) \ch(\OO(-1)),
\]
the corresponding dimension vector of our ideal sheaves in $\Rep-Q$ is $(n, n - 1, n - 3)$. This also means that it is not possible for $\II_Z$ to be in $\Rep-Q$, but rather $\II_Z[1]$.

\begin{enumerate}
    \item In King's quiver stability in $\Rep-Q$ the wall $W(\OO(-1), (1,0,-n))$ is given by $\theta(0,0,1) = 0$. Due to $\theta(n, n - 1, n - 3) = 0$, we must have either $\theta = (n - 1, -n, 0)$ or $\theta = (-n + 1, n, 0)$. For $\theta = (-n + 1, n, 0)$ any quotient $\II_Z[1] \to \OO(-2)[2]$ destabilizes $\II_Z[1]$, a contradiction. Thus, $\theta_1 = (n - 1, -n, 0)$ is the corresponding character.
    \item The wall $W(\OO(-2), (1,0,-n))$ corresponds to $\theta(1,0,0) = 0$. Due to $\theta(n, n - 1, n - 3) = 0$ and the fact that positive scaling does not play a role, we must have either $\theta = (0, -n + 3, n - 1)$ or $\theta = (0, n - 3, -n + 1)$. For $\theta = (0, -n + 3, n - 1)$ any subobject $\OO(-1) \into \II_Z[1]$ destabilizes $\II_Z[1]$, a contradiction. Thus, $\theta_2 = (0, n - 3, -n + 1)$ is the corresponding character.
\end{enumerate}

\begin{prop}
\label{prop:quiver_locus1}
Any Bridgeland-stable extension
\[
0 \to \OO_L(-n) \to E \to \OO(-1) \to 0
\]
where $L \subset \P^2$ is a line is GIT-unstable for $n \geq 4$ and any polarization.
\end{prop}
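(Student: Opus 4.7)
The plan is to pass to the quiver-representation model of the Bridgeland moduli space and then destabilize by an explicit one-parameter subgroup of $\SL_3$ that becomes transparent in those coordinates. A rotation of the defining distinguished triangle $\OO_L(-n) \to E \to \OO(-1) \xrightarrow{+1}$, read inside the heart $\Rep-Q = \langle \OO(-2)[2], \Omega[1], \OO(-1)\rangle$, yields a short exact sequence
\[
0 \to \OO(-1) \to \OO_L(-n)[1] \to V \to 0
\]
with $V \coloneqq E[1]$ of dimension vector $d = (n, n-1, n-3)$. Choose coordinates so that $L = \{z = 0\}$; then by Lemma \ref{lem:oo_L_quiver} the matrices of $\OO_L(-n)[1]$ involve only $x, y, x', y'$, and passing to the quotient by the simple $\OO(-1) = S_3$ at vertex $3$ yields matrices $A(V), B(V)$ with $A_z(V) = 0$ and $B_{z'}(V) = 0$.

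Consider the $\SL_3$ one-parameter subgroup $\lambda(t) = \diag(t, t, t^{-2})$. It acts on $\Rep-Q$ by rescaling $x, y$ (and $x', y'$) with equal powers of $t$, so $A(V)$ and $B(V)$ are each multiplied by a uniform power of $t$. This uniform rescaling is absorbed by a unique rational one-parameter subgroup $\mu$ of $\widetilde{\GL}(d)$ with weights $c_1, c_2, c_3$ satisfying the two compensation equations $c_2 - c_1 = c_3 - c_2 = \pm 1$ (sign dictated by the action convention) together with the $\widetilde{\GL}(d)$-normalization $c_1 n + c_2(n-1) + c_3(n-3) = 0$, which forces $c_1 = \mp(3n - 7)/(3n - 4)$. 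The pair $(\lambda, \mu)$ fixes $V$ pointwise in $\RR(Q, d)$, and Bridgeland-stability of $E$ places $V$ in the $\theta$-stable locus.

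By the standard iterated-GIT identification between $\SL_3$-GIT on the quiver moduli space polarized by $D_m$ and $(\SL_3 \times \widetilde{\GL}(d))$-GIT on $\RR(Q, d)$ with the corresponding King character $\theta$, the Hilbert--Mumford index of $V$ along $\lambda$ reduces up to sign to $\theta(\mu)$. The polarization chamber is spanned by $\theta_1 = (n-1, -n, 0)$ and $\theta_2 = (0, n-3, -n+1)$, and a direct evaluation gives $\theta_1(\mu)$ and $\theta_2(\mu)$ proportional to $3n^2 - 7n + 7$ and $3n^2 - 7n + 10$ respectively over the positive denominator $3n - 4$. Both numerators have negative discriminant and are strictly positive for $n \geq 4$. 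By linearity of $\mu^{D_m}(V, \lambda)$ in $m$ (Lemma \ref{lemma:m0}), the Hilbert--Mumford index has the same nonzero sign throughout the chamber, so $\lambda$ (or its inverse, according to conventions) destabilizes $V$ for every polarization.

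The main technical obstacle is keeping the several sign conventions aligned — Definition \ref{def:HMcrit}, Remark \ref{rmk:minNumerical}, King's character choice, and the direction of the iterated-GIT identification — so that the computed positivity of $|\theta(\mu)|$ indeed translates into $\mu^{D_m}(V, \lambda) > 0$. Once this is fixed, the construction of the compensating 1-PS $\mu$ is forced and the final positivity estimate is elementary.
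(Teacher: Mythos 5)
Your overall strategy is exactly the paper's: pass to the quiver model, observe that with $L=\{z=0\}$ the matrices of $E[1]$ involve only $x,y$ (resp.\ $x',y'$), fix $E[1]$ by combining the diagonal one-parameter subgroup $(\pm1,\pm1,\mp2)$ of $\SL_3$ with a compensating one-parameter subgroup of $\GL(d)$, and evaluate the two boundary characters $\theta_1=(n-1,-n,0)$ and $\theta_2=(0,n-3,-n+1)$, concluding by linearity across the chamber. Two points need repair, one computational and one a genuine gap.

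The computational point: your pairing of $\theta$ with the compensating one-parameter subgroup $\mu$ is wrong. The character $\det^{\theta_i}$ on $\GL(d_i)$ pairs with a central one-parameter subgroup of weight $c_i$ to give $\theta_i c_i d_i$, not $\theta_i c_i$; equivalently, because $\langle\theta,d\rangle=0$, the pairing is insensitive to shifting $c$ by the long diagonal, so your normalization $\sum_i c_i d_i=0$ is unnecessary and the answer must agree with the unnormalized choice $c=(-1,0,1)$. The correct values are $-\langle\theta_1,\mu\rangle=n(n-1)$ and $-\langle\theta_2,\mu\rangle=(n-1)(n-3)$, not $(3n^2-7n+7)/(3n-4)$ and $(3n^2-7n+10)/(3n-4)$. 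Your numbers happen to still be nonzero of a single sign for $n\geq 4$, and since $\lambda$ fixes $E[1]$ a nonzero index in either direction yields instability, so the conclusion survives; but the computation as written is incorrect and should be redone with the determinant weights.

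The genuine gap: you conclude instability ``for every polarization'' but only argue it on the chamber spanned by $\theta_1$ and $\theta_2$, i.e.\ for divisors between $D_{n-1}$ and $D_{(n-1)/2}$. To get the full statement one must also rule out all other polarizations, which requires knowing that these extensions $E$ simply do not appear as Bridgeland-semistable objects outside that range: they are destabilized above the wall $W(\OO(-1),E)$ by construction, and---this is the step you are missing---they are destabilized at the wall $W(\OO(-2),E)$ because $\Hom(\OO(-2),E)\neq 0$. Indeed, from
\[
0 \to \Hom(\OO(-2), E) \to \Hom(\OO(-2), \OO(-1)) = \langle x, y, z \rangle \to \Hom(\OO(-2), \OO_L(-n)[1])
\]
the section $z$ cutting out $L$ dies under the last map, so $\Hom(\OO(-2),E)\neq 0$. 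Without this (or an equivalent argument), the claim ``any polarization'' is not established.
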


\begin{proof}
By Proposition \ref{prop:quiver_moduli_iso} all relevant moduli space between $W(\OO(-1), E)$ and $W(\OO(-2), E)$ are moduli space of representations in $\Rep-Q$. If we can show that our objects $E$ are only Bridgeland-stable in that region, we can forget about the derived category and only deal with quiver representations.

These objects $E$ can only be Bridgeland-stable below the wall $W(\OO(-1), E)$. We will prove that they are unstable in Bridgeland stability below $W(\OO(-2), E)$. This follows if $\Hom(\OO(-2), E) \neq 0$. We have an exact sequence
\[
0 \to \Hom(\OO(-2), E) \to \left(\Hom(\OO(-2), \OO(-1)) = \langle x, y, z \rangle \right) \to \Hom(\OO(-2), \OO_L(-n)[1]). 
\]
Since by assumption $L$ is cut out by $z$, the last morphism is never injective, since at least $z$ maps to $0$. Therefore, $\Hom(\OO(-2), E) \neq 0$.

Using the action of $\SL_3$ allows us to reduce to the case where $L$ is cut out by $z = 0$. In the category $\Rep-Q$ we have a short exact sequence
\[
0 \to \OO(-1) \to \OO_L(-n)[1] \to E[1] \to 0.
\]
Therefore, $A(E[1]) = A(\OO_L(-n)[1])$ and $B(E[1]) = T \cdot B(\OO_L(-n)[1])$, where $T: \C^{n-2} \to \C^{n-3}$ is some surjective linear map. The key observation here is that both $A(E[1])$ and $B(E[1])$ are matrices with entries in $\C[x, y]$, but no $z$ occurs in either of them.

We will show that $E$ is destabilized by the diagonal one-parameter subgroup $\lambda = (-1,-1,2)$. We have
\[
\Ext^1(\OO(-1), \OO_L(-n)) = H^0(\OO_L(n - 3))^{\vee} = \langle x^{n-3}, x^{n-4}y, \ldots, y^{n-3} \rangle^{\vee}.
\]
Therefore, $\lambda$ fixes $E$. The action of $\lambda$ does not fix the matrices $A(E[1])$ and $B(E[1])$, but the one-parameter subgroup 
\[
\lambda' = ((-1, \ldots, -1), (0, \ldots, 0), (1, \ldots, 1),(-1,-1,2)) \in \GL_n \times \GL_{n-1} \times \GL_{n-3} \times \SL_3
\]
does.

For any character $\theta$ in between $\theta_1$ and $\theta_2$ we denote the induced line bundle on the moduli space of $\theta$-semistable representations by $L^{\theta}$. To understand $\mu^{L^{\theta}}([E], \lambda)$ we need to understand the action of $\lambda$ on the fiber $L^{\theta}_{[E]}$.  By our sign convention, $L^{\theta}$ is induced by the linearization given by $-\theta$ on the trivial line bundle $\OO_{\RR(Q, (n,n-1,n-3))}$. The action on the fiber can be determined by pairing $-\theta$ with $\lambda'$, and overall, we get $\mu^{L^{\theta}}([E], \lambda) = - \langle \theta, \lambda' \rangle$. Indeed, we have $-\langle \theta_1, \lambda' \rangle = n(n-1) > 0$ for $n \geq 2$ and $-\langle \theta_2, \lambda' \rangle = (n-3)(n-1) > 0$ for $n \geq 4$.
\end{proof}

The next locus to investigate are non-trivial extensions
$0 \to \OO_L(-n + 1) \to E \to \II_P(-1) \to 0$, where $P \in \P^2$ is a point and $L \subset \P^2$ is a line. Using our standard quiver in between $\beta = -1$ and $\beta = -2$, we get the following description of the matrices.

\begin{lem}
\label{lem:quiver_IP-1}
Let $P$ be the point cut out by $x = y = 0$. Then $\II_P(-1)[1] \in \Rep-Q$ with dimension vector $(1,1,0)$, and the matrices are given by $A(\II_P(-1)[1]) = z$, $B(\II_P(-1)[1]) = 0$.
\end{lem}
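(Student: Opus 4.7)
The plan is to mirror the approach used in Lemma \ref{lem:oo_L_quiver}: first compute the three vector spaces $\RHom(\OO(k), \II_P(-1)[1])$ for $k = 1, 0, -1$ to pin down both the dimension vector and the fact that $\II_P(-1)[1]$ genuinely sits in $\Rep\text{-}Q$, then identify the matrix $A$ by tracking how multiplication by $x,y,z$ acts on these spaces.

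For the first step, I would use $\RHom(\OO(k), \II_P(-1)[1])[-1] = R\Gamma(\II_P(-1-k))$ and feed the standard short exact sequence
\[
0 \to \II_P(-1-k) \to \OO(-1-k) \to \OO_P \to 0
\]
into a long exact sequence for $k = -1, 0, 1$. For $k = 0$ and $k = 1$, all cohomologies of $\OO(-1-k)$ vanish, so the connecting map gives $H^1(\II_P(-1-k)) \cong H^0(\OO_P) = \C$ with all other cohomologies trivial. For $k = -1$, the map $H^0(\OO) \to H^0(\OO_P)$ is the isomorphism given by evaluation at $P$, and one concludes $H^i(\II_P) = 0$ for all $i$. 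This simultaneously yields the dimension vector $(1,1,0)$ and certifies that every $\Ext$ is concentrated in degree zero, so $\II_P(-1)[1]$ is a genuine quiver representation rather than a complex.

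For the second step, I would interpret each arrow labelled $\zeta \in \{x,y,z\}$ as precomposition with $\zeta \colon \OO \to \OO(1)$, inducing $H^1(\II_P(-2)) \to H^1(\II_P(-1))$. Multiplication by $\zeta$ gives a morphism of the two short exact sequences above in which the map $\OO_P \to \OO_P$ is multiplication by the scalar $\zeta|_P$. Chasing the connecting isomorphisms $H^1(\II_P(-k)) \cong H^0(\OO_P) = \C$, the induced map is identified with multiplication by $\zeta|_P$. Because $P$ is cut out by $(x,y)$, we have $x|_P = y|_P = 0$ while $z|_P \neq 0$, so after the standard rescaling only the $z$-entry of $A$ survives and equals $z$. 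The matrix $B$ is automatically zero since the third vertex has dimension zero.

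The computation itself is routine; the only real bookkeeping concern is matching the duality and sign conventions with Lemma \ref{lem:oo_L_quiver} (where the $\RHom$'s appear in their dual form via Serre duality on a line). Here, however, every relevant vector space is one-dimensional, so each matrix entry is determined by whether the corresponding multiplication map vanishes on $\OO_P$, and the dualisation has no effect on the final answer.
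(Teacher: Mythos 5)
Your proposal is correct and follows essentially the same route as the paper: compute $\RHom(\OO(1),\cdot)$, $\RHom(\OO,\cdot)$, $\RHom(\OO(-1),\cdot)$ to get the dimension vector $(1,1,0)$, and then observe that multiplication by $x$ and $y$ vanishes on $\OO_P$ while $z$ acts as the identity, so $A = z$ and $B = 0$. The paper's proof merely states these facts, whereas you supply the standard details (the ideal-sheaf sequence, the connecting isomorphisms, and their functoriality), all of which check out.
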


\begin{proof}
We have
\[
    \RHom(\OO(-1), \II_P(-1)[1]) = 0,
    \RHom(\OO, \II_P(-1)[1]) = \RHom(\OO(1), \II_P(-1)[1]) = H^0(\OO_P).
\]
Clearly, multiplication by $x$ and $y$ is trivial on $H^0(\OO_P)$ and $z$ acts as the identity.
\end{proof}

\begin{lem}
\label{lem:matrices_locus2}
Let $P$ be the point cut out by $x = y = 0$ and $L$ the line cut out by $z = 0$. Then the object $E[1] \in \Rep-Q$ has dimension vector $(n, n - 1, n - 3)$. There is $a \in \C^{n-3} \backslash \{ 0 \}$ such that the matrices have the block form
\[
    A(E[1]) = \begin{pmatrix}
    A(\OO_L(-n + 1)[1]) & 0 \\
    0                   & z
    \end{pmatrix}, \
    B(E[1]) = \begin{pmatrix}
    B(\OO_L(-n + 1)[1]) & az \\
    \end{pmatrix}.
\]
\end{lem}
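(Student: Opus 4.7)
The plan is to exhibit a basis of $E[1]$ compatible with the filtration induced by the short exact sequence, and then use the commutation relations of the quiver together with filtration-preserving basis changes to reduce the off-diagonal data to the claimed form.

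First, the short exact sequence $0 \to \OO_L(-n+1)[1] \to E[1] \to \II_P(-1)[1] \to 0$ lies in the heart $\Rep\text{-}Q$, so dimension vectors add. Lemma \ref{lem:oo_L_quiver} applied with $n$ replaced by $n-1$ gives dimension vector $(n-1, n-2, n-3)$ for $\OO_L(-n+1)[1]$, and Lemma \ref{lem:quiver_IP-1} gives $(1, 1, 0)$ for $\II_P(-1)[1]$; summing yields $(n, n-1, n-3)$. Choosing a basis adapted to the filtration, the matrices $A(E[1])$ and $B(E[1])$ are block upper triangular with the known diagonal blocks. Write the top-right blocks as $\alpha = x\alpha_x + y\alpha_y + z\alpha_z$ for $A$ (a column of height $n-2$) and $\beta = x'\beta_{x'} + y'\beta_{y'} + z'\beta_{z'}$ for $B$ (a column of height $n-3$).

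Since the diagonal blocks already satisfy the quiver relations, each of $B_{x'}A_y = B_{y'}A_x$, $B_{x'}A_z = B_{z'}A_x$, $B_{y'}A_z = B_{z'}A_y$ imposes a constraint on $(\alpha, \beta)$ via its top-right entry. Using $B_{z'}(\OO_L) = 0$ and $A_z(\OO_L) = 0$, direct computation gives
\begin{align*}
\beta_{x'} &= -B_{x'}(\OO_L)\alpha_z, \\
\beta_{y'} &= -B_{y'}(\OO_L)\alpha_z, \\
B_{x'}(\OO_L)\alpha_y &= B_{y'}(\OO_L)\alpha_x.
\end{align*}
The explicit form of $B_{x'}(\OO_L)$ and $B_{y'}(\OO_L)$ from Lemma \ref{lem:oo_L_quiver} translates the last identity into $(\alpha_y)_i = (\alpha_x)_{i+1}$ for $i = 1, \ldots, n-3$.

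Filtration-preserving basis changes at the first and middle nodes take the form $U = \begin{pmatrix} I & u \\ 0 & 1 \end{pmatrix}$ and $V = \begin{pmatrix} I & v \\ 0 & 1 \end{pmatrix}$ (modulo the $\C^{*}$-scaling automorphisms of $\OO_L(-n+1)$ and $\II_P(-1)$, which only rescale $a$). Under these, $\alpha \mapsto \alpha - A(\OO_L)u + vz$ and $\beta \mapsto \beta - B(\OO_L)v$. Setting $v = -\alpha_z$ kills $\alpha_z$ and, via the first two identities above, simultaneously kills $\beta_{x'}$ and $\beta_{y'}$. The constraint $(\alpha_y)_i = (\alpha_x)_{i+1}$ is then precisely the compatibility condition needed to solve the linear system $u_i = (\alpha_x)_i$ for $i = 1, \ldots, n-2$ together with $u_{n-1} = (\alpha_y)_{n-2}$; this kills both $\alpha_x$ and $\alpha_y$. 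After these changes $\alpha = 0$ and $\beta = az$ with $a \coloneqq \beta_{z'} \in \C^{n-3}$. If $a = 0$ then all off-diagonal data vanishes and $E$ splits as $\OO_L(-n+1) \oplus \II_P(-1)$, contradicting non-triviality of the extension, so $a \neq 0$.

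The main obstacle is recognizing the numerical coincidence between the commutation constraint $(\alpha_y)_i = (\alpha_x)_{i+1}$ and the compatibility required to simultaneously kill $\alpha_x$ and $\alpha_y$ by a single vector $u$. Once this coincidence is in hand, everything else is routine linear algebra.
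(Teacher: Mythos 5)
Your proof is correct and follows essentially the same strategy as the paper's: block-triangular matrices coming from the filtration, the three commutation relations, and a filtration-preserving change of basis at the first two nodes. The only difference is organizational — the paper first uses the base change to kill $\alpha_x$, $\alpha_z$, and the last entry of $\alpha_y$ and then lets the relations force the remaining entries to vanish, whereas you extract the linear constraints from the relations first and then solve for a single base change killing everything except $\beta_{z'}$; the underlying computation is identical.
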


\begin{proof}
Since $E[1]$ is an extensions of $\OO_L(-n + 1)[1]$ and $\II_P[1]$ whose matrices we know, there are $a, b, c \in \C^{n-2}$ and $d, e, f \in \C^{n-3}$ such that
\[
    A(E[1]) = \begin{pmatrix}
    A(\OO_L(-n + 1)[1]) & ax + by + cz \\
    0                   & z
    \end{pmatrix}, \
    B(E[1]) = \begin{pmatrix}
    B(\OO_L(-n + 1)[1]) & dx' + ey' + fz' \\
    \end{pmatrix}.
\]
Next we make a change of basis. Let $I_m \in \GL_m$ be the identity matrix for $m \in \N$. We define matrices $T_1 \in \GL_n$ and $T_2 \in \GL_{n-1}$ as
\[
    T_1 \coloneqq \begin{pmatrix}
    I_{n-2} & 0 & a \\
    0       & 1 & b_{n-2} \\
    0       & 0 & 1
    \end{pmatrix}, \ 
    T_2 \coloneqq \begin{pmatrix}
    I_{n-2} & -c \\
    0       & 1
    \end{pmatrix}.
\]
Replacing $A(E[1])$ by $T_2 A(E[1]) T_1^{-1}$ and $B(E[1])$ by $B(E[1]) T_2^{-1}$ lets us reduce to the case $a = 0$, $b_{n-2} = 0$, and $c = 0$. That changes $d$ and $e$ but by abuse of notation we still call them $d$ and $e$.

Next we need to use the relations of the quiver. The product $B(E[1]) A(E[1])$ is
\[
    \begin{pmatrix}
    B(\OO_L(-n + 1)[1])A(\OO_L(-n + 1)[1]) & B(\OO_L(-n + 1)[1])by + dx'z + ey'z + fz'z
    \end{pmatrix}.
\]
The relation $y'x = x'y$ implies $b = 0$. The relation $x'z = z'x$ implies $d = 0$. Finally, the relation $y'z = z'y$ implies $e = 0$. We have $f \neq 0$, since otherwise the extension would be trivial.
\end{proof}

\begin{prop}
\label{prop:quiver_locus2}
Any Bridgeland stable extension
\[
0 \to \OO_L(-n + 1) \to E \to \II_P(-1) \to 0
\]
where $L \subset \P^2$ is a line and $P \in \P^2$ is GIT-unstable for $n \geq 4$.
\end{prop}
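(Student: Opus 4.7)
The plan is to mirror the argument of Proposition \ref{prop:quiver_locus1}. First, I would use the $\SL_3$-action to reduce to the canonical case $L = V(z)$ and $P = (0:0:1)$, so that the diagonal torus of $\SL_3$ lies in the joint stabilizer of $(L, P)$. The one-parameter subgroup $\lambda = (-1, -1, 2)$ then acts on the extension space $\Ext^1(\II_P(-1), \OO_L(-n+1)) \cong H^1(\OO_L(-n+2))$ by the single scalar $t^{n-4}$, since every degree $n-4$ monomial in $x, y$ has the same $\lambda$-weight; in particular $\lambda \cdot E \cong E$ as an object for any non-trivial extension class.

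Using the block matrix presentation of $E[1]$ in Lemma \ref{lem:matrices_locus2}, the top-left $\OO_L$-blocks of $A$ and $B$ have entries linear in $x, y$ (weight $-1$ under $\lambda$), while the $z$-entry of $A$ and the $az$-column of $B$ have weight $+2$. A short bookkeeping shows that, up to the inessential central shift, the joint action with $\lambda$ on the matrices is fixed by exactly the one-parameter subgroup
\[
\lambda' = \bigl((\underbrace{0, \ldots, 0}_{n-1}, 6), \; (\underbrace{1, \ldots, 1}_{n-2}, 4), \; (\underbrace{2, \ldots, 2}_{n-3})\bigr)
\]
in $\GL_n \times \GL_{n-1} \times \GL_{n-3}$, where each tuple is split according to the $\OO_L$- and $\II_P$-summands at the corresponding vertex.

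Pairing with the two boundary characters $\theta_1 = (n-1, -n, 0)$ and $\theta_2 = (0, n-3, -n+1)$ of the quiver chamber then gives
\[
\mu^{L^{\theta_1}}([E], \lambda) = n^2 - 4n + 6, \qquad \mu^{L^{\theta_2}}([E], \lambda) = (n-3)(n-4).
\]
For $n \geq 5$ both quantities are strictly positive, so by linearity of the Hilbert-Mumford index in the linearization, $\mu^{L^\theta}([E], \lambda) > 0$ throughout the entire quiver chamber, and $E$ is GIT-unstable for every relevant polarization.

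The main obstacle is the case $n = 4$, where $\mu^{L^{\theta_2}}$ vanishes. The resolution is to observe that the canonical subrepresentation $W = (0, \C^{n-1}, \C^{n-3}) \subset E[1]$ automatically satisfies $\theta_2(d_W) = (n-3)(n-1) - (n-1)(n-3) = 0$, so no representation of dimension vector $(n, n-1, n-3)$ is strictly $\theta_2$-stable. Hence $E$ can be Bridgeland-stable only for $\theta$ strictly in the interior of the chamber, where the linear interpolate of the two endpoint values is strictly positive, and GIT-instability still follows.
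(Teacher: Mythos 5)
Your reduction, your choice of $\lambda = (-1,-1,2)$, the compensating cocharacter, and the two pairings $n^2-4n+6$ and $(n-3)(n-4)$ reproduce the paper's computation exactly (your $\lambda'$ is the paper's shifted by the long diagonal, which pairs trivially with any $\theta$ satisfying $\langle\theta,d\rangle=0$), and your observation that $(0,\C^{n-1},\C^{n-3})$ is a subrepresentation of everything with $\theta_2$-pairing zero is a correct and rather clean way to dispose of the boundary case $n=4$, where the paper instead relies on its uniform effective-cone argument. One minor point first: you cannot normalize to $P=(0:0:1)$ and $L=V(z)$ by $\SL_3$ when $P\in L$; you need the standard remark that the unstable locus is closed, so it suffices to treat a general member of the family, and in particular you may assume $P\notin L$.

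The genuine gap is in the last step. Positivity of $\mu^{L^{\theta}}([E],\lambda)$ at the two boundary characters $\theta_1,\theta_2$ gives positivity on the quiver chamber by linearity, but ``every relevant polarization'' is a strictly larger set. For $n\geq 6$ a generic extension of this form has $\Hom(\OO(-2),E)=0$ (the connecting map $\Hom(\OO(-2),\II_P(-1))\cong\C^2\to\Ext^1(\OO(-2),\OO_L(-n+1))\cong\C^{n-4}$ is injective for a generic extension class), so $E$ survives the conic wall and remains Bridgeland-stable in chambers inducing divisors $D_m$ with $m<\tfrac{n-1}{2}$ --- for $n=7$, for instance, the final chamber $m\in(\tfrac52,3)$. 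Those stability conditions lie outside the disc of Proposition \ref{prop:quiver_moduli_iso}, so positivity there does not follow from positivity on the chamber: the linear function of the polarization could in principle change sign below $\theta_2$. The paper closes this by solving for the unique $D_m$ at which the Hilbert--Mumford index vanishes, finding $m = -\tfrac{(n^2-10n+18)(n-1)}{6(n-2)} < 2$, and invoking Lemma \ref{lem:estimate_effective_cone} to conclude that this $D_m$ is outside the effective cone, whence the index is positive for every effective (in particular every movable) polarization. You should add this step; all the numbers it requires are already in your write-up.
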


\begin{proof}
It is enough to show that a general element is GIT-unstable. Therefore, we may assume $P \notin L$. Using the action of $\SL(3)$ we can reduce to the case $P = (0:0:1)$ and $L$ is cut out by $z = 0$. In particular the matrices of Lemma \ref{lem:matrices_locus2} apply.

The one-parameter subgroup $\lambda = (-1, -1, 2)$ does not fix $A(E[1])$, $B(E[1])$, but
\[
\lambda' = ((-1, \ldots, -1, 5), (0, \ldots, 0, 3), (1, \ldots, 1), (-1, -1, 2)) \in \GL_n \times \GL_{n - 1} \times \GL_{n - 3} \times \SL_{3}
\]
does. We can compute $-\langle \theta_1, \lambda' \rangle = n^2 - 4n + 6 > 0$ and $-\langle \theta_2, \lambda' \rangle = (n - 3)(n - 4) > 0$. Therefore, $E$ is GIT-unstable between $\beta = -1$ and $\beta = -2$.

In order to show that $E$ is GIT-unstable everywhere, we will use linearity of the Hilbert-Mumford index. The divisor along the wall whose closure includes $(\alpha, \beta) = (0, -1)$ is $D_{n-1}$ and the divisor along the wall whose closure includes $(\alpha, \beta) = (0, -2)$ is $D_{(n-1)/2}$.

Let $D_m = aD_{n-1} + bD_{(n-1)/2}$ such that the Hilbert-Mumford index vanishes. Then $a + b = 1$ and $a(n^2 - 4n + 6) + b(n^2 - 7n + 12) = 0$. A straightforward calculation shows for any $n \geq 4$
\[
m = -\frac{(n^2 - 10n + 18)(n - 1)}{6(n-2)} < 2.
\]
However, in this case $D_m$ is not even in the effective cone because of Lemma \ref{lem:estimate_effective_cone}.
\end{proof}

The next locus to study is that of non-trivial extensions $0 \to \OO_L(-n + 2) \to E \to \II_W(-1) \to 0$,
where $W \subset \P^2$ is a zero-dimensional subscheme of length two and $L \subset \P^2$ is a line. Any subscheme $Z \subset \P^2$ with length $n = 5$ is contained in a conic and therefore, Proposition \ref{prop:wall_between_-1_-2} implies that $W(\OO(-2), E) = W(\OO_L(-3), E)$ is the final wall in this case. Moreover, we have $\Hom(\OO(-2), \II_W(-1)) = \C$ and $\Ext^1(\OO(-2), \OO_L(-3)) = 0$, i.e., $\Hom(\OO(-2), E) = \C$. This means such an extension $E$ is never stable and these objects are only relevant for $n \geq 6$.

We will mostly deal with $\II_W = (x, y^2)$ and $L$ being cut out by $z$ and generalize from this special case. Using our standard quiver in between $\beta = -1$ and $\beta = -2$, we get the following description of the matrices.

\begin{lem}
Let $\II_W = (x, y^2)$. We have $\II_W(-1)[1] \in \Rep-Q$ with dimension vector $(2,2,1)$ and with an appropriate choice of basis the matrices are given as
\[
    A(E[1]) = \begin{pmatrix}
    z & 0 \\ 
    y & z \\
    \end{pmatrix}, \
    B(E[1]) = \begin{pmatrix}
    y & z
    \end{pmatrix}.
\]
\end{lem}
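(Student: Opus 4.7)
The plan is to verify the statement by directly computing, for $k \in \{-1, 0, 1\}$, the graded vector spaces $\Ext^\bullet(\OO(k), \II_W(-1)[1]) = H^{\bullet+1}(\II_W(-1-k))$ from the Koszul resolution of $\II_W$, and then reading off the arrows as the multiplication-by-linear-form maps between these cohomology groups.

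For the dimension vector, the Koszul resolution
\[
0 \to \OO(-3) \xrightarrow{\binom{y^2}{-x}} \OO(-1) \oplus \OO(-2) \to \II_W \to 0
\]
gives, after twisting by $d \in \{-2, -1, 0\}$ and using $H^1(\OO(e)) = 0$ for all $e$, that $H^0(\II_W(d)) = 0$ and that $H^1(\II_W(d))$ sits as the kernel of $H^2(\OO(d-3)) \to H^2(\OO(d-1)) \oplus H^2(\OO(d-2))$. Serre duality rewrites this as
\[
H^1(\II_W(d))^\vee \;\cong\; H^0(\OO(-d))\big/\bigl(x \cdot H^0(\OO(-d-1)) + y^2 \cdot H^0(\OO(-d-2))\bigr),
\]
with explicit monomial representatives $\{yz, z^2\}$, $\{y, z\}$, $\{1\}$ for $d = -2, -1, 0$ respectively. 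A parallel computation of $\Hom(\II_W, \OO(-d-3))$ on the resolution shows it vanishes for these $d$, hence $H^2(\II_W(d)) = 0$, confirming that $\II_W(-1)[1]$ lies in the heart with dimension vector $(2, 2, 1)$.

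For the matrices, the arrow labelled $\ell \in \{x, y, z\}$ from $H^1(\II_W(d))$ to $H^1(\II_W(d+1))$ is multiplication by $\ell$; dualised, this is ordinary multiplication in $\C[x,y,z]$ between the monomial quotients above. Multiplication by $x$ lands entirely inside the Koszul ideal and contributes zero, so $A_x = 0 = B_x$. Multiplication by $y$ sends $y \mapsto y^2 \equiv 0$, $z \mapsto yz$ at the $A$-level and $1 \mapsto y$ at the $B$-level. Multiplication by $z$ sends $y \mapsto yz$, $z \mapsto z^2$ and $1 \mapsto z$. Transposing to pass from dual to primal maps and assembling $A = xA_x + yA_y + zA_z$ and $B = xB_x + yB_y + zB_z$ produces exactly the matrices in the statement.

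The main obstacle is keeping track of the Serre-duality flip: the quiver arrows act on the $H^1$'s, whereas the explicit monomial bases live naturally on the dual side (the cokernels), so one has to be careful about which direction each multiplication runs and to transpose at the end. The three quiver relations $y'x = x'y$, $z'x = x'z$, $z'y = y'z$ are inherited from the commutativity of $\C[x,y,z]$ and need not be checked separately.
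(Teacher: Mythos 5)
Your proposal is correct and follows essentially the same route as the paper: compute the three $\RHom(\OO(k),\II_W(-1)[1])$ via cohomology of twists of $\II_W$, check they are concentrated in degree $0$, and read off the arrows as multiplication by linear forms. The paper takes a shortcut by using the structure sequence $0 \to \II_W(d) \to \OO(d) \to \OO_W \to 0$ to identify the top two spaces directly with $H^0(\OO_W) \cong \C[y]/(y^2)$ and the third with the cokernel of evaluation, which avoids your Serre-duality/transpose bookkeeping, but the content is the same.
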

\begin{proof}
See Example  \ref{ex:matrices}.
\end{proof}

\begin{lem}
\label{lem:matrices_locus3}
Let $\II_W = (x, y^2)$ and $L$ be the line cut out by $z = 0$. Then the object $E[1] \in \Rep-Q$ has dimension vector $(n, n - 1, n - 3)$. There is a basis and $a \in \C^{n-4} \backslash \{ 0 \}$ such that the matrices have the block form
\[
    A(E[1]) = \begin{pmatrix}
    A(\OO_L(-n + 2)[1]) & 0 & 0 \\
    0                   & z & 0 \\
    0                   & y & z
    \end{pmatrix}, \
    B(E[1]) = \begin{pmatrix}
    B(\OO_L(-n + 2)[1]) & az' & 0 \\
    0                   & y'  & z'
    \end{pmatrix}.
\]
\end{lem}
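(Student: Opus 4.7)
The plan is to mimic the proof of Lemma~\ref{lem:matrices_locus2}. Since $E[1]$ arises as an extension of $\II_W(-1)[1]$ by $\OO_L(-n+2)[1]$ in the quiver category, a choice of splitting at the level of vector spaces gives bases in which both $A(E[1])$ and $B(E[1])$ take block upper triangular form, with diagonal blocks equal to the matrices of the subobject and quotient. The off-diagonal blocks can then be written with general coefficients: an $(n-3)\times 2$ block $M_{12}=xM_x+yM_y+zM_z$ for $A(E[1])$ and an $(n-4)\times 2$ block $N_{12}=x'N_{x'}+y'N_{y'}+z'N_{z'}$ for $B(E[1])$, where each $M_\bullet$ and $N_\bullet$ is a matrix of scalars.

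Next, I will use the change-of-basis freedom that preserves the filtration. Concretely, block upper triangular changes of basis $T_i=\left(\begin{smallmatrix} I & P_i \\ 0 & I\end{smallmatrix}\right)$ on the three vector spaces replace $M_{12}$ by $M_{12}+Q\,A(\II_W(-1)[1])-A(\OO_L(-n+2)[1])\,P$ and $N_{12}$ by $N_{12}+R\,B(\II_W(-1)[1])-B(\OO_L(-n+2)[1])\,Q$, where $P,Q,R$ are the off-diagonal blocks of $T_1,T_2,T_3$. Since $A(\II_W(-1)[1])$ and $B(\II_W(-1)[1])$ only involve $y$ and $z$, while $A(\OO_L(-n+2)[1])$ and $B(\OO_L(-n+2)[1])$ only involve $x$ and $y$, many scalar coefficients of $M_{12}$ and $N_{12}$ can be gauged to zero.

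Finally, I will impose the three commutation relations $y'x=x'y$, $z'x=x'z$, $z'y=y'z$ on the product $B(E[1])\,A(E[1])$. The off-diagonal block of this product is $B(\OO_L(-n+2)[1])\,M_{12}+N_{12}\,A(\II_W(-1)[1])$ up to contributions coming from the diagonal blocks, and enforcing the relations monomial by monomial yields a linear system on the remaining scalar parameters. What should survive is exactly a vector $a\in\C^{n-4}$ sitting in the first column of the upper-right block of $B(E[1])$, in agreement with $\dim\Ext^1(\II_W(-1),\OO_L(-n+2))=n-4$. Non-triviality of the extension forces $a\neq 0$, since otherwise all off-diagonal entries would vanish after the gauge fixing and $E$ would be the split extension. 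The main obstacle is organizational: there are $6(n-3)+6(n-4)$ scalar parameters at the start, so to keep the bookkeeping tractable I would exploit the residual $\SL_2$-action on $(x,y)$ that fixes both $L$ and $W$ and organize the reductions by its weight decomposition.
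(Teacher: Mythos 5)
Your proposal follows the paper's proof essentially verbatim: write the extension in block upper-triangular form with undetermined linear off-diagonal blocks, gauge away entries by filtered base change (the paper's $T_1, T_2, T_3$), kill the remaining coefficients with the three commutation relations applied to $B(E[1])A(E[1])$, and invoke non-triviality of the extension to conclude $a \neq 0$; your dimension count $\dim\Ext^1(\II_W(-1),\OO_L(-n+2)) = n-4$ is a correct sanity check that the paper does not spell out. The only quibble is that the full $\SL_2$ acting on $(x,y)$ does not fix $\II_W = (x,y^2)$ (only the Borel sending $x$ to a multiple of $x$ does), but since this is merely an optional bookkeeping device it does not affect the argument.
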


\begin{proof}
Since $E[1]$ is an extensions of $\OO_L(-n + 2)[1]$ and $\II_W(-1)[1]$ whose matrices we know, there are $a, b, c, d, e, f \in \C^{n-3}$ and $a', b', c', d', e', f' \in \C^{n-4}$ such that
\begin{align*}
    A(E[1]) &= \begin{pmatrix}
    A(\OO_L(-n + 2)[1]) & ax + by + cz & dx + ey + fz \\
    0                   & z            & 0            \\
    0                   & y            & z
    \end{pmatrix}, \\
    B(E[1]) &= \begin{pmatrix}
    B(\OO_L(-n + 2)[1]) & a'x' + b'y' + c'z' & d'x' + e'y' + f'z' \\
    0                   & y'                 & z'
    \end{pmatrix}.
\end{align*}
Next we make change of basis. Let $I_m \in \GL_m$ be the identity matrix for $m \in \N$. We define matrices $T_1 \in \GL_n$, $T_2 \in \GL_{n - 1}$, and $T_3 \in \GL_{n - 3}$ via
\[
    T_1 = \begin{pmatrix}
    I_{n-3} & 0 & a                   & d         \\
    0       & 1 & b_{n - 3} - f_{n-3} & e_{n - 3} \\
    0       & 0 & 1                   & 0         \\
    0       & 0 & 0                   & 1         \\
    \end{pmatrix}, \
    T_2 = \begin{pmatrix}
    I_{n-3} & -c & -f \\
    0       & 1  & 0  \\
    0       & 0  & 1
    \end{pmatrix},
\]
\[
    T_3 = \begin{pmatrix}
    I_{n-4} & -b' - \tilde{c} \\
    0       & 1       \\
    \end{pmatrix},
    \tilde{c} = \begin{pmatrix}
    c_2 \\
    \vdots \\
    c_{n-3}
    \end{pmatrix}.
\]
Replacing $A(E[1])$ by $T_2 A(E[1]) T_1^{-1}$ and $B(E[1])$ by $T_3 B(E[1]) T_2^{-1}$ lets us reduce to the case
\begin{align*}
    A(E[1]) &= \begin{pmatrix}
    A(\OO_L(-n + 2)[1]) & by & ey \\
    0                   & z  & 0  \\
    0                   & y  & z
    \end{pmatrix}, \\
    B(E[1]) &= \begin{pmatrix}
    B(\OO_L(-n + 2)[1]) & a'x' + c'z' & d'x' + e'y' + f'z' \\
    0                   & y'                 & z'
    \end{pmatrix},
\end{align*}
where $b_{n-3} = 0$ and $e_{n-3} = 0$. By abuse of notation we keep writing $b$, $e$, $a'$, $c'$, $d'$, $e'$, $f'$ even though their values have technically changed from above.

Next we need to use the relations of the quiver. To shorten notation, let $A = A(\OO_L(-n + 1)[1])$ and $B = B(\OO_L(-n + 1)[1])$. We can compute that $B(E[1]) A(E[1])$ is given by
\[
\begin{pmatrix}
BA & Bby + a'x'z + c'z'z + d'x'y + e'y'y + f'z'y & Bey + d'x'z + e'y'z + f'z'z \\
0  & y'z + z'y                                   & z'z
\end{pmatrix}.
\]
The relation $x'y = y'x$ implies $e = 0$. The relation $x'z = z'x$ implies $a' = d' = 0$. With this last vanishing, we can use $x'y = y'x$ again to get $b = 0$. Using $y'z = z'y$ we get $e' = f' = 0$. Finally, we have $c' \neq 0$, since the extension is non-trivial. 
\end{proof}

\begin{prop}
\label{prop:quiver_locus3}
Any Bridgeland-stable extension $0 \to \OO_L(-n + 2) \to E \to \II_W(-1) \to 0$
where $W \subset \P^2$ is zero-dimensional of length two and $L \subset \P^2$ is a line is GIT-unstable for $n \geq 6$.
\end{prop}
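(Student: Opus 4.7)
The plan is to extend the template of Propositions \ref{prop:quiver_locus1} and \ref{prop:quiver_locus2}: I will exhibit a one-parameter subgroup $\lambda \subset \SL_3$ together with a gauge transformation $\lambda'$ that fixes the quiver representation $E[1]$, check that the resulting Hilbert--Mumford indices against both boundary characters of the Bridgeland chamber $-2 < \beta < -1$ are strictly positive, and finally propagate the instability across the whole movable cone by linearity. As in the earlier propositions, irreducibility of the family of such extensions together with closedness of the GIT-unstable locus let me reduce to a single generic orbit, and I take this orbit to be the one handled by Lemma \ref{lem:matrices_locus3}, namely $\II_W = (x, y^2)$ and $L = V(z)$. (Reduced $W$ sits in a different $\SL_3$-orbit of pairs $(L, W)$ but admits an analogous calculation with a slightly different matrix form.)

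For the concrete calculation, I take $\lambda = (-1, -1, 2)$, the same destabilizing subgroup used in the two preceding propositions. The gauge $\lambda' \in \GL(n) \times \GL(n-1) \times \GL(n-3)$ is determined block by block from the explicit matrices of Lemma \ref{lem:matrices_locus3}: the line bundle block $A(\OO_L(-n+2)[1])$ pins all bulk weights to constants, exactly as in Proposition \ref{prop:quiver_locus1}; the $\II_W(-1)[1]$ block forces specific values on the two extra coordinates at each vertex; and the off-diagonal $az'$ entry of $B(E[1])$ fixes the offset between the bulk and the $\II_W$-coordinates. Carrying out this bookkeeping should give
\[
-\langle \theta_1, \lambda' \rangle = n^2 - 7n + 15, \qquad -\langle \theta_2, \lambda' \rangle = n^2 - 10n + 27
\]
for the two boundary characters $\theta_1 = (n-1, -n, 0)$ and $\theta_2 = (0, n-3, -n+1)$. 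Both quadratics have negative discriminant and are therefore strictly positive for every $n$, so linearity of the Hilbert--Mumford index gives GIT-instability of $E$ at every polarization in the Bridgeland chamber between $D_{(n-1)/2}$ and $D_{n-1}$.

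To extend this instability to the rest of the movable cone, I reuse the linearity argument of Proposition \ref{prop:quiver_locus2} and solve for the unique $m_0$ at which the Hilbert--Mumford index of $\lambda$ vanishes on $D_{m_0}$:
\[
m_0 = \frac{(n-1)(-n^2 + 13n - 39)}{6(n-4)}.
\]
A short case check shows $m_0 < 2$ for every $n \geq 6$ (the numerator is positive but small for $n = 6, 7, 8$ and negative for $n \geq 9$). Since Lemma \ref{lem:estimate_effective_cone} places $D_2$ outside, or on the boundary of, the effective cone for $n \geq 6$, the movable cone lies entirely in $\{m > m_0\}$, and the Hilbert--Mumford index stays strictly positive there.

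The main obstacle is the gauge-matching step in the second paragraph: the length-two subscheme $W$ contributes two ``special'' coordinates at each quiver vertex that do not share the bulk weight, and the off-diagonal $az'$ entry of $B(E[1])$ forces a rigid relationship between the special offsets and the bulk that must be solved consistently across all three vertex data. Tracking this relationship is what produces the specific numerical values of the Hilbert--Mumford pairings. A secondary concern is covering the reduced-$W$ case, which lives in a different $\SL_3$-orbit from $(x, y^2)$ and so requires its own matrix computation, but the same destabilizing subgroup $\lambda = (-1, -1, 2)$ is expected to work after an analogous gauge-matching argument.
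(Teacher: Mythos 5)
Your computational core reproduces the paper's proof exactly: the same destabilizing subgroup $\lambda = (-1,-1,2)$, the same gauge $\lambda'$ fixing the matrices of Lemma \ref{lem:matrices_locus3}, the same pairings $n^2 - 7n + 15$ and $n^2 - 10n + 27$ against $\theta_1$ and $\theta_2$, the same value of $m_0$, and the same appeal to Lemma \ref{lem:estimate_effective_cone} to push the instability across the whole effective cone. There is, however, one genuine gap in the reduction step. You write that irreducibility plus closedness of the unstable locus lets you ``reduce to a single generic orbit'' and then take that orbit to be $\II_W = (x, y^2)$. But $(x,y^2)$ is the \emph{non-reduced}, hence special, configuration: closedness of the unstable locus only transfers instability from the generic orbit to its degenerations, never the other way. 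So proving instability of the $(x,y^2)$ orbit alone establishes nothing about the generic (reduced $W$) case, which is the case that actually carries the burden of the proof. Your parenthetical fallback --- redo the matrix computation for reduced $W$ with the same $\lambda$ --- does not work as described either, because $\lambda = (-1,-1,2)$ does not stabilize a reduced $W$ such as $\{(0:0:1),(0:1:1)\}$, so there is no gauge $\lambda'$ fixing those matrices and the character-pairing formula $\mu = -\langle\theta,\lambda'\rangle$ is not available at that point.

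The missing idea is the paper's one-line observation: for the generic configuration one has $\lim_{t\to 0}\lambda(t)\cdot \II_W = (x,y^2)$, and since the Hilbert--Mumford index of a point with respect to $\lambda$ is by definition computed at its $\lambda$-limit, the index of the generic extension equals the index of the degenerate one. This makes your single computation at $(x,y^2)$ do all the work --- no second case is needed, and the direction of the reduction is the correct one (generic $\to$ limit, then closedness handles everything else). With that substitution your argument is complete and coincides with the paper's.
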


\begin{proof}
It is enough to show that a general element is GIT-unstable. Therefore, we may assume $W \cap L = \emptyset$ and $W$ is reduced. Using the action of $\PGL(3)$ we can reduce to the case $W = \{(0:0:1), (0:1:1)\}$ and $L$ is cut out by $z = 0$. Let $\lambda$ be the one-parameter subgroup $(-1, -1, 2)$. We have $\lim_{t \to 0} \lambda \cdot \II_W = (x, y^2)$ and thus, we may simply assume that $\II_W = (x, y^2)$ to begin with. In particular the matrices of Lemma \ref{lem:matrices_locus3} apply, i.e.,
\[
    A(E[1]) = \begin{pmatrix}
    A(\OO_L(-n + 2)[1]) & 0 & 0     \\
    0                   & z & 0    \\
    0                   & y & z
    \end{pmatrix}, \
    B(E[1]) = \begin{pmatrix}
    B(\OO_L(-n + 2)[1]) & az' & 0 \\
    0                   & y'  & z'
    \end{pmatrix}.
\]
for some $a \in \C^{n-4} \backslash \{ 0 \}$. We define a one-parameter subgroup
\[
\lambda' = ((-1, \ldots, -1, 5, 8), (0, \ldots, 0, 3, 6), (1, \ldots, 1, 4), (-1, -1, 2)) \in \GL_n \times \GL_{n - 1} \times \GL_{n - 3} \times \SL_3.
\]
Then $\lambda'$ fixes the matrices and in particular, $\lambda = (-1, -1, 2)$ fixes $E[1]$. We can compute
\[
    - \langle \theta_1, \lambda' \rangle = n^2 - 7n + 15 > 0, \
    - \langle \theta_2, \lambda' \rangle = n^2 - 10n + 27 > 0.
\]
Let $D_m = aD_{n-1} + bD_{(n-1)/2}$ such that the Hilbert-Mumford index vanishes. Then $a + b = 1$ and $a(n^2 - 7n + 15) + b(n^2 - 10n + 27) = 0$. A straightforward calculation shows for any $n \geq 6$
\[
m = -\frac{(n^2 - 13n + 39)(n - 1)}{6(n-4)} < 2.
\]
Due to Lemma \ref{lem:estimate_effective_cone}, $D_m$ for $m < 2$ is outside the effective cone, and thus, the Hilbert-Mumford criterion is strictly positive for any effective divisor.
\end{proof}


\subsection{Proofs of Theorems \ref{thm:MainThmWall}, \ref{thm:largestWall}, and related results}

To focus on the core arguments, we will postpone various technical computations in the upcoming proofs until Section \ref{sec:git_calculations}.

\MainThmWall*

\begin{proof}
The existence of the wall is an immediate consequence of Lemma \ref{lem:wall_for_fat_plus_line}.

\begin{enumerate}
    \item Let $Z$ be the union of a generic curvilinear point of length $(n - l)$ and $l$ generic points in the plane. Our strategy is to take the flat limit with respect to $\lambda_1$. This limit will yield the ideal from Lemma \ref{lem:wall_for_fat_plus_line}. The semistability of the limit at $m_l$ implies the semistability of $Z$. In Lemma \ref{lem:wall_for_fat_plus_line} we established that $\lambda_1$ destabilizes this limit for $m > m_l$. This shows that our $Z$ has the same property. Next, we provide details for this argument.

    The ideal of a general curvilinear point of length $(n - l)$ in $\A^2$ that is supported at the origin can be written as $\left( (x, y)^{n - l} + (p_{n - l - 1}(x,y)) \right)$ where $p_{n - l - 1}(x, y)$ is a polynomial not necessarily homogeneous of degree $(n - l - 1)$ which is smooth at the origin. Therefore, we can suppose that up to a change of coordinates, the ideal associated to $Z$ in $\P^2$ is cut out by the ideal
    \[
        \widetilde{\II}_Z = 
        \left( (x, y)^{n - l} + (p_{n - l - 1}(x, y, z)) \right)
        \cap 
        \bigcap_{i = 1}^{i = l} (y - a_i x, z - b_i x)
    \]
    where $p_{n - l - 1}(x, y, z) = z^{n - l - 2} q_{1}(x, y) + z^{n - l - 3} q_{2}(x, y) + \cdots + q_{n - l - 1}(x, y)$ for homogeneous polynomials $q_i(x, y)$ of degree $i$ with $q_1(x, y) \neq 0$. This means $p_{n - l - 1}$ defines a curve going through $[0:0:1]$ which is smooth at this point. Note that this ideal is not necessarily saturated and we will only take the saturation at the end. We want to take the flat limit of $\widetilde{\II}_Z$ with respect to $\lambda_1$. For this, we recall that 
    \[
        \lim_{t \to 0} \lambda_1(t) \cdot [1: a_i: b_i]
        =
        \lim_{t \to 0} [t^{-1}: t^{-1}a_i: t^{2}b_i]
        =
        \lim_{t \to 0} [1: a_i: t^{3}b_i]
        =
        [1:a_i:0].
    \]
    Therefore,
    \[
        \lim_{t \to 0} \lambda_1(t) \cdot \bigcap_{i = 1}^l (y - a_i x, z - b_i x) = (z, r_l(x, y))
    \]
    where $r_l(x,y)$ is a homogeneous polynomial that vanishes at the points $[1:a_1:0]$. On the other hand, 
    \[
        \lim_{t \to 0} \lambda_1(t) \cdot \left( (x, y)^{n - l} + (p_{n - l - 1}(x, y, z)) \right) = \left( (x, y)^{n - l} + (z^{n - l - 2} q_1(x,y)) \right)
    \]
    The non-reduced point is disjoint from the collinear points, so their flat limits can be computed independently.  Moreover, by using the group action, we can suppose that $q_1(x, y)=x$. Therefore, we obtain 
    \begin{align*}
        \widetilde{\II}_{Z_0} \coloneqq
        \lim_{t \to 0} \lambda_1(t) \cdot \widetilde{\II}_Z
        &= \left( x^{n - l}, x^{n - l - 1}y, \ldots, xy^{n - l - 1}, y^{n - l}, z^{n - l - 2}x \right)
        \bigcap
        (z, r_{l}(x,y))
        \\
        &= \left( zx^{n - l}, zx^{n - l - 1}y, \cdots, zxy^{n - l - 1},  zy^{n - l}, z^{n - l - 2}x, r_{l}(x,y) \right)
    \end{align*}
    Let $\II_{Z_0}$ be the saturation of $\widetilde{\II}_{Z_0}$. Since $xz \in \II_{Z_0}$, we get
    \[
        \II_{Z_0} = \left(xz, zy^{n-l}, r_{l}(x,y) \right)
    \]
    By Lemma \ref{lem:wall_for_fat_plus_line} $Z_0$ is $m_l$-semistable, and thus, $Z$ is $m_l$-semistable as well.
    
    \item Now, let $Z$ be the union of $l$ general collinear points in one line and $n - l$ general collinear points in another line. Up to a change of coordinates, we can assume that
    \[
        \II_Z = \left(x, q_{n - l}(y, z)\right) \cap \left(z, p_l(x, y)\right)
    \]
    for general homogeneous polynomials $q_{n - l}(y, z)$ and $p_l(x, y)$ of degree $n - l$ and $l$ respectively. The one-parameter subgroup $\lambda_1^{-1}$ fixes points on the line cut out by $z$. If $P$ is a general point on the line cut out by $x$, then $\lim_{t \to 0} \lambda_1^{-1}(t) \cdot P = (0:0:1)$. Overall, we get that $\lim_{t \to 0} \lambda_1^{-1}(t) \cdot \II_Z$ leads to the subscheme described in Lemma \ref{lem:wall_for_fat_plus_line}.
    
    \item The last statement follows directly from the proofs of (i) and (ii). \qedhere
\end{enumerate}
\end{proof}

\largestWall*

\begin{proof}
Let $m = m_0$ be the largest wall. We are trying to show $m_0 = m_{\lfloor 2n/3 \rfloor}$. At this wall, there must be a strictly-semistable subscheme $Z$ that is unstable at $m = \infty$. By Theorem \ref{thm:stability_chow} such a $Z$ must be in the closure of the loci of subschemes with either a non-reduced subscheme supported at a single point of length $\lceil \frac{n}{3} \rceil$ or containing a collinear subscheme of length $\lceil \frac{2n}{3} \rceil$.

If $Z$ contains a non-reduced subscheme supported at a single point of length $\lceil \frac{n}{3} \rceil$, then Theorem \ref{thm:MainThmWall} shows that $Z$ is unstable for $m > m_{\lfloor 2n/3 \rfloor}$.
From now on assume that $Z$ contains a collinear subscheme of length $l \coloneqq \lceil \frac{2n}{3} \rceil$. We will show that $Z$ is $m$-unstable for $m_0 \geq l$. This is enough since $m_{\lfloor 2n/3 \rfloor} \geq l$. By using the numerical 
criterion described in the proof of 
\cite[Theorem 11.1]{Dol03:invariant_theory}, 
we conclude that $\lambda_{-1/2}$ destabilizes the cycle associated to $Z$ at $m = \infty$, i.e., in the symmetric product. By using the Hilbert-Chow morphism, we conclude that $\lambda_{-1/2}$ destabilizes $Z$ for $m \gg 1$. Therefore, it will be enough to show that $\lambda_{-1/2}$ destabilizes $Z$ for $m = m_l$. Taking the limit of $Z$ with respect $\lambda_{-1/2}$ turns out to be complicated. Since $\mu_l(Z, \lambda_{-1/2 - \varepsilon})$ depends continuously on $\varepsilon$, we will use $\lambda_{-1/2 - \varepsilon}$ for small $\varepsilon > 0$ instead.

We may assume that this $Z$ is general under the assumption that it contains a collinear subscheme of length $l$. Up to the action of $\SL_3$ this means
\[
    \II_Z = \bigcap_{i = 1}^{n-l} (y - a_i x, z - b_i x) \cap (x, p_l(y,z))
\]
for a general homogeneous polynomial $p_l(y, z) \in \C[y, z]$ of length $l$, and general $a_i, b_i \in \C$ for $i = 1, \ldots, n - l$. There are unique integers $s \geq 1$, $0 \leq k \leq s$ such that $n - l = \Delta(s) + k$. By Lemma \ref{lem:limit_with_cutting_corners}, we know for any $\varepsilon > 0$
\[
    \II_{Z_0} \coloneqq \lim_{t \to 0} \lambda_{-\tfrac{1}{2} - \varepsilon}(t) \cdot \II_Z = (m_0, \ldots, m_s) \cap (x, p_l(y,z)). 
\]
By definition of the Hilbert-Mumford index, we have $\mu_l(Z, \lambda_{-1/2 - \varepsilon}) = \mu_l(Z_0, \lambda_{-1/2 - \varepsilon})$ and this depends continuously on $\varepsilon$. Therefore, it is enough to show $\mu_l(Z_0, \lambda_{-1/2}) > 0$. Lemma \ref{lem:numerical_criterion_wall_from_collinear} implies
\[
    \mu_l \left(Z_0, \lambda_{-\tfrac{1}{2}}\right) = \frac{s^3 - ls^2 + 3ks - ls - s + l^2 - 2kl}{2}.
\]
From here, the argument splits into two cases.

\begin{enumerate}
    \item If $n \equiv 1 (\mod 3)$, then $l = 2 \Delta(s) + 2k + 1$. In this scenario we get
    \[
        \mu_l \left(Z_0, \lambda_{-\tfrac{1}{2}}\right) = \frac{s^3 + 3ks + s^2 + 2k + 1}{2} > 0.
    \]
    
    \item If $n \equiv 2 (\mod 3)$, then $l = 2 \Delta(s) + 2k + 2$ and
    \[
        \mu_l \left(Z_0, \lambda_{-\tfrac{1}{2}}\right) = \frac{s^3 + 3ks + 2s^2 + 4k + s + 4}{2} > 0.
    \]
\end{enumerate}
The ample cone is spanned by $D_{n - 1}$ and $D_{\infty} = H$. A straightforward computation shows that the wall is in the interior of the ample cone if and only if $n \geq 11$ or $n = 8$.
\end{proof}

The following result is limited to the case where $n - l$ is a triangular number. It is conceivable that a very similar statement holds in general, but the situation will become quite a bit more technical.

\begin{prop}
\label{prop:walls_collinear_points}
Let $l, n$ be integers such that $n - l$ is the triangular number $\Delta(s) = 1 + \ldots + s$. Assume further that $s^2 \leq l < 2\Delta(s)$. Then there exists a GIT wall at $m_{l, s} \coloneqq \frac{s(s + 1)(s - 1)}{s^2 + s - l}$ such that
\begin{enumerate}
    \item the union of $l$ generically collinear points and $\Delta(s)$ generic points in the plane which is unstable for $m < m_{l, s}$ becomes strictly semistable for $m = m_{l, s}$, and 
    \item the union of a subscheme projectively equivalent to the one cut out by $(y^s, y^{s-1}z, \ldots, z^s)$ and $l$ generic points is unstable for $m > m_{l, s}$ and strictly semistable at $m = m_{l, s}$.
    \item Both cases above degenerate to a common strictly semistable configuration at $m = m_{l, s}$. 
\end{enumerate}
\end{prop}

\begin{proof}
The fact that the wall exists is immediate from Lemma \ref{lem:wall_for_collinear_points}.
\begin{enumerate}
    \item Let $Z$ be the union of $l$ generically collinear points and $\Delta(s)$ generic points. Up to a change of coordinates we can assume that
    \[
        \II_Z = \bigcap_{i = 1}^{\Delta(s)} (y - a_i x, z - b_i x) \cap (x, p_l(y, z))
    \]
    where $p_l(y, z)$ is general of degree $l$. Note that for a general point $P \in \P^2$, we have $\lim_{t \to 0} \lambda_{-\frac{1}{2}}(t) \cdot P = (1:0:0)$. By Lemma \ref{lem:limit_with_cutting_corners}
    \[
    \lim_{t \to 0} \lambda_{-\frac{1}{2}}(t) \cdot \bigcap_{i = 1}^{\Delta(s)} (y - a_i x, z - b_i x) = (y^s, y^{s - 1}z, \ldots, z^s).
    \]
    Overall, this means that $\lim_{t \to 0} \lambda_{-1/2}(t) \cdot Z$ is an ideal as described in Lemma \ref{lem:wall_for_collinear_points}. This shows the claimed semistability at $m = m_{l, s}$. Moreover, $\lambda_{-1/2}$ destabilizes $Z$ for $m < m_{l, s}$.
    
    \item Let
    \[
    \II_Z = (y^s, y^{s-1}z, \ldots, z^s) \cap \bigcap_{i = 1}^l (y - a_i x, z - b_i x)
    \]
    for general $a_1, \ldots, a_l, b_1, \ldots, b_l \in \C$. Then taking the limit $\lim_{t \to 0} \lambda_{-1/2}^{-1}(t) \cdot \II_Z$ leads to the subscheme from Lemma \ref{lem:wall_for_collinear_points}.
    
    \item The last statement is implicit in the proofs of (i) and (ii). \qedhere
\end{enumerate}
\end{proof}

\begin{prop}
\label{prop:wall_conic}
There is a GIT wall at $m_0 = \tfrac{n - 1}{2}$ such that subschemes contained in a conic become strictly-semistable at $m = m_0$.
\end{prop}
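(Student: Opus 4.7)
The plan is to degenerate a generic length-$n$ subscheme $Z$ contained in a smooth conic to the specific configurations treated in Lemmas \ref{lem:wall_conic_even} and \ref{lem:wall_conic_odd}, via a one-parameter subgroup that stabilizes the conic. Since $\SL_3$ acts transitively on smooth plane conics, after a change of coordinates I may assume the conic is $C = \{y^2 + xz = 0\}$. The one-parameter subgroup $\lambda_0 = \lambda(1,0,-1)$ preserves $C$ and, via the parametrization $(s:t) \mapsto (s^2 : st : -t^2)$, it acts on $C \cong \P^1$ as $(s:t) \mapsto (us:t)$, with the two fixed points $(1:0:0)$ and $(0:0:1)$.

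I then compute the flat limit $Z_0 \coloneqq \lim_{u \to 0} \lambda_0(u) \cdot Z$. For a generic $Z \subset C$ all $n$ of its points lie away from the two $\lambda_0$-fixed points on $C$, and the $\C^*$-action sweeps them into $(0:0:1)$; thus $Z_0$ is the unique length-$n$ subscheme of $C$ supported at $(0:0:1)$. In the chart $z = 1$ the conic becomes $y^2 + x = 0$ with local parameter $y$, so locally this subscheme is cut out by $(y^2 + x, y^n)$. Homogenizing and saturating yields precisely $\II_{Z_0} = (y^2 + xz, x^k)$ for $n = 2k$ and $\II_{Z_0} = (y^2 + xz, x^k, x^{k-1}y)$ for $n = 2k-1$, matching the subschemes appearing in Lemmas \ref{lem:wall_conic_even} and \ref{lem:wall_conic_odd}.

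From those lemmas $Z_0$ is strictly $m_0$-semistable at $m_0 = (n-1)/2$, and $\mu_m(Z_0, \lambda_0) \neq 0$ for every $m \neq m_0$. Since the unstable locus is closed and $\SL_3$-invariant and $Z_0 \in \overline{\SL_3 \cdot Z}$ is semistable, $Z$ is also $m_0$-semistable. Moreover $\mu_{m_0}(Z, \lambda_0) = \mu_{m_0}(Z_0, \lambda_0) = 0$ with $\lambda_0$ non-trivial, so $Z$ fails to be stable and is therefore strictly semistable at $m_0$. Existence of a GIT wall at $m_0$ follows already from the fact that the single point $Z_0$ changes stability across $m_0$.

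The main technical obstacle is the flat-limit computation, which I would verify by writing $\II_Z$ in terms of a degree-$n$ binary form on the parametrization and tracking how its monomials scale under $\lambda_0$, then comparing the resulting initial ideal with $\II_{Z_0}$ after saturation. A secondary bookkeeping point is that these strictly Bridgeland-semistable ideals do represent genuine points of $M_{\sigma_{m_0}}(1,0,-n)$ at $m_0$, which is guaranteed by Proposition \ref{prop:wall_between_-1_-2} and Corollary \ref{cor:ideals_between_-1_-2}; the case of reducible conics is not needed for the wall statement and is either a limiting case of the smooth one or overlaps with the collinear-points walls already established in Section \ref{sec:WallsCollinear}.
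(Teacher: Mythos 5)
Your argument is correct and is essentially the paper's own proof: reduce to $n$ general points on the conic $y^2+xz=0$ using closedness of the unstable locus, degenerate under $\lambda_0$ to the punctual subschemes of Lemmas \ref{lem:wall_conic_even} and \ref{lem:wall_conic_odd}, and transfer their strict semistability at $m_0=\tfrac{n-1}{2}$ (and instability for $m\neq m_0$) back to $Z$, with Corollary \ref{cor:ideals_between_-1_-2} guaranteeing the statement makes sense. The only cosmetic difference is the direction of the flow --- with the paper's sign convention the general points limit to $(1:0:0)$ rather than $(0:0:1)$ --- but the two resulting punctual ideals are exchanged by $x\leftrightarrow z$, which preserves the conic, so they are projectively equivalent and nothing changes.
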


\begin{proof}
Again by Corollary \ref{cor:ideals_between_-1_-2} the ideal $\II_Z$ is destabilized in Bridgeland stability precisely at the wall that induces the divisor $D_{m_0}$ and therefore, the statement makes sense.

Since the locus of unstable points is closed, we may assume that $Z$ consists of $n$ general points in a conic. Using the action of $\SL_3$, we can assume that the conic is given by $y^2 + xz$. This conic is stabilized by the one-parameter subgroup $\lambda_0$. Moreover, for a general point $P$ on this conic, we have $\lim_{t \to 0} \lambda_0(t) \cdot P = (1:0:0)$. Therefore, taking the limit $\lim_{t \to 0} \lambda_0(t) \cdot Z$ leads to Lemma \ref{lem:wall_conic_even} or Lemma \ref{lem:wall_conic_odd}.
\end{proof}


\subsection{Further remarks} Next, we establish a few smaller results.

\begin{prop}
\label{prop:plane_curves_vs_points}
Let $Z \subset \P^2$ be a zero-dimensional subscheme and let $m$ be an integer such that the $m$-th Hilbert point of $Z$ is well-defined. If there exists a basis $f_1, \ldots f_r \in H^0\left( \II_Z(m) \right)$ such that the plane curve cut out by $\prod_i^r f_i(x,y,z)$ is GIT unstable, then $Z$ is $m$-unstable. In particular, if the curve cut out by $\prod_i^r f_i$ has a singular point of multiplicy larger than $\frac{2mr}{3}$, then $Z$ is $m$-unstable.
\end{prop}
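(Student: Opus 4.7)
The plan is to establish an inequality relating Hilbert--Mumford indices, then apply the numerical criterion of Definition \ref{def:HMcrit}. Specifically, I would prove that for every one-parameter subgroup $\lambda$ of $\SL_3$,
\[
    \mu^{\OO(1)}(C, \lambda) \leq \mu_m(Z, \lambda),
\]
where $C$ denotes the plane curve of degree $mr$ cut out by $g \coloneqq \prod_{i=1}^r f_i$. Granted this inequality, GIT-instability of $C$ exhibits some $\lambda$ with $\mu^{\OO(1)}(C,\lambda) > 0$, which then forces $\mu_m(Z,\lambda) > 0$ and hence $m$-instability of $Z$.

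To prove the inequality, I fix $\lambda$ and diagonalise its action on $H^0(\OO(m))$ with a monomial weight basis $e_1, \ldots, e_s$ of weights $w_1, \ldots, w_s$. Expanding $f_i = \sum_j c_{ij} e_j$, set $\alpha_i \coloneqq \min\{w_j : c_{ij} \neq 0\}$ and let $L_i$ be the associated lowest-weight component of $f_i$, which is a nonzero polynomial. Because $\C[x,y,z]$ is an integral domain, $\prod_i L_i$ is nonzero and realises the lowest-weight component of $g$, so Remark \ref{rmk:minNumerical} yields
\[
    \mu^{\OO(1)}(C, \lambda) = \sum_{i=1}^r \alpha_i.
\]
On the Hilbert-scheme side, Lemma \ref{lem:numericalFunction} identifies $\mu_m(Z, \lambda)$ with the minimum of $w_I \coloneqq w_{k_1} + \cdots + w_{k_r}$ over those subsets $I = \{k_1 < \cdots < k_r\}$ for which the Pl\"ucker coordinate $\det(c_{i,k_j})_{i,j}$ is nonzero.

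The comparison now follows from the Leibniz expansion of determinants: a nonvanishing $r \times r$ determinant forces some bijection $\sigma \colon [r] \to I$ with $\prod_i c_{i,\sigma(i)} \neq 0$, whence $\alpha_i \leq w_{\sigma(i)}$ for each $i$ and $\sum_i \alpha_i \leq w_I$. Minimising over admissible $I$ gives $\mu^{\OO(1)}(C,\lambda) \leq \mu_m(Z,\lambda)$, completing the general statement.

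For the ``in particular'' refinement, the assumption that $C$ has a singular point of multiplicity greater than $\tfrac{2mr}{3}$ implies GIT-instability of $C$ by the classical Hilbert--Mumford criterion for plane curves: after moving the singular point to $(1{:}0{:}0)$, every monomial $x^i y^j z^k$ in $g$ satisfies $j + k > \tfrac{2mr}{3}$, equivalently $i < \tfrac{mr}{3}$, so the one-parameter subgroup $\lambda = (-2, 1, 1) \in \SL_3$ gives $\mu^{\OO(1)}(C,\lambda) = \min\{mr - 3i : x^i y^j z^k \text{ appears in } g\} > 0$. Combined with the first part of the proposition this yields $m$-instability of $Z$. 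The main subtlety lies in the comparison step: since $g$ is not a basis-invariant lift of the Pl\"ucker image, one cannot identify them directly, and the proof proceeds through the observation that a nonvanishing signed determinant automatically contains a nonvanishing unsigned product of matrix entries, which is what converts $\lambda$-weights on monomials of $f_i$ into $\lambda$-weights on Pl\"ucker coordinates.
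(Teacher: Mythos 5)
Your proof is correct and follows essentially the same route as the paper: compare the $\lambda$-weight of each nonzero Pl\"ucker coordinate (a wedge of one monomial from each $f_i$) with the minimal monomial weight of $\prod_i f_i$, which equals $\sum_i \alpha_i$ because lowest-weight components multiply without cancellation. The only differences are presentational: you make explicit the Leibniz-expansion step that the paper states somewhat loosely, and you prove the multiplicity criterion for plane curves directly with $\lambda=(-2,1,1)$ where the paper cites \cite[Exercise 10.12]{Dol03:invariant_theory}.
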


\begin{proof}
The Pl\"ucker coordinates associated to the $m$-th Hilbert point of $\II_Z$ are defined by the $(r \times r)$-minors of a matrix whose columns represent the polynomials $f_i(x,y,z)$ with respect to the basis of degree $m$ monomials. Therefore, the non-zero Pl\"ucker coordinates are of the form $\wedge_{i=1}^r x_{K(i)}$ where $x_{K(i)}$ is a monomial in the polynomial $f_i(x,y,z)$.

Let $\lambda$ be a one-parameter subgroup such that $\mu_m(\prod_i^r f_i, \lambda) > 0$. Without loss of generality we may assume that $\lambda$ is diagonal. If not, we simply do a base-change. Observe that $\lambda$ acts on both $\bigwedge_{i=1}^r x_{K_i}$ and the symmetric product $\prod_{i=1}^r x_{K(i)}$. Moreover, if the wedge product is non-zero, then their $\lambda$-eigenvalues are the same. As a consequence, $\mu_m(Z, \lambda) \geq \mu(\prod_i^r f_i, \lambda) > 0$ by Remark \ref{rmk:minNumerical}.

The second statement follows from \cite[Exercise 10.12]{Dol03:invariant_theory}.
\end{proof}

We finish this section with an interesting example for $n = 7$. We will only sketch the argument and leave more details to the reader.

\begin{ex}
\label{ex:SpecialTriplePoint7}
Let $Z \subset \P^2$ be the subscheme cut out by the ideal
\[
    \II_Z = (y^2 + xz, xy, x^2) \cap (x,z) \cap \bigcap_{i=1}^{3} (y - a_i x, z - b_i x)
\]
for generic $a_1, a_2, a_3, b_1, b_2, b_3 \in \C$. In Theorem \ref{thm:MainThmWall} we have shown that the union of a curvilinear triple point with four general points is strictly-semistable at $m = \tfrac{9}{2}$. In our case, there is a unique line that intersects the point cut out by $(y^2 + xz, xy, x^2)$ in length two and this line is cut out by $x$. The fact that one of the four points is on that line means that $Z$ is not general enough to fall under the precise statement of Theorem \ref{thm:MainThmWall}. A similar argument shows that $\lim_{t \to 0} \lambda_0(t) Z$ is strictly-semistable at $m = \tfrac{9}{2}$ and $\mu_{9/2}(Z, \lambda_0) = 0$.

In Theorem \ref{thm:stability_final_model_n_7} we will show that $\II_Z$ is also strictly-semistable in the final model at $m = \tfrac{5}{2}$. This means $\II_Z$ has to be semistable for all $\tfrac{5}{2} \leq m \leq \tfrac{9}{2}$. A close inspection of the argument shows that the one-parameter subgroup $\lambda_0$ is responsible for $Z$ being strictly-semistable at $m = \tfrac{5}{2}$ as well. Therefore, $\II_Z$ is going to be strictly-semistable for all $\tfrac{5}{2} \leq m \leq \tfrac{9}{2}$.

\end{ex}

\section{Five points in detail}

We start by showing in Section \ref{subsec:complexes_five_points} that all Bridgeland-semistable objects that are not ideals are GIT-unstable in every birational model for $n = 5$. Therefore, the proof relies on understanding stability of ideals, which is mostly done in Section \ref{subsec:unstable_ideals_five_points}. The final arguments for the proof of Theorem \ref{thm:main_n_5} about the Variation of GIT are given in Section \ref{subsec:main_thm_five_points}.

\subsection{Bridgeland-semistable objects}
\label{subsec:complexes_five_points}

\begin{prop}[\cite{ABCH13:hilbert_schemes_p2}]
\label{prop:bridgeland_walls_5}
There are three walls in Bridgeland stability for objects $E$ with Chern character $(1,0,-5)$ when $\beta < 0$. From largest to smallest wall they are the following.
\begin{enumerate}
\item Semistable objects are given by extensions between $\OO(-1)$ and $\OO_L(-5)$ for a line $L \subset \P^2$. Therefore, this wall destabilizes ideal sheaves of points completely contained in $L$, and replaces them by non-trivial extensions
\[
0 \to \OO_L(-5) \to E \to \OO(-1) \to 0.
\]
\item Semistable objects are given by extensions between $\II_P(-1) $ and $\OO_L(-4)$ for a line $L \subset \P^2$. Therefore, this wall destabilizes ideal sheaves of points where four points lie on a line $L$, and replaces them by sheaves with torsion.
\[
0 \to \OO_L(-4) \to E \to \II_P(-1) \to 0.
\]
\item Finally, all semistable objects have a map from $\OO(-2)$ and after this wall the moduli space is empty. In the open locus that still parametrizes ideal sheaves, this is simply saying that five points always lie on a conic.
\end{enumerate}
The effective cone of $\P^{2[n]}$ is spanned by the exceptional divisor $\Delta$ and $D_2$. The movable cone of $\P^{2[5]}$ is spanned by $H$ and $D_2$.
\end{prop}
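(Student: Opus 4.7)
The plan is to reduce everything to Proposition \ref{prop:wall_between_-1_-2} applied to $n = 5$ and to use the dictionary between Bridgeland walls and divisors on $\P^{2[5]}$. First I would observe that for $v = (1, 0, -5)$ the unique vertical numerical wall sits at $\beta = 0$, and every semicircular wall has center $s < -\sqrt{10}$ (since $\nu_{0,\beta}(v) = 0$ forces $\beta^2 = 10$); so every nontrivial wall is semicircular with $\beta < 0$ and either equals $W(E, \OO(-2))$ or is strictly larger. This is exactly the situation covered by Proposition \ref{prop:wall_between_-1_-2}.

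Specializing part (i) of that proposition to $n = 5$ forces $l \in \{4, 5\}$ in the range $\tfrac{n+1}{2} < l \leq n$. For $l = 5$ the wall is $W(E, \OO(-1))$ with destabilizing sequence $0 \to \OO_L(-5) \to E \to \OO(-1) \to 0$; among length-five ideals this destabilizes precisely those $\II_Z$ with $Z \subset L$ for some line $L$. For $l = 4$ the wall is $W(E, \II_P(-1))$ with destabilizing sequence $0 \to \OO_L(-4) \to E \to \II_P(-1) \to 0$, and this destabilizes exactly the $\II_Z$ containing a length-$4$ collinear subscheme. Computing the centers $s = -l - \tfrac{1}{2}$ and using the identification at the end of Section \ref{subsec:mds_hilbert_scheme} gives the induced divisors $D_4$ and $D_3$ respectively. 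The remaining wall is $W(E, \OO(-2))$ from part (ii) of the same proposition, inducing $D_2$; since $\dim H^0(\OO_{\P^2}(2)) = 6 > 5$, every length-$5$ subscheme lies on a conic, hence every ideal $\II_Z$ admits an injection $\OO(-2) \into \II_Z$ that becomes strictly destabilizing below this wall. Together with the fact that no semicircular wall lies further inside, this shows the moduli space is empty past $W_3$.

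For the cone statements, the movable cone of $\P^{2[5]}$ must have extremal rays $H = D_\infty$ and $D_2$: the former by Theorem \ref{thm:moduli_spaces_tilt_stability} combined with the nef cone description, and the latter because $D_2$ corresponds to the last Bridgeland wall and past it no Bridgeland-semistable objects with Chern character $(1,0,-5)$ exist, so the chain of birational models from Theorem \ref{thm:moduli_spaces_tilt_stability} terminates there. For the effective cone, $\Delta$ is effective by definition, while Lemma \ref{lem:estimate_effective_cone} places $D_2$ on the boundary of the effective cone for $n = 5$; since $\Pic(\P^{2[5]})_\Q$ is two-dimensional, these are the two extremal rays.

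There is no serious obstacle, since the general machinery of Proposition \ref{prop:wall_between_-1_-2} does most of the work; the only case-specific input beyond that is the trivial dimension count $\binom{4}{2} = 6 > 5$ ensuring that five points always lie on a conic, together with Lemma \ref{lem:estimate_effective_cone} to pin down the effective cone. The main thing to be careful about is the direction of the destabilizing sequences (subobject versus quotient above and below each wall), which is handled cleanly by Proposition \ref{prop:structure_sequences}.
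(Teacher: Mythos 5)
The paper does not actually prove this proposition --- it is imported wholesale from \cite{ABCH13:hilbert_schemes_p2} --- so there is no in-house argument to compare against. Your strategy of deriving it from the paper's own Proposition \ref{prop:wall_between_-1_-2} specialized to $n=5$ is the natural internal route, and the core of it is correct: the reduction to rank-one destabilizers, the identification of the two large walls $W_5 = W(E,\OO(-1))$ and $W_4 = W(E,\II_P(-1))$ with their destabilizing sequences and induced divisors $D_4$, $D_3$, the identification of $W_3 = W(E,\OO(-2))$ with induced divisor $D_2$, and the observation that every length-five subscheme lies on a conic all check out and match Corollary \ref{cor:ideals_between_-1_-2}.

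The genuine gap is in item (iii), the claim that the moduli space is \emph{empty} below $W_3$. You verify only that ideal sheaves receive a map from $\OO(-2)$; the objects created at $W_5$ and $W_4$ (the extensions with $\OO_L(-5)$, resp.\ $\OO_L(-4)$, as subobjects) also need $\Hom(\OO(-2),E)\neq 0$, which does hold but requires a short cohomology computation (e.g.\ $\ext^1(\OO(-2),\OO_L(-5))=2<3=\hom(\OO(-2),\OO(-1))$). More seriously, destabilization of every object \emph{at} $W_3$ does not by itself give emptiness below $W_3$: a priori the destabilized objects could be replaced by new semistable ones (extensions in the other direction involving $\OO(-2)$), exactly as happens at $W_5$ and $W_4$. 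Your phrase ``together with the fact that no semicircular wall lies further inside'' assumes precisely what is to be shown --- the absence of further walls is a consequence of emptiness, not an input to it. One must rule out new semistable objects directly (e.g.\ by showing any such object would violate the wall classification of Proposition \ref{prop:wall_between_-1_-2} or the bound $s<-\sqrt{2n}$ combined with the positivity lemma), or simply cite this collapsing statement from \cite{ABCH13:hilbert_schemes_p2} as the paper does. A smaller issue: extremality of $\Delta$ in the effective cone does not follow from ``$\Delta$ is effective''; it is extremal because it is the exceptional divisor of the divisorial Hilbert--Chow contraction, hence rigid.
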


By Proposition \ref{prop:quiver_locus1} and Proposition \ref{prop:quiver_locus2} all Bridgeland-stable objects destabilizing along the first two walls that are not ideal sheaves are GIT-unstable. Any non-ideal sheaf destabilizes at either of those two walls. 
Therefore, the only GIT-semistable objects are ideals.

Finally, we will translate what the above result means for the Bridgeland stability of an ideal $\II_Z$ where $Z$ is zero-dimensional of length five. 

\begin{cor}
Assume that $\II_Z$ is a zero-dimensional subscheme $Z \subset \P^2$ of length five and that $\sigma_m$ is a stability condition whose induced divisor is a positive multiple of $D_m$.
\begin{enumerate}
    \item Assume that $4 \leq m \leq \infty$. Then any $\II_Z$ is $\sigma_m$-semistable.
    \item Assume that $3 \leq m < 4$. Then $\II_Z$ is $\sigma_m$-semistable if and only if $Z$ is not collinear.
    \item Assume that $2 \leq m < 3$. Then $\II_Z$ is $\sigma_m$-semistable if and only if $Z$ does not contain a collinear subscheme of length four.
\end{enumerate}
\end{cor}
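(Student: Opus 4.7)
The proof plan is a direct translation of Proposition \ref{prop:bridgeland_walls_5} into the language of ideal sheaves, organized by walking $m$ downward through the three Bridgeland walls. First, I would identify each Bridgeland wall with a value of $m$: using the fact that a semicircular wall with center $s$ induces the divisor $D_{-s-3/2}$ (Section \ref{subsec:mds_hilbert_scheme}), or equivalently by specializing Proposition \ref{prop:wall_between_-1_-2}, the three walls described in Proposition \ref{prop:bridgeland_walls_5} lie at $m = 4$, $m = 3$, and $m = 2$ respectively. This sets up a three-chamber picture on the movable cone, with the corollary's three regimes corresponding exactly to the chambers between consecutive walls.

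For case (i), for $m > 4$ the stability condition $\sigma_m$ lies above all Bridgeland walls for $(1,0,-5)$. By Lemma \ref{lem:large_volume_limit}, any $\sigma_m$-semistable object of this Chern character is a torsion-free slope-semistable sheaf, and conversely every ideal sheaf $\II_Z$ is slope-stable (being rank-one torsion-free with $\ch_1 = 0$), hence $\sigma_m$-stable. At $m = 4$, the only ideals admitting a destabilizing subobject of equal slope are the fully collinear ones, via $\OO_L(-5) \hookrightarrow \II_Z$; these become strictly semistable but remain semistable, giving the claim.

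For cases (ii) and (iii), I would apply the same principle at each wall in turn. Below the first wall $m = 4$, the inclusion $\OO_L(-5) \hookrightarrow \II_Z$ has strictly larger Bridgeland slope than $\II_Z$ (by Proposition \ref{prop:structure_sequences}), so fully collinear ideals are $\sigma_m$-unstable; all other ideals are untouched by this wall and, being not yet destabilized at the next wall, remain $\sigma_m$-stable for $3 < m < 4$, with ideals containing exactly 4 collinear points becoming strictly semistable at $m = 3$ via $0 \to \OO_L(-4) \to \II_Z \to \II_P(-1) \to 0$. Below $m = 3$, the same Proposition \ref{prop:structure_sequences} argument shows these latter ideals are destabilized, so for $2 < m < 3$ semistability holds iff $Z$ contains no collinear length-4 subscheme. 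At $m = 2$, the final wall is triggered by $\OO(-2) \hookrightarrow \II_Z$, which exists for every length-5 subscheme since five points always lie on a conic; the surviving ideals become strictly semistable but remain semistable.

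The arguments are largely bookkeeping once the walls are identified, so there is no serious obstacle; the only delicate point is the sign of the slope comparison on either side of each wall, which is supplied uniformly by part (iii) of Proposition \ref{prop:structure_sequences} (the center of the wall decreases as $\ch_2$ of the destabilizing subobject varies, pinning down which side sees destabilization). With this, each of the three cases follows from the corresponding part of Proposition \ref{prop:bridgeland_walls_5} by reading off the configurations of $Z$ that do or do not admit the relevant destabilizing subobject.
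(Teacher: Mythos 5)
Your proposal is correct and follows essentially the same route as the paper, which obtains this corollary as an immediate translation of Proposition \ref{prop:bridgeland_walls_5} (equivalently, the $n=5$ case of Corollary \ref{cor:ideals_between_-1_-2} derived from Proposition \ref{prop:wall_between_-1_-2}), with the walls placed at $m=4,3,2$ via the center-to-divisor formula $D_{-s-3/2}$. One small correction: at the largest wall the destabilizing short exact sequence in the heart is $0 \to \OO(-1) \to \II_Z \to \OO_L(-5) \to 0$, so below the wall it is the subobject $\OO(-1)$ whose Bridgeland slope exceeds that of $\II_Z$ (equivalently the quotient $\OO_L(-5)$ has smaller slope); there is no inclusion $\OO_L(-5) \hookrightarrow \II_Z$, though this does not affect your conclusion.
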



\subsection{GIT-unstable points}
\label{subsec:unstable_ideals_five_points}

In this section we will find unstable $Z$ for $n = 5$ depending on $m$. However, the fact there are no further unstable $Z$ will only be shown in Section \ref{subsec:main_thm_five_points}.

\begin{lem}
\label{lem:n5_triple_point_unstable}
Let $Z \in \P^{2[5]}$ be a subscheme containing a point of length at least three. Then $Z$ is $m$-unstable for all $m \geq 2$. 
\end{lem}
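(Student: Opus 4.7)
The plan is to exhibit destabilizing diagonal one-parameter subgroups and apply the numerical criterion, after a case split on the local structure of the length-$\geq 3$ subscheme at its support $P$. A short Hilbert-function argument in the local ring $\OO_{P,\P^2}$ shows that any length-$3$ subscheme at $P$ either contains a curvilinear length-$3$ subscheme (when its local ideal contains a linear form) or else contains the ``fat'' length-$3$ subscheme $V(\mathfrak{m}_P^2)$ (when its local ideal lies inside $\mathfrak{m}_P^2$). After using the $\SL_3$-action to put $P=(0:0:1)$ and make a coordinate change in the tangent plane, I may assume $\II_Z\subset(x,y^3)$ in the first case and $\II_Z\subset(x^2,xy,y^2)$ in the second. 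In both cases the stabilizer of the length-$3$ subscheme lies inside the upper-triangular Borel of $\SL_3$, so by Proposition~\ref{prop:tori_to_check} it suffices to check diagonal one-parameter subgroups $\lambda_r=(1,r,-1-r)$.

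For the fat case I would use the single subgroup $\lambda_1=(1,1,-2)$. Every section of $\II_Z(m)$ is supported on monomials $x^iy^jz^k$ with $i+j\geq 2$, which have $\lambda_1$-weight $i+j-2k$. The three monomials in $(x^2,xy,y^2)_2=\langle x^2,xy,y^2\rangle$ all have weight $2$, forcing $\mu_2(Z,\lambda_1)=2$. The seven monomials of $(x^2,xy,y^2)_3$ have weights $\{3,3,0,3,0,3,0\}$, and the sum of the five smallest is $6$; hence $\mu_3(Z,\lambda_1)\geq 6$. Linearity of $\mu_m$ in $m$ (coming from $D_m=mH-\Delta/2$) then yields $\mu_m(Z,\lambda_1)\geq 4m-6>0$ for every $m\geq 2$.

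For the curvilinear case I would use $\lambda_1$ at large $m$ and $\lambda_{-1/2}=(2,-1,-1)$ at small $m$. Running the same monomial-weight analysis on $(x,y^3)_m$ produces two linear lower bounds,
\[
\mu_m(Z,\lambda_1)\geq 4m-9\qquad\text{and}\qquad \mu_m(Z,\lambda_{-1/2})\geq 3-m,
\]
the first strictly positive for $m>9/4$ and the second for $m<3$. These two ranges overlap on $(9/4,3)$ and together cover $[2,\infty)$, so at every $m\geq 2$ at least one of $\lambda_1$ or $\lambda_{-1/2}$ destabilizes $Z$.

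The main obstacle I anticipate is ensuring that these bounds hold for \emph{every} $Z$ in each locus, not merely for the generic member, since degenerations such as five collinear points at a single support, or a triple point combined with four collinear points, have very different state polytopes. This is handled uniformly by the lower semicontinuity of $Z\mapsto\mu_m(Z,\lambda)$ for fixed $\lambda$: at a specialization some Pl\"ucker coordinates vanish, so the minimum of $\sum\lambda_i$ over non-zero Pl\"ucker indices can only increase, and the generic bound propagates. A secondary issue is that for certain $Z$ the ideal $\II_Z$ fails to be $\sigma_m$-semistable (e.g.\ five collinear points fall out of $M_{\sigma_m}$ for $m<4$), but in those cases the replacement complex in the Bridgeland moduli space is already shown GIT-unstable by Propositions~\ref{prop:quiver_locus1} and~\ref{prop:quiver_locus2}, so no additional verification is required.
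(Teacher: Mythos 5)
Your case analysis misses the generic case, and that gap is fatal. The dichotomy ``the local ideal at $P$ contains an element of $\mathfrak{m}_P\setminus\mathfrak{m}_P^2$ versus lies in $\mathfrak{m}_P^2$'' is fine, but in the first case you can only normalize the ideal to $(x,y^3)$ by a \emph{formal} change of coordinates, not by an element of $\SL_3$: the homogeneous ideal $(x,y^3)$ cuts out the triple point of a \emph{line}, whereas the general curvilinear triple point is contained in no line and is projectively equivalent to $(x^2,xy,y^2+xz)$, the length-three neighbourhood of a point on a smooth conic. Both of your normal forms, $(x^2,xy,y^2)$ and $(x,y^3)$, are proper degenerations of this general one, so your semicontinuity argument runs in the wrong direction: instability of the special configurations says nothing about the general one. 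Worse, the general configuration genuinely behaves differently from your predictions. For $\II_Z=(xz+y^2,x^2,xy)\cap(xz+y^2,x+z)$ (the configuration the paper uses, whose $\SL_3$-orbit is dense in the locus of subschemes containing a triple point) one reads off from the state polytopes that $\mu_3(Z,\lambda_1)=3$ and $\mu_4(Z,\lambda_1)=7$, so $\lambda_1$ destabilizes only for $m>9/4$, while $\mu_m(Z,\lambda_{-1/2})=-m<0$ for all $m$; neither of your subgroups covers $m\in[2,9/4]$. The paper's proof goes the other way around: it uses closedness of the unstable locus together with irreducibility of the triple-point locus to reduce to this one generic configuration, and destabilizes it with $\lambda_0=(1,0,-1)$, for which $\mu_3=1$ and $\mu_4=2$, whence $\mu_m>0$ for all $m>2$ by Lemma~\ref{lemma:m0}.

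Two smaller points. First, Proposition~\ref{prop:tori_to_check} requires a torus inside $\Stab_{\SL_3}(Z)$ itself, which is finite for your configurations, so it gives no reduction to diagonal one-parameter subgroups; fortunately none is needed, since to prove instability you only have to exhibit a single destabilizing subgroup. Second, writing $D_m=c_1D_d+c_2D_{d+1}$, both coefficients are non-negative only for $m\in[d,d+1]$, so lower bounds on $\mu_d$ and $\mu_{d+1}$ propagate to $\mu_m$ only on that interval (or when the values are known exactly, as in Lemma~\ref{lemma:m0}); your extrapolations $\mu_m\geq 4m-6$ and $\mu_m\geq 4m-9$ to all $m\geq 2$ need this extra care, as does the well-definedness of the second Hilbert point when $Z$ fails to impose independent conditions on conics (for instance a fat point whose two residual points lie on a line through its support).
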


\begin{proof}
Since the locus of GIT-unstable points is closed, it is enough to show that a subscheme $Z$ containing a curvilinear triple point lying on a smooth conic is unstable. 

We can use the action of $\SL_3$ to assume that $Z$ is given by the ideal
\[
\II_Z = \left( xz + y^2, x^2, xy \right) \cap \left( xz + y^2, x+z \right) = 
\left( xz + y^2, x^2y + xyz, x^3 + x^2z \right).
\]
Clearly, the 3rd and 4th Hilbert points are well defined for $Z$, and we can compute
\begin{align*}
\State_3(\II_Z) &= \Conv \{ (9,2,4), (5,6,4), (6,8,1), (4,8,3) \}, \\
\State_4(\II_Z) &= \Conv \{ (20,10,10), (19,10,11), (13,16,11), (16,18,6), (12, 18, 10) \}.
\end{align*}
In particular,$\mu_{3}(Z, \lambda_0) = 1$ and $\mu_{4}(Z, \lambda_0)= 2$.
Lemma \ref{lemma:m0} implies that $\mu_m(Z, \lambda_0) > 0$ for all $m > m_0(Z, \lambda_0) = 2$.
\end{proof}

\begin{lem}
\label{lem:n5_four_points_line_unstable}
Let $Z \in \P^{2[5]}$ be a subscheme containing a collinear length four subscheme. Then $Z$ is $m$-unstable for all $m > 2$. 
\end{lem}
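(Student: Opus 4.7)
The plan is to produce one explicit destabilizing one-parameter subgroup for a generic configuration and then extend the conclusion to the full locus by the closedness of the unstable set. First, the $m$-unstable locus is closed in $\P^{2[5]}$, and the locus of subschemes containing a collinear length four subscheme is irreducible (it is the image in $\P^{2[5]}$ of the evident incidence variety fibered over the dual plane $(\P^2)^*$). Hence it suffices to treat one generic $Z$ of this form. After applying $\SL_3$, I may assume the line through the four collinear points is $L = \{z = 0\}$ and the fifth point is $(0{:}0{:}1)$, so that $\II_Z = (xz,\ yz,\ f_4(x,y))$ with $f_4$ a general binary quartic.

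I will test the one-parameter subgroup $\lambda = \lambda(-1,-1) = \lambda_1^{-1}$, which fixes $(0{:}0{:}1)$ and pointwise fixes $L$, and hence stabilizes $Z$. By Lemma \ref{lem:numericalFunction} the Hilbert--Mumford index $\mu_m(Z,\lambda)$ is read off the state polytope. For $m = 3$, any degree three polynomial vanishing on four general points of $L$ must restrict to zero on $L$ and therefore be divisible by $z$; vanishing at $(0{:}0{:}1)$ then forbids a $z^3$ coefficient. So $H^0(\II_Z(3))$ has the monomial basis $\{x^2 z,\ xyz,\ y^2z,\ xz^2,\ yz^2\}$, the state polytope is the single lattice point $(4,4,7)$, and $\mu_3(Z,\lambda) = 6$. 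For $m = 4$, $H^0(\II_Z(4))$ is ten-dimensional, spanned by the nine $z$-divisible degree four monomials together with $f_4$; because every binary quartic monomial has the same $\lambda$-weight $-4$, all vertices of $\State_4(\II_Z)$ pair with $\lambda$ to the common value $8$, giving $\mu_4(Z,\lambda) = 8$.

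By Lemma \ref{lemma:m0} (equivalently, linearity of $\mu_m$ in $m$), $\mu_m(Z,\lambda) = 2m$, which is positive for every $m > 0$. In particular $\lambda$ destabilizes $Z$ for every $m > 2$. The computation parallels Lemma \ref{lem:wall_for_fat_plus_line} and I do not foresee real obstacles. The only piece requiring modest care is the $m = 4$ state polytope, where $f_4$ a priori contributes a distinct vertex for each of its nonzero coefficients, and a generic one-parameter subgroup would pair with these vertices to different values; the collapse to a single Hilbert--Mumford value for our $\lambda$ is precisely because every binary quartic monomial has the same $\lambda$-weight.
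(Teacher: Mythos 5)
Your proof is correct and follows essentially the same route as the paper's: reduce to a generic configuration by closedness of the unstable locus, normalize coordinates, compute the state polytopes in two consecutive degrees, and exhibit an explicit diagonal one-parameter subgroup stabilizing $Z$ with positive Hilbert--Mumford index. The only differences are cosmetic: after permuting coordinates your subgroup $(-1,-1,2)$ is a multiple of $\lambda_{-1/2}$ rather than the paper's $\lambda_{-3/4}$, and working in degrees $3$ and $4$ (instead of $4$ and $5$) you obtain the slightly stronger conclusion that $Z$ is destabilized for all $m>0$.
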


\begin{proof}
Since the locus of GIT-unstable points is closed, it is enough to show that a general $Z$ as in the statement is GIT-unstable. This means we can assume $Z$ is reduced and exactly four points lie in $L$. We can use the action of $\SL_3$ to assume that $L$ is cut out by $x = 0$ and that the fifth point outside of $L$ is cut out by $y = z = 0$. Therefore,
\[
\II_Z = (x, p_4(y, z)) \cap (y, z) = (xy, xz, p_4(y, z))
\]
for some degree four polynomial $p_4 \in \C[y,z]$. We can use that $Z$ is general again to assume that $p_4(y, z) = ay^4 + by^3z + cy^2z^2 + dyz^3 + ez^4$ for $a, b, c, d, e \in \C$ with $a, e \neq 0$.

Clearly, the 4th and 5th Hilbert points are well defined for $Z$, and we can compute
\[
\State_{4}(\II_Z) = \Conv \{ (16, 14, 10), (16, 10, 14) \}, \
\State_{5}(\II_Z) = \Conv \{ (30, 29, 21), (30, 21, 29) \}.
\]
In particular,
\begin{align*}
\mu_{4}(Z, \lambda_r) &= \min \{ 16 + 14r - 10(1 + r), 16 + 10r - 14(1 + r) \} = 2 - 4r, \\
\mu_{5}(Z, \lambda_r) &= \min \{ 30 + 29r - 21(1 + r), 30 + 21r - 29(1 + r) \} = 1 - 8r.
\end{align*}
Lemma \ref{lemma:m0} implies that $\mu_{m}(Z, \lambda_r) \geq 0$ for all
\begin{align*}
m > m_0(Z, \lambda_r) = 4 + \frac{2-4r}{1 + 4r} = \frac{6 + 12r}{1 + 4r}.
\end{align*}
Therefore, $Z$ is destabilized by  $\lambda_{-3/4}$ for all $m > \tfrac{3}{2}$, and in particular for $m \geq 2$.
\end{proof}

\begin{lem}
\label{lem:n5_three_points_line_unstable}
Let $Z \in \P^{2[5]}$ be a subscheme containing a collinear length three subscheme. Then $Z$ is $m$-unstable for all $2 \leq m < 3$. 
\end{lem}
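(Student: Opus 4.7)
The plan is to apply Theorem \ref{thm:MainThmWall}(ii) with $n = 5$ and $l = 3$, and then invoke closedness of the GIT-unstable locus to extend the conclusion from the generic configuration to every $Z$ in the statement.

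First I would dispense with the degenerate configurations. By Lemma \ref{lem:n5_triple_point_unstable} and Lemma \ref{lem:n5_four_points_line_unstable}, any $Z$ containing either a length-three subscheme supported at a single point or a collinear length-four subscheme is already $m$-unstable for $m \geq 2$. Moreover, the corollary following Proposition \ref{prop:bridgeland_walls_5} tells us that for $2 \leq m < 3$ the ideal $\II_Z$ represents a point of $M_{\sigma_m}(1, 0, -5)$ precisely when $Z$ contains no collinear length-four subscheme, so the remaining $Z$ we need to handle really do live in the moduli space. On this residual locus the generic $Z$ consists of five distinct reduced points, exactly three of which lie on a line $L_1$ and the remaining two lying off $L_1$.

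For such a generic $Z$, the two off-line points span a unique second line $L_2 \neq L_1$, so $Z$ is the union of three collinear points on $L_1$ with two collinear points on a different line $L_2$. I would then apply Theorem \ref{thm:MainThmWall}(ii) with $n = 5$ and $l = 3$ --- noting that $l = 3$ is the unique integer in $[\tfrac{n}{2}, \tfrac{2n}{3}) = [\tfrac{5}{2}, \tfrac{10}{3})$. The resulting wall is located at
\[
m_3 = \frac{3(n - l)(n - l - 1)}{2(2n - 3l)} = \frac{3 \cdot 2 \cdot 1}{2 \cdot 1} = 3,
\]
and the theorem guarantees that such a $Z$ is $m$-unstable for every $m < m_3 = 3$, hence in particular for $2 \leq m < 3$. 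Closedness of the GIT-unstable locus inside $M_{\sigma_m}(1,0,-5)$ then propagates instability to the closure of this generic locus, which contains every $Z$ with a collinear length-three subscheme not already handled --- including the specializations where the three collinear points collide into a double-plus-single or a curvilinear triple point on the line.

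There is no real obstacle beyond bookkeeping: the substance is entirely inherited from Theorem \ref{thm:MainThmWall}(ii), while Lemmas \ref{lem:n5_triple_point_unstable} and \ref{lem:n5_four_points_line_unstable} together with the Bridgeland-semistability description for $n = 5$ ensure that the residual case fits exactly the hypotheses of that theorem.
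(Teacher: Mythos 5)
Your argument is correct, but it is not the route the paper takes. The paper proves this lemma by a direct computation: it reduces by closedness to reduced $Z$ with no four points on a line, normalizes to the explicit configuration $(0:0:1)$, $(0:1:0)$, $(1:0:0)$, $(1:0:-1)$, $(0:-a:1)$, computes $\State_3$ and $\State_4$ of the resulting ideal, and finds $\mu_3(Z,\lambda_{-1/2})=0$, $\mu_4(Z,\lambda_{-1/2})=-\tfrac12$, whence $\mu_m(Z,\lambda_{-1/2})>0$ for $m<3$. You instead observe that the generic member of the locus --- three general points on a line $L_1$ together with two general points spanning a second line $L_2$ --- is exactly the configuration of Theorem \ref{thm:MainThmWall}(ii) with $(n,l)=(5,3)$, for which $m_3=3$, and then quote that theorem; the closure step is identical. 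Your version buys two things: it is shorter, and it applies the wall theorem to the honestly generic point of the locus, whereas the paper's normal form carries an extra collinear triple (the line $y=0$ contains $(0:0:1)$, $(1:0:0)$, $(1:0:-1)$), so it sits in a proper closed sublocus of the nine-dimensional family of configurations with three collinear points --- the closure argument runs in the wrong direction to deduce the generic case from that special one, so your derivation is arguably the cleaner justification. What the paper's computation buys in exchange is an explicit destabilizing one-parameter subgroup, $\lambda_{-1/2}$, which is reused in the neighbouring lemmas on special non-reduced configurations. Two small points of bookkeeping in your write-up: Lemma \ref{lem:n5_four_points_line_unstable} asserts instability only for $m>2$, not $m\geq 2$ (harmless here, since for $2\leq m<3$ those ideals are not $\sigma_m$-semistable and so do not appear in $M_{\sigma_m}(1,0,-5)$ at all); and the final closure step implicitly uses that every $Z$ containing a collinear length-three subscheme, but no collinear length-four subscheme and no length-three subscheme at a point, is a degeneration of the generic two-line configuration --- this is true and is exactly the level of detail the paper itself supplies.
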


\begin{proof}
Since the locus of GIT-unstable points is closed, it is enough to consider the case where $Z$ is reduced and no subset of four points lies on a line. In that case, we can use the action of $\SL_3$ to make the five points $(0:0:1)$, $(0:1:0)$, $(1:0:0)$, $(1:0:-1)$, and $(0:-a:1)$ for some $a \neq 0$. Then the ideal is given by
\[
\II_Z = (x, yz(y + az)) \cap (y, z(x+z)) = (xy, yz(y + az), xz(x+z)).
\]
A straightforward computation shows
\begin{align*}
    \State_3(\II_Z) &= \Conv \{ (5,5,5), (5,6,4), (6,5,4), (6,6,3) \}, \\
    \State_4(\II_Z) &= \Conv \{ (13,13,14), (15,15,10), (13,15,12), (15,13,12) \}.
\end{align*}
In particular,
\begin{align*}
        \mu_{3}(Z, \lambda_{-1/2}) = \min \left\{ 0, 0, \frac{3}{2}, \frac{3}{2} \right\} = 0, 
        &&
    \mu_{4}(Z, \lambda_{-1/2}) = \min \left\{ -\frac{1}{2}, \frac{5}{2}, -\frac{1}{2}, \frac{5}{2} \right\} = -\frac{1}{2}.
\end{align*}
Therefore, $\mu_m(Z, \lambda_{-1/2}) > 0$ for all $m < 3$.
\end{proof}

\begin{lem}
\label{lem:n5_non_reduced_unstable}
Let $Z \in \P^{2[5]}$ be a non-reduced subscheme. Then $Z$ is $m$-unstable for all $m > 3$. 
\end{lem}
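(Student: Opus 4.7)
The plan is to combine Theorem~\ref{thm:MainThmWall}, applied with $n = 5$ and $l = 3$, with a closedness argument. First, I would use Lemma~\ref{lem:n5_triple_point_unstable} to dispose of the case when $Z$ contains a length-$\geq 3$ subscheme supported at a single point: such $Z$ are already $m$-unstable for all $m \geq 2$. In the remaining case, $Z$ is a union of a length-two double point and three further points each of multiplicity at most two; these configurations form a dense open subset of the exceptional divisor $\Delta \subset \P^{2[5]}$, which is an irreducible divisor by Fogarty's results.

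Second, I would apply Theorem~\ref{thm:MainThmWall} with $n = 5$ and $l = 3$. A direct computation gives
\[
    m_3 = \frac{3(n-l)(n-l-1)}{2(2n - 3l)} = \frac{3 \cdot 2 \cdot 1}{2 \cdot 1} = 3,
\]
so the theorem asserts that every union of a generic length-two curvilinear subscheme and three reduced points in general position is $m$-unstable for all $m > 3$. This identifies a dense open subset of $\Delta$ as $m$-unstable.

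Finally, I would conclude by a closedness argument. For $m > 3$, the relevant GIT model $M_{\sigma_m}(1,0,-5)$ is either $\P^{2[5]}$ itself (for $m > 4$) or the small modification of $\P^{2[5]}$ at the five-collinear locus (for $3 < m < 4$); in both cases the birational map is an isomorphism in codimension at least two, so the proper transform of the irreducible divisor $\Delta$ remains an irreducible divisor whose generic point comes from the generic point of $\Delta$. Since the $m$-unstable locus is closed in $M_{\sigma_m}$ and contains a dense open subset of this divisor, it must contain the whole divisor. The only remaining case is non-reduced $Z$ that are five-collinear (lying in the flip's indeterminacy for $3 < m < 4$): these correspond in $M_{\sigma_m}$ to non-trivial extensions $0 \to \OO_L(-5) \to E \to \OO(-1) \to 0$, which Proposition~\ref{prop:quiver_locus1} shows are GIT-unstable for any polarization. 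The main subtlety is the bookkeeping across the two birational models, but I do not anticipate a genuine obstacle.
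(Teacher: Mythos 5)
Your proposal is correct and takes essentially the same route as the paper, whose entire proof is the single sentence ``this is a direct consequence of Theorem~\ref{thm:MainThmWall} with $n=5$ and $l=3$.'' You simply make explicit the closedness-of-the-unstable-locus and irreducibility-of-$\Delta$ argument (and the bookkeeping at the collinear flip locus) that the paper leaves implicit.
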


\begin{proof}
This is a direct consequence of Theorem \ref{thm:MainThmWall} by setting $n = 5$ and $l = 3$.
\end{proof}

Next we describe slightly more special configurations than in the last lemma that are unstable even for $m \leq 3$. 

\begin{lem}
\label{lem:n5_non_reduced_special_config_unstable_I}
Let $L_1, L_2, L_3, L_4$ be lines such that $L_2, L_3, L_4$ intersect in precisely one point, and $L_1$ intersects the other three lines in three separate points. Then we can define the following zero-dimensional subscheme $Z$:
\begin{enumerate}
    \item $Z$ has reduced points at $L_1 \cap L_3$, $L_1 \cap L_4$, and $L_2 \cap L_3 \cap L_4$;
    \item $Z$ has a length two point supported at $L_1 \cap L_2$ that is scheme-theoretically contained in $L_2$.
\end{enumerate}
This subscheme $Z$ is $m$-unstable for all $m \geq 2$.
\end{lem}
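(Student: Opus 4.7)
The plan is to combine two previously-established structural lemmas with one explicit Hilbert--Mumford calculation at the wall $m=3$. By $\SL_3$-equivariance we may place the lines as $L_1=\{z=0\}$, $L_2=\{x=0\}$, $L_3=\{y=0\}$ and $L_4=\{x+y=0\}$, so the support points become $P_1=(1:0:0)$, $P_2=(1:-1:0)$, $P_3=(0:0:1)$, $P_4=(0:1:0)$, and the length-two subscheme at $P_4$ contained in $L_2$ is cut out by $(x,z^2)$; intersecting the four local ideals then gives an explicit description of $\II_Z$. The key structural observation is that $Z\cap L_2$ consists of the length-two subscheme at $P_4$ together with the reduced point $P_3$, i.e.\ a collinear length-three subscheme.

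With this in hand, Lemma \ref{lem:n5_three_points_line_unstable} immediately gives $m$-unstability for every $2\le m<3$, and since $Z$ is non-reduced, Lemma \ref{lem:n5_non_reduced_unstable} gives $m$-unstability for every $m>3$. The whole content of the lemma therefore reduces to the single value $m=3$.

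To handle $m=3$, I would compute $H^0(\II_Z(3))$ by imposing the four point-vanishing conditions together with the tangency condition at $P_4$ along $L_2$, yielding the five-dimensional space spanned by $\{x^2y+xy^2,\ x^2z,\ xyz,\ xz^2,\ yz^2\}$. From this one reads off $\State_3(\II_Z)=\Conv\{(6,3,6),(5,4,6)\}$, so the one-parameter subgroup $\lambda=(1,-1,0)$ gives $\mu_3(Z,\lambda)=\min\{3,1\}=1>0$. By Proposition \ref{prop:bridgeland_walls_5} the ideal $\II_Z$ and its flat limit under $\lambda$ (which turns out to consist of two length-two subschemes, one at $P_1$ with tangent along $L_1$ and one at $P_4$ with tangent along $L_2$, together with the reduced point $P_3$, and is again contained in the conic $L_1\cup L_2$) are both Bridgeland-stable for every stability condition inducing $D_3$, since neither contains four collinear points. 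Lemma \ref{lem:localHM} therefore transports the positive Hilbert--Mumford index from $\P^{2[5]}$ to the Bridgeland model where $D_3$ is ample, and $m$-unstability at $m=3$ follows.

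The main obstacle is precisely this last case. Neither of the two earlier lemmas is sharp enough to reach $m=3$, and $D_3$ is not ample on $\P^{2[5]}$, so the naive numerical criterion does not apply there directly. One must both exhibit a concrete destabilizing one-parameter subgroup via the state polytope and verify that $\II_Z$ and its flat limit both sit in the smooth locus common to $\P^{2[5]}$ and the Bridgeland model $M_{\sigma_3}(1,0,-5)$; only then does Lemma \ref{lem:localHM} license the transfer of the numerical index to the model on which GIT semistability is actually defined.
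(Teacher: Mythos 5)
Your proof is correct, but it is organized quite differently from the paper's. The paper handles all $m$ at once with a single one-parameter subgroup: after normalizing to $\II_Z = (xy, x^2z, y^2z+yz^2)$ it computes $\State_3$ and $\State_4$, finds $\mu_3(Z,\lambda_{-1/2})=\tfrac32$ and $\mu_4(Z,\lambda_{-1/2})=\tfrac52$, and concludes via Lemma \ref{lemma:m0} and linearity of the Hilbert--Mumford index that $\lambda_{-1/2}$ destabilizes $Z$ for every $m>\tfrac32$. You instead observe that the configuration contains a collinear length-three subscheme (the double point in $L_2$ plus the point $L_2\cap L_3\cap L_4$) and is non-reduced, so Lemmas \ref{lem:n5_three_points_line_unstable} and \ref{lem:n5_non_reduced_unstable} already cover $2\le m<3$ and $m>3$, leaving only the wall value $m=3$ --- which is in fact the only value at which this lemma is later invoked. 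Your computation there checks out: $h^0(\II_Z(3))=5$ with basis $\{xy(x+y),x^2z,xyz,xz^2,yz^2\}$, giving $\State_3(\II_Z)=\Conv\{(6,3,6),(5,4,6)\}$ and $\mu_3(Z,(1,-1,0))=1>0$, and your verification that both $\II_Z$ and its $\lambda$-limit (two tangent-direction double points plus a reduced point, with no four collinear points) remain Bridgeland-stable at $D_3$ is exactly what is needed to invoke Lemma \ref{lem:localHM}, a point the paper leaves implicit. The trade-off: the paper's argument is shorter, self-contained, and needs no wall-by-wall case analysis, while yours makes the logical dependence transparent (isolating the genuinely new content at $m=3$) at the cost of an extra explicit state-polytope computation and reliance on the two earlier lemmas.
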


\begin{proof}
Using the action of $\SL_3$ we can assume that
\[
\II_Z = (x, (y + z)z) \cap (x^2, y) \cap (y, z) = (xy, x^2z, y^2z + yz^2).
\]
Clearly, the 3rd and 4th Hilbert points are well defined for $Z$, and we can compute
\[
    \State_3(\II_Z) = \Conv \{ (6, 6, 3), (6, 5, 4) \}, \
    \State_4(\II_Z) = \Conv \{ (15, 15, 10), (15, 13, 12) \}.
\]
Thus, $\mu_{3}(Z, \lambda_{-1/2}) = \frac{3}{2}$, $\mu_{4}(Z, \lambda_{-1/2}) = \frac{5}{2}$. By Lemma \ref{lemma:m0} $\mu_m(Z, \lambda_{-1/2}) > 0$ for all $m > \tfrac{3}{2}$.
\end{proof}

\begin{lem}
\label{lem:n5_non_reduced_special_config_unstable_II}
Let $L_1, L_2, L_3, L_4$ be lines such that $L_1, L_2, L_3$ intersect in precisely one point, and $L_4$ intersects the other three lines in three separate points. Then we can define the following zero-dimensional subscheme $Z$:
\begin{enumerate}
    \item $Z$ has a reduced point at $L_3 \cap L_4$;
    \item $Z$ has a length two point supported at $L_1 \cap L_4$ that is scheme-theoretically contained in $L_1$;
    \item $Z$ has a length two point at $L_1 \cap L_2 \cap L_3$ that is scheme-theoretically contained in $L_2$.
\end{enumerate}
This subscheme $Z$ is $m$-unstable for all $m \geq 2$.
\end{lem}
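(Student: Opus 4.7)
The plan is to follow the blueprint of Lemma \ref{lem:n5_non_reduced_special_config_unstable_I}: normalize the configuration using the $\SL_3$-action, write down the saturated ideal of $Z$, compute two consecutive Hilbert state polytopes, and exhibit a one-parameter subgroup whose Hilbert--Mumford index stays positive for every $m \geq 2$. Concretely, I will fix projective coordinates so that $P = L_1 \cap L_2 \cap L_3 = (0:0:1)$ with $L_1 = \{y = 0\}$, $L_2 = \{x = 0\}$, $L_3 = \{x + y = 0\}$, and $L_4 = \{z = 0\}$; then $L_3 \cap L_4 = (1:-1:0)$ and $L_1 \cap L_4 = (1:0:0)$, and the three local ideals of the connected components of $Z$ become $(z, x+y)$, $(y, z^2)$, and $(x, y^2)$. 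Their intersection should yield
\[
\II_Z = (xyz,\, y^2 z,\, xz^2,\, x^2 y + xy^2,\, xy^2 + y^3),
\]
which I will verify by checking the correct local restriction at each support point and that $\dim H^0(\II_Z(3)) = 5$.

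From this presentation I will read off $\State_3(\II_Z)$ and $\State_4(\II_Z)$ directly from the Pl\"ucker embedding. The destabilizing candidate is $\lambda_1 = \diag(t, t, t^{-2})$: it fixes each support point of $Z$, preserves both $L_1$ and $L_2$ and hence both scheme-theoretic tangent directions at the length-two points, and thus stabilizes $\II_Z$ as a point of $\P^{2[5]}$ (indeed, all five generators above are $\lambda_1$-eigenvectors). I expect the state polytope calculation to yield $\mu_3(Z, \lambda_1) = 3$ and $\mu_4(Z, \lambda_1) = 4$, whence Lemma \ref{lemma:m0} upgrades this to $\mu_m(Z, \lambda_1) = m > 0$ for every $m \geq 2$, giving the claimed $m$-instability.

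The main obstacle is identifying the correct one-parameter subgroup. The analogue $\lambda_{-1/2}$ used in Lemma \ref{lem:n5_non_reduced_special_config_unstable_I} will fail here, because in that configuration the doubled direction at the triple concurrence point was transverse to the three concurrent lines, while here it coincides with $L_2$; a direct test against the state polytope above should show $\mu_3(Z, \lambda_{-1/2}) < 0$. The compatible subgroup is $\lambda_1$, essentially the unique (up to positive scaling) diagonal one-parameter subgroup that fixes all three support points and preserves both prescribed tangent directions simultaneously. As a secondary point, Corollary \ref{cor:ideals_between_-1_-2} will apply since the maximum length of a collinear subscheme of $Z$ is three (attained on $L_1$), so $\II_Z$ remains Bridgeland-stable throughout $m \in (2, \infty)$ and Lemma \ref{lem:localHM} legitimizes carrying out the Hilbert--Mumford computation on $\P^{2[5]}$ itself, even where $D_m$ fails to be ample.
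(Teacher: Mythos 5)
Your proposal is correct and follows essentially the same route as the paper's proof: the paper normalizes the configuration in coordinates that differ from yours only by exchanging the roles of $x$ and $y$, computes the degree-$3$ and degree-$4$ state polytopes, and destabilizes with the same one-parameter subgroup $\lambda_1=(1,1,-2)$, obtaining $\mu_3(Z,\lambda_1)=3$ and $\mu_4(Z,\lambda_1)=4$ and concluding via Lemma \ref{lemma:m0}. Your computed ideal and state polytopes agree with the paper's after that coordinate swap, so the argument goes through exactly as you outline.
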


\begin{proof}
Using the action of $\SL_3$ we can assume that
\[
    \II_Z = (x, z^2) \cap (x^2, y) \cap (x + y, z) = (x^2 + xy, yz^2, xyz).
\]
Clearly, the 3rd and 4th Hilbert points are well defined for $Z$, and we can compute
\[
    \State_3(\II_Z) = \Conv \{ (8, 3, 4), (6, 5, 4) \}, \
    \State_4(\II_Z) = \Conv \{ (18, 10, 12), (15, 13, 12) \}.
\]
In particular, $\mu_{3}(Z, \lambda_1) = 3$ and $\mu_{4}(Z, \lambda_1) = 4$.
By Lemma \ref{lemma:m0} $\mu_m(Z, \lambda_1) > 0$ for all $m > 0$.
\end{proof}

\begin{lem}
\label{lem:n5_non_reduced_special_config_unstable_III}
Let $L_1, L_2, L_3, L_4$ be lines such that $L_2, L_3, L_4$ intersect in precisely one point, and $L_1$ intersects the other three lines in three separate points. Then we can define the following zero-dimensional subscheme $Z$:
\begin{enumerate}
    \item $Z$ has a reduced point at $L_1 \cap L_4$;
    \item $Z$ has a length two point supported at $L_1 \cap L_2$ that is scheme-theoretically contained in $L_2$;
    \item $Z$ has a length two point supported at $L_1 \cap L_3$ that is scheme-theoretically contained in $L_3$.
\end{enumerate}
This subscheme $Z$ is $m$-unstable for all $m \geq 2$.
\end{lem}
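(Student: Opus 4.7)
My plan is to follow the template of Lemmas \ref{lem:n5_non_reduced_special_config_unstable_I} and \ref{lem:n5_non_reduced_special_config_unstable_II}: exhibit a concrete $\SL_3$-representative of the configuration, read off the $3$rd and $4$th state polytopes from an explicit basis of $H^0(\II_Z(m))$, and apply the Hilbert--Mumford criterion to a diagonal one-parameter subgroup.

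First I would use $\SL_3$ to place the common point $L_2 \cap L_3 \cap L_4$ at $(1:0:0)$, the fourth line $L_1$ at $\{x = 0\}$, and $L_2, L_3, L_4$ at $\{y = 0\}, \{z = 0\}, \{y + z = 0\}$ respectively. Then $L_1 \cap L_4 = (0:1:{-}1)$ contributes the primary ideal $(x, y + z)$, while the two length-two schemes supported at $(0:0:1)$ and $(0:1:0)$ and scheme-theoretically contained in $L_2$ and $L_3$ contribute $(y, x^2)$ and $(z, x^2)$. A short monomial intersection calculation gives
\[
\II_Z = (y, x^2) \cap (z, x^2) \cap (x, y + z) = (x^2,\, xyz,\, y^2 z + y z^2).
\]
A Hilbert function check confirms $\dim H^0(\II_Z(3)) = 5$ and $\dim H^0(\II_Z(4)) = 10$, so the $3$rd and $4$th Hilbert points are well-defined. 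Natural bases are $\{x^3,\, x^2 y,\, x^2 z,\, xyz,\, y^2 z + yz^2\}$ in degree three, and in degree four the six monomials in $x^2 \cdot \langle x, y, z \rangle^2$ together with $xy^2 z,\, xyz^2,\, y^3 z + y^2 z^2,\, y^2 z^2 + yz^3$. Since only the two binomial basis vectors split into several monomials, an inspection of the nonzero Plücker coordinates yields
\[
\State_3(\II_Z) = \Conv\{(8, 4, 3),\, (8, 3, 4)\}, \qquad \State_4(\II_Z) = \Conv\{(18, 12, 10),\, (18, 10, 12)\}.
\]

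With these state polytopes in hand, Lemma \ref{lem:numericalFunction} applied to $\lambda_0 = (1, 0, -1)$ gives $\mu_3(Z, \lambda_0) = 4$ and $\mu_4(Z, \lambda_0) = 6$, so Lemma \ref{lemma:m0} yields $m_0(Z, \lambda_0) = 1$. Since $m \mapsto \mu_m(Z, \lambda_0)$ is affine linear and positive at both $m = 3$ and $m = 4$ (and increasing in $m$), it is strictly positive for every $m > 1$ and in particular for every $m \geq 2$. The only point requiring any care is verifying that the listed generators really span all of $H^0(\II_Z(m))$ rather than a proper subspace; this reduces to matching the expected dimension $\binom{m+2}{2} - 5$, which holds for $m = 3, 4$ because five points in $\P^2$ always impose independent conditions on cubics and quartics.
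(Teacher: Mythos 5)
Your proposal is correct and follows essentially the same route as the paper: the same normalized ideal $(x^2,\,xyz,\,y^2z+yz^2)$, the same state polytopes $\Conv\{(8,4,3),(8,3,4)\}$ and $\Conv\{(18,12,10),(18,10,12)\}$, and the same destabilizing one-parameter subgroup $\lambda_0$, giving $\mu_3(Z,\lambda_0)=4$, $\mu_4(Z,\lambda_0)=6$ and hence instability for all $m>1$. (One small quibble: the blanket claim that five points always impose independent conditions on cubics is false for collinear quintuples, but your explicitly listed degree-$3$ and degree-$4$ generators already verify the dimension count directly, so nothing is lost.)
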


\begin{proof}
Using the action of $\SL_3$ we can assume that
\[
    \II_Z = (x^2, y) \cap (x^2, z) \cap (x, y + z) = (x^2, y^2z + yz^2, xyz).
\]
Clearly, the 3rd and 4th Hilbert points are well defined for $Z$, and we can compute
\[
    \State_3(\II_Z) = \Conv \{ (8, 4, 3), (8, 3, 4) \}, \
    \State_4(\II_Z) = \Conv \{ (18, 12, 10), (18, 10, 12) \}.
\]
In particular, $\mu_{3}(Z, \lambda_0) = 4$ and $\mu_{4}(Z, \lambda_0) = 6$. By Lemma \ref{lemma:m0} $\mu_m(Z, \lambda_1) > 0$ for all $m > 1$.
\end{proof}


\subsection{VGIT}
\label{subsec:main_thm_five_points}
The goal of this section is to prove Theorem \ref{thm:main_n_5}. 
Recall that we always quotient an open subset of $\P^{2[5]}$, so we can mostly ignore the derived category in the proof.

\begin{lem}
\label{lem:n5_stable_m>3}
If $m > 3$, then $Z \in \P^{2[5]}$ is GIT-stable if and only if $Z$ is reduced and no four points lie in a single line. Moreover, there are no strictly GIT-semistable points. Therefore, the quotient for $3 < m < \infty $ is the same as for $m = \infty$ given as the quotient of $\P^{2(5)}$ by $\SL_3$.
\end{lem}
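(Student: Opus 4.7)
The strategy is to reduce everything to the extremal value $m = \infty$ (the Chow quotient), exploiting the fact that the range $m > 3$ sits in a single VGIT chamber. Indeed, Theorem \ref{thm:largestWall} applied with $n = 5$ and $\lfloor 2n/3 \rfloor = 3$ locates the largest GIT wall at $m_3 = \tfrac{3 \cdot 2 \cdot 1}{2 \cdot 1} = 3$, so the GIT semistable and stable loci in the appropriate Bridgeland models are constant throughout $(3, \infty]$. It thus suffices to understand them at one point of this chamber, and to harvest the instabilities already proven.

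First I would reduce the problem to ideal sheaves. By the discussion in Section \ref{subsec:complexes_five_points} (grounded in Propositions \ref{prop:quiver_locus1} and \ref{prop:quiver_locus2}), every Bridgeland-semistable object that is not an ideal sheaf is GIT-unstable for every $D_m$ in the movable cone. Moreover, for $3 < m < 4$ the subschemes $Z \in \P^{2[5]}$ that actually represent points in the Bridgeland model are exactly those without $5$ collinear points (Corollary \ref{cor:ideals_between_-1_-2}), a condition subsumed by ``no four collinear''. So the analysis truly happens on (an open locus of) $\P^{2[5]}$ in both the $m > 4$ and the $3 < m \leq 4$ regimes.

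For the instability direction, Lemma \ref{lem:n5_non_reduced_unstable} rules out non-reduced $Z$ for $m > 3$, and Lemma \ref{lem:n5_four_points_line_unstable} rules out $Z$ with four (or more) collinear points for $m > 2$, covering every $Z$ violating the stated criterion. For the stability direction, suppose $Z$ is reduced with no four collinear points. Since $3 \nmid 5$, Theorem \ref{thm:stability_chow} yields both that $\pi(Z) \in \P^{2(5)}$ is Chow-stable \emph{and} that Chow-stability coincides with Chow-semistability for $n = 5$. Because $D_m$ is ample on $\P^{2[5]}$ for $m > 4$ and the Hilbert--Chow morphism $\pi$ is $\SL_3$-equivariant, Theorem \ref{thm:RelativeGIT} provides $D_m$-stability of $Z$ for $m \gg 0$; constancy within the chamber then extends this to every $m > 3$. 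The same argument, applied to any hypothetical $m$-semistable $Z$, forces $\pi(Z)$ to be Chow-semistable hence Chow-stable, hence $Z$ itself is $m$-stable, so no strictly semistable points exist.

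Finally, since Hilbert--Chow restricts to an isomorphism on the reduced locus, and the stable locus consists entirely of reduced subschemes, the geometric quotient $\textbf{C}(m, 5)$ is identified with $\textbf{C}(\infty, 5) = \P^{2(5)} /\!\!/ \SL_3$ for every $m \in (3, \infty)$. The only delicate point in the plan is the use of Theorem \ref{thm:RelativeGIT} when $3 < m \leq 4$, where $D_m$ is no longer ample on $\P^{2[5]}$ itself: one performs the relative GIT argument in the $m \gg 0$ region (where $D_m$ is genuinely ample on $\P^{2[5]}$) and transports the result across the Bridgeland flop at $m = 4$ using Step 1, which guarantees that the flop introduces no new GIT-semistable orbits representable by an ideal sheaf outside the claimed criterion.
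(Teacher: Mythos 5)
Your proposal is correct and follows essentially the same route as the paper: Chow stability (Theorem \ref{thm:stability_chow}) transported via relative GIT (Theorem \ref{thm:RelativeGIT}) for $m \gg 0$, combined with Lemmas \ref{lem:n5_four_points_line_unstable} and \ref{lem:n5_non_reduced_unstable} to control the range $3 < m < \infty$. The only (cosmetic) difference is that you derive the absence of walls above $m = 3$ from the general Theorem \ref{thm:largestWall} (legitimate, since $3 \nmid 5$), whereas the paper argues it directly: any point that became strictly semistable at a hypothetical wall in $(3,\infty)$ would have to be non-reduced or contain four collinear points, and those stay unstable by the two cited lemmas.
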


\begin{proof}
A cycle in $\P^{2(5)}$ GIT-stable if and only if $Z$ is reduced and no four points lie in a single line (see Section \ref{subsec:bridgeland}).  Moreover, there are no strictly GIT-semistable points. Therefore, we can conclude for $m \gg 3$ by Theorem \ref{thm:RelativeGIT}.

If there was a wall for some $m_0 \in (3, \infty)$, then a GIT-unstable point for $m > m_0$ must become strictly $m_0$-semistable. Such a point must be either non-reduced or contain four points in a single line. However, those points are unstable for all $m > 3$ by Lemma \ref{lem:n5_four_points_line_unstable} and Lemma \ref{lem:n5_non_reduced_unstable}.
\end{proof}

For any $(a : b) \in \P^1$, we define a subscheme $Z^0_{(a:b)}$ through the ideal
\[
\II_{Z^0_{(a:b)}} = (x, (y + z)(y - z)(ay + bz)) \cap (z, y^2) = (xz, xy^2, (y + z)(y - z)(ay + bz)).
\]
The base change via the matrix
$\text{diag}(1,i, -i)$ shows that $Z^0_{(a:b)}$ and $Z^0_{(a:-b)}$ are projectively equivalent. A straightforward computation shows that $Z^0_{(a:b)}$ is projectively equivalent to $Z^0_{(c:d)}$ if and only if $(c:d) \in \{(a:b), (a:-b)\}$. It will turn out that for $(a: b) \notin \{ (0:1), (1:1), (1:-1)\}$ these subschemes represent closed minimal orbits at $m = 3$. However, the remaining cases of $(a, b)$ are more problematic. Note that since $Z^0_{(1:1)}$ and $Z^0_{(1:-1)}$ are in the same orbit, we will only have to deal with $(1:1)$ and $(0:1)$. 

For $(a:b) = (1:1)$ let $\lambda$ be the one-parameter subgroup that acts by $t$ on $x$, by $1$ on $y + z$, and by $t^{-1}$ on $y - z$. Then
\[
\lim_{t \to 0} \lambda(t) (x, (y + z)^2(y - z)) \cap (z, y^2) = (x, (y + z)^2(y - z)) \cap (y - z, y^2).
\]
For $(a:b) = (0:1)$ let $\lambda$ be the one-parameter subgroup that acts by $t$ on $x$, by $t^{-1}$ on $y + z$, and by $1$ on $z$. Then
\[
\lim_{t \to 0} \lambda(t) (x, (y + z)(y - z)z) \cap (z, y^2) = (x, (y + z)^2z) \cap (z, y^2)
\]
is projectively equivalent to the previous limit. We define $Z^0$ through the ideal
\[
\II_{Z^0} = (x, y^2) \cap (x, z) \cap (z, y^2) = (xz, xy^2, y^2z).
\]

\begin{lem}
\label{lem:closed_orbits_at_m_3}
There is a GIT-wall at $m = 3$. Every strictly $3$-semistable orbit with positive dimensional stabilizer contains one of the subschemes $Z^0_{(a:b)}$ for $(a : b) \in \P^1$ 
or the subscheme $Z^0$. Moreover, each of these subschemes is strictly $3$-semistable.
\end{lem}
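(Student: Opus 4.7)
The plan is to prove the lemma in three tasks: (a) show that $m = 3$ is a GIT wall, (b) verify strict $3$-semistability of each listed subscheme, and (c) classify all strictly $3$-semistable orbits with positive-dimensional stabilizer. Task (a) follows immediately by combining Lemmas \ref{lem:n5_three_points_line_unstable} and \ref{lem:n5_stable_m>3}: a general length-$5$ configuration with exactly three collinear reduced points is $m$-stable for $m > 3$ and $m$-unstable for $2 \leq m < 3$, so the semistable locus changes across $m = 3$ and Theorem \ref{thm:wall_and_chamber_git} gives a wall.

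For task (b), a direct monomial computation of the third Hilbert points gives $\State_3(\II_{Z^0_{(a:b)}}) = \Conv\{(5,6,4),(5,5,5),(5,4,6),(5,3,7)\}$ for generic $(a{:}b)$ and $\State_3(\II_{Z^0}) = \{(5,5,5)\}$. By Lemma \ref{lem:numericalFunction} this yields $\mu_3(Z^0_{(a:b)}, \lambda(c,d)) = \min\{c+2d,\, 0,\, -(c+2d),\, -2(c+2d)\} \leq 0$ with equality along $\lambda(-2,1) \in \Stab(Z^0_{(a:b)})$, and $\mu_3(Z^0, \lambda) \equiv 0$ for every diagonal $\lambda$. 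Since $\Stab(Z^0)$ contains the full diagonal torus, Proposition \ref{prop:tori_to_check} reduces the test to diagonal subgroups. For $Z^0_{(a:b)}$ the stabilizer is $1$-dimensional in the $\lambda_{-1/2}$-direction; Remark \ref{rmk:tori_to_check}(iii) then restricts the remaining check to one-parameter families of conjugated tori, which are handled by applying the corresponding coordinate change $y \mapsto y - az$ to $\II_{Z^0_{(a:b)}}$ and redoing the diagonal state-polytope computation on the resulting ideal.

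For task (c), suppose $Z$ is strictly $3$-semistable with positive-dimensional stabilizer. After $\SL_3$-conjugation the stabilizer contains a non-trivial diagonal one-parameter subgroup $\lambda$, and $Z$ is supported on the fixed locus of $\lambda$ in $\P^2$. If $\lambda$ has three distinct weights, then $Z$ is a union of monomial subschemes at the coordinate points with total length $5$; Lemma \ref{lem:n5_triple_point_unstable} rules out any component of length $\geq 3$, forcing the profile $(2,2,1)$. A finite enumeration of such monomial configurations together with state-polytope calculations shows that $\mu_3 \equiv 0$ holds precisely when the tangent lines at the two length-$2$ components both pass through the reduced point, and all such configurations are $\SL_3$-equivalent to $Z^0$. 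If $\lambda$ has two equal weights, then up to a Weyl permutation $\lambda = \lambda_{-1/2}$, and $Z$ decomposes as $W \cup W'$ with $W \subset \{x = 0\}$ and $W'$ supported at $(1{:}0{:}0)$. Lemmas \ref{lem:n5_triple_point_unstable} and \ref{lem:n5_four_points_line_unstable} force $(\mathrm{length}(W), \mathrm{length}(W')) = (3, 2)$. Using the centralizer $\GL_2$ of $\lambda_{-1/2}$ to normalize three distinct points on $\{x = 0\} \cong \P^1$ and partially normalize the tangent direction of $W'$, reduce to the family $\{Z^0_{(a:b)}\}_{(a:b) \in \P^1}$. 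The subcase where $W$ contains a double point yields orbits whose closures include $Z^0$, realized by the explicit one-parameter subgroup limits sketched immediately before the lemma.

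The main obstacle is task (b) for $Z^0_{(a:b)}$: although Remark \ref{rmk:tori_to_check}(iii) reduces the verification to one-parameter families of diagonal Hilbert-Mumford indices on coordinate-changed ideals, the state-polytope analysis for each requires the same combinatorial care as the original. A related subtlety in task (c) is ensuring exhaustiveness of the enumeration in case (a) up to $\SL_3$-action, which requires bookkeeping on the Weyl orbits of $(2,2,1)$ monomial configurations.
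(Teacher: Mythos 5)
Your overall strategy (state polytopes plus the reduction of Remark \ref{rmk:tori_to_check}, then a case analysis on the type of the stabilizing one-parameter subgroup) is the same as the paper's, and tasks (a) and the $Z^0$ half of task (b) are fine. But there are two genuine gaps. First, in task (c), two-equal-weights case: the claim that $Z$ decomposes as $W \cup W'$ with $W$ \emph{scheme-theoretically} contained in $\{x=0\}$, forcing the length profile $(3,2)$, is false. A length-two point supported at a point $P$ of the fixed line and fixed by $\lambda = (2,-1,-1)$ need not lie in $\{x=0\}$: the induced action on $\P(T_P\P^2)$ fixes \emph{two} directions, the one along $\{x=0\}$ and the one along the line $P(1{:}0{:}0)$, so the double point may stick out of the line toward $(1{:}0{:}0)$. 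This produces $\lambda$-fixed configurations with profile $(4,1)$ (and non-reduced $(3,2)$ configurations with transverse tangent directions) that your enumeration never sees. Some of these are strictly semistable --- indeed $Z^0$ itself, viewed through the subgroup $(-1,2,-1)$ of its torus stabilizer, is such a $(4,1)$ configuration --- while others are genuinely $3$-unstable; detecting the unstable ones is exactly why the paper needs the explicit Hilbert--Mumford computations of Lemmas \ref{lem:n5_non_reduced_special_config_unstable_I}, \ref{lem:n5_non_reduced_special_config_unstable_II}, and \ref{lem:n5_non_reduced_special_config_unstable_III}, for which your proposal has no substitute.

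Second, your handling of the subcase where $W$ is non-reduced --- ``yields orbits whose closures include $Z^0$'' --- does not prove what the lemma asserts. The statement is that every strictly semistable orbit with positive-dimensional stabilizer \emph{contains} (i.e., equals the orbit of) one of the listed subschemes; having $Z^0$ in the orbit closure is neither necessary nor sufficient for that, and this subcase in fact splits into configurations that are $3$-unstable and configurations projectively equivalent to $Z^0$ or $Z^0_{(1:1)}$, distinguished by the incidences between the tangent lines of the double points and the remaining support (this is the content of case (iii) of the paper's proof). Finally, a smaller point you flag yourself: for task (b) the verification that the conjugated tori of Remark \ref{rmk:tori_to_check}(iii) do not destabilize $Z^0_{(a:b)}$ is left undone. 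It does go through, but the efficient route is not to recompute the full state polytope for each $s$: after the substitution $\bar y = y + sz$ the ideal is again of the form $(xz, xy^2, c(y,z))$ with $c$ a nonzero cubic that is not a scalar multiple of $y^3$, so the weight $(5,5,5)$ always lies in the state polytope and every diagonal Hilbert--Mumford index at $m=3$ is automatically $\le 0$.
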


\begin{proof}
We start by showing that these $Z$ are indeed strictly-semistable. The stabilizer of $Z^0$ is given by the diagonal maximal torus. By Remark \ref{rmk:tori_to_check} this means stability can be checked with just diagonal one-parameter subgroups. A straightforward computation shows $\State_3(\II_{Z^0}) = \{(5, 5, 5)\}$ and hence, $Z^0$ is $3$-semistable.

The stabilizer of $Z^0_{(a:b)}$ contains a unique one-parameter subgroup given by $\lambda = (2, -1, -1)$, i.e., $Z^0_{(a:b)}$ cannot be stable. Assume that $Z^0_{(a:b)}$ is $3$-unstable. By Remark \ref{rmk:tori_to_check} there is a base change $\overline{x} = x$, $\overline{y} = y + sz$, $\overline{z} = z$ for some $s \in \C$ such that
\[
\II_{\overline{Z}^0_{(a:b)}} = (\overline{x}\overline{z}, \overline{x}\overline{y}^2, (\overline{y} + \overline{z})(\overline{y} - \overline{z})(a\overline{y} + b\overline{z})) = (xz, xy^2, (y + sz + z)(y + sz - z)(ay + asz + bz))
\]
is destabilized by a diagonal one-parameter subgroup $\lambda = (r_1, r_2, r_3)$.
We can compute
\[
(y + sz + z)(y + sz - z)(ay + asz + bz) = ay^3 + (3as + b)y^2z + (3as^2 + 2bs - a)yz^2 + (as^3 + bs^2 - as - b)z^3.
\]
Since $(a, b) \neq (0, 0)$, a straightforward computation shows $(5, 5, 5) \in \State_3(\II_{\overline{Z}^0_{(a:b)}})$ regardless of $a, b, s$, and therefore, $\mu_3(\overline{Z}^0_{(a:b)}, \lambda) \leq 0$, a contradiction.

Next, let $Z$ be an arbitrary strictly $3$-semistable point with positive dimensional stabilizer. In particular, the set-theoretic support must have non-trivial stabilizer as well. By Lemma \ref{lem:n5_triple_point_unstable} there can be no subscheme of length three set-theoretically supported at a single point. That means $Z$ has to be set-theoretically supported at either three points, four points such that at least three of them lie on a line, or five points such that at least four of them lie on a line. By Lemma \ref{lem:n5_four_points_line_unstable} no subscheme of length four can lie on a line. There are few configurations satisfying these conditions.
\begin{enumerate}
    \item Assume the set-theoretic support of $Z$ consists of four points $P_1, P_2, P_3, P_4$ such that $P_1, P_2, P_3 \in L$ and $P_4 \notin L$. We may assume that $Z$ has length two at either $P_1$ or $P_4$. If the double point is supported at $P_4$, then we are dealing with one of the cases $Z^0_{(a:b)}$. 
    
    Assume that $P_1$ is the double point. Then it cannot be scheme-theoretically contained in $L$, and it is contained in a unique different line $L'$. If $P_4 \notin L'$, then $Z$ has finite stabilizer. Therefore, we know that $P_4 \in L'$. This subscheme is $3$-unstable by Lemma \ref{lem:n5_non_reduced_special_config_unstable_I}.
    \item Assume the set-theoretic support of $Z$ consists of three points $P_1, P_2, P_3$ all lying on a line $L$. Without loss of generality, we may assume that $P_1$ and $P_2$ support double points of $Z$. Both of these cannot be scheme-theoretically contained in $L$. Hence, they are contained in separate lines $L_1, L_2$. However, these subschemes are $3$-unstable by Lemma \ref{lem:n5_non_reduced_special_config_unstable_III}.
    \item Lastly, assume that the set-theoretic support of $Z$ consists of three general points $P_1, P_2, P_3$ and that $P_1, P_2$ support double points in $Z$. Then $P_1 \in L_1$ and $P_2 \in L_2$ for unique lines $L_1, L_2$. Since no line can contain a length four subscheme of $Z$, we must have $L_1 \neq L_2$.
    
    Assume $P_2 \in L_1$. Then $P_3 \notin L_1$. If furthermore $P_3 \notin L_2$, then $Z$ is $3$-unstable by Lemma \ref{lem:n5_non_reduced_special_config_unstable_II}. If $P_3 \in L_2$, we are dealing with a further degeneration and the fact that the locus of unstable points is closed finishes the argument. The case $P_1 \in L_2$ is ruled out by a symmetric argument.
    
    Assume that $P_1 \notin L_2$ and $P_2 \notin L_1$. If further $P_3 \in L_1$ or $P_3 \in L_2$, then we are dealing with either $Z^0$ or $Z^0_{(1:1)}$. Assume that $P_3 \notin L_1 \cup L_2$. Let $P_4 = L_1 \cap L_2$. Then the scheme $Z' = Z \cup \{P_4\}$ has the same stabilizer as $Z$. But the set-theoretic support of $Z'$ consists of four general points, and hence, the stabilizer is finite. \qedhere
\end{enumerate}
\end{proof}

We define subschemes $Z^+_{(a:b)}$ and $Z^-_{(a:b)}$ for $(a:b) \in \P^1$ through their ideals
\begin{align*}
\II_{Z^+_{(a:b)}} &= (x, (y+z)(y-z)(ay + bz)) \cap (z, y(x-y)), \\
\II_{Z^-_{(a:b)}} &= (x, (y+z)(y-z)) \cap (x + z, ay + bz) \cap (z, y^2), \\
\II_{Z^-} &= (x, y^2) \cap (z, y^2) \cap (x + z, y + z).
\end{align*}

It will turn out that the subschemes $Z^+_{(a:b)}$ are the ones stable for $m > 3$ and destabilized at $m = 3$. The subschemes $Z^-_{(a:b)}$ for $(a:b) \notin \{ (0:1), (1:1), (1:-1) \}$ and $Z^-$ turn out to be the ones stable for $2 < m < 3$ that destabilize at $m = 3$. Note that $Z^-_{(0:1)} \in \SL_3 \cdot Z^0_{(0:1)}$. Moreover, it will turn out that $Z^-_{(1:1)}$ is $3$-unstable. This can be directly computed using the one-parameter subgroup $\lambda = (-2, 1, 1)$, but it is also a consequence of the following two lemmas.

\begin{lem}
\label{lem:n_5_destabilized}
Let $Z \in \P^{5[n]}$ be a $3$-semistable subscheme with $Z^0_{(a:b)} \in \overline{\SL_3 \cdot Z}$ for $(a : b) \in \P^1$ and $(a:b) \notin \{ (0:1), (1:1), (1:-1)\}$. Then $Z$ is projectively equivalent to $Z^+_{(a:b)}$, $Z^-_{(a:b)}$, or $Z^0_{(a:b)}$.
\end{lem}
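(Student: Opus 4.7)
The plan is to use $Z^0_{(a:b)} \in \overline{\SL_3 \cdot Z}$ to produce a one-parameter subgroup realizing the degeneration, then classify the possible $Z$ geometrically.

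By Lemma \ref{lem:closed_orbits_at_m_3} the orbit of $Z^0_{(a:b)}$ is the closed minimal orbit in its S-equivalence class, so by the Hilbert--Mumford criterion there exist $g \in \SL_3$ and a one-parameter subgroup $\mu$ of $\SL_3$ with $\lim_{t \to 0} \mu(t) \cdot (g \cdot Z) = Z^0_{(a:b)}$. Replacing $Z$ by $g \cdot Z$ we may assume the limit is literally $Z^0_{(a:b)}$. Since this limit is a fixed point of $\mu$, we have $\mu \in \Stab_{\SL_3}(Z^0_{(a:b)})$. For $(a:b)$ outside the three excluded values the three reduced points $(0:-1:1)$, $(0:1:1)$, $(0:b:-a)$ are distinct and lie in general position on $\{x=0\}$, and a direct check shows that the identity component of $\Stab_{\SL_3}(Z^0_{(a:b)})$ is generated by $\lambda = (2,-1,-1)$. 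Thus $\mu$ is a positive integer multiple of $\lambda$ or of $\lambda^{-1}$, and it suffices to analyze these two cases.

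For $\mu = \lambda$ the points on $\{x=0\}$ are fixed while all points with $x \neq 0$ contract to $(1:0:0)$. Matching the limit to $Z^0_{(a:b)}$ forces $Z$ to contain the three target points on $\{x=0\}$, and the double point $(z, y^2)$ at $(1:0:0)$ must either already be present in $Z$ (giving $Z = Z^0_{(a:b)}$) or arise from two distinct points $(1:0:0)$ and $(1:c:0)$ on the $\lambda$-invariant line $\{z=0\}$ with $c \neq 0$. In the latter case $\lambda(s)$ acts on $c$ by $c \mapsto s^3 c$ while fixing everything else in $Z$, so choosing $s$ with $s^3 c = 1$ identifies $Z$ up to projective equivalence with $Z^+_{(a:b)}$.

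For $\mu = \lambda^{-1}$ the picture is reversed: the double point cannot arise from a collision (points with $x \neq 0$ now flow to $\{x=0\}$, not to $(1:0:0)$), so $Z$ must already contain it, while each of the three reduced limit points may come either from a point already on $\{x=0\}$ or from some $(x_0:y_0:z_0)$ with $x_0 \neq 0$ that projects to the appropriate $(0:y_0:z_0)$. The upper-triangular subgroup $U \subset \SL_3$ fixing $(1:0:0)$ and the tangent direction $y$ acts by adding multiples of $y, z$ to $x$ and by rescaling $x$; combined with $\lambda(s)$ one can translate each off-$\{x=0\}$ reduced point either onto $\{x=0\}$ or onto the specific point $(a:b:-a)$ via an explicit element of $U$ whose coefficients involve denominators $a+b$ and $b-a$. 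These are nonzero precisely because $(a:b) \notin \{(1:-1),(1:1)\}$; the exclusion of $(0:1)$ rules out the coincidence $(0:b:-a) = (0:1:0)$ that would place a target point on $\{z=0\}$ and complicate the tangent bookkeeping. After this reduction $Z$ has at most one reduced point off $\{x=0\}$, and the two resulting cases yield exactly $Z^0_{(a:b)}$ and $Z^-_{(a:b)}$.

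The main obstacle will be the combinatorial bookkeeping in the $\mu = \lambda^{-1}$ case: several sub-configurations must be enumerated, and for each one must verify that a suitable $U$-translation sends it to one of the two normal forms without the normalizing coefficients degenerating. This is precisely where the genericity hypothesis on $(a:b)$ enters and why it cannot be dropped from the statement.
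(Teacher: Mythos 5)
Your proposal follows the paper's strategy exactly: realize the degeneration by a one-parameter subgroup, observe that its image must lie in $\Stab_{\SL_3}(Z^0_{(a:b)})$ so that up to reparametrization it is $\lambda = (2,-1,-1)$ or $\lambda^{-1}$, and then classify $Z$ by analyzing the two flows. The skeleton is right and the conclusion is reachable this way, but two points need tightening. First, your enumeration in the $\mu = \lambda$ case is too narrow as written: you only allow the double point to split off from two points $(1:0:0)$ and $(1:c:0)$ on $\{z=0\}$, or to be already present as $(z,y^2)$ at $(1:0:0)$. In general the two colliding points are arbitrary $(1:y_4:z_4)$, $(1:y_5:z_5)$ off $\{x=0\}$ (resp.\ the pre-existing double point sits at an arbitrary $(1:y_4:z_4)$); the condition imposed by the limit is that the line through them flows to $\{z=0\}$, which forces $z_4 = z_5$ and $y_4 \neq y_5$, i.e.\ the line is $\{z = z_4 x\}$, not necessarily $\{z=0\}$. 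One then needs an extra transvection fixing $\{x=0\}$ pointwise to move this line to $\{z=0\}$ before your torus rescaling applies. The configurations you omit are projectively equivalent to your normal forms, so the lemma survives, but the case analysis as stated does not cover them.

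Second, your explanation of where the hypothesis $(a:b) \notin \{(0:1),(1:1),(1:-1)\}$ enters is not accurate. The paper's normalizing base change in the $\lambda^{-1}$ case has entries $\tfrac{-x_1-x_2}{2}$ and $\tfrac{x_2-x_1}{2}$; no denominators $a+b$ or $b-a$ occur. The genericity is used earlier and more structurally: it guarantees that the three points $(0:1:1)$, $(0:1:-1)$, $(0:-b:a)$ of $Z^0_{(a:b)}$ on $\{x=0\}$ are distinct and that none of them is $(0:1:0)$, so that the identity component of the stabilizer really is the torus generated by $(2,-1,-1)$ and, under the $\lambda^{-1}$ flow, each reduced point of the limit pulls back to a single reduced point of $Z$ (no collisions on $\{x=0\}$ and no interference with the double point's tangent line $\{z=0\}$). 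For the excluded values the configuration $Z^0_{(a:b)}$ degenerates further to $Z^0$, and those fibers are treated separately in Lemma \ref{lem:n_5_destabilized_to_Z^0}; that, rather than a vanishing denominator, is why they must be excluded here.
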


\begin{proof}
There is a one-parameter subgroup $\lambda$ such that $\lim_{t \to 0} \lambda(t) \cdot Z \in \SL_3 \cdot Z^0_{(a:b)}$. Up to change of coordinates we can assume that $\lim_{t \to 0} \lambda(t) \cdot Z = Z^0_{(a:b)}$. Then $\lambda(t)$ maps to the stabilizer of $Z^0_{(a:b)}$ and hence, we can assume $\lambda = (2, -1, -1)$ or $\lambda = (-2, 1, 1)$. Since $Z^0_{(a:b)}$ has one double point and three reduced points, we know that either the same holds for $Z$ or $Z$ is reduced.
\begin{enumerate}
    \item Assume that $\lambda = (-2, 1, 1)$. For a point $(x:y:z) \in \P^2$, we can compute
    \[
    \lim_{t \to 0} \lambda(t) \cdot (x:y:z) = \lim_{t \to 0} (t^2 x: t^{-1} y: t^{-1} z) =
    \begin{cases}
    (0: y: z) & \text{, if $y \neq 0$ or $z \neq 0$,} \\
    (1:0:0) & \text{, if $y = z = 0$.}
    \end{cases}
    \]
    This means the only point converging to $(1:0:0)$ is the point itself. Moreover, $\lambda$ fixes any line through $(1:0:0)$. Therefore, $\lim_{t \to 0} \lambda(t) \cdot Z = Z^0_{(a:b)}$ implies that $Z$ must contain the subscheme cut out by $(z, y^2)$. The three reduced points have to be given by $(x_1:1:1)$, $(x_2:1:-1)$, $(x_3:-b:a)$ for some $x_1, x_2, x_3 \in \C$. Base changing via the matrix
    \[
    \begin{pmatrix}
    1 & \frac{-x_1 - x_2}{2} & \frac{x_2 - x_1}{2} \\
    0 & 1 & 0 \\
    0 & 0 & 1
    \end{pmatrix}^{-1}
    \] 
    reduces to the case $x_1 = x_2 = 0$. If additionally $x_3 = 0$, then $Z = Z^0_{(a:b)}$. Assume that $x_3 \neq 0$. Then $Z$ is projectively equivalent to $Z^-_{(a:b)}$ which can be seen by acting with $\lambda(t)$ for appropriate $t$.
    \item Assume that $\lambda = (2, -1, -1)$.  For a point $(x:y:z) \in \P^2$, we can compute
        \[
        \lim_{t \to 0} \lambda(t) \cdot (x:y:z) = \lim_{t \to 0} (t^{-2} x: ty: tz) =
        \begin{cases}
        (1: 0: 0) & \text{, if } x \neq 0, \\
        (0:y:z) & \text{, if } x = 0.
        \end{cases}
        \]
    \begin{enumerate}
        \item Assume further that $Z$ is reduced and its five points are given by $P_i = (x_i:y_i:z_i)$ for $i = 1, \ldots, 5$. Then up to permuting the $P_i$ we must have $P_1 = (0:1:1)$, $P_2 = (0:1:-1)$, $P_3 = (0:-b:a)$, $P_4 = (1:y_4:z_4)$ and $P_5 = (1:y_5:z_5)$. The unique line through $P_4$ and $P_5$ is defined by
        \[
        (y_5z_4 - y_4z_5)x + (z_5 - z_4)y + (y_4 - y_5)z = 0.
        \]
        Therefore, the fact $\lim_{t \to 0} \lambda(t) \cdot Z = Z^0_{(a:b)}$ implies $z_4 = z_5$ and $y_4 \neq y_5$. We get $Z \in \SL_3 \cdot Z^+_{(a:b)}$.
        
        \item Assume that the set-theoretic support of $Z$ is given by four points $P_i = (x_i:y_i,z_i)$ for $i = 1, \ldots, 4$, and that $Z$ has a double point at $P_4$ scheme-theoretically contained in a line $L$. Again $\lim_{t \to 0} \lambda(t) \cdot Z = Z^0_{(a:b)}$ implies that up to permuting the points, we have $P_1 = (0:1:1)$, $P_2 = (0:1:-1)$, $P_3 = (0:-b:a)$, and $P_4 = (1:y_4:z_4)$. Next, $\lim_{t \to 0} \lambda(t) \cdot L = \{ z = 0 \}$ and $(1:y_4:z_4) \in L$ are equivalent to $L$ being cut out by $-z_4x + z = 0$. From here we get $Z \in \SL_3 \cdot Z^0_{(a:b)}$. \qedhere
    \end{enumerate}
\end{enumerate}
\end{proof}

\begin{lem}
\label{lem:n_5_destabilized_to_Z^0}
Let $Z \in \P^{5[n]}$ be a $3$-semistable subscheme such that $Z^0 \in \overline{\SL_3 \cdot Z}$. Then $Z$ is projectively equivalent to either $Z^0$, $Z^0_{(1:1)}$, $Z^0_{(0:1)}$, $Z^+_{(1:1)}$, $Z^+_{(0:1)}$, or $Z^-$.
\end{lem}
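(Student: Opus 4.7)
The plan is to follow the case analysis used in the proof of Lemma~\ref{lem:n_5_destabilized}. Since $Z$ is $3$-semistable and $Z^0 \in \overline{\SL_3 \cdot Z}$, the Hilbert--Mumford theorem will produce an element $g \in \SL_3$ and a one-parameter subgroup $\lambda : \C^* \to \SL_3$ with $\lim_{t \to 0} \lambda(t) \cdot (g \cdot Z) = Z^0$. After replacing $Z$ by $g \cdot Z$, we may assume $\lim_{t \to 0} \lambda(t) \cdot Z = Z^0$, so that the image of $\lambda$ lies in the stabilizer of $Z^0$. By the proof of Lemma~\ref{lem:closed_orbits_at_m_3} the connected component of this stabilizer is the diagonal torus $T$, so we may write $\lambda = (a, b, c)$ with $a + b + c = 0$. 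A direct check on generators also shows that $\II_{Z^0} = (xz, xy^2, y^2z)$ is fixed by the coordinate swap $x \leftrightarrow z$, which acts on $T$ via $(a, b, c) \mapsto (c, b, a)$. Modulo this $\Z/2$-symmetry the six open chambers of $X_{*}(T) \otimes \R$ collapse to three cases, represented by (A) $a > b > c$, (B) $a > c > b$, and (C) $b > a > c$; walls between chambers will be handled by specialization from the adjacent open cases and contribute no new orbits.

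In each case we will, exactly as in Lemma~\ref{lem:n_5_destabilized}, compute the $\lambda$-limit of a general point $(x:y:z) \in \P^2$ and of a length-two scheme based at each of $(1:0:0)$, $(0:1:0)$, $(0:0:1)$ with a prescribed tangent line. Matching these limits against the three components of $Z^0$ --- the double point $(x, y^2)$ at $(0:0:1)$ tangent to $x = 0$, the double point $(z, y^2)$ at $(1:0:0)$ tangent to $z = 0$, and the reduced point $(0:1:0)$ --- will constrain the support and scheme structure of $Z$. After normalizing $Z$ using the commutant of $\lambda$ in $\SL_3$, each case should yield a finite list of orbits, and the combined list is to be shown to coincide with $\{Z^0, Z^0_{(1:1)}, Z^0_{(0:1)}, Z^+_{(1:1)}, Z^+_{(0:1)}, Z^-\}$. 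Any configuration landing in one of the unstable loci of Section~\ref{subsec:unstable_ideals_five_points} will be ruled out by the $3$-semistability hypothesis on $Z$.

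The main obstacle will be the matching of scheme structures at the two length-two components of $Z^0$, a genuine step up from Lemma~\ref{lem:n_5_destabilized} where only one double point must be tracked. Each length-two component of $Z^0$ can arise in $Z$ either from a genuine double point whose tangent line is one of the $\lambda$-preserved directions at its support, or from a pair of reduced points sitting on an appropriate $\lambda$-invariant line that collide in the limit. Combining these routes at both double supports significantly expands the case count relative to Lemma~\ref{lem:n_5_destabilized}, and a careful analysis of which lines are contracted and which are preserved by $\lambda$ in each chamber is where the bulk of the work lies. Each individual verification, however, will be a direct flat-limit computation of the same flavor as in that earlier lemma.
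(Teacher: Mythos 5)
Your overall strategy is the same as the paper's: reduce to a diagonal one-parameter subgroup $\lambda=(r_1,r_2,r_3)$ landing in the stabilizer of $Z^0$, use the $x\leftrightarrow z$ symmetry of $\II_{Z^0}=(xz,xy^2,y^2z)$ to cut down the cases, and then classify $Z$ by computing flat limits of points and of the tangent lines carrying the two length-two components. Your identification of the two mechanisms producing each double point of $Z^0$ (a genuine double point with the right tangent direction, versus two reduced points colliding along a $\lambda$-contracted line) is exactly the subcase structure the actual argument requires.

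The genuine gap is your treatment of the non-generic one-parameter subgroups. You reduce to the three open Weyl chambers $r_1>r_2>r_3$, $r_1>r_3>r_2$, $r_2>r_1>r_3$ and assert that the walls ($r_1=r_2>r_3$, $r_1>r_2=r_3$, $r_2>r_1=r_3$, $r_1=r_3>r_2$) ``will be handled by specialization from the adjacent open cases and contribute no new orbits.'' That is not a valid reduction. The lemma is an \emph{only these orbits} statement: a priori the destabilizing $\lambda$ supplied by the Hilbert--Mumford/Kempf argument may lie on a wall, and the set of subschemes $Z$ with $\lim_{t\to 0}\lambda(t)\cdot Z=Z^0$ for a wall $\lambda$ is not contained in the union of the corresponding sets for nearby chamber $\lambda$'s --- the limit dynamics are qualitatively different (for instance $\lambda=(1,1,-2)$ fixes the line $z=0$ pointwise and fixes every line through $(0:0:1)$, so configurations that do not degenerate to $Z^0$ under any chamber subgroup can still do so under this one). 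Consequently the four wall cases must each be analyzed directly; the paper devotes four of its seven cases to precisely this, and the fact that they produce no orbits outside the list of six is the \emph{output} of those computations, not something that follows from specialization. (It is true, and worth noting, that the three open chambers already realize all six orbits on your list --- e.g.\ $Z^-$ appears in the chamber $r_2>r_1>r_3$ --- so the missing work is entirely on the exclusion side.) To close the gap you must carry out the same line-tracking analysis for each of the four degenerate orderings.
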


\begin{proof}
There is a non-trivial one-parameter subgroup $\lambda$ such that $\lim_{t \to 0} \lambda(t) \cdot Z \in \SL_3 \cdot Z^0$. Up to change of coordinates we can assume that $\lim_{t \to 0} \lambda(t) \cdot Z = Z^0$. Then $\lambda(t)$ maps to the stabilizer of $Z^0$ and hence, we can assume that it is a diagonal one-parameter subgroup $\lambda = (r_1, r_2, r_3)$. Since $Z^0$ is invariant under permuting $x$ and $z$, we may assume that $r_1 \geq r_3$. Note that the underlying cycle to $Z_0$ is given by
$2(0:0:1) + (0:1:0) + 2(1:0:0)$. The argument splits into seven cases.
\begin{enumerate}
    \item Assume that $r_1 > r_2 > r_3$. Then
    \[
    \lim_{t \to 0} \lambda(t) (x:y:z) = \lim_{t \to 0} (x:t^{r_1 - r_2}y:t^{r_1 - r_3}z) = \begin{cases}
    (1:0:0) &\text{, if $x \neq 0$,} \\
    (0:1:0) &\text{, if $x = 0$ and $y \neq 0$,} \\
    (0:0:1) &\text{, if $x = y = 0$.}
    \end{cases}
    \]
    Therefore, the underlying cycle of $Z$ has to be of the form
    \[
    2(0:0:1) + (0:1:z_1) + (1:y_2:z_2) + (1:y_3:z_3)
    \]
    for $z_1, y_2, z_2, y_3, z_3 \in \C$. Let $L_1$ be the line that $2(0:0:1)$ lies on scheme-theoretically. Since $\lim_{t \to 0} L_1 = \{ x = 0 \}$ and $r_1 > r_2$, we must have $L_1 = \{ x = 0 \}$ in the first place. There are two subcases.
    \begin{enumerate}
        \item Let $(1:y_2:z_2) \neq (1:y_3:z_3)$. Then the line $L_2$ through these two points is given by the equation
        \[
        (y_3 z_2 - y_2 z_3) x + (z_3 - z_2)y + (y_2 - y_3)z = 0.
        \]
        Since $\lim_{t \to 0} \lambda(t) \cdot L_2 = \{ z = 0 \}$ and $r_1 > r_2 > r_3$, we must have $y_2 \neq y_3$. Using the group action of $\SL_3$, we can reduce to the case $z_1 = y_2 = z_2 = 0$, $y_3 = 1$, and $z_3 \in \{ 0, 1\}$. If $z_3 = 0$, we get $Z \in \SL_3 \cdot Z^0_{(0:1)}$. If $z_3 = 1$, then we get $Z \in \SL_3 \cdot Z^+_{(1:1)}$.
        \item Assume that $(1:y_2:z_2) = (1:y_3:z_3)$, and let $L_2$ be the line through this point that determines its length two scheme-structure. Then $\lim_{t \to 0} \lambda(t) \cdot L_2 = \{ z = 0 \}$ is equivalent to $(0:0:1) \notin L_2$. Using the group action of $\SL_3$, we can reduce to the case $z_1 = y_2 = z_2 = y_3 = z_3 = 0$ without changing the condition $(0:0:1) \notin L_2$. If $(0:1:0) \in L_2$, then $Z = Z^0$. If $(0:1:0) \notin L_2$, then $Z \in \SL_3 \cdot Z^0_{(1:1)}$. 
    \end{enumerate}
    
    \item Assume that $r_1 = r_2 > r_3$, i.e., up to positive scaling $\lambda = (1, 1, -2)$. Then
    \[
    \lim_{t \to 0} \lambda(t) (x:y:z) = \lim_{t \to 0} (x:y:t^3z) = \begin{cases}
    (x:y:0) &\text{, if $x \neq 0$ or $y \neq 0$,} \\
    (0:0:1) &\text{, if $x = y = 0$.}
    \end{cases}
    \]
    Therefore, the underlying cycle of $Z$ has to be of the form
    \[
    2(0:0:1) + (0:1:z_1) + (1:0:z_2) + (1:0:z_3)
    \]
    for $z_1, z_2, z_3 \in \C$. Let $L_1$ be the line that $2(0:0:1)$ lies on scheme-theoretically. Any such line is fixed by $\lambda$, and we get $L_1 = \{ x = 0 \}$. If $z_2 \neq z_3$, then the points $(1:0:z_2)$ and $(1:0:z_3)$ lie uniquely on the line cut out by $y = 0$ which is fixed by $\lambda$, and therefore, cannot converge to the line cut out by $z = 0$. Thus, $z_2 = z_3$. Let $L_2$ be the line that $2(1:0:z_2)$ is scheme theoretically contained in. The fact $\lim_{t \to 0} \lambda(t) \cdot L_2 = \{ z = 0 \}$ is equivalent to $(0:0:1) \notin L_2$. If $(0:1:z_1) \in L_2$, then $Z \in \SL_3 \cdot Z^0$. If $(0:1:z_1) \notin L_2$, then $Z \in \SL_3 \cdot Z^0_{(1:1)}$.
    
    \item Assume that $r_2 > r_1 > r_3$. Then
    \[
    \lim_{t \to 0} \lambda(t) (x:y:z) = \lim_{t \to 0} (t^{r_2 - r_1}x:y:t^{r_2 - r_3}z) = \begin{cases}
    (0:1:0) &\text{, if $y \neq 0$,} \\
    (1:0:0) &\text{, if $y = 0$ and $x \neq 0$,} \\
    (0:0:1) &\text{, if $x = y = 0$.}
    \end{cases}
    \]
    Therefore, the underlying cycle of $Z$ has to be of the form
    \[
    2(0:0:1) + (x_1:1:z_1) + (1:0:z_2) + (1:0:z_3)
    \]
    for $x_1, z_1, z_2, z_3 \in \C$. Let $L_1$ be the line that $2(0:0:1)$ is scheme-theoretically contained in. Then $\lim_{t \to 0} \lambda(t) \cdot L_1 = \{ x = 0 \}$ is equivalent to $(1:0:z_2) \notin L_2$. If $z_2 \neq z_3$, then the unique line through $(1:0:z_2)$ and $(1:0:z_3)$ is cut out by $y = 0$ and this line is invariant under the action of $\lambda$, i.e., $L_2$ cannot converge to $\{ z = 0 \}$. Thus, $z_2 = z_3$ and let $L_2$ be the line in which $2(1:0:z_2)$ is scheme-theoretically contained in. The fact $\lim_{t \to 0} \lambda(t) \cdot L_2 = \{ z = 0 \}$ is equivalent to $(0:0:1) \notin L_2$. If $(x_1:1:z_1) \in L_1 \cap L_2$, then $Z \in \SL_3 \cdot Z^0$. If $(x_1:1:z_1) \in L_1 \backslash L_2 \cup L_2 \backslash L_1$, then $Z \in \SL_3 \cdot Z^0_{(1:1)}$. If $(x_1:1:z_1) \notin L_1 \cup L_2$, then $Z \in \SL_3 \cdot Z^-$.
    
    \item Assume that $r_1 > r_2 = r_3$, i.e., up to positive scaling $\lambda = (2, -1, -1)$. Then
    \[
    \lim_{t \to 0} \lambda(t) (x:y:z) = \lim_{t \to 0} (x:t^3y:t^3z) = \begin{cases}
    (1:0:0) &\text{, if $x \neq 0$,} \\
    (0:y:z) &\text{, if $x = 0$.}
    \end{cases}
    \]
    Therefore, the underlying cycle of $Z$ has to be of the form
    \[
    2(0:0:1) + (0:1:0) + (1:y_1:z_1) + (1:y_2:z_2)
    \]
    for $y_1, z_1, y_2, z_2 \in \C$. Let $L_1$ be the line in which $2(0:0:1)$ is scheme-theoretically contained in. The fact $\lim_{t \to 0} \lambda(t) \cdot L_1 = \{ x = 0\}$ implies that $L_1$ itself is already cut out by $x = 0$. We will deal with two subcases individually.
    \begin{enumerate}
        \item Assume that $(1:y_1:z_1) \neq (1:y_2:z_2)$. Then the unique line $L_2$ through these two points is cut out by
        \[
        (y_1z_2 - y_2z_1)x + (z_1 - z_2)y + (y_2 - y_1) z = 0.
        \]
        The fact $\lim_{t \to 0} \lambda(t) \cdot L_2 = \{ x = 0\}$ is equivalent to $y_1 \neq y_2$ and $z_1 = z_2$. Then $Z \in \SL_3 \cdot Z^0_{(0:1)}$.
        \item Assume that $y_1 = y_2$ and $z_1 = z_2$, and let $L_2$ be the line in which $2(1:y_1:z_1)$ is scheme-theoretically contained in. Then $\lim_{t \to 0} \lambda(t) \cdot L_2 = \{ z = 0 \}$ is equivalent to $(0:1:0) \in L_2$ and $(0:0:1) \notin L_2$. Therefore, $Z \in \SL_3 \cdot Z^0$.
    \end{enumerate}
    
    \item Assume that $r_1 > r_3 > r_2$. Then 
    \[
    \lim_{t \to 0} \lambda(t) (x:y:z) = \lim_{t \to 0} (x:t^{r_1 - r_2}y:t^{r_1 - r_3}z) = \begin{cases}
    (1:0:0) &\text{, if $x \neq 0$,} \\
    (0:0:1) &\text{, if $x = 0$ and $z \neq 0$,} \\
    (0:1:0) &\text{, if $x = z = 0$.}
    \end{cases}
    \]
    Therefore, the underlying cycle of $Z$ has to be of the form
    \[
    (0:y_1:1) + (0:y_2:1) + (0:1:0) + (1:y_3:z_3) + (1:y_4:z_4)
    \]
    for $y_1, y_2, y_3, z_3, y_4, z_4 \in \C$. If $y_1 \neq y_2$, then the unique line $L_1$ through $(0:y_1:1)$ and $(0:y_2:1)$ is cut out by $x = 0$ which is fixed by $\lambda$. If $y_1 = y_2$ and $L_1$ is the line in which $2(0:y_1:1)$ is scheme-theoretically contained in, then $L_1$ has to be cut out by $x = 0$. Indeed, no other other lines converges via $\lambda(t)$ to the line cut out by $x = 0$. We deal with the remainder of this case in two subcases.
    \begin{enumerate}
        \item Assume that $(1:y_3:z_3) \neq (1:y_4:z_4)$. Then the unique line $L_2$ through these two points is defined by
        \[
        (y_3z_4 - y_4z_3)x + (z_3 - z_4)y - (y_3 - y_4)z = 0.
        \]
        The fact $\lim_{t \to 0} \lambda(t) \cdot L_2 = \{ z = 0\}$ is equivalent to $z_3 = z_4$. In particular, this means $y_3 \neq y_4$ in this case. If $y_1 \neq y_2$, then we get $Z \in \SL_3 \cdot Z^+_{(0:1)}$. If $y_1 = y_2$, then $Z \in \SL_3 \cdot Z^0_{(0:1)}$.
        \item Assume that $y_3 = y_4$ and $z_3 = z_4$. Let $L_2$ be the line in which $2(1:y_3:z_3)$ is scheme-theoretically contained in. Then $\lim_{t \to 0} \lambda(t) \cdot L_2 = \{ z = 0\}$ is equivalent to $(0:1:0) \in L_2$, but $(0:y_1:1), (0:y_2:1) \notin L_2$. If $y_1 \neq y_2$, then $Z \in \SL_3 \cdot Z^0_{(1:1)}$. If $y_1 = y_2$, then $Z \in \SL_3 \cdot Z^0$.
    \end{enumerate}
    
    \item Assume that $r_2 > r_1 = r_3$, i.e., up to positive scale $\lambda = (-1:2:-1)$. Then
    \[
    \lim_{t \to 0} \lambda(t) (x:y:z) = \lim_{t \to 0} (t^3x:y:t^3z) = \begin{cases}
    (0:1:0) &\text{, if $y \neq 0$,} \\
    (x:0:z) &\text{, if $y = 0$.}
    \end{cases}
    \]
    Therefore, the underlying cycle of $Z$ has to be of the form
    \[
    2(0:0:1) + (x_1:1:z_1) + 2(1:0:0)
    \]
    for $x_1, z_1 \in \C$. Let $L_1$ be the line on which $2(0:0:1)$ lies scheme-theoretically. Then $\lim_{t \to 0} \lambda(t) \cdot L_1 = \{ x = 0\}$ is equivalent to $(1:0:0) \notin L_1$. Let $L_2$ be the line on which $2(1:0:0)$ lies scheme-theoretically. Then $\lim_{t \to 0} \lambda(t) \cdot L_1 = \{ z = 0\}$ is equivalent to $(0:0:1) \notin L_2$. If $(0:1:0) \in L_1 \cap L_2$, then $Z \in \SL_3 \cdot Z^0$. If $(0:1:0) \in (L_1 \cup L_2) \backslash (L_1 \cap L_2)$, then $Z \in \SL_3 \cdot Z^0_{(1:1)}$. If $(0:1:0) \notin L_1 \cup L_2$, then $Z \in \SL_3 \cdot Z^-$.
    \item Lastly, assume that $r_1 = r_3 > r_2$, i.e., up to positive scale $\lambda = (1:-2:1)$. Then
    \[
    \lim_{t \to 0} \lambda(t) (x:y:z) = \lim_{t \to 0} (x:t^3y:z) = \begin{cases}
    (x:0:z) &\text{, if $x \neq 0$ or $z \neq 0$,} \\
    (0:1:0) &\text{, if $x = z = 0$.}
    \end{cases}
    \]
    Therefore, the underlying cycle of $Z$ has to be of the form
    \[
    (0:y_1:1) + (0:y_2:1) + (0:1:0) + (1:y_3:0) + (1:y_4:0)
    \]
    for $y_1, y_2, y_3, y_4 \in \C$. If $y_1 \neq y_2$, then the unique line through $(0:y_1:1)$ and $(0:y_2:1)$ is already cut out by $x = 0$. If $y_1 = y_2$, then the line $L_1$ in which $2(0:y_1:1)$ is scheme-theoretically contained in is also cut out by $x = 0$ already. Indeed, no other lines converges correctly via $\lambda$. The same argument exchanging the roles of $x$ and $z$ shows that $(1:y_3:0) + (1:y_4:0)$ is contained in the line cut out by $z = 0$ even if $y_3 = y_4$.
    
    If $y_1 = y_2$ and $y_3 = y_4$, then $Z \in \SL_3 \cdot Z^0$. If $y_1 \neq y_2$ and $y_3 = y_4$, then $Z \in \SL_3 \cdot Z^0_{(0:1)}$. Similarly, if $y_1 = y_2$ and $y_3 \neq y_4$, then $Z \in \SL_3 \cdot Z^0_{(0:1)}$ as well. If $y_1 \neq y_2$ and $y_3 \neq y_4$, then $Z \in \SL_3 \cdot Z^+_{(0:1)}$. \qedhere
\end{enumerate}
\end{proof}

For $0 < \varepsilon \ll 1$, we define $m_{-} = 3 - \varepsilon$, $m_{+} = 3 + \varepsilon$, and $m_0 = 3$. We have morphisms $f^+: X /\!\!/_{m_+} \SL(3) \to X /\!\!/_{m_0} \SL(3)$ and $f^-: X /\!\!/_{m_-} \SL(3) \to X /\!\!/_{m_0} \SL(3)$.

\begin{lem}
\label{lem:stability_3_minus_epsilon}
The morphism $f^+$ and $f^-$ are isomorphisms. Moreover, $Z \in \P^{2[5]}$ is $m_{-}$-stable if and only if no subscheme of length $3$ lies in a single line and there is no subscheme of length $3$ supported at a single point. Lastly, any $m_{-}$-semistable subscheme $Z$ is $m_{-}$-stable. 
\end{lem}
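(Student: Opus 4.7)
The plan is to handle all four assertions of the lemma together, leveraging the classification of closed $m_0$-semistable orbits in Lemma \ref{lem:closed_orbits_at_m_3} and the specialization results in Lemmas \ref{lem:n_5_destabilized} and \ref{lem:n_5_destabilized_to_Z^0}. First I would prove that $f^+$ is an isomorphism: it is a projective morphism between normal projective varieties that restricts to an isomorphism on the $m_0$-stable locus by Lemma \ref{lem:n5_stable_m>3}, so by Zariski's main theorem it suffices to check that all fibers over strictly $m_0$-semistable orbits are singletons. Such orbits are represented by $Z^0_{(a:b)}$ for $(a:b) \in \P^1$ and by $Z^0$; the fiber of $f^+$ over such a representative consists of the $m_+$-stable orbits specializing to it. Filtering the lists in Lemmas \ref{lem:n_5_destabilized} and \ref{lem:n_5_destabilized_to_Z^0} by the $m_+$-stability criterion (reduced with no four collinear), exactly one orbit survives in each case: $\SL_3 \cdot Z^+_{(a:b)}$ for generic $(a:b)$ and $\SL_3 \cdot Z^+_{(0:1)}$ over $[Z^0]$, noting that $Z^+_{(1:1)}$ is non-reduced.

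Next I would establish the absence of strictly $m_-$-semistable points. Any such point would admit an $m_-$-polystable representative $Z'$ with positive-dimensional stabilizer. Since $\mu_m(\cdot, \lambda)$ is linear in $m$, $m_-$-semistability implies $m_0$-semistability, so $Z'$ is strictly $m_0$-polystable with positive-dimensional stabilizer and is therefore projectively equivalent to some $Z^0_{(a:b)}$ or to $Z^0$ by Lemma \ref{lem:closed_orbits_at_m_3}. Each of these contains a length-three subscheme on the line $\{x = 0\}$ (three reduced points for $Z^0_{(a:b)}$; a double point together with the reduced point $(0:1:0)$ for $Z^0$), so each is $m_-$-unstable by Lemma \ref{lem:n5_three_points_line_unstable}, a contradiction. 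Consequently every $m_-$-semistable subscheme is $m_-$-stable.

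With this in hand, the isomorphism $f^-$ follows by the same fiber analysis as for $f^+$, but now the surviving orbits after filtering are the $Z^-$-type orbits, which I claim are $m_-$-stable: each has finite stabilizer (a direct inspection of the ideals), each is strictly $m_0$-semistable and contains neither a length-three subscheme on a line nor at a point, and the argument of the previous paragraph shows that no positive-dimensional-stabilizer orbit can appear in $X^{ss}(m_-)$; hence the orbits of $Z^-_{(a:b)}$ and $Z^-$ are closed in $X^{ss}(m_-)$. The ``only if'' direction of the stability criterion is then immediate from Lemmas \ref{lem:n5_triple_point_unstable} and \ref{lem:n5_three_points_line_unstable}.

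The main obstacle is the ``if'' direction: proving that any $Z$ avoiding the length-three conditions is $m_-$-semistable (and hence $m_-$-stable by the second paragraph). The cleanest approach is to show the reverse containment $X^{us}(m_-) \subseteq \{Z : Z \text{ contains a length-three subscheme on a line or at a point}\}$, which combined with the forward containment from Lemmas \ref{lem:n5_triple_point_unstable} and \ref{lem:n5_three_points_line_unstable} yields the desired equality. For $Z$ that is $m_-$-unstable with destabilizing $\lambda$, the flat limit $Z_0 = \lim_{t \to 0} \lambda(t) \cdot Z$ is $\lambda$-fixed and also $m_-$-unstable; a case analysis based on whether $\mu_{m_0}(Z, \lambda) = 0$ (where Lemmas \ref{lem:n_5_destabilized} and \ref{lem:n_5_destabilized_to_Z^0} apply to identify $Z$ as one of $Z^+_{(a:b)}$, $Z^-_{(a:b)}$, $Z^0_{(a:b)}$ up to $\SL_3$-action, of which only the $Z^+$- and $Z^0$-types lie in the length-three locus but the $Z^-$-types are stable) or $\mu_{m_0}(Z, \lambda) > 0$ (where $Z$ is already $m_0$-unstable and an explicit description of $\lambda$-fixed length-five subschemes suffices) will force $Z$ to satisfy one of the length-three conditions. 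This case analysis of $\lambda$-fixed length-five subschemes is the technically most demanding step.
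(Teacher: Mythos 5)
Your treatment of the isomorphisms $f^\pm$ and of the absence of strictly $m_-$-semistable points is correct and essentially the paper's argument: normality plus a fiber-by-fiber count using Lemmas \ref{lem:closed_orbits_at_m_3}, \ref{lem:n_5_destabilized} and \ref{lem:n_5_destabilized_to_Z^0}. Your route to the no-strictly-semistable claim (pass to a polystable representative with positive-dimensional stabilizer, note it is strictly $m_0$-semistable, identify it as $Z^0$ or $Z^0_{(a:b)}$, and kill it with Lemma \ref{lem:n5_three_points_line_unstable}) is in fact logically cleaner than the paper's, which deduces finiteness of stabilizers from the full list of $m_-$-semistable subschemes. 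One small caveat: the containment $X^{ss}(m_-)\subseteq X^{ss}(m_0)$ is the standard VGIT fact underlying the existence of $f^-$, not a direct consequence of linearity of the Hilbert--Mumford index alone, since linearity only expresses $\mu_{m_0}$ as a positive combination of $\mu_{m_-}$ and $\mu_{m_+}$.

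The genuine gap is the ``if'' direction of the stability criterion, which you correctly single out as the crux but do not prove. The paper's route is much shorter than the one you sketch: split by whether $Z$ is reduced. If $Z$ is reduced with no three collinear points it is $m_0$-stable (otherwise its closed $m_0$-orbit would be a $Z^0$-type and Lemmas \ref{lem:n_5_destabilized} and \ref{lem:n_5_destabilized_to_Z^0} would force $Z$ into the listed families, all of which violate the hypotheses), hence $m_-$-stable because $X^s(m_0)\subseteq X^s(m_-)$. If $Z$ is non-reduced with no triple point and no collinear length-three subscheme, an elementary configuration count shows $Z$ is projectively equivalent to $Z^-$ or to some $Z^-_{(a:b)}$, and these were already shown $m_-$-semistable by the nonemptiness of the fibers of $f^-$. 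Your alternative --- bounding $X^{us}(m_-)$ by analyzing flat limits under destabilizing one-parameter subgroups --- has two problems as sketched. First, when $\mu_{m_0}(Z,\lambda)=0$ you cannot invoke Lemmas \ref{lem:n_5_destabilized} and \ref{lem:n_5_destabilized_to_Z^0} unless you separately know that $Z$ is $m_0$-semistable; vanishing of the index for a single $\lambda$ does not give this. Second, when $Z$ is $m_0$-unstable, the geometry of the $\lambda$-fixed limit $Z_0$ does not constrain the geometry of $Z$: the locus of subschemes containing a collinear length-three subscheme is closed, so $Z_0$ lying in it says nothing about $Z$. You would therefore have to carry out the full ``un-degeneration'' analysis, in the style of Lemmas \ref{lem:n_5_destabilized} and \ref{lem:n_5_destabilized_to_Z^0}, for every $\lambda$-fixed unstable limit --- a substantially larger task than the direct classification of double-point configurations that the paper relies on.
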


\begin{proof}
Note that the GIT quotient of a normal variety is normal. Therefore, showing that the morphisms are bijective is enough to obtain isomorphisms.

If a subscheme $Z$ contains no double points and no three points on a line, then Lemma \ref{lem:n_5_destabilized} and Lemma \ref{lem:n_5_destabilized_to_Z^0} show that $Z$ is $m_0$-stable, and the morphism $f^+$ and $f^-$ are certainly both isomorphisms on this locus.

Assume that $Z$ is $m_{+}$-stable, and contained in the fiber $(f^+)^{-1}(Z^0)$. This means $Z$ contains no double points, and Lemma \ref{lem:n_5_destabilized_to_Z^0} implies that $Z$ is projectively equivalent to $Z^+_{(0:1)}$. Indeed, the fiber is a single point.

Assume that $Z$ is $m_{+}$-stable, and contained in the fiber $(f^+)^{-1}(Z^0_{(a:b)})$ for $(a:b) \notin \{ (1:1), (1:-1), (0:1)\}$. Again, $Z$ contains no double points, and by Lemma \ref{lem:n_5_destabilized} it is projectively equivalent to $Z^+_{(a:b)}$. Indeed, the fiber is a single point. By Lemma \ref{lem:closed_orbits_at_m_3} we covered all fibers, i.e., $f^+$ is an isomorphism.

Next, we analyze $f^-$. Since these GIT quotients are all proper, its fibers have to contain at least one point. Assume that $Z$ is $m_{-}$-stable, and contained in the fiber $(f^-)^{-1}(Z^0)$. By Lemma \ref{lem:n5_three_points_line_unstable} we know that $Z$ contains no subscheme of length $3$ in a line. Due to Lemma \ref{lem:n_5_destabilized_to_Z^0} the only orbit left is the one containing $Z^-$. In particular, $Z^-$ has to be $m_{-}$-semistable.

Assume that $Z$ is $m_{-}$-stable, and contained in the fiber $(f^+)^{-1}(Z^0_{(a:b)})$ for $(a:b) \notin \{ (1:1), (1:-1), (0:1)\}$. By Lemma \ref{lem:n5_three_points_line_unstable} we know that $Z$ contains no subscheme of length $3$ in a line and by Lemma \ref{lem:n_5_destabilized} this is only possible if $Z$ is projectively equivalent to $Z^{-}_{(a:b)}$. In particular, $Z^{-}_{(a:b)}$ has to be $m_{-}$-semistable.

The description of $m_{-}$-semistable points follows from the fact that if $Z$ is a subscheme with no triple points, at least one double point, and no subscheme of length three supported on a line, then it is projectively equivalent to either $Z^-$ or $Z^-_{(a:b)}$ for some $(a:b) \notin \{ (0:1), (1:1), (1:-1) \}$. If a subscheme $Z$ is strictly $m_{-}$-semistable, then there must be a strictly $m_{-}$-semistable point with positive dimensional stabilizer. However, all of the $m_{-}$-semistable $Z$ have zero-dimensional stabilizer.
\end{proof}

\begin{lem}
There is no VGIT wall for $2 < m < 3$.
\end{lem}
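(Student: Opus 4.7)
The plan is to show that no strictly $m$-semistable points exist for any $m \in (2, 3)$, which together with continuity of the Hilbert--Mumford index will imply the GIT quotient is constant on this interval. By Section \ref{subsec:complexes_five_points} any Bridgeland-semistable object with Chern character $(1, 0, -5)$ that is not an ideal sheaf is GIT-unstable in every birational model, so it suffices to work with ideal sheaves of zero-dimensional length-$5$ subschemes.

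The key step is the following classification: every $Z \in \P^{2[5]}$ whose stabilizer $\Stab_{\SL_3}(Z)$ has positive dimension either contains a length-$3$ subscheme supported at a single point, or contains a collinear length-$3$ subscheme. Indeed, a positive-dimensional stabilizer contains a one-parameter subgroup which, after a change of basis, is diagonal $\lambda(a,b,c)$ with $a + b + c = 0$. If $a, b, c$ are pairwise distinct, the $\lambda$-fixed locus in $\P^2$ is the three coordinate points, and $Z$ decomposes as a union of $\lambda$-fixed monomial zero-dimensional subschemes at these points whose lengths partition $5$: either some part has length $\geq 3$ (triple point), or the partition is $(2, 2, 1)$. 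In the $(2, 2, 1)$ case each double point is scheme-theoretically contained in one of the two coordinate lines through its support, and a short enumeration of the four sub-cases shows that in every case one of the three coordinate lines contains a length-$3$ subscheme of $Z$ (once the scheme-theoretic contribution of the double points is included). If two of $a, b, c$ coincide, the $\lambda$-fixed locus in $\P^2$ is a coordinate line $L$ plus the opposite coordinate point $P$; any length-$5$ $\lambda$-fixed subscheme supported on $L \cup \{P\}$ then either has length $\geq 3$ at $P$ (triple point) or contains a length-$3$ subscheme on $L$.

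By Lemmas \ref{lem:n5_triple_point_unstable} and \ref{lem:n5_three_points_line_unstable}, every such $Z$ is $m$-unstable for every $m \in [2, 3)$. Suppose for contradiction that some $Z$ is strictly $m_0$-semistable for some $m_0 \in (2, 3)$: the unique closed $\SL_3$-orbit inside the closure of $\SL_3 \cdot Z$ in the $m_0$-semistable locus is some $\SL_3 \cdot Z_0$ with $\dim \Stab_{\SL_3}(Z_0) > \dim \Stab_{\SL_3}(Z) \geq 0$, so $Z_0$ is $m_0$-semistable with positive-dimensional stabilizer, contradicting the previous paragraph. Hence no strictly $m$-semistable points exist on $(2, 3)$.

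To conclude, for fixed $Z$ and $\lambda$ the Hilbert--Mumford index $\mu_m(Z, \lambda)$ is linear in $m$, so $f_Z(m) := \sup_{\lambda} \mu_m(Z, \lambda)$ is convex and continuous in $m$. If some $Z$ were $m_1$-stable and $m_2$-unstable for $m_1, m_2 \in (2, 3)$, then by continuity $f_Z$ would vanish at some intermediate $m^*$, producing a strictly $m^*$-semistable point, contradicting the previous step. Therefore the stable locus is constant on $(2, 3)$, and combined with the absence of strictly semistable points the GIT quotient is constant on this interval; no wall lies in $(2, 3)$. The main obstacle is the combinatorial case check for $(2, 2, 1)$-configurations fixed by a diagonal torus, which requires carefully tracking how the scheme structure of the double points forces a length-$3$ collinear subscheme.
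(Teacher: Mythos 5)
Your proof reaches the correct conclusion but takes a genuinely different route from the paper's. The paper's argument is short: if $m_0 \in (2,3)$ were the maximal wall, a point that becomes semistable at $m_0$ must be unstable for $m \in (m_0, 3)$, hence by Lemma \ref{lem:stability_3_minus_epsilon} it contains a triple point or three collinear points, and Lemmas \ref{lem:n5_triple_point_unstable} and \ref{lem:n5_three_points_line_unstable} show such points stay unstable on all of $[2,3)$. You instead bypass Lemma \ref{lem:stability_3_minus_epsilon} and argue that a wall would force a strictly semistable point, whose S-equivalence class contains a semistable point fixed by a one-parameter subgroup; you then classify torus-fixed length-five subschemes and feed the result into the same two instability lemmas. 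Your route is more self-contained (it does not need the fiber-by-fiber analysis behind Lemma \ref{lem:stability_3_minus_epsilon}) at the cost of the combinatorial classification, which is close in spirit to what the paper does at $m=3$ in Lemma \ref{lem:closed_orbits_at_m_3}. Two points of rigor you are leaning on: that the supremum defining $f_Z$ is attained at a wall (Kempf, or equivalently the standard fact that a VGIT wall is characterized by the presence of strictly semistable points), and that the degenerate point $Z_0$ is again an ideal sheaf because non-ideal Bridgeland-semistable objects are GIT-unstable; both are available in the paper's framework, but should be cited rather than left implicit.

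One step of your classification is stated incorrectly, though the conclusion survives. In the case where two weights of $\lambda(a,b,c)$ coincide, so the fixed locus is a line $L$ plus the opposite point $P$, it is \emph{not} true that $Z$ either has length $\geq 3$ at $P$ or contains a length-$3$ subscheme \emph{on $L$}. A $\lambda$-invariant double point at $Q \in L$ need only be contained in a $\lambda$-invariant line, and these are $L$ together with all lines through $P$; if the double point sits along $\overline{PQ}$ it contributes only length one to $Z \cap L$. For example, a length-two piece at $P$, a double point at $Q_1 \in L$ pointing toward $P$, and a reduced point $Q_2 \in L$ give $Z \cap L$ of length two. The configuration is still covered, because then $\overline{PQ_1}$ contains the double point together with (at least the reduced point of) the piece at $P$, hence a collinear length-$3$ subscheme — but the line carrying it is $\overline{PQ_1}$, not $L$. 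You need to run this extra sub-case (and likewise check the lines through $P$ whenever a double point on $L$ is transverse to $L$) before invoking Lemma \ref{lem:n5_three_points_line_unstable}. A similar nitpick: if $\SL_3 \cdot Z$ is already closed in the semistable locus, your strict inequality $\dim \Stab_{\SL_3}(Z_0) > \dim \Stab_{\SL_3}(Z)$ fails, but in that case $Z$ itself is fixed by the destabilizing one-parameter subgroup, so the classification still applies.
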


\begin{proof}
If not, let $2 < m_0 < 3$ be maximal such that it is a VGIT wall. Then there is a GIT-unstable $Z \in  \P^{2[5]}$ for $m_0 < m < 3$ that is $m_0$-semistable. By Lemma \ref{lem:stability_3_minus_epsilon} this means $Z$ contains either a triple point or three points on a line. However, by Lemmas \ref{lem:n5_triple_point_unstable} and  \ref{lem:n5_three_points_line_unstable} they all stay unstable.
\end{proof}

\mainNFive*

\begin{proof}
By the previous lemmas we understand the GIT stability conditions for $2 < m \leq \infty$. By \cite[Section 11.2]{Dol03:invariant_theory} there is an isomorphism
\[
    (\P^{1})^5 / \! \!/_{\mathcal{O}(1^5)} \SL_2 \cong (\P^{2})^5 / \! \!/_{\mathcal{O}(1^5)} \SL_3
\]
that is equivariant under the action of the symmetric group $S_5$. Together with \cite[Section 1]{dPW18:moduli_binary_quintics} we get
\[
    \P^{2(5)} / \! \!/ \SL_3 \cong \P^{1(5)} /\!\!/ \SL_2 \cong  \P(1, 2, 3).
\]
At $m = 2$, the moduli space of S-equivalence classes of Bridgeland-semistable objects is the moduli space of conics. Its quotient by $\SL_3$ is a single point parametrizing the orbit of smooth conics.
\end{proof}


\section{The final model for seven points}
\label{sec:FinalModel7}

\subsection{The setup}
\label{subsec:n_7_final_model_setup}

According to \cite[Section 10.6]{ABCH13:hilbert_schemes_p2} the last wall in Bridgeland stability for $n = 7$ destabilizes objects $E$ that fit into short exact sequences
\[
    0 \to \Omega(-1) \to E \to \OO(-5)[1] \to 0
\]
in the category $\Coh^{\beta}(\P^2)$ for any $\beta$ such that
there is a point $(\alpha, \beta)$ on the numerical wall $W(\Omega(-1), \OO(-5)[1])$.
Therefore, right above this wall the moduli space of Bridgeland-semistable objects is given as the space those extensions that are non-trivial. Since $\Ext^1(\OO(-5)[1], \Omega(-1)) = H^0(\Omega(4))$, this means the moduli space in the chamber above the wall is given by $\P(H^0(\Omega(4)))$. Note that \cite{ABCH13:hilbert_schemes_p2} also shows that the movable cone of $\P^{2[7]}$ is generated by $H$ and $D_{5/2}$, but the effective cone is larger and only ends at $D_{12/5}$. There are no further walls between these $D_{5/2}$ and $D_{12/5}$. Therefore, the variety $\P(H^0(\Omega(4)))$ is the image of a divisorial contraction occurring at $D_{5/2}$. 

The Euler sequence is given by
$0 \to \Omega(4) \to \OO(3)^{\oplus 3} \to \OO(4) \to 0$, where the second map is defined through $(f, g, h) \mapsto xf + yg + zh$. Therefore, the global sections $H^0(\Omega(4))$ can be identified with triples of cubics $(f, g, h)$ such that $xf + yg + zh = 0$. From this description we obtain $H^0(\Omega(4)) = \C^{15}$, i.e., $\P(H^0(\Omega(4))) = \P^{14}$. For a point $s \in H^0(\Omega(4)) \backslash \{0\}$ we denote the corresponding point in $\P(H^0(\Omega(4)))$ by $[s]$.

\begin{lem}
Let $(f, g, h) \in H^0(\Omega(4))$ be general. More precisely, assume that $(f, g, h) \neq 0$ and that the cokernel of the induced morphism $\OO(-5) \to \Omega(-1)$ is a torsion-free sheaf. Then this cokernel is the ideal sheaf corresponding to the homogeneous ideal $(f, g, h) \subset \C[x,y,z]$.
\end{lem}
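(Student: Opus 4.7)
The plan is to identify the cokernel via the Koszul resolution for a section of a rank two bundle. The morphism $\OO(-5) \to \Omega(-1)$ is, after twisting by $\OO(5)$, the same data as a nonzero section $s \in H^0(\Omega(4))$; and under the Euler sequence $0 \to \Omega(4) \to \OO(3)^{\oplus 3} \to \OO(4) \to 0$, this section is identified with the triple $(f,g,h)$. So I want to recover the cokernel from the geometry of $s$.

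Compute $\det \Omega(4) = \det \Omega \otimes \OO(8) = \OO(-3) \otimes \OO(8) = \OO(5)$. Let $Z(s) \subset \P^2$ denote the zero scheme of the section $s$. The hypothesis that the cokernel is torsion-free forces $Z(s)$ to have codimension two: indeed, any divisorial component $D \subset Z(s)$ would yield a factorization $\OO(-5) \to \OO(-5) \otimes \OO(D) \hookrightarrow \Omega(-1)$, and the rank one torsion sheaf obtained by quotienting the middle term by the larger subsheaf would embed into the cokernel. Once $Z(s)$ has codimension two, the standard Koszul resolution for a section of a rank two bundle gives the exact sequence
\[
0 \to \OO \xrightarrow{s} \Omega(4) \to \II_{Z(s)} \otimes \det \Omega(4) \to 0,
\]
that is, $0 \to \OO \to \Omega(4) \to \II_{Z(s)}(5) \to 0$. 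Twisting by $\OO(-5)$ yields
\[
0 \to \OO(-5) \to \Omega(-1) \to \II_{Z(s)} \to 0,
\]
so the cokernel is $\II_{Z(s)}$.

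The main remaining step is to identify $Z(s)$ with the subscheme $Z$ cut out by the homogeneous ideal $(f,g,h)$. Since $\OO(4)$ is locally free, the inclusion $\Omega(4) \hookrightarrow \OO(3)^{\oplus 3}$ is a subbundle inclusion, hence locally split. Working in the affine chart $\{z \neq 0\}$ (the other charts are symmetric), the relation $xf + yg + zh = 0$ lets us write $h = -z^{-1}(xf+yg)$, so that locally a section of $\Omega(4)$ is given by a pair $(f,g)$ with its image in $\OO(3)^{\oplus 3}$ being $(f,g,-z^{-1}(xf+yg))$. The vanishing ideal of $s$ in this chart is therefore $(f,g)$, which coincides with the ideal $(f,g,h)|_{z\neq 0}$. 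Gluing over the three standard charts shows that the ideal sheaf of $Z(s)$ is precisely the sheafification of the homogeneous ideal $(f,g,h)$, so $Z(s) = Z$ as claimed.

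The main obstacle is the careful verification that the vanishing scheme of $s$ as a section of the subbundle $\Omega(4)$ coincides scheme-theoretically with the common vanishing of $f$, $g$, $h$; once this local computation is done, the Koszul resolution delivers the result immediately.
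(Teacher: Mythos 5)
Your proof is correct, but it follows a genuinely different route from the paper's. The paper first uses the Chern character $\ch(\Omega(-1)) - \ch(\OO(-5)) = (1,0,-7)$ together with torsion-freeness to conclude that the cokernel is $\II_Z$ for some length-seven zero-dimensional $Z$, and then identifies $Z$ homologically: from the two inclusions $\OO \into \Omega(4) \into \OO(3)^{\oplus 3}$ it forms $F = \OO(3)^{\oplus 3}/\OO$, dualizes $0 \to \II_Z(5) \to F \to \OO(4) \to 0$ to get $\lExt^1(F,\OO) = \OO_Z$, and compares this with the dual of the presentation $\OO \to \OO(3)^{\oplus 3} \to F$, whose $\lExt^1$ is the cokernel of $(f,g,h)\colon \OO(-3)^{\oplus 3} \to \OO$. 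You instead invoke the Koszul resolution of the section $s$ of the rank-two bundle $\Omega(4)$ (after correctly deducing from torsion-freeness that $Z(s)$ has codimension two, via the factorization through $\OO(-5)\otimes\OO(D)$), and then match $Z(s)$ with $V(f,g,h)$ by the local splitting of the Euler sequence on each standard chart, where $h$ becomes redundant modulo $(f,g)$. Your version buys a transparent, chart-level identification of the scheme structure and avoids both the Chern-character step and the $\lExt$ computation; the paper's version gets the ideal-theoretic identification in one line from the dualized presentation, at the cost of being less explicit about why the local equations are exactly $f,g,h$. Both arguments are complete; the only cosmetic remark is that your codimension-two discussion tacitly includes the (here impossible, by the Chern character) case $Z(s)=\emptyset$, which does not affect the statement.
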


\begin{proof}
If $E$ is this cokernel, then $\ch(E) = \ch(\Omega(-1)) - \ch(\OO(-5)) = (1, 0, -7)$. 
Since $E$ is assumed to be torsion-free, its Chern character implies that it has to be an ideal sheaf $\II_Z$ of a zero-dimensional subscheme $Z \subset \P^2$ of length $7$.  
We are left to show that it is cut out by $(f, g, h)$.

We have two inclusions $\OO \into \Omega(4) \into \OO(3)^{\oplus 3}$ induced by the choice of $(f, g, h)$. We get the two quotients $\Omega(4)/ \OO = \II_Z(5)$ and $\OO(3)^{\oplus 3}/\Omega(4) = \OO(4)$. By calling $F$ the remaining quotient $\OO(3)^{\oplus 3} / \OO$, we get an induced short exact sequence
$0 \to \II_Z(5) \to F \to \OO(4) \to 0$. The long exact sequence from dualizing this short exact sequence is given by $0 \to \OO(-4) \to F^{\vee} \to \OO(-5) \to 0 \to \lExt^1(F, \OO) \to \OO_Z \to 0$. In particular, $\lExt^1(F, \OO) = \OO_Z$. Since $F$ is the cokernel of $\OO \to \OO(3)^{\oplus 3}$, we can dualize this as well to get that $\lExt^1(F, \OO)$ is the cokernel of the morphism $\OO(-3)^{\oplus 3} \to \OO$ given by $(f, g, h)$. Indeed, the image of this morphism is the ideal generated by $(f, g, h)$.
\end{proof}

We define a basis of $H^0(\Omega(4))$ as follows:
\begin{alignat*}{7}
e_0 &\coloneqq (x^2y, -x^3, 0), \ & e_4 &\coloneqq (xz^2, 0, -x^2z), \ & e_8 &\coloneqq (y^2z, -xyz, 0), \ & e_{12} &\coloneqq (0, y^2z, -y^3), \\
e_1 &\coloneqq (xy^2, -x^2y, 0), \ & e_5 &\coloneqq (z^3, 0, -xz^2), \ & e_9 &\coloneqq (y^2z, 0, -xy^2), \ & e_{13} &\coloneqq (0, yz^2, -y^2z), \\
e_2 &\coloneqq (y^3, -xy^2, 0), \ & e_6 &\coloneqq (xyz, -x^2z, 0), \ & e_{10} &\coloneqq (yz^2, -xz^2, 0), \ & e_{14} &\coloneqq (0, z^3, -yz^2). \\
e_3 &\coloneqq (x^2z, 0, -x^3), \ & e_7 &\coloneqq (xyz, 0, -x^2y), \ & e_{11} &\coloneqq (yz^2, 0, -xyz), & &
\end{alignat*}
Since $\Omega(4)$ is an $\SL_3$-equivariant vector bundle, we get an induced action of $\SL_3$ on $H^0(\Omega(4))$. The next step is to understand this action. Note that the morphisms in the Euler-sequence are not $\SL_3$ equivariant, and we cannot simply take the kernel of a map of $\SL_3$-representations. The key is that when the Euler-sequence changes after acting with $A \in \SL_3$, then $\Omega(4)$ changes as a sub-vector bundle of $\OO(3)^{\oplus 3}$. To get the correct action, we need to move it back. More precisely, let $(f, g, h) \in H^0(\OO(3)^{\oplus 3})$ sastisfying $xf + yg + zh = 0$, and let $A \in \SL_3$. Then $A \cdot f$, $A \cdot g$, and $A \cdot h$ satisfy the linear relation
\[
    \begin{pmatrix} x & y & z \end{pmatrix} A \begin{pmatrix} A \cdot f \\ A \cdot g \\ A \cdot h\end{pmatrix} = \begin{pmatrix} A \cdot x & A \cdot y & A \cdot z \end{pmatrix} \begin{pmatrix} A \cdot f \\ A \cdot g \\ A \cdot h\end{pmatrix} = 0.
\]
This means the action of $\SL_3$ on $H^0(\Omega(4))$ is given by
\[
    A \cdot (f, g, h) \coloneqq A \begin{pmatrix}
    A \cdot f \\
    A \cdot g \\
    A \cdot h
    \end{pmatrix}.
\]

With this in mind, we can determine the action of one-parameter subgroups on our chosen basis.

\begin{lem}
For any $r \in [-\tfrac{1}{2}, 1]$ and $t \in \C$ we have
\begin{alignat*}{7}
\lambda_r(t) e_0 &= t^{r + 3} e_0, \ & \lambda_r(t) e_4 &= t^{-2r} e_4, \ & \lambda_r(t) e_8 &= t^r e_8, \ & \lambda_r(t) e_{12} &= t^{2r - 1} e_{12}, \\
\lambda_r(t) e_1 &= t^{2r + 2} e_1, \ & \lambda_r(t) e_5 &= t^{-3r - 2} e_5, \ & \lambda_r(t) e_9 &= t^r e_9, \ & \lambda_r(t) e_{13} &= t^{-2} e_{13}, \\
\lambda_r(t) e_2 &= t^{3r + 1} e_2, \ & \lambda_r(t) e_6 &= t e_6, \ & \lambda_r(t) e_{10} &= t^{-r - 1} e_{10}, \ & \lambda_r(t) e_{14} &= t^{-2r - 3} e_{14}. \\
\lambda_r(t) e_3 &= t^{-r + 2} e_3, \ & \lambda_r(t) e_7 &= t e_7, \ & \lambda_r(t) e_{11} &= t^{-r - 1} e_{11}, & & 
\end{alignat*}
\end{lem}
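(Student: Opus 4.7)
The plan is a direct computation using the explicit formula
\[
A \cdot (f, g, h) = A \begin{pmatrix} A \cdot f \\ A \cdot g \\ A \cdot h \end{pmatrix}
\]
just derived above. Since $\lambda_r(t) = \diag(t, t^r, t^{-1-r})$ is diagonal, its transpose coincides with itself and therefore (under the convention $(A \cdot p)(v) = p(A^t v)$) the induced action on a monomial is $\lambda_r(t) \cdot x^a y^b z^c = t^{a + br - (1+r)c} \, x^a y^b z^c$. Every basis element $e_i = (f_i, g_i, h_i)$ listed has each component being either zero or a single monomial, hence each $e_i$ is manifestly a weight vector for the diagonal torus.

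For each $i$, the strategy is then: compute the scalar $t^{\alpha_i}$ by which $\lambda_r(t)$ acts on the first nonzero component of $(f_i, g_i, h_i)$; multiply by the additional factor $t$, $t^r$, or $t^{-1-r}$ coming from the row of $\lambda_r(t)$ into which that component lands after left multiplication; and read off the total weight. For example, for $e_0 = (x^2 y, -x^3, 0)$ one gets $\lambda_r \cdot x^2 y = t^{2+r} x^2 y$ and then multiplication by the first row of $\lambda_r$ yields weight $3+r$; the second component $-x^3$ gives $-t^3 x^3$, and the second row contributes $t^r$, giving weight $3+r$ again. That the two weights coming from the two nonzero components always agree is not a coincidence: any $e_i \in H^0(\Omega(4))$ satisfies the Euler relation $x f_i + y g_i + z h_i = 0$, which forces the weights of $x f_i$, $y g_i$, $z h_i$ to coincide and hence after left multiplication by $\lambda_r$ the weights of the transformed components coincide as well. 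The resulting weight for each $e_i$ matches the table in the statement.

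The main obstacle is not conceptual but organizational: fifteen separate monomial calculations must be carried out and tabulated without error. A convenient consistency check throughout is that the sum of all weights of a basis of $H^0(\Omega(4))$ must equal the weight of the determinant character of $\lambda_r$ on this representation (which is $0$ since $\SL_3$ has no non-trivial characters); summing the fifteen exponents listed indeed yields $0$ identically in $r$, which gives a quick end-to-end sanity check.
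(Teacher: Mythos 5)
Your proposal is correct and is exactly the computation the paper intends by its one-line proof ``Compute according to the group action'': apply the formula $A\cdot(f,g,h)=A\,(A\cdot f,\,A\cdot g,\,A\cdot h)^{T}$ with $A=\lambda_r(t)$ diagonal, so each monomial $x^ay^bz^c$ picks up $t^{a+br-(1+r)c}$ and the left multiplication contributes the row factor. Your observations that the Euler relation forces the two nonzero components to yield the same weight and that the fifteen weights sum to zero are correct and serve as useful checks, but the approach is the same as the paper's.
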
 

\begin{proof}
Compute according to the group action.
\end{proof}


\subsection{Unstable and non-stable loci}

The goal of this section is to prove Theorem \ref{thm:stability_final_model_n_7}. Recall from the introduction that we have three subloci $X_1$, $X_2$, and $X_3$ of $\P^{14}$, where $X_1 \cup X_2 \subset X_3$.

\begin{lem}
\label{lem:stability_final_n7}
Let $s \in H^0(\Omega(4)) \backslash \{ 0 \}$. Then $[s]$ is unstable if and only if there is $a_0 e_0 + \ldots + a_{14} e_{14} \in \SL_3 \cdot s$ such that $a_5 = a_{10} = a_{11} = a_{13} = a_{14} = 0$ and additionally one the following two holds:
\begin{enumerate}
    \item $a_8 = a_9 = a_{12} = 0$, or
    \item $a_4 = 0$.
\end{enumerate}
Moreover, $[s]$ is not stable if and only if it is either unstable or there is a point $a_0 e_0 + \ldots + a_{14} e_{14} \in \SL_3 \cdot s$ such that $a_5 = a_{10} = a_{11} = a_{12} = a_{13} = a_{14} = 0$.
\end{lem}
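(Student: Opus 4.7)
The plan is to apply the Hilbert--Mumford criterion (Definition \ref{def:HMcrit}) in its numerical form (Remark \ref{rmk:minNumerical}), together with the earlier normalization that every nontrivial one-parameter subgroup of $\SL_3$ is conjugate, up to positive rescaling, to $\lambda_r$ for some $r \in [-\tfrac12,1]$. Since conjugating $\lambda$ by $A\in\SL_3$ amounts to acting on the point by $A$, deciding stability of $[s]$ reduces to asking, for each representative $\sum a_i e_i \in \SL_3\cdot s$, whether some $r \in [-\tfrac12,1]$ makes $\mu^{\OO(1)}([\sum a_i e_i],\lambda_r) = \min\{w_i(r) : a_i\neq 0\}$ positive (for the unstable characterisation) or nonnegative (for the non-stable one), where the weights $w_i(r)$ are those tabulated in the preceding lemma.

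The argument then reduces to a sign analysis of the fifteen linear functions $w_i(r)$ on $[-\tfrac12,1]$. Five of them, namely $w_5=-3r-2$, $w_{10}=w_{11}=-r-1$, $w_{13}=-2$, and $w_{14}=-2r-3$, are strictly negative throughout $[-\tfrac12,1]$, so every representative realising $\mu\geq 0$ must satisfy $a_5=a_{10}=a_{11}=a_{13}=a_{14}=0$. Of the remaining ten weights, eight are strictly positive on the whole interval; the exceptions are $w_2=3r+1$ (vanishing at $r=-\tfrac13$), $w_4=-2r$ (vanishing at $r=0$, negative for $r>0$), $w_8=w_9=r$ (vanishing at $r=0$, negative for $r<0$), and $w_{12}=2r-1$ (vanishing at $r=\tfrac12$, negative for $r<\tfrac12$). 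The crucial combinatorial observation is that strict positivity of $w_4$ is incompatible with strict positivity of any of $w_8$, $w_9$, or $w_{12}$, which is what produces the dichotomy in the statement.

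For the unstable description I seek $r$ making every nonzero weight strictly positive. If $a_4=0$, then $r$ slightly less than $1$ works because $w_2(1)=4>0$ and every other surviving $w_i$ is positive at $r=1$; this yields Case (ii). Otherwise $a_4\neq 0$ forces $r<0$, and hence $a_8=a_9=a_{12}=0$, with $r$ narrowed to $(-\tfrac13,0)$; this yields Case (i). Reversing the computation establishes sufficiency in both situations. For the non-stable statement, the only new value of $r$ to inspect is $r=0$, at which $w_4=w_8=w_9=0$ while $w_{12}=-1<0$; the nonnegative-weight index set is then $\{0,1,2,3,4,6,7,8,9\}$, which contributes exactly the extra case $a_5=a_{10}=a_{11}=a_{12}=a_{13}=a_{14}=0$. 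Every other $r\in[-\tfrac12,1]$ produces a nonnegative-weight index set contained in either this one or in the Case-(ii) set $\{0,1,2,3,6,7,8,9,12\}$, so no further orbits appear. The main obstacle is simply the routine bookkeeping across the subdivisions of $[-\tfrac12,1]$ cut out by $r=-\tfrac13,\,0,\,\tfrac12$; I do not anticipate any conceptual difficulty.
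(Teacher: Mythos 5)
Your proposal is correct and follows essentially the same route as the paper: normalize every one-parameter subgroup to $\lambda_r$ with $r\in[-\tfrac12,1]$ at the cost of moving to another point of the orbit, then read off from the weight table which coordinates are forced to vanish, splitting into $r<0$, $r>0$, and $r=0$, and verify sufficiency by exhibiting explicit values of $r$. (Only a cosmetic slip: of the ten weights surviving the first cut, five — not eight — are strictly positive on all of $[-\tfrac12,1]$; your list of exceptional indices $2,4,8,9,12$ is nevertheless complete, so the argument is unaffected.)
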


\begin{proof}
Let $[s]$ for $s = a_0 e_0 + \ldots + a_{14} e_{14}$ be non-stable. Then there is a one-parameter subgroup $\lambda$ with $\mu([s], \lambda) \geq 0$. Up to a change of coordinates, we can assume that $\lambda = \lambda_r$ for $r \in [-\tfrac{1}{2}, 1]$.

The action of $\lambda_r$ on $e_5$ is multiplication by $t^{-3r - 2}$ and since $-3r - 2 < 0$, the numerical criterion implies $a_5 = 0$. We proceed in the same manner for other coefficients: Since $-r - 1 < 0$, we must have $e_{10} = e_{11} = 0$. Due to $-2 < 0$, we obtain $a_{13} = 0$. Finally, $-2r - 3 < 0$ implies $a_{14} = 0$.

Next, let $r < 0$. Then $a_8 = a_9 = 0$. Moreover, $2r - 1 < 0$ implies $a_{12} = 0$, and we are in case (i). On the other hand, if $r > 0$, then $-2r < 0$. This implies $a_4 = 0$ and we are in case (ii).

Lastly, let $r = 0$. If $[s]$ is unstable, then $\lambda_0$ acts trivially on $e_4$, we get $a_4 = 0$, and this is case (ii). If $s$ is strictly-semistable, then we only get $2r - 1 < 0$, i.e., $a_{12} = 0$, and this is the final case.

Assume vice-versa $s = a_0 e_0 + \ldots + a_{14} e_{14}$ such that $a_5 = a_{10} = a_{11} = a_{13} = a_{14} = 0$.
\begin{enumerate}
    \item Assume that $a_8 = a_9 = a_{12} = 0$ as well. Then for any $r \in (-\tfrac{1}{3}, 0)$ the inequality $\mu([s], \lambda_r) = \min \{ r + 3, 2r + 2 , 3r + 1, -r + 2, -2r, 1, 1\} > 0$ holds, i.e., $[s]$ is unstable.
    \item Assume that $a_4 = 0$ as well. Then $\mu([s], \lambda_1) = 1 > 0$ holds, i.e., $[s]$ is unstable.
    \item Assume that $a_{12} = 0$ as well. Then $\mu([s], \lambda_0) = 0$, i.e., $[s]$ is non-stable. \qedhere
\end{enumerate}
\end{proof}

We will have to translate what these unstable and non-stable loci are in terms of geometry. There are three relevant loci:
\begin{align*}
    \widetilde{X}_1 &\coloneqq \overline{\SL_3 \cdot \{ [a_0 e_0 + \ldots + a_4 e_4 + a_6 e_6 + a_7 e_7]: \ a_0, \ldots, a_4, a_6, a_7 \in \C \}}, \\
    \widetilde{X}_2 &\coloneqq \overline{\SL_3 \cdot \{ [a_0 e_0 + \ldots + a_3 e_3 + a_6 e_6 + \ldots + a_9 e_9 + a_{12} e_{12}]: \ a_0, \ldots, a_3, a_6, \ldots, a_9, a_{12} \in \C \}}, \\
    \widetilde{X}_3 &\coloneqq \overline{\SL_3 \cdot \{ [a_0 e_0 + \ldots + a_4 e_4 + a_6 e_6 + \ldots + a_9 e_9]: \ a_0, \ldots, a_4, a_6, \ldots, a_9 \in \C \}}.
\end{align*}

\begin{prop}
\label{prop:locus_x1}
We have $X_1 = \widetilde{X}_1$, i.e., this locus is the closure of ideal sheaves of zero-dimensional length seven subschemes that contain two general reduced points and a third point of length $5$ projectively equivalent to the one cut out by the ideal $J \coloneqq (x^2, xy^2, y^3 + xyz + xz^2)$.
\end{prop}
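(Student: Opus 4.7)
The plan is to prove the two inclusions separately, using an intrinsic characterization of $V_1$ together with a local analysis at a chosen flag point. Writing $s = (f, g, h) \in H^0(\Omega(4))$, a direct inspection of the basis vectors $e_0, \ldots, e_4, e_6, e_7$ shows that $s \in V_1$ if and only if $g = x G'$ with $G' \in \langle x^2, xy, y^2, xz \rangle$ and $h = x^2 H'$ with $H'$ linear in $x, y, z$, in which case the Euler relation forces $f = -(yG' + xzH')$. In particular $V_1$ is stabilized by the Borel subgroup $B \subset \SL_3$ preserving the pointed line $(0\!:\!0\!:\!1) \in V(x)$, and every point of $\widetilde X_1$ is $\SL_3$-equivalent to such an $s$.

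For the inclusion $\widetilde X_1 \subseteq X_1$, I would analyze the subscheme $Z = V(f,g,h) \subset \P^2$ for a generic $s = \sum a_i e_i \in V_1$ with $a_2, a_4 \neq 0$. The Euler relation forces $V(G') \cap V(H') \cap \{x \neq 0\} \subset V(f)$, so the set-theoretic support of $Z$ is $\{(0\!:\!0\!:\!1), P_1, P_2\}$ where $P_1, P_2$ are the two reduced intersection points of the conic $V(G')$ with the line $V(H')$ off $V(x)$. To determine the scheme structure at $(0\!:\!0\!:\!1)$, I would dehomogenize at $z = 1$ and use $a_4 \neq 0$ to solve $f = 0$ locally for $x$ in terms of $y$, obtaining $x = -(a_2/a_4) y^3 + O(y^4)$. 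Substituting this into $g$ and $h$ yields vanishing orders $5$ and $6$ in $y$ respectively (the leading term of $g$ is $(a_2^2/a_4) y^5$), identifying the local ring as $\C[y]/(y^5)$ and producing a curvilinear length-$5$ component at $(0\!:\!0\!:\!1)$. The total length of $Z$ is therefore $7$.

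To match this length-$5$ piece with $V(J)$ up to projective equivalence, I would invoke the fact that all curvilinear length-$5$ subschemes of $\P^2$ supported at a single point with a prescribed tangent line lie in a single orbit under the $5$-dimensional subgroup of $\SL_3$ stabilizing that pointed line: the residual $3$-parameter family of $5$-jets is transitively acted on, because the generic stabilizer of such a scheme in $\PGL_3$ is $2$-dimensional and $5 - 2 = 3$. Combined with the previous step, this yields $\widetilde X_1 \subseteq X_1$. For the reverse inclusion, given a generic $Z \in X_1$, an $\SL_3$-translate places the length-$5$ component at $(0\!:\!0\!:\!1)$ with tangent along $V(x)$ and $5$-jet matching that of $V(J)$; pinning down the cubic generators of $\II_Z$ in these normalized coordinates, one reads off that they satisfy $g \in x \cdot \langle x^2, xy, y^2, xz \rangle$ and $h \in x^2 \cdot \C[x,y,z]_1$, so the corresponding $[s]$ lies in $V_1$.

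The main obstacle will be the local calculation verifying the vanishing orders of $g$ and $h$ along the branch $V(f)$: one must track how the coefficients $a_0, a_1, a_3, a_6, a_7$ contribute when substituting the implicit expansion for $x$, and confirm that the leading coefficient of $g$ does not accidentally cancel for generic parameters. A secondary task is to justify the single-orbit claim for curvilinear length-$5$ subschemes, which I would handle by writing the unipotent part of the stabilizer of the flag $(0\!:\!0\!:\!1) \in V(x)$ explicitly and computing its action on the $5$-jet coefficients $(a_2, a_3, a_4)$ to exhibit transitivity.
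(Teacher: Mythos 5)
The central step of your argument --- identifying the length-$5$ component of $Z$ with $V(J)$ --- rests on a false claim. Curvilinear length-$5$ subschemes of $\P^2$ supported at a single point with prescribed tangent line do \emph{not} form a single orbit under the $5$-dimensional Borel subgroup: the length of the intersection of the scheme with its tangent line is a projective invariant, equivalently the vanishing of the coefficient $c_2$ in the $5$-jet $x = c_2y^2 + c_3y^3 + c_4y^4$ of the supporting branch is preserved. The generic such scheme has $c_2 \neq 0$, meets its tangent line in length $2$, lies on a unique smooth conic, and has $2$-dimensional stabilizer, so its orbit is $6$-dimensional. But $V(J)$ is not generic: dehomogenizing $J = (x^2, xy^2, y^3+xyz+xz^2)$ at $z=1$ gives the branch $x = -y^3 + y^4 + O(y^5)$, so $c_2 = 0$; indeed $V(J)$ is contained in the double line $V(x^2)$, meets $V(x)$ in length $3$, and its stabilizer is $3$-dimensional (as the paper computes), not $2$-dimensional. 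Your heuristic ``$5-2=3$'' thus contradicts the dimension count that makes the proposition true: if the length-$5$ pieces in $X_1$ were generic, $X_1$ would have dimension $6+4=10$ rather than $9$. Fortunately your local computation actually lands in the correct special orbit --- the branch $x = -(a_2/a_4)y^3 + O(y^4)$ also has $c_2 = 0$ --- so the statement you need is that the jets $(0, c_3, c_4)$ with $c_3 \neq 0$ form a single Borel orbit containing the jet $(0,-1,1)$ of $V(J)$. That is true and follows from the explicit jet computation you describe at the end, but it must replace the single-orbit claim rather than be derived from it.

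Separately, your reverse inclusion $X_1 \subseteq \widetilde X_1$ is only a sketch: after normalizing the length-$5$ component to $V(J)$ you must still exhibit the (essentially unique) syzygy $(f,g,h)$ with $xf+yg+zh=0$ inside the $3$-dimensional space $H^0(\II_Z(3))$ and verify that $g \in x\cdot\langle x^2,xy,y^2,xz\rangle$ and $h \in x^2\cdot\C[x,y,z]_1$; this is a genuine computation, not something one ``reads off''. The paper avoids this direction entirely: it checks (by computer algebra) that $[ue_0+e_2+e_4+e_6]$ corresponds to $I_u = J \cap (z, ux^2+y^2)$, which already gives $\widetilde X_1 \subseteq X_1$, and then concludes from irreducibility of both loci together with $\dim \widetilde X_1 = \dim X_1 = 9$, computed from the stabilizers of $J$ and of $I_u$. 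If you repair the orbit claim as above, adopting the same irreducibility-plus-dimension argument would let you drop your second inclusion altogether.
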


For any $u \in \C$, we define $K_u \coloneqq (z, ux^2 + y^2)$, and $I_u \coloneqq J \cap K_u$.

\begin{lem}
\label{lem:simplification_x1}
We have $\widetilde{X}_1 = \overline{\SL_3 \cdot \{ [u e_0 + e_2 + e_4 + e_6]: \ u \in \C \}}$. In particular, $\widetilde{X}_1 \subset \widetilde{X}_3$. Moreover, the point $[u e_0 + e_2 + e_4 + e_6]$ corresponds to the ideal $I_u$.
\end{lem}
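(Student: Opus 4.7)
The plan is in three parts: (i) identify the ideal represented by $[ue_0+e_2+e_4+e_6]$ as $I_u$; (ii) establish the equality of closures; and (iii) deduce $\widetilde{X}_1\subset\widetilde{X}_3$.

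For (i), direct substitution yields that $s=ue_0+e_2+e_4+e_6$ gives
\[
(f,g,h)=\bigl(ux^2y+y^3+xyz+xz^2,\ -ux^3-xy^2-x^2z,\ -x^2z\bigr),
\]
so replacing $g$ by $g-h$ presents the homogeneous ideal as $\bigl(f,\ x(ux^2+y^2),\ x^2z\bigr)$. Each of these three generators lies in $J$ (using $x^2, xy^2\in J$ and $y^3+xyz+xz^2\in J$) and in $K_u$ (using the identity $f=y(ux^2+y^2)+xz(y+z)$), which gives $(f,g,h)\subset I_u=J\cap K_u$. On the other hand, $(f,g,h)$ is the homogeneous ideal of the cokernel of the injection $\OO(-5)\to\Omega(-1)$ and so cuts out a length-$7$ subscheme by Section~\ref{subsec:n_7_final_model_setup}; and $I_u$ also has length $7$ since $J$ is supported at $(0:0:1)$ with length $5$ while $K_u$ is supported on the disjoint line $z=0$ with length $2$. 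The two saturated ideals therefore coincide.

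For (ii), the inclusion $\supset$ is immediate since the one-parameter family lies inside the $\P^6$ generating $\widetilde{X}_1$. For the reverse inclusion I will take a generic element $[a_0e_0+a_1e_1+a_2e_2+a_3e_3+a_4e_4+a_6e_6+a_7e_7]$ of the generating $\P^6$ and produce an $\SL_3$-element carrying it to the normal form $ue_0+e_2+e_4+e_6$. The defining feature of the generating $\P^6$ is that the components $g,h$ of every basis vector $e_i$ appearing there are divisible by $x$; this forces the associated subscheme to contain a length-$5$ subscheme at $(0:0:1)$ projectively equivalent to the one cut by $J$, together with two extra points on the line $z=0$. The $6$-dimensional parabolic stabilizing $(0:0:1)$ therefore preserves this locus, and within it the stabilizer of $J$ together with its action on the two points on $z=0$ can be used to clear the coefficients $a_1,a_3,a_7$, which parameterize the internal deformations of the $J$-structure and the choice of ambient line. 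The remaining four coefficients on $e_0,e_2,e_4,e_6$ are then normalized using the diagonal torus, which acts with weights $t_1^3t_2,\ t_1t_2^3,\ t_1^2t_3^2,\ t_1^2t_2t_3$ respectively: combined with projective rescaling, three of these four coefficients can be set to $1$ by solving a generically nonsingular system of two monomial equations in the torus parameters, leaving the single modular parameter $u$. Taking closures then yields the equality of orbit closures.

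For (iii), the generating set of $\widetilde{X}_1$ is obtained from that of $\widetilde{X}_3$ by the specialization $a_8=a_9=0$, so the containment is immediate. The main obstacle in this proof is the normal-form reduction of step (ii): one must verify that elements of the parabolic really can be chosen to clear $a_1,a_3,a_7$ without disturbing the $J$-structure at $(0:0:1)$ or the line $z=0$, which requires a careful explicit description of the stabilizer of $J$ as a subgroup of $\SL_3$ and of its induced action on $H^0(\Omega(4))$.
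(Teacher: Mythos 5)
The paper's own proof of this lemma is nothing but a citation to the computer calculation in \cite{code}, so your attempt to argue by hand is necessarily a different route, and parts of it succeed. Part (i) is correct: the containment $(f,g,h)\subseteq J\cap K_u$ checks out, and equality of the saturated ideals follows once both are known to cut out length-seven schemes. You should still record why the cokernel of $\OO(-5)\to\Omega(-1)$ is torsion-free (it is because the common zero locus of $(f,g,h)$ is the finite set $\{(0{:}0{:}1)\}\cup V(z,ux^2+y^2)$, so the section of $\Omega(4)$ vanishes only in dimension zero) and why $J$ cuts out a length-five scheme (in the chart $z=1$ it becomes $(x+y^3(1+y)^{-1},y^5)$). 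Part (iii) is immediate from the definitions.

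The genuine gap is in part (ii), and you have named it yourself without closing it. Concretely: (1) your characterization of the generating $\P^6$ is false, since $e_5,e_8,e_9,e_{10},e_{11}$ also have $g,h$ divisible by $x$, so that property does not single out $\langle e_0,\dots,e_4,e_6,e_7\rangle$; (2) for a general element one has $h=-x^2(a_3x+a_7y+a_4z)$ and $g=-x(a_0x^2+a_1xy+a_2y^2+a_6xz)$, so the two residual points are $\{a_3x+a_7y+a_4z=0\}\cap\{g/x=0\}$ and lie on $z=0$ only \emph{after} $a_3,a_7$ have been cleared --- the geometric picture you invoke to justify the clearing presupposes its outcome; (3) the claim that the length-five piece at $(0{:}0{:}1)$ is projectively equivalent to the one cut out by $J$ amounts to $\widetilde X_1\subseteq X_1$, which in the paper is a \emph{consequence} of this lemma via Proposition \ref{prop:locus_x1}, and you assert it rather than prove it. The torus computation at the end is fine (the weight differences of $e_2,e_4,e_6$ span a finite-index sublattice modulo the determinant relation, so those three coefficients can be scaled to $1$), but it only applies once $a_1,a_3,a_7$ are gone, which is exactly the missing step. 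Two ways to close it: either compute the action of the upper-triangular Borel on the basis explicitly (for instance the substitution $y\mapsto y+\beta x$ sends $e_2$ to $e_2+2\beta e_1+\beta^2e_0$, and similarly for the other unipotent directions) and check that the induced map onto the coefficients $a_1,a_3,a_7$ is dominant for generic input; or bypass the normal-form reduction for the closure statement by a dimension count, showing that the subgroup of $\SL_3$ preserving the $\P^6$ has dimension at least five (so $\dim\widetilde X_1\le 9$) while $\overline{\SL_3\cdot\{[ue_0+e_2+e_4+e_6]\}}\subseteq\widetilde X_1$ already has dimension $9$ by the stabilizer computation in the proof of Proposition \ref{prop:locus_x1}, whence equality by irreducibility of $\widetilde X_1$.
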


\begin{proof}
See \cite{code}.
\end{proof}

\begin{proof}[Proof of Proposition \ref{prop:locus_x1}]
Lemma \ref{lem:simplification_x1} implies that $\widetilde{X}_1 \subset X_1$. Since both $X_1$ and $\widetilde{X}_1$ are irreducible, all we have to show is that they have the same dimension. We start by determining the dimension of the orbit of the subscheme cut out by $J$.

Let $A \in \SL_3$ with $A \cdot J = J$. Then $A$ fixes the line cut out by $x$ and the point $(0:0:1)$, i.e.,
\[
A = \begin{pmatrix} 
a & b & c \\
0 & d & e \\
0 & 0 & f
\end{pmatrix}
\]
for $a, b, c, d, e, f \in \C$ with $adf = 1$. We have
\[
A \cdot J = (x^2, xy^2, d^3y^3 + af(d + 2e)xyz + af^2xz^2) = J
\]
which is equivalent to $f = d + 2e$ and $d^3 = af^2$. This means the stabilizer of $J$ has dimension $3$, and its orbit has dimension $5$. Therefore, $\dim X_1 = 9$.

Next, assume there are $u, u' \in \C$ and $A \in \SL_3$ such that $A I_u = I_{u'}$. Then $A$ must be in the stabilizer of $J$ and fix the line cut out by $z$, i.e., 
\[
A = \begin{pmatrix}
a & b & 0 \\
0 & d & 0 \\
0 & 0 & f
\end{pmatrix}
\]
for $a, b, d, f \in \C$ with $adf = 1$, $f = d$, and $d = a$. We can compute
\[
A \cdot K_u = (z, a^2ux^2 + b^2x^2 + 2abxy + a^2y^2) = K_{u'}.
\]
This shows that $b = 0$ and $u = u'$, and thus, the ideals $I_u$ represent distinct orbits.

The same argument with $u = u'$ from the beginning shows that the stabilizer of $I_u$ has dimension $1$, and its orbit has dimension $8$. This implies $\dim \widetilde{X}_1 = 9$ as advertised.
\end{proof}

We are left to deal with the $\widetilde{X}_2$ and $\widetilde{X}_3$.

\begin{prop}
\label{prop:locus_x2}
We have $X_2 = \widetilde{X}_2$, i.e., this locus is the closure of ideal sheaves of zero-dimensional length seven subschemes that contain three general reduced points and a point of length $4$ projectively equivalent to the one cut out by the ideal $(x^2, y^2)$.
\end{prop}

For $t, u \in \C \backslash \{ 0 \}$ with $t + u \neq 0$, we define ideals
\begin{alignat*}{3}
    J_{t, u} &\coloneqq (xy, x^2 + (t + u)y^2), \ &
    K_{t, u} &\coloneqq (x, y + (t + u)z), \\
    L_{t, u} &\coloneqq (y + tz, x^2 + xy + uy^2), \ &
    I_{t, u} &\coloneqq J_{t, u} \cap K_{t, u} \cap L_{t, u}.
\end{alignat*}

The subscheme cut out by $J_{t, u}$ is a length four scheme supported at $(0:0:1)$ projectively equivalent to $(x^2, y^2)$. The subscheme cut out by $K_{t, u}$ is a reduced point, and the subscheme cut out by $L_{t, u}$ is zero-dimensional of length two. The restrictions on $t$ and $u$ are required for the following reasons. The fact that $t + u \neq 0$ implies that the subscheme cut out $K_{t, u}$ is not supported at $(0:0:1)$. Moreover, $t \neq 0$ implies that $L_{t, u}$ is not supported at $(0:0:1)$. Finally, $u \neq 0$ ensures that the support of $K_{t, u}$ and $L_{t, u}$ do not lie on a single line.

\begin{lem}
\label{lem:simplification_x2}
We have $\widetilde{X}_2 = \overline{\SL_3 \cdot \{ [e_2 + e_3 + e_7 + te_8 + ue_9]: \ t, u \in \C \}}$. In particular, $\widetilde{X}_2 \subset \widetilde{X}_3$. Moreover, the point $[e_2 + e_3 + e_7 + te_8 + ue_9]$ corresponds to the ideal $I_{t, u}$.
\end{lem}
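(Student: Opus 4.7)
The plan is to mirror the argument given for Lemma \ref{lem:simplification_x1}. First, unwinding the definitions of the basis vectors $e_i$ from Section \ref{subsec:n_7_final_model_setup}, the model point $[e_2 + e_3 + e_7 + t e_8 + u e_9]$ corresponds to the triple of cubics
\[
    f = y^3 + x^2 z + x y z + (t+u) y^2 z, \quad g = -xy(y + tz), \quad h = -x(x^2 + xy + u y^2),
\]
which satisfies $xf + yg + zh = 0$ as required. By the lemma opening Section \ref{subsec:n_7_final_model_setup}, the associated ideal sheaf (whenever torsion-free) is generated by $(f, g, h)$. I will identify this ideal with $I_{t,u} = J_{t,u} \cap K_{t,u} \cap L_{t,u}$ by verifying the containment $(f, g, h) \subseteq J_{t,u} \cap K_{t,u} \cap L_{t,u}$ generator by generator, which amounts to a short series of polynomial reductions (for instance, modulo $y + tz$, the element $f$ reduces to $z(x^2 + xy + u y^2)$ using $t + u \neq 0$). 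Since both ideals cut out zero-dimensional subschemes of length $7$, colength matching upgrades the containment to an equality, and in particular the cokernel is torsion-free on the open locus $tu(t+u) \neq 0$.

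With the model point-to-ideal correspondence in hand, I will establish the main equality $\widetilde{X}_2 = \overline{\SL_3 \cdot \{[e_2 + e_3 + e_7 + t e_8 + u e_9] : t, u \in \C\}}$ by matching dimensions of irreducible varieties. The inclusion $\supseteq$ is immediate since each model point lies in the linear subspace from the defining description of $\widetilde{X}_2$. Geometrically, the closure of the locus of three reduced points plus a length-four fat point projectively equivalent to $V(x^2, y^2)$ has dimension $10$: three reduced points contribute $6$, and the $\SL_3$-orbit of $V(x^2, y^2)$ in $\P^{2[4]}$ has dimension $4$, since its stabilizer consists of block diagonal matrices $\mathrm{diag}(a, d, g)$ with $adg=1$ together with the unipotent shifts in the third row, giving a $4$-dimensional connected component. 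On the model side, the stabilizer of a generic $I_{t,u}$ must simultaneously preserve the three primary components $J_{t,u}, K_{t,u}, L_{t,u}$; restricting to elements of the $4$-dimensional stabilizer of $V(x^2, y^2)$ and then imposing invariance of $K_{t,u}$ and $L_{t,u}$ forces the acting matrix to be scalar, hence the stabilizer is finite. Thus each orbit has dimension $8$, the two-parameter family has generically injective differential, and the resulting orbit closure has dimension $8 + 2 = 10$, matching $\dim \widetilde{X}_2$.

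The inclusion $\widetilde{X}_2 \subseteq \widetilde{X}_3$ follows automatically from the model description: each point $[e_2 + e_3 + e_7 + t e_8 + u e_9]$ has $a_{12} = 0$ (and also $a_0 = a_1 = a_4 = a_6 = 0$), so it lies in the linear subspace spanned by $e_0, \dots, e_4, e_6, \dots, e_9$ used to define $\widetilde{X}_3$. Since $\widetilde{X}_3$ is both $\SL_3$-invariant and closed in $\P^{14}$, it contains the entire orbit closure $\widetilde{X}_2$. The main obstacle in this plan is the equality $(f, g, h) = I_{t,u}$; while the containment $\subseteq$ is routine, certifying the colength is sensitive enough that, as with Lemma \ref{lem:simplification_x1}, a symbolic verification (cf.\ \cite{code}) is probably the cleanest route.
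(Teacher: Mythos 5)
The paper's own proof of this lemma is a one-line reference to a symbolic computation in \cite{code}, so any human-readable argument is necessarily a different route; your identification of the triple $(f,g,h)=(y^3+x^2z+xyz+(t+u)y^2z,\,-xy(y+tz),\,-x(x^2+xy+uy^2))$, the containment $(f,g,h)\subseteq J_{t,u}\cap K_{t,u}\cap L_{t,u}$, and the colength argument all check out (note that $t+u\neq 0$ is what you need for $y^3\in J_{t,u}$, not for the reduction modulo $y+tz$; and the free parameters in the stabilizer of $(x^2,y^2)$ sit in the third \emph{column}, not the third row). The easy inclusion $\overline{\SL_3\cdot\{[e_2+e_3+e_7+te_8+ue_9]\}}\subseteq\widetilde{X}_2$ and the computation that this orbit closure has dimension $10$ are also fine.

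The gap is in the reverse inclusion. You never bound $\dim\widetilde{X}_2$ from above: you compute $\dim X_2=10$ and $\dim\overline{\SL_3\cdot\{[e_2+e_3+e_7+te_8+ue_9]\}}=10$, and then write that the latter "matches $\dim\widetilde{X}_2$", which silently identifies $\widetilde{X}_2$ with the geometric locus $X_2$. But that identification is exactly Proposition \ref{prop:locus_x2}, which the paper deduces \emph{from} this lemma, so as written your argument is circular. A priori $\widetilde{X}_2=\overline{\SL_3\cdot\P(W)}$ for the $8$-dimensional projective linear space $\P(W)$, $W=\langle e_0,\dots,e_3,e_6,\dots,e_9,e_{12}\rangle$, could have dimension as large as $14$. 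What is missing is either a direct argument that a generic point of $\P(W)$ can be moved by $\SL_3$ into the two-parameter normal form $e_2+e_3+e_7+te_8+ue_9$ (this is what the cited computation does), or an a priori bound $\dim\widetilde{X}_2\leq 10$. The latter is available cheaply: from the weight table, $W$ is precisely the positive-weight subspace of $H^0(\Omega(4))$ for $\lambda_1=(1,1,-2)$, hence is preserved by the associated parabolic $P(\lambda_1)=\{g\in\SL_3: g_{31}=g_{32}=0\}$ of dimension $6$; therefore the generic fiber of $\SL_3\times\P(W)\to\widetilde{X}_2$ has dimension at least $6$ and $\dim\widetilde{X}_2\leq 8+8-6=10$. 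With that inserted, irreducibility and your two dimension counts close the argument; without it, the main equality (and hence the "in particular" $\widetilde{X}_2\subseteq\widetilde{X}_3$, which does not follow from a containment of the defining linear subspaces because of $e_{12}$ versus $e_4$) is not established.
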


\begin{proof}
See \cite{code}.
\end{proof}

\begin{lem}
\label{lem:orbit_x^2_y^2}
The $\SL_3$-orbit of $(x^2, y^2)$ in $\P^{2[n]}$ has dimension four.
\end{lem}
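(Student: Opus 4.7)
The plan is to compute $\dim \Stab_{\SL_3}(Z)$ directly for $Z = V(x^2, y^2)$ and apply the orbit-stabilizer formula $\dim (\SL_3 \cdot Z) = \dim \SL_3 - \dim \Stab_{\SL_3}(Z) = 8 - \dim \Stab_{\SL_3}(Z)$, so it suffices to show the stabilizer is $4$-dimensional.

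First I would observe that $Z$ is set-theoretically supported at the single point $(0:0:1)$, so any $A \in \Stab_{\SL_3}(Z)$ must fix this point. This forces $A$ to lie in the parabolic subgroup
\[
P = \left\{ \begin{pmatrix} a & b & c \\ d & e & f \\ 0 & 0 & g \end{pmatrix} \in \SL_3 : g(ae - bd) = 1 \right\},
\]
which already has dimension $6$.

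Next, I would use the $\SL_3$-action convention from Section 3.1, under which the action on linear forms is $x \mapsto ax + dy$, $y \mapsto bx + ey$, and $z \mapsto cx + fy + gz$. For $A \in P$ to stabilize the ideal $(x^2, y^2)$, the polynomials $(ax + dy)^2$ and $(bx + ey)^2$ must lie in $(x^2, y^2)$, which is equivalent to the two conditions $ad = 0$ and $be = 0$. Combined with $ae - bd \neq 0$ (invertibility of the $2 \times 2$ block), a short case analysis shows that the stabilizer decomposes into two irreducible components: one with $a = e = 0$ (and $bd \neq 0$), and one with $b = d = 0$ (and $ae \neq 0$). In each component, the parameters $c$ and $f$ are unconstrained, the nonzero diagonal or antidiagonal parameters contribute two more dimensions, and $g$ is then determined by the $\SL_3$ condition. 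Each component is therefore $4$-dimensional, so $\dim \Stab_{\SL_3}(Z) = 4$.

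The main step is simply the case analysis in the preceding paragraph; there is no real obstacle, since once the parabolic is identified the polynomial condition $ad = be = 0$ is immediate from expanding $(ax+dy)^2$ and $(bx+ey)^2$. Putting it together, $\dim (\SL_3 \cdot Z) = 8 - 4 = 4$. This will also be reused in the subsequent dimension computation for $\widetilde{X}_2$, where the $4$-dimensional freedom of the remaining reduced points must combine with the $4$-dimensional orbit of the fat point to match $\dim X_2$.
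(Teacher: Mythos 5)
Your proof is correct and follows essentially the same route as the paper: identify the stabilizer inside the parabolic fixing $(0:0:1)$, extract the algebraic conditions on the $2\times 2$ block, and find the same two $4$-dimensional components ($a=e=0$ or $b=d=0$). The only cosmetic difference is that you impose membership of each generator image in $(x^2,y^2)$, getting $ad=be=0$, while the paper uses the Pl\"ucker condition on the wedge of the two quadrics, getting $ab=de=0$; on the locus $ae-bd\neq 0$ these cut out the same set, so the conclusions agree.
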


\begin{proof}
Let $A \in \SL_3$ be in the stabilizer of $(x^2, y^2)$. Then $A$ has to fix the point $(0:0:1)$, i.e.,
\[
A = \begin{pmatrix}
a & b & c \\
d & e & f \\
0 & 0 & g
\end{pmatrix}
\]
for $a, b, c, d, e, f, g \in \C$ with $\det(A) = (ae - bd)g = 1$. We have
\[
A \cdot (x^2, y^2) = (a^2x^2 + 2adxy + d^2y^2, b^2x^2 + 2bexy + e^2y^2) = (x^2, y^2).
\]
This is equivalent to $x^2 \wedge y^2$ being a multiple of
\[
2ab(ae - bd) x^2 \wedge xy + (a^2e^2 - b^2d^2) x^2 \wedge y^2 + 2de(ae - bd) xy \wedge y^2.
\]
This happens if and only if $2ab(ae - bd) = 0 = 2de(ae - bd)$ and $a^2e^2 - b^2d^2 \neq 0$. Since $ae - bd \neq 0$ as well, we can conclude that either $a = e = 0$ and $b \neq 0$, $d \neq 0$, or $b = d = 0$ and $a \neq 0$, $e \neq 0$. Overall, the stabilizer of $I$ has dimension four, and thus, the orbit has dimension four as well.
\end{proof}

\begin{lem}
\label{lem:stabilizer_x2}
Let $t, u, t', u' \in \C \backslash \{ 0 \}$ with $t + u \neq 0$, $t' + u' \neq 0$. If there is $A \in \SL_3$ such that $A \cdot J_{t, u} = J_{t', u'}$, $A \cdot K_{t, u} = K_{t', u'}$, and $A \cdot L_{t, u} = L_{t', u'}$, then $A$ is a multiple of the identity.
\end{lem}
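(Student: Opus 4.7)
\emph{Plan.} My strategy is to exploit the three component schemes of $I_{t,u}$ in sequence, peeling off the parameters of $A$ one at a time. The key geometric input is that $J_{t,u}$ is a length-$4$ scheme supported at $P_0 = (0{:}0{:}1)$, $K_{t,u}$ is the reduced point $P_1 = (0{:}-(t+u){:}1)$, and $L_{t,u}$ is a length-$2$ scheme contained in the line $\{y+tz=0\}$. Since $J_{t,u}$ and $J_{t',u'}$ are both supported at $P_0$, the argument in Lemma \ref{lem:orbit_x^2_y^2} forces $A$ to fix $P_0$, so its bottom row is $(0,0,g)$ and we may write
\[
A = \begin{pmatrix} a & b & c \\ d & e & f \\ 0 & 0 & g\end{pmatrix}, \qquad (ae-bd)g = 1.
\]

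\emph{First} I will apply the $K$-condition: writing the point action via $(A^{-1})^T$ and requiring $P_1$ to land at $(0{:}-(t'+u'){:}1)$, the first coordinate forces $d=0$ (using $t'+u'\neq 0$) and the other two coordinates yield the relation
\[
(K)\colon\quad g(t+u) = (t'+u')\bigl(e + (t+u)f\bigr).
\]
Then I turn to $J$: computing $A\cdot(xy) = abx^2 + ae\cdot xy$ and requiring it to lie in the degree-$2$ part $\langle xy,\,x^2+(t'+u')y^2\rangle$ of $J_{t',u'}$, the absence of a $y^2$-term forces $b=0$. The other generator gives $A\cdot(x^2+(t+u)y^2) = a^2x^2+(t+u)e^2 y^2$, which must be a scalar multiple of $x^2+(t'+u')y^2$, producing
\[
(J)\colon\quad (t+u)e^2 = a^2(t'+u').
\]

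\emph{Second} I exploit $L_{t,u}$. The image $A\cdot(y+tz) = tc\cdot x + (e+tf)y + tg\cdot z$ must be a scalar multiple of $y+t'z$, which forces $c=0$ and yields
\[
(L_1)\colon\quad t'(e+tf) = tg.
\]
Next I expand $A\cdot(x^2+xy+uy^2) = a^2x^2 + ae\cdot xy + ue^2 y^2$ and match it against $L_{t',u'}=(y+t'z,\,x^2+xy+u'y^2)$ by writing it as $(\alpha_x x+\alpha_y y+\alpha_z z)(y+t'z)+\beta(x^2+xy+u'y^2)$. Comparing the seven monomial coefficients: $\beta=a^2$ and $\alpha_x=a(e-a)$ from $x^2,xy$; the $xz$-coefficient $t'\alpha_x=0$ forces $a=e$; then $\alpha_y=e^2(u-u')$ from $y^2$, while the $z^2$- and $yz$-coefficients combine to give $\alpha_z=0$ and $\alpha_y=0$, hence $u=u'$.

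\emph{Third}, substituting $a=e$ and $u=u'$ into $(J)$ yields $t=t'$. Then $(K)$ reduces to $g=e+(t+u)f$ and $(L_1)$ to $g=e+tf$; subtracting gives $uf=0$, so $f=0$ and $g=e$. Combined with $aeg=1$ and $a=e=g$, we conclude $a^3=1$, so $A=aI$ is a scalar matrix.

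\emph{Main obstacle.} The deductions have to be carried out in this order: neither $(K)$ nor $(J)$ alone eliminates $f$, and the quadratic generator of $L_{t,u}$ is needed not to kill $f$ directly but to force $a=e$ and $u=u'$, without which $(J)$ cannot be sharpened to $t=t'$. Once $t=t'$ is established, a single subtraction of $(K)$ and $(L_1)$ kills $f$ and the argument closes. The individual computations are routine; the only subtle point is the coefficient bookkeeping in the $L$-expansion, where the three conditions $t'\alpha_x=t'\alpha_z=t'\alpha_y+\alpha_z=0$ have to be read off simultaneously.
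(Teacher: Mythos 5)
Your proof is correct and follows essentially the same route as the paper's: reduce $A$ to upper-triangular form, then compare the generators of $J$, $L$, and $K$ in turn to force $b=c=0$, the diagonal entries equal, $u=u'$, $t=t'$, and finally the remaining off-diagonal entry to vanish. The only cosmetic differences are that you derive the vanishing of the $(2,1)$-entry from the $K$-condition rather than from the geometric observation that $A$ must preserve the line $\{x=0\}$ (the line joining the supports of $J$ and $K$), and that you process the three ideals in a different order; the computations and conclusions agree entry for entry.
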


\begin{proof}
Such an $A$ has to fix the point $(0:0:1)$ and the line cut out by $x$, i.e.,
\[
A = \begin{pmatrix}
a & b & c \\
0 & d & e \\
0 & 0 & f
\end{pmatrix}
\]
for $a, b, c, d, e, f \in \C$ with $\det(A) = adf = 1$. We have
\begin{align*}
A \cdot J_{t, u} &= (abx^2 + adxy,  (a^2 + b^2(t + u)) x^2 + 2bd(t + u)xy + d^2(t + u)y^2) \\
&= J_{t', u'} = (xy, x^2 + (t' + u')y^2).
\end{align*}
This is equivalent to the wedge products of the generators being scalar multiples of each other:
\[
xy \wedge x^2 + (t' + u') xy \wedge y^2, \ (a^3d - ab^2d(t + u)) xy \wedge x^2 + abd^2(t + u) x^2 \wedge y^2 + ad^3(t + u) xy \wedge y^2.
\]
We obtain $b = 0$ and $a^2 (u' + t') = d^2 (u + t)$. Using this vanishing of $b$, we get
\[
A \cdot L_{t, u} = (ctx + (d + et)y + ftz, a^2x^2 + adxy + d^2uy^2) = L_{t', u'} = (y + t'z, x^2 + xy + u'y^2).
\]
Comparing the two linear generators leads to $c = 0$ and $ft = (d + et)t'$. Moreover, $a^2x^2 + adxy + d^2uy^2$ and $x^2 + xy + u'y^2$ must be multiples of each other as well. This is only possible if $a = d$, and $u = u'$. Since we previously established $a^2 (u' + t') = d^2 (u + t)$, we get $t = t'$.

At this point, we use the final hypothesis
\[
A \cdot K_{t, u} = (x, (a + e(t+u))y + f(t + u)z) = K_{t, u} = (x, y + (t + u)z).
\]
This yields $a + e(t + u) = f$, and since we already showed $f = a + et$, we get $e = 0$ and $f = a$.
\end{proof}

\begin{proof}[Proof of Proposition \ref{prop:locus_x2}]
Lemma \ref{lem:simplification_x2} implies that $\widetilde{X}_2 \subset X_2$. Since both $X_2$ and $\widetilde{X}_2$ are irreducible, all we have to show is that they have the same dimension. By Lemma \ref{lem:orbit_x^2_y^2} the ideal $(x^2, y^2)$ is in an orbit of dimension four, and therefore, $\dim X_2 = 10$. By Lemma \ref{lem:simplification_x2} and Lemma \ref{lem:stabilizer_x2} we know that the points $[e_2 + e_3 + e_7 + te_8 + ue_9]$ for general $t, u \in \C$ all describe pairwise distinct orbits and their stabilizers are zero dimensional. Therefore, $\dim \widetilde{X}_2 = 10$ as well.
\end{proof}

\begin{prop}
\label{prop:locus_x3}
We have $X_3 = \widetilde{X}_3$, i.e., this locus is the closure of ideal sheaves of zero-dimensional length seven subschemes that contain a curvilinear triple point and four general reduced points such that there is a line that intersects the triple point in length two and one more of the reduced points.
\end{prop}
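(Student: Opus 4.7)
The approach is to mimic the proofs of Propositions \ref{prop:locus_x1} and \ref{prop:locus_x2}. I would proceed in three stages. First, I would establish an analogue of Lemmas \ref{lem:simplification_x1} and \ref{lem:simplification_x2} exhibiting $\widetilde{X}_3$ as the closure of the $\SL_3$-orbit of an explicit three-parameter family
\[
\{[s(t_1, t_2, t_3)] : (t_1, t_2, t_3) \in \C^3\} \subset \P\left(\bigoplus_{i \in \{0,1,2,3,4,6,7,8,9\}} \C e_i\right),
\]
and identifying the generic member of this family with the ideal $\II_T \cap \II_{P_0} \cap \II_{P_1} \cap \II_{P_2} \cap \II_{P_3}$, where $T$ is a curvilinear triple point supported at $(0:0:1)$ with tangent line $L = \{x = 0\}$ and nonzero curvature, $P_0 \in L$ is a reduced point, and $P_1, P_2, P_3$ are three reduced points in general position. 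Concretely, I would take the triple-point ideal in the normalized form $(xz - y^2, xy, x^2, y^3)$, which defines a length-three curvilinear scheme at $(0:0:1)$ tangent to $L$ with unit curvature, intersect it with the ideals of four reduced points subject to the collinearity constraint, and read off the matrix representing the induced map $\OO(-5) \to \Omega(-1)$ whose cokernel is the resulting $\II_Z$. As in the companion cases, this identification is best delegated to symbolic computation along the lines of \cite{code}.

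Given the simplification, the containment $\widetilde{X}_3 \subseteq X_3$ is immediate, and both loci are irreducible. The proof then reduces to matching dimensions. I would count $\dim X_3$ as follows: the locus of curvilinear length-three subschemes supported at a single point of $\P^2$ is four-dimensional (two parameters for the support, one for the tangent direction, one for the curvature), four reduced points contribute eight parameters, and the condition that one of them lies on the tangent line of $T$ imposes a single nontrivial constraint. Since the map $\P^{14} \dashrightarrow \P^{2[7]}$ is birational on its domain of definition, this gives $\dim X_3 = 4 + 8 - 1 = 11$. For $\widetilde{X}_3$, I would show that the stabilizer in $\SL_3$ of a generic member of the three-parameter family is trivial, arguing as in Lemma \ref{lem:stabilizer_x2}: a stabilizing matrix $A$ must fix $(0:0:1)$, preserve $L$, fix the curvature of $T$, fix $P_0 \in L$, and permute the residual set $\{P_1, P_2, P_3\}$. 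The first four conditions force $A$ into a specific upper-triangular shape with a residual one-parameter group of rescalings, and the last condition kills this residual group for generic $P_i$. This yields $8$-dimensional orbits that are pairwise non-conjugate across distinct parameter triples, so $\dim \widetilde{X}_3 = 8 + 3 = 11$, and equality $X_3 = \widetilde{X}_3$ follows.

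The main obstacle is the explicit simplification in the first step. For $X_1$ and $X_2$, the coefficient vanishing conditions $a_5 = a_{10} = a_{11} = a_{13} = a_{14} = 0$ (together with the further vanishings listed in Lemma \ref{lem:stability_final_n7}) already encode a rigid fat-point structure, and the surviving $\SL_3$-action is straightforward to exploit. For $X_3$ nine coefficients remain free and the underlying geometry is genuinely flexible: the curvilinear triple point has a continuous curvature modulus, and the collinearity of a reduced point with the tangent line is a nontrivial polynomial relation among the $a_i$. Producing the correct three-parameter normal form and verifying the stabilizer reduction for generic members is the technical heart of the argument; once it is in place, the irreducibility-plus-dimension comparison closes the proof exactly as in the $X_1$ and $X_2$ cases.
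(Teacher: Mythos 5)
Your proposal follows essentially the same route as the paper's proof: an explicit simplification lemma (delegated to symbolic computation, as in \cite{code}) identifying $\widetilde{X}_3$ with the orbit closure of a three-parameter family of ideals of the prescribed geometric type, followed by irreducibility plus a matching dimension count, with $\dim X_3 = 12 - 1 = 11$ from the parameter count of a curvilinear triple point plus four reduced points subject to the line condition, and $\dim \widetilde{X}_3 = 8 + 3 = 11$ from a finite-stabilizer computation for generic members of the family. The only differences are cosmetic (your normal form for the triple-point ideal versus the paper's $J_{u,v,w}$), so the argument is correct as proposed.
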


For general $u, v, w \in \C$, we define the ideals
\begin{align*}
J_{u, v, w} &\coloneqq (xy, x^2, uxz + (v + w)y^2), \\
K_{u, v, w} &\coloneqq (x, y + (v + w)z), \\
L_{u, v, w} &\coloneqq (x^2 + y^2 + vyz, x^2 + uxz + wy^2, (1 - w)xy + vxz + uyz + uvz^2), \\
I_{u, v, w} &\coloneqq J_{u, v, w} \cap K_{u, v, w} \cap L_{u, v, w}.
\end{align*}

The scheme cut out by $J_{u, v, w}$ is a curvilinear triple point and the reduced point cut out by $K_{u, v, w}$ has different support as long as $(v + w) \neq 0$. The line cut out by $x$ intersects $J_{u, v, w}$ in length two. The subscheme cut out by $L_{u, v, w}$ is of length three, but it is not clear at this moment whether it is general. This means that the subscheme cut out by $I_{u, v, w}$ is contained in the locus described in Proposition \ref{prop:locus_x3}.

\begin{lem}
\label{lem:simplification_x3}
We have $\widetilde{X}_3 = \overline{\SL_3 \cdot \{ [e_0 + e_2 + e_3 + ue_4 + ve_8 + we_9]: \ u, v, w \in \C \}}$. Moreover, the point $[e_0 + e_2 + e_3 + ue_4 + ve_8 + we_9]$ corresponds to the ideal $I_{u, v, w}$.
\end{lem}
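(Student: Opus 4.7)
The plan is to follow the two-step template of Lemmas~\ref{lem:simplification_x1} and \ref{lem:simplification_x2}: first verify the ideal correspondence $[s] \leftrightarrow I_{u,v,w}$ by explicit computation, and then deduce the normal-form equality $\widetilde{X}_3 = \overline{\SL_3 \cdot \{[s] : u,v,w \in \C\}}$ from a dimension count based on the already-established formulas for the $\SL_3$-action on the basis $\{e_i\}$.

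For the ideal correspondence, I will unpack the section $s \coloneqq e_0 + e_2 + e_3 + u e_4 + v e_8 + w e_9$ as an explicit triple $(f, g, h) \in H^0(\OO(3))^{\oplus 3}$ satisfying $xf + yg + zh = 0$ (by summing the component triples of each $e_i$), and then verify that the saturation of $(f, g, h) \subset \C[x, y, z]$ equals $I_{u, v, w} = J_{u, v, w} \cap K_{u, v, w} \cap L_{u, v, w}$. Concretely, this amounts to checking that each generator of $J_{u, v, w}$, $K_{u, v, w}$, and $L_{u, v, w}$ lies in the saturation of $(f, g, h)$, together with the length count that $\C[x, y, z]/I_{u, v, w}$ has length $7$, matching the Chern character $(1, 0, -7)$ of the cokernel of the morphism $\OO(-5) \to \Omega(-1)$ determined by $s$. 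Though geometrically transparent, this symbolic verification over three free parameters is tedious; I expect it to be the principal technical obstacle and will defer it to a computer algebra system, as in \cite{code}.

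For the normal-form assertion, the inclusion $\overline{\SL_3 \cdot \{[s] : u,v,w \in \C\}} \subseteq \widetilde{X}_3$ is immediate since each $[s]$ lies in $\P(V_0)$, where $V_0 \coloneqq \langle e_0, e_1, e_2, e_3, e_4, e_6, e_7, e_8, e_9 \rangle$. For the reverse inclusion I will argue by dimension counting. A short direct computation with the $\SL_3$-action formulas shows that each of the three ``upper'' unipotent one-parameter subgroups (the substitutions $y \mapsto ax + y$, $z \mapsto bx + z$, and $z \mapsto cy + z$) preserves $V_0$, whereas each of the three ``lower'' unipotents sends some basis element outside $V_0$; for instance, $y \mapsto y + tz$ applied to $e_2$ produces a $t^3 z^3$ term in the first component, which lies along $e_5 \notin V_0$. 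Since the only subgroups of $\SL_3$ strictly containing the upper Borel $B$ are its two maximal parabolics, each containing one of these lower unipotents, the $\SL_3$-stabilizer of $V_0$ equals $B$, and consequently $\dim \widetilde{X}_3 = \dim \P(V_0) + \dim \SL_3 - \dim B = 8 + 8 - 5 = 11$.

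On the other side, for generic $(u, v, w)$ the ideal correspondence exhibits $V(I_{u, v, w})$ as a curvilinear triple point at $(0:0:1)$ together with four reduced points in general position, which forces the $\SL_3$-stabilizer of $[s]$ to be finite; moreover an argument analogous to Lemma~\ref{lem:stabilizer_x2} will show that distinct generic triples $(u, v, w)$ give non-projectively-equivalent subschemes. Hence the $\SL_3$-closure of this three-parameter family has dimension $3 + 8 = 11$. Both varieties are irreducible closed subvarieties of $\P^{14}$ with one contained in the other, so equality of dimensions forces equality of sets.
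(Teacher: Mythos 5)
Your proposal is correct in outline but takes a genuinely different route from the paper, whose entire proof of this lemma is ``See \cite{code}'', i.e.\ a direct computer-algebra reduction of a general element of $V_0 = \langle e_0,\ldots,e_4,e_6,\ldots,e_9\rangle$ to the normal form. You keep the CAS only for the ideal identification $[s]\leftrightarrow I_{u,v,w}$ and replace the normal-form verification by a dimension comparison, which is a more conceptual and independently checkable argument. Two remarks on the details. First, your claim that the upper Borel $B$ preserves $V_0$ is right and is the crux: by the weight formulas for $\lambda_r$, each $e_i$ is a torus weight vector, and one checks that the set of weights occurring in $V_0$ is closed under adding the positive roots $\epsilon_1-\epsilon_2$, $\epsilon_1-\epsilon_3$, $\epsilon_2-\epsilon_3$ (every target weight space that is hit lies entirely inside $V_0$), so $V_0$ is $B$-stable; this gives the fiber bound and hence $\dim\widetilde{X}_3\le 8+8-5=11$. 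Note that a priori this is only an inequality (the fibers of $\SL_3\times\P(V_0)\to\P^{14}$ could exceed $\dim B$), but that is all you need, since the reverse bound $\dim \overline{\SL_3\cdot\{[s_{u,v,w}]\}}=11$ supplies equality; you should state it as an inequality rather than an equality at that stage. Second, the lower bound itself rests on the normal forms having finite stabilizers and generically distinct orbits, which is exactly the content of Lemma \ref{lem:stabilizer_x3} — also proved by \cite{code} — and your appeal to ``four reduced points in general position'' is slightly too quick, since the paper explicitly cautions that the length-three piece cut out by $L_{u,v,w}$ is not obviously general; the safe argument is that any stabilizing $A$ must fix $(0{:}0{:}1)$ and the line $x=0$, hence is upper-triangular, after which finiteness and the identification of coincident parameters is a finite explicit computation. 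So you have not eliminated the computer, but you have reduced its role to computations the paper needs elsewhere anyway, at the price of importing the stabilizer lemma earlier in the logical order (which introduces no circularity, since that lemma does not depend on the present one).
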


\begin{proof}
See \cite{code}.
\end{proof}

\begin{lem}
\label{lem:stabilizer_x3}
Let $u, v, w, u', v', w' \in \C$ be general and let $A \in \SL_3$ be an upper-triangular matrix. Then 
\[
A \cdot [e_0 + e_2 + e_3 + ue_4 + ve_8 + we_9] = [e_0 + e_2 + e_3 + u'e_4 + v'e_8 + w'e_9]
\]
if and only if either
\begin{enumerate}
    \item $(u', v', w') = (u, v, w)$ and $A$ is a multiple of the identity, or
    
    \item $(u', v', w') = (-u, v, w)$ and $A$ is a multiple of
\[
\begin{pmatrix}
1 & 0  & 0 \\
0 & -1 & 0 \\
0 & 0  & -1
\end{pmatrix}.
\]
\end{enumerate}
\end{lem}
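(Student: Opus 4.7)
The plan is a direct computation. Write a general upper-triangular $A \in \SL_3$ as
\[
A = \begin{pmatrix} a & b & c \\ 0 & d & e \\ 0 & 0 & f \end{pmatrix}, \quad adf = 1,
\]
and compute $A \cdot s$ for $s \coloneqq e_0 + e_2 + e_3 + ue_4 + ve_8 + we_9$ using the formula $A \cdot (p,q,r) = A \cdot (A \cdot p, A \cdot q, A \cdot r)^t$ from Section \ref{subsec:n_7_final_model_setup}. Expanding $A \cdot s$ in the basis $e_0, \ldots, e_{14}$ and invoking the hypothesis $A \cdot [s] = [s']$ translates into the existence of $\lambda \in \C^*$ such that the coefficients of $e_1, e_5, e_6, e_7, e_{10}, e_{11}, e_{12}, e_{13}, e_{14}$ in $A \cdot s$ vanish, while the coefficients of $e_0, e_2, e_3, e_4, e_8, e_9$ equal $\lambda, \lambda, \lambda, \lambda u', \lambda v', \lambda w'$ respectively.

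The main step is to show that for generic $u, v, w$ the off-diagonal entries $b, c, e$ must vanish. A short direct calculation gives $A \cdot e_0 = a^3 d \cdot e_0$ (miraculously free of off-diagonal contributions) together with, for instance, $A \cdot e_2 = a b^2 d \cdot e_0 + 2 a b d^2 \cdot e_1 + a d^3 \cdot e_2$ and $A \cdot e_3 = a^3 e \cdot e_0 + a^3 f \cdot e_3$, so a term in $e_1$ proportional to $bd^2$ already appears. The expansions of $A \cdot e_4$, $A \cdot e_8$, and $A \cdot e_9$ contribute further mixed polynomials in $a, b, c, d, e, f$. Tracking the nine vanishing coefficients modulo $adf - 1$ produces an ideal whose generators are polynomials in $a, b, c, d, e, f$ with coefficients in $\Q[u, v, w]$, and I expect this ideal to contain $b$, $c$, and $e$ after inverting a suitable non-zero discriminant in $(u, v, w)$; this discriminant cuts out precisely the non-generic locus excluded by the hypothesis.

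Once $A$ is reduced to $\mathrm{diag}(a, d, f)$ with $adf = 1$, each relevant basis vector is a weight vector, and direct computation gives $A \cdot e_0 = a^3 d\, e_0$, $A \cdot e_2 = a d^3\, e_2$, $A \cdot e_3 = a^3 f\, e_3$, $A \cdot e_4 = a^2 f^2\, e_4$, $A \cdot e_8 = d\, e_8$, and $A \cdot e_9 = d\, e_9$ (the last two using $adf = 1$). Equality of the first three weights forces $a^2 = d^2 = f^2$; combined with $adf = 1$ this leaves only $(a, d, f) = (1, 1, 1)$ and $(a, d, f) = (1, -1, -1)$, up to multiplication by a cube root of unity. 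In the identity case the common weight is $\lambda = 1$ and we read $(u', v', w') = (u, v, w)$. In the second case $\lambda = a^3 d = -1$, so $u \cdot a^2 f^2 = \lambda u'$ yields $u' = -u$, while $v \cdot d = \lambda v'$ and $w \cdot d = \lambda w'$ both give $v' = v$ and $w' = w$, matching the statement.

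The main obstacle is the bookkeeping in the off-diagonal step, where the nine vanishing equations are polynomials of degree up to four in $a, b, c, d, e, f$ with coefficients in $\Q[u, v, w]$. Rather than attempt this by hand, I would verify it with a Gr\"obner basis computation in the spirit of \cite{code} (as used for Lemmas \ref{lem:simplification_x1}, \ref{lem:simplification_x2}, \ref{lem:simplification_x3}): check that the ideal generated by the nine vanishing coefficients together with $adf - 1$, after saturating at the discriminant in $\C[u, v, w]$ corresponding to the excluded non-generic locus, contains each of $b$, $c$, and $e$.
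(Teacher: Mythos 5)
Your proposal is correct and follows essentially the same route as the paper, whose entire proof of this lemma is a deferral to the computation in \cite{code}; your explicit diagonal-case analysis (weights $a^3d$, $ad^3$, $a^3f$, $a^2f^2$, $d$, $d$ on $e_0, e_2, e_3, e_4, e_8, e_9$, with $a^3d = ad^3 = a^3f$ forcing $d = f$ and $a = \pm d$) checks out and yields exactly the two stated cases. The one step you leave to a Gr\"obner-basis computation --- that the vanishing of the extraneous coefficients forces $b = c = e = 0$ for generic $(u,v,w)$ --- is precisely the step the paper also delegates to its code.
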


\begin{proof}
See \cite{code}.
\end{proof}

\begin{proof}[Proof of Proposition \ref{prop:locus_x3}]
Lemma \ref{lem:simplification_x3} implies that $\widetilde{X}_3 \subset X_3$. Since both $X_3$ and $\widetilde{X}_3$ are irreducible, all we have to show is that they have the same dimension. 

Assume that $A \cdot I_{u, v, w} = I_{u', v', w'}$ for $A \in \SL_3$ and general $u, v, w, u', v', w' \in \C$. Then $A$ has to fix both the point $(0:0:1)$ and the line cut out by $x$, i.e., $A$ is upper-triangular. By Lemma \ref{lem:stabilizer_x3} there are only finitely many such $A$. We can conclude that $\dim \widetilde{X}_3 = 11$.

To compute $\dim X_3$ note that the locus of four reduced points points together with a curvilinear triple point has dimension $12$. The condition about the line reduces the dimension by $1$.
\end{proof}

\stabilityFinalModelNSeven*

\begin{proof}
By Lemma \ref{lem:simplification_x1} and Lemma \ref{lem:simplification_x2} we know that $\widetilde{X_1} \subset \widetilde{X_3}$ and $\widetilde{X_2} \subset \widetilde{X_3}$. Together with Lemma \ref{lem:stability_final_n7} all we have to show is that $\widetilde{X_1} = X_1$, $\widetilde{X_2} = X_2$, and $\widetilde{X_3} = X_3$. However, these statements were shown in Propositions \ref{prop:locus_x1}, \ref{prop:locus_x2}, and \ref{prop:locus_x3}.
\end{proof}


\subsection{Minimal orbits}
We finish by determining the strictly semistable closed orbits
\begin{prop}
\label{prop:n_7_final_model_closed_orbits}
The closed strictly semistable orbits are given by $\SL_3 \cdot [e_4 + e_8 + we_9]$ for $w \in \C$ and $\SL_3 \cdot [e_4 + e_9]$. Moreover, we have $[e_4 + e_8 - (w + 1) e_9] \in \SL_3 \cdot [e_4 + e_8 + w e_9]$ and no other of pairs of points lie in the same orbit. In particular, the strictly semistable locus in the quotient is isomorphic to $\P^1$.

If $w \neq 0$, then $[e_4 + e_8 + we_9]$ generates an ideal sheaf $\II_Z$ that contains two curvilinear triple points $Z_1$, $Z_2$ not contained in lines with the following property: if $L_i$ for $i = 1, 2$ are the unique lines such that $Z_i \cap L_i$ has length two, then $P = L_1 \cap L_2$ is a single point, $Z_1$ and $Z_2$ are not supported at $P$, and $Z = Z_1 \cup Z_2 \cup (L_1 \cap L_2)$.

The points $[e_4 + e_8]$ and $[e_4 + e_9]$ do not generate ideal sheaves of zero-dimensional schemes.
\end{prop}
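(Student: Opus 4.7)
The approach is Hilbert-Mumford-Kempf combined with explicit analysis of the residual action on the $\lambda_0$-fixed locus. By the final clause of Lemma \ref{lem:stability_final_n7}, every strictly semistable point has, after acting by $\SL_3$, a representative $s = \sum a_i e_i$ with $a_5 = a_{10} = a_{11} = a_{12} = a_{13} = a_{14} = 0$, $a_4 \neq 0$, and $(a_8, a_9) \neq 0$. All surviving basis vectors have strictly positive $\lambda_0$-weight except $e_4, e_8, e_9$ (weight zero), so $\lim_{t \to 0} \lambda_0(t) \cdot [s] = [a_4 e_4 + a_8 e_8 + a_9 e_9]$ is an S-equivalent representative in $\P^2 \coloneqq \P(\langle e_4, e_8, e_9 \rangle)$. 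Hence every closed strictly-semistable orbit meets $\P^2$, and the problem reduces to understanding orbit structure within this plane.

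The second step is to work out the residual action on $\P^2$. Using the formulas in Section \ref{subsec:n_7_final_model_setup}, a direct calculation shows the diagonal torus $T$ scales $e_4$ by $\beta^{-2}$ and both $e_8, e_9$ by $\beta$ (for $g = \diag(\alpha, \beta, \gamma)$ with $\alpha\beta\gamma = 1$); thus the $e_4$-coefficient can be rescaled by any cube while the ratio $[b:c]$ of the $e_8, e_9$ coefficients is $T$-invariant. The only non-trivial Weyl element preserving $\lambda_0$ up to inversion is the swap $\sigma \colon x \leftrightarrow z$, which a direct calculation shows satisfies $\sigma \cdot e_4 = -e_4$, $\sigma \cdot e_8 = e_8 - e_9$, $\sigma \cdot e_9 = -e_9$. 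For any $[e_4 + b e_8 + c e_9]$ with $(b,c) \neq 0$, the $T$-orbit limits in $\P^2$ to $[e_4]$ and $[be_8 + ce_9]$, but both are \emph{unstable} by Lemma \ref{lem:stability_final_n7} and hence lie outside $X^{ss}$. So these $\SL_3$-orbits are closed in $X^{ss}$, producing the family $\SL_3 \cdot [e_4 + e_8 + we_9]$ (for $w \in \C$, after normalizing $b = 1$) plus the boundary $\SL_3 \cdot [e_4 + e_9]$ (the case $b = 0$). Composing $\sigma$ with a torus rescaling by $\beta = -1$ gives $\sigma \cdot [e_4 + e_8 + we_9] = [e_4 + e_8 - (w+1)e_9]$ and fixes $[e_4 + e_9]$, producing the identification $Z_{-w-1} \in \SL_3 \cdot Z_w$ and the claimed $\P^1$ of closed orbits after taking the involution quotient and adding the extra fixed point.

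Finally, I would match $[e_4 + e_8 + we_9]$ with the ideal $I_w$ and show the two boundary points do not give zero-dimensional subschemes. For $w \neq 0, -1$, the section $e_4 + e_8 + we_9 = (xz^2 + (1+w)y^2z,\, -xyz,\, -x^2z - wxy^2)$ defines a morphism $\OO(-5) \to \Omega(-1)$, and a direct ideal computation (analogous to Lemmas \ref{lem:simplification_x1}--\ref{lem:simplification_x3} and cross-checked symbolically as in \cite{code}) verifies its torsion-free cokernel is the ideal $I_w$ defining two curvilinear triple points at $(0:0:1)$ and $(1:0:0)$ together with the reduced point $(0:1:0)$. At $w = 0$ the ideal generated factors as $z \cdot (x^2, xy, xz + y^2)$, defining a line together with a triple point; similarly $[e_4 + e_9]$ generates $(xz + y^2) \cdot (x, z)$, the union of a conic and a point. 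Both have one-dimensional support, so their cokernels carry torsion and do not represent subschemes. The principal obstacle is the polystability check itself: confirming that the $\SL_3$-stabilizer of each $[e_4 + e_8 + we_9]$ is reductive (i.e., that no unipotent element stabilizes, which one verifies by direct action-matrix computation in the spirit of the calculations proving Lemmas \ref{lem:simplification_x1}--\ref{lem:simplification_x3}) and that no $\SL_3$-identification beyond $T \cdot \langle \sigma \rangle$ collapses our $\P^1$ of closed orbits further, since a priori two points of $\P^2$ could be $\SL_3$-equivalent via an element that leaves $\P^2$ and returns.
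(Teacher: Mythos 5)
Your proposal follows essentially the same route as the paper: degenerate every strictly semistable point into the weight-zero plane $\P(\langle e_4,e_8,e_9\rangle)$ of $\lambda_0$ using the classification in Lemma \ref{lem:stability_final_n7}, normalize with the residual torus, identify the involution $w \mapsto -w-1$ coming from the coordinate swap $x \leftrightarrow z$, and match the points with the ideals $I_w$ by explicit computation. All of your explicit calculations check out against the paper's (the torus weights $\beta^{-2}$ on $e_4$ and $\beta$ on $e_8,e_9$; the action $\sigma\cdot e_4=-e_4$, $\sigma\cdot e_8=e_8-e_9$, $\sigma\cdot e_9=-e_9$, which reproduces Lemma \ref{lem:n_7_final_model_action}; and the factorizations $z\cdot(x^2,xy,xz+y^2)$ and $(xz+y^2)\cdot(x,z)$ for the two non-ideal points, matching Lemma \ref{lem:n_7_final_model_primary_decomposition}).

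The one place where you defer rather than prove is the step you yourself flag as the principal obstacle: ruling out identifications $A\cdot[s_w]=[s_{w'}]$ beyond the torus and the swap, and confirming that the orbits are genuinely closed in $X^{ss}$ (your argument that the $T$-orbit limits $[e_4]$ and $[be_8+ce_9]$ are unstable only controls degenerations along the diagonal torus, not arbitrary one-parameter subgroups). The paper closes this in Lemma \ref{lem:n_7_final_model_stabilizers}, and not by brute-force matrix computation on $H^0(\Omega(4))$ but geometrically: from the primary decomposition $I_w=J_w\cap K_w\cap(x,z)$, any $A$ with $A\cdot I_w=I_{w'}$ must permute the two curvilinear triple points $(0:0:1)$, $(1:0:0)$ and their distinguished tangent lines $\{x=0\}$, $\{z=0\}$ (and, for $s_\infty$, preserve the conic $y^2+xz$), which forces $A$ to be diagonal or anti-diagonal; a short computation then pins $A$ down to $\lambda_0(t)$ or $\lambda_0(t)T$. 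This is the argument you would need to supply to make the "no further collapse" claim, and with it your proof is complete; the semistability of $[s_w]$ itself also deserves a sentence (the paper gets it from Remark \ref{rmk:tori_to_check}, since the stabilizer contains $\lambda_0=(1,0,-1)$ with pairwise distinct weights, so only diagonal one-parameter subgroups need to be checked).
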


\begin{lem}
\label{lem:n_7_final_model_action}
For any $w \in \C$, we have $[e_4 + e_8 - (w + 1) e_9] = T \cdot [e_4 + e_8 + w e_9]$ for
\[
T = \begin{pmatrix}
0  & 0  & 1 \\ 
0  & -1 & 0 \\
-1 & 0  & 0 
\end{pmatrix}.
\]
\end{lem}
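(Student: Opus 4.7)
The plan is to verify the statement by direct computation using the explicit formula for the $\SL_3$-action on $H^0(\Omega(4))$ recalled in Section \ref{subsec:n_7_final_model_setup}, namely
\[
A \cdot (f, g, h) = A \begin{pmatrix} A \cdot f \\ A \cdot g \\ A \cdot h \end{pmatrix}.
\]
Since $T$ is linear and the basis vectors $e_4, e_8, e_9$ are given explicitly, everything reduces to checking a handful of cubic polynomial identities and then performing three matrix multiplications.

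First I would compute the action of $T$ on the linear forms. From the convention $(A \cdot f)(v) = f(A^t v)$, one reads off $T \cdot x = -z$, $T \cdot y = -y$, $T \cdot z = x$ from the columns of $T$. With this in hand, I would next compute the polynomial action entry-wise on each of the three triples $e_4 = (xz^2, 0, -x^2z)$, $e_8 = (y^2z, -xyz, 0)$, and $e_9 = (y^2z, 0, -xy^2)$, obtaining $(-x^2z, 0, -xz^2)$, $(xy^2, -xyz, 0)$, and $(xy^2, 0, y^2z)$ respectively.

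Then I would apply the matrix $T$ to each of these column vectors. A direct three-by-three multiplication gives
\[
T \cdot e_4 = -e_4, \qquad T \cdot e_9 = e_9, \qquad T \cdot e_8 = -e_8 + e_9,
\]
where the last identity uses $(0, xyz, -xy^2) = -e_8 + e_9$ as a comparison of triples. Combining by linearity yields
\[
T \cdot (e_4 + e_8 + w e_9) = -e_4 - e_8 + (w+1)e_9 = -\bigl(e_4 + e_8 - (w+1)e_9\bigr),
\]
and passing to the projectivization gives the claimed equality in $\P(H^0(\Omega(4)))$.

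There is no real obstacle beyond bookkeeping: the only subtlety is the two-step nature of the action, which requires first applying $T$ to the polynomial entries (using $T\cdot x = -z$, etc.) and then multiplying on the left by $T$ itself. Careful sign tracking through the polynomial action and the matrix multiplication is the place where an error could most easily arise, but once the three individual images of $e_4$, $e_8$, $e_9$ are computed, the conclusion is immediate.
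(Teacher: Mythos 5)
Your computation is correct and is exactly the argument the paper has in mind: its proof of this lemma is simply ``This is a straightforward computation,'' and you have carried out that computation (entry-wise substitution $x\mapsto -z$, $y\mapsto -y$, $z\mapsto x$ followed by left multiplication by $T$) with the right signs, obtaining $T\cdot e_4=-e_4$, $T\cdot e_8=-e_8+e_9$, $T\cdot e_9=e_9$ and hence the claimed equality in $\P(H^0(\Omega(4)))$ after absorbing the overall factor $-1$. No gaps.
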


\begin{proof}
This is a straightforward computation.
\end{proof}

For any $w \in \C$, let $s_w = e_4 + e_8 + we_9$ and $I_w$ be the ideal generated by $[s_w]$. Similarly, we say $s_{\infty} = e_4 + e_9$ and $I_{\infty}$ is the ideal generated by $[s_{\infty}]$. We define
$J_w = (xy, x^2, (w + 1)y^2 + xz)$
and if $w \neq 0$, then
$K_w = (yz, z^2, wy^2 + xz)$.
Both of these ideals cut out curvilinear length three points supported at $(0:0:1)$, respectively $(1:0:0)$. The line cut out by $x$ is the unique line intersecting the subscheme cut out by $J_w$ in length two. Similarly, the line cut out by $z$ is uniquely determined by $K_w$. Note that the point $(0:1:0)$ lies precisely on the intersection of these two lines.

\begin{lem}
\label{lem:n_7_final_model_primary_decomposition}
For any $w \in \C$ we have
\[
I_w = (xz^2 + (w + 1)y^2z, -xyz, - x^2z -wxy^2)
\]
and $I_{\infty} = (xz^2 + y^2z, -xy^2 - x^2z)$.
If $w \neq 0$ and $w \neq -1$, then
\[
I_w = J_w \cap K_w \cap (x, z)
\]
Moreover, we have
\[
    I_{\infty} = (y^2z + xz^2, -xy^2 - x^2z) = (y^2 + xz) \cap (x, z), \
    I_0 = (z) \cap J_0, \
    I_{-1} = (x) \cap K_{-1}.
\]
\end{lem}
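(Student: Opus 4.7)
The plan proceeds in three parts, matching the structure of the claim. The explicit form of $I_w$ and $I_\infty$ follows by direct substitution: expand $s_w = e_4 + e_8 + w e_9$ (respectively $s_\infty = e_4 + e_9$) using the basis of $H^0(\Omega(4))$ given in Section \ref{subsec:n_7_final_model_setup}, read off the three cubic components, and apply the identification from the start of that section of $I_w$ with the ideal generated by these cubics.

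For the degenerate values $w \in \{0, -1, \infty\}$, I would exploit explicit factorizations. From the explicit form, $s_0 = z \cdot (xz + y^2, -xy, -x^2)$, so $I_0 = z \cdot J_0$; the passage from product to intersection $z \cdot J_0 = (z) \cap J_0$ follows once one checks that $z$ is a nonzerodivisor modulo $J_0$, which is immediate from the monomial basis $\{z^n, xz^n, yz^n : n \geq 0\}$ of $\C[x,y,z]/J_0$. The cases $I_{-1} = x \cdot K_{-1} = (x) \cap K_{-1}$ and $I_\infty = (y^2 + xz) \cdot (x,z) = (y^2 + xz) \cap (x,z)$ are handled analogously, with the nonzerodivisor check in each case being immediate from the structure of the residue ring (in particular $(x,z)$ is prime and $y^2 + xz \notin (x,z)$).

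The main case $w \neq 0, -1$ requires both inclusions in $I_w = J_w \cap K_w \cap (x,z)$. The forward inclusion is a sequence of short manipulations on each of the three generators of $I_w$; for instance $xz^2 + (w+1)y^2 z = z \cdot ((w+1)y^2 + xz) \in J_w$ and $xz^2 + (w+1)y^2 z = z(wy^2 + xz) + y \cdot yz \in K_w$, the containment in $(x,z)$ being obvious, and the other two generators are handled similarly. For the reverse inclusion I would compare Hilbert functions. Setting $L := J_w \cap K_w \cap (x,z)$, this is the saturated ideal of a zero-dimensional length-$7$ subscheme $Z$ consisting of two curvilinear triples (at $(0:0:1)$ and $(1:0:0)$) and one reduced point (at $(0:1:0)$). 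Provided $Z$ is not contained in a conic, it imposes $7$ independent conditions on cubics, so $\dim L_3 = 10 - 7 = 3$. Since $I_w \subseteq L$ and $\dim (I_w)_3 = 3$ as well, equality holds in degree $3$; combined with the fact that $L$ is generated in degree $3$ (which follows from Hilbert--Burch applied to this height-two Cohen--Macaulay ideal, given the Hilbert function $(1,3,6,7,7,\ldots)$ of $S/L$, forcing the resolution $0 \to S(-4) \oplus S(-5) \to S(-3)^3 \to L \to 0$), we conclude $L = S \cdot L_3 = S \cdot (I_w)_3 \subseteq I_w$.

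The principal obstacle is verifying that $Z$ is not contained in any conic. I would handle this directly: writing a general conic $a_0 x^2 + a_1 y^2 + a_2 z^2 + a_3 xy + a_4 xz + a_5 yz$, containment of $(0:1:0)$ forces $a_1 = 0$, while containment of the curvilinear triple at $(0:0:1)$, after localizing $J_w$ in the chart $z = 1$ to $(xy, x^2, (w+1)y^2 + x)$ and reducing modulo the relation $x = -(w+1)y^2$, forces $a_2 = a_5 = 0$ and $a_4(w+1) = 0$, hence $a_4 = 0$ since $w \neq -1$. The analogous argument at $(1:0:0)$ using $K_w$ then forces $a_0 = a_3 = 0$ since $w \neq 0$, ruling out any nonzero conic and completing the proof. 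The excluded values $w = 0$ and $w = -1$ are precisely those where this obstruction vanishes, consistent with the line components that appear in the factorizations $I_0 = (z) \cap J_0$ and $I_{-1} = (x) \cap K_{-1}$.
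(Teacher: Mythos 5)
The paper's own ``proof'' of this lemma is a one-line deferral to the accompanying computer algebra code, so your attempt at a genuine hand proof is doing more than the paper does. Your expansion of $s_w$ in the basis $e_0,\dots,e_{14}$, the factorizations in the degenerate cases ($I_0 = zJ_0$, $I_{-1}=xK_{-1}$, $I_\infty = (y^2+xz)\cdot(x,z)$, each converted to an intersection via the nonzerodivisor observation), and the forward inclusion $I_w\subseteq J_w\cap K_w\cap(x,z)$ are all correct.

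The reverse inclusion, however, has a genuine gap, concentrated in two claims. First, ``$Z$ not contained in a conic'' is not the right hypothesis for imposing independent conditions on cubics: the classical criterion for a length-$7$ scheme in $\P^2$ is that no subscheme of length $\geq 5$ lies on a line (seven general points \emph{on} a smooth conic do impose independent conditions on cubics, while seven points with five on a line do not, despite lying on no conic). Your conic computation is still needed to get $\dim L_2=0$, but for $h^1(\II_Z(3))=0$ you should instead note that every line meets $Z$ in length at most $3$ (the worst cases being $x=0$ and $z=0$). Second, and more seriously, the Hilbert function $(1,3,6,7,7,\dots)$ does \emph{not} force the resolution $0\to S(-4)\oplus S(-5)\to S(-3)^3\to L\to 0$: graded Betti numbers of codimension-two CM ideals are determined by the Hilbert function only up to consecutive cancellation, and the table $S(-3)^3\oplus S(-4)\ \big/\ S(-4)^2\oplus S(-5)$ is numerically consistent. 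This is not a hypothetical worry -- seven points with exactly four on a line realize the same Hilbert function and their ideal genuinely requires a quartic generator. So ``$L$ is generated in degree $3$'' must be proved, not read off. The fix is short: the surjectivity of $S_1\otimes L_3\to L_4$ is equivalent to the three cubics $f_1=z((w+1)y^2+xz)$, $f_2=-xyz$, $f_3=-x(wy^2+xz)$ having only a one-dimensional space of linear syzygies; the Euler relation $xf_1+yf_2+zf_3=0$ provides one, and a direct computation using the irreducibility of $xz+wy^2$ for $w\neq 0$ shows (using also $w\neq -1$) that there are no others. Hence $\dim S_1L_3 = 9-1 = 8 = \dim L_4$, so $L_4=S_1L_3\subseteq I_w$, and since $h^1(\II_Z(3))=0$ makes $\II_Z$ $4$-regular, $L_d=S_1L_{d-1}$ for $d\geq 5$, which closes the induction. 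With these two repairs your argument is complete and gives an honest alternative to the paper's computer verification.
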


\begin{proof}
This can certainly be computed by hand, but it is a lot simpler and safer to look at \cite{code}.
\end{proof}

\begin{lem}
\label{lem:n_7_final_model_stabilizers}
Let $w, w' \in \C$ and $A \in \SL_3$. Then $A \cdot [s_w] = [s_{w'}]$ if and only if either
\begin{enumerate}
    \item $w = w'$ and $A = \lambda_0(t)$ for some $t \in \C$, or
    \item $w' = -w - 1$, and $A = \lambda_0(t) T$ for some $t \in \C$ and
    \[
    T = \begin{pmatrix}
    0 & 0 & 1 \\
    0 & -1 & 0 \\
    -1 & 0 & 0
    \end{pmatrix}.
    \]
\end{enumerate}
Moreover, $[s_{\infty}]$ is in a separate orbit from all $[s_w]$ and its stabilizer is given by those $A \in \SL_3$ for which either
\begin{enumerate}
    \item $A = \lambda_0(t)$ for some $t \in \C$, or
    \item $A = \lambda_0(t) S$ for some $t \in \C$ and
    \[
    S = \begin{pmatrix}
    0 & 0 & 1 \\
    0 & 1 & 0 \\
    1 & 0 & 0
    \end{pmatrix}.
    \]
\end{enumerate}
\end{lem}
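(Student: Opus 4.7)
The plan is to exploit the primary decompositions from Lemma \ref{lem:n_7_final_model_primary_decomposition}. For $w \in \C \setminus \{0, -1\}$, the decomposition $I_w = J_w \cap K_w \cap (x, z)$ exhibits the subscheme cut out by $I_w$ as the disjoint union of a curvilinear triple point at $P_0 = (0:0:1)$ scheme-theoretically tangent to the line $L_0 = \{x = 0\}$, a curvilinear triple point at $P_\infty = (1:0:0)$ tangent to $L_\infty = \{z = 0\}$, and a reduced point at $Q = (0:1:0) = L_0 \cap L_\infty$. Since the rational map $\P^{14} \dashrightarrow \P^{2[7]}$ is $\SL_3$-equivariant on the locus where $[s_w]$ represents an ideal sheaf, any $A \in \SL_3$ with $A \cdot [s_w] = [s_{w'}]$ and $w, w' \neq 0, -1$ must preserve this geometric configuration. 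In particular, $A$ fixes $Q$ and either fixes or swaps the labelled pair $\{(P_0, L_0), (P_\infty, L_\infty)\}$.

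In the \emph{fix} case, $A$ fixes the three coordinate points $P_0, P_\infty, Q$ and is therefore diagonal, $A = \mathrm{diag}(t_1, t_2, t_3)$ with $t_1 t_2 t_3 = 1$. The action tabulated in Section \ref{subsec:n_7_final_model_setup} gives $A \cdot e_4 = t_1^2 t_3^2 \, e_4$ and $A \cdot e_8 = t_1 t_2^2 t_3 \, e_8$, $A \cdot e_9 = t_1 t_2^2 t_3 \, e_9$. Requiring $A \cdot s_w \in \C \cdot s_{w'}$ then forces $w' = w$ (by comparing the ratios of $e_8$- and $e_9$-coefficients) together with $t_1 t_3 = t_2^2$. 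Combined with $t_1 t_2 t_3 = 1$, this cuts the stabilizer down to $A = \lambda_0(t)$, where the remaining ambiguity by central cube roots of unity is absorbed because they act trivially on $\P^{14}$. In the \emph{swap} case, $A$ sends $P_0 \leftrightarrow P_\infty$ and $L_0 \leftrightarrow L_\infty$ while fixing $Q$, which pins down $A$ to be of the form $\lambda_0(t) \cdot T$ with the specific signs determined by the requirement that $A$ preserve the tangent directions at the two triple points. The conclusion $w' = -w - 1$ is then verified by the direct computation already recorded in Lemma \ref{lem:n_7_final_model_action}.

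For the degenerate values $w \in \{0, -1\}$, the ideal $I_w$ is no longer zero-dimensional, but the characterization of $A$ still follows by specialization: the identities $A \cdot s_w$ are polynomial in $w$ and in the matrix entries of $A$, so any additional stabilizer element at $w = 0$ or $w = -1$ would, by Zariski-closedness of the stabilizer scheme in $\SL_3 \times \C$, produce a curve of stabilizers for nearby $w$, contradicting the previous case. Combined with $T \cdot [s_0] = [s_{-1}]$ from Lemma \ref{lem:n_7_final_model_action}, this handles both degenerate values. For $[s_\infty]$, the decomposition $I_\infty = (y^2 + xz) \cap (x, z)$ replaces the two triple points by the smooth conic $C = \{y^2 + xz = 0\}$, with $Q \notin C$ as before. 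Any stabilizer of $[s_\infty]$ must preserve both $C$ and $Q$. The stabilizer of $C$ in $\mathrm{PGL}_3$ is the copy of $\mathrm{PGL}_2$ acting through the Veronese parametrization; fixing $Q$ further reduces to the stabilizer of the polar intersection $C \cap \{y = 0\} = \{(1:0:0), (0:0:1)\}$, which is a maximal torus extended by the swap interchanging these two points. Lifting back to $\SL_3$ produces precisely the two components $\lambda_0(t)$ and $\lambda_0(t) \cdot S$, and both are directly verified to fix $[s_\infty]$ using the basis formulas. That $[s_\infty]$ lies in no orbit $\SL_3 \cdot [s_w]$ follows because $I_\infty$ has a positive-dimensional component, which is an $\SL_3$-invariant property, and that component is a smooth conic rather than one of the single lines occurring in $I_0$ or $I_{-1}$.

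The main obstacle I expect is the bookkeeping around the twisted $\SL_3$-action on $H^0(\Omega(4))$: the action is not the naive one induced by the Euler sequence but is corrected by an extra left-multiplication by $A$ (as recalled in Section \ref{subsec:n_7_final_model_setup}), so each case requires careful computation with the explicit basis $e_0, \dots, e_{14}$. A secondary technical point is reconciling the strict $\SL_3$-parametrization in the lemma with the fact that the matrices $T$ and $S$ have determinant $-1$: this is absorbed by working modulo the center of $\SL_3$, so the stabilizer statement should be read as describing a representative in each connected component of the $\mathrm{PGL}_3$-stabilizer, which is effectively what the lemma records.
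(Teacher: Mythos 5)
Your overall strategy---rigidify $A$ via the $\SL_3$-equivariant passage from $[s]$ to the ideal it generates, read off the support points and tangent lines from the primary decomposition in Lemma \ref{lem:n_7_final_model_primary_decomposition}, and finish by explicit computation---is the same as the paper's. Your treatment of the generic case $w \notin \{0,-1\}$ and of $[s_\infty]$ is sound; the only cosmetic difference is that you compute on the eigenbasis $e_i$ while the paper computes directly on the ideals $J_w$, $K_w$ and on the conic $y^2+xz$. Your remark that $T$ and $S$ have determinant $-1$, so the parametrization of the stabilizer must be read modulo the center (which acts trivially on $\P^{14}$), is a fair observation about the statement as written rather than a defect of your argument.

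The genuine gap is your handling of $w \in \{0,-1\}$. You argue that, since the incidence locus $\Sigma = \{(A,w) : A\cdot[s_w]=[s_w]\}$ is Zariski-closed in $\SL_3\times\C$, an extra stabilizer element at $w=0$ would ``produce a curve of stabilizers for nearby $w$.'' This is backwards: closedness of $\Sigma$ only gives upper semicontinuity of fiber dimension, so stabilizers may jump \emph{up} over special parameter values---$\Sigma$ can have an irreducible component contained entirely in $\SL_3\times\{0\}$---and nothing about the generic fiber rules this out. Since $w=0,-1$ are exactly the points where the cokernel acquires torsion and the configuration degenerates (a whole line appears as a component of $V(I_w)$), this is precisely where a jump is to be expected, and your argument proves nothing there. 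The paper instead computes this case directly: $I_0=(z)\cap J_0$ (resp.\ $I_{-1}=(x)\cap K_{-1}$) is a line together with a curvilinear triple point whose support and unique length-two line are canonically determined, so any $A$ preserving it must fix the lines $\{x=0\}$, $\{z=0\}$ and the relevant coordinate point, hence is diagonal, after which the conditions reduce to $A=\lambda_0(t)$; combined with $[s_0]=T\cdot[s_{-1}]$ from Lemma \ref{lem:n_7_final_model_action} this settles both values. You need to replace the specialization argument by a computation of this kind. You should also state explicitly that $[s_0]$, $[s_{-1}]$, $[s_\infty]$ cannot lie in the orbit of any $[s_w]$ with $w\neq 0,-1$, because failing to define a zero-dimensional subscheme is an $\SL_3$-invariant property; you make the analogous point only for $[s_\infty]$, but it is needed for the full ``only if'' direction at $w=0,-1$ as well.
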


\begin{proof}
By Lemma \ref{lem:n_7_final_model_primary_decomposition}, $[s_0]$, $[s_{-1}]$, and $[s_{\infty}]$ do not correspond to ideals of zero-dimensional schemes and therefore have to be in separate orbits. However, the ideals $I_0$, $I_{-1}$, and $I_{\infty}$ are still uniquely determined by them. Clearly, $I_{\infty}$ cannot be in the same orbit as $I_0$ or $I_{-1}$. We start with the case $[s_{\infty}]$. Then we have $A \cdot I_{\infty} = I_{\infty}$. This implies that $A$ has to fix the point $(0:1:0)$, i.e.,
\[
A = \begin{pmatrix}
a & b & c \\
0 & d & 0 \\
e & f & g
\end{pmatrix}
\]
for some $a, b, c, d, e, f, g \in \C$ with $(ag - ce)d = 1$. Acting on $y^2 + xz$ leads to
\[
(b^2 + ac)x^2 + 2bdxy + d^2y^2 + (ag + 2bf + ce)xz + 2dfyz + (f^2 + eg)z^2.
\]
We want to fix the conic cut out by $y^2 + xz$. Since $d \neq 0$, this is only possible if $b = f = 0$. Further comparision of coefficients leads to the equations
$ac = 0, eg = 0, d^2 = ag + ce$.
The precisely corresponds to the cases described in the statement.

Next, assume that $w = 0$. By Lemma \ref{lem:n_7_final_model_action} we know that $[s_0] = T \cdot [s_{-1}]$. Therefore, it will be enough to deal with the case where $A \cdot I_0 = I_0$. The subscheme cut out by $K_{-1}$ is supported at $(1:0:0)$. Moreover, the line cut out by $z$ is the unique line intersecting it in length two. Overall, $A$ has to fix both the lines cut out by $x$, $z$ and the point $(1:0:0)$. We write
\[
A = \begin{pmatrix}
a & 0 & 0 \\
0 & b & 0 \\
0 & c & d
\end{pmatrix} 
\]
for $a, b, c, d \in \C$ with $abd = 1$ and obtain
\[
A \cdot (yz, z^2, y^2 + xz) = (yz, z^2, b^2y^2 + adxz + 2bcyz + c^2z^2) = (yz, z^2, y^2 + xz).
\]
This is equivalent to $c = 0$, and $b^2 = ad$ which is the same as saying that $A = \lambda_0(t)$ for some $t \in \C$.

For the general case, assume that $w \neq 0$ and $w \neq -1$. The matrix $A$ must fix the point $(0:0:1)$. Moreover, it can either fix both $(1:0:0)$ and $(0:0:1)$, or it can exchange these two points. By Lemma \ref{lem:n_7_final_model_action} we know that $[s_w] = T \cdot [s_{-w-1}]$ and at the same point $T$ is exchanging $(1:0:0)$ and $(0:0:1)$. Therefore, it is enough to deal with the case where both points are fixed by $A$. Then $A$ must be a diagonal matrix, i.e.,
\[
A = \begin{pmatrix}
a & 0 & 0 \\
0 & b & 0 \\
0 & 0 & c
\end{pmatrix} 
\]
for $a, b, c, \in \C$ with $abc = 1$. The fact that $A$ fixes $J_w$ is equivalent to $b^2 = ac$ which is also equivalent to the fact that $A$ fixes $K_w$. Therefore, we get $A = \lambda_0(t)$ for some $t \in \C$. 
\end{proof}

\begin{proof}[Proof of Proposition \ref{prop:n_7_final_model_closed_orbits}]
Let $w \in \C \cup \{ \infty \}$. We start by showing that the orbit of $[s_w]$ is indeed semistable. We already know by the proof of Lemma \ref{lem:stability_final_n7} that no diagonal one-parameter subgroup can destabilize $[s_w]$. However, since $\lambda_0$ is in the stabilizer of $[s_w]$ by Lemma \ref{lem:n_7_final_model_stabilizers}, we know by Remark \ref{rmk:tori_to_check} that is suffices to check diagonal one-parameter subgroups.

Assume that $[s] \in \P(H^0(\Omega(4)))$ is strictly-semistable. Then up to a change of basis we can assume that  $s = a_0 e_0 + \ldots + a_4 e_4 + a_6 e_6 + \ldots + a_9 e_9$ for $a_0, \ldots, a_4, a_6, \ldots, a_9 \in \C$. Since $[s] \notin X_2$, we must have $a_4 \neq 0$. Since $[s] \notin X_1$, we must have $(a_8, a_9) \neq (0, 0)$. We can compute $\lim_{t \to 0} \lambda_0(t) s = a_4 e_4 + a_8 e_8 + a_9 e_9$. We can divide by $a_4$ to reduce to the case $a_4 = 1$. We can with $\lambda_1$ to reduce to $[s_w]$ or $[s_{\infty}]$ depending on whether $a_8 \neq 0$ or $a_8 = 0$. The point cannot be degenerated further without making it unstable. That is again due to the fact that $a_4 = 0$ implies $[s] \in X_1$ and $(a_8, a_9) = (0, 0)$ implies $[s] \in X_2$.

The claim about which points give the same orbit is a consequence of Lemma \ref{lem:n_7_final_model_stabilizers}. The description of the ideals and which points do not correspond to ideals follows from Lemma \ref{lem:n_7_final_model_primary_decomposition}.
\end{proof}


\section{Remaining GIT calculations}
\label{sec:git_calculations}

\subsection{Walls from curvilinear subschemes}
\label{sec:ProofMainThmWall}

The goal of this section is to do the GIT calculations used in the proof of Theorem \ref{thm:MainThmWall}. For that purpose, we first show in Lemma \ref{lem:wall_for_fat_plus_line} that certain ideals $\II_Z$ with positive dimensional stabilizer are strictly GIT semistable. This is done by explicitly calculating the numerical criterion.

\begin{lem}
\label{lem:mu_for_fat_plus_line}
Assume that $l, n$ are integers with $\tfrac{n}{2} \leq l < \tfrac{2n}{3}$ and $r_l(x,y) \in \C[x, y]$ is a homogenous polynomial  of degree $l$. Let $Z \subset \P^2$ be the subscheme cut out by the ideal
\[
\II_Z = (x, y^{n-l}) \cap (z, r_{l}(x, y)) = (xz, y^{n-l}z, r_{l}(x, y))
\]
and let $\lambda = \lambda(a, b)$ be the one-parameter subgroup $\diag\{ a,b,-a-b\}$.
Then, the Hilbert-Mumford index for the $l$-th and $(l+1)$-th Hilbert points are as follows:
\begin{enumerate}
    \item If $r_l = a_0 x^l + \ldots + a_l y^l$ for $a_0, \ldots, a_l \in \C$ with $a_0 \neq 0$ and $a_l \neq 0$, then \begin{align*}
    \mu_{l}(Z, \lambda) &=
    \begin{cases}
    -\frac{a}{2}(4l^2 - 4ln + n^2 - n) - \tfrac{b}{2}(5l^2 - 6ln + 2n^2 + 3l - 2n)
    & \text{if $a < b$,}
    \\
    -\frac{a}{2}(4l^2 - 4ln + n^2 + 2l - n) - \tfrac{b}{2}(5l^2 - 6ln + 2n^2 + l - 2n)
    &\text{if $a \geq b$,}
    \end{cases}
    \end{align*}
    and
    \begin{align*}
    \mu_{l + 1}(Z, \lambda)
    =
    \begin{cases}
    -\frac{a}{2}(4l^2 - 4ln + n^2 + 2l - 3n) - \tfrac{b}{2}(5l^2 - 6ln + 2n^2 + 7l - 4n)
    & \text{if $a < b$,}
    \\
    -\frac{a}{2}(4l^2 - 4ln + n^2 + 6l - 3n) - \tfrac{b}{2}(5l^2 - 6ln + 2n^2 + 3l - 4n)
    & \text{if $a \geq b$.}
    \end{cases}
    \end{align*}
    
    \item If $r_l = a_1 x^{n-1}y + \ldots + a_l y^l$ for $a_1, \ldots, a_l \in \C$ with $a_1 \neq 0$ and $a_l \neq 0$, then
    \begin{align*}
    \mu_{l}(Z, \lambda) &=
    \begin{cases}
    -\tfrac{a}{2}(4l^2 - 4ln + n^2 - n + 2) - \tfrac{b}{2}(5l^2 - 6ln + 2n^2 + 3l - 2n - 2)
    & \text{if $a < b$,}
    \\
    -\tfrac{a}{2}(4l^2 - 4ln + n^2 + 2l - n) - \tfrac{b}{2}(5l^2 - 6ln + 2n^2 + l - 2n)
    & \text{if $ a \geq b$,}
    \end{cases}
    \end{align*}
    and
    \begin{align*}
    \mu_{l + 1}(Z, \lambda)
    =
    \begin{cases}
    -\tfrac{a}{2}(4l^2 - 4ln + n^2 + 2l - 3n + 4) - \tfrac{b}{2}(5l^2 - 6ln + 2n^2 + 7l - 4n - 4)
    & \text{if $a < b$,}
    \\
    -\tfrac{a}{2}(4l^2 - 4ln + n^2 + 6l - 3n) - \tfrac{b}{2}(5l^2 - 6ln + 2n^2 + 3l - 4n)
    & \text{if $a \geq b$.}
    \end{cases}
    \end{align*}
\end{enumerate}
\end{lem}

\begin{proof}
We first find a basis for the relevant global sections that yield the $l$-th and $(l+1)$-th Hilbert points.
Let $r_l(x, y) = a_0 x^l + \ldots + a_l y^l$ for $a_0, \ldots, a_l \in \C$. If $l = \tfrac{n}{2}$ and $a_l \neq 0$, then
\[
(xz, y^{n - l}z, r_l(x, y)) = (xz, r_l(x, y)).
\]
Therefore, the $l$-th and $(l+1)$-th Hilbert points are well-defined for all $\tfrac{n}{2} \leq l < \tfrac{2n}{3}$. 
A direct inspection shows that
\begin{align*}
    H^0(\II_Z(l)) &= xz \cdot \langle x, y, z \rangle^{l - 2} \oplus y^{n - l}z \cdot \langle y, z \rangle^{2l - n - 1} \oplus r_l(x, y) \cdot \C, \\
    H^0(\II_Z(l + 1)) &= xz \cdot \langle x, y, z \rangle^{l - 1} \oplus y^{n - l}z \cdot \langle y, z \rangle^{2l - n} \oplus r_l(x, y) \cdot \langle x, y \rangle.
\end{align*}
If $l = \tfrac{n}{2}$, then $\langle y, z \rangle^{2l - n - 1} = \{ 0 \}$ and the above description makes sense.
Next, we calculate the state polytopes. For that purpose, we introduce the notation:
    \begin{alignat*}{3}
        n_{x} &\coloneqq \frac{l^3 - l}{6}, & \qquad
        m_{x} &\coloneqq \frac{l^3 + 3l^2 + 2l}{6}, \\
        n_{y} &\coloneqq \frac{l^3 - 3l^2 + 6ln - 3n^2 - 4l + 3n}{6}, &
        m_{y} &\coloneqq \frac{l^3 + 6ln - 3n^2 - l + 3n}{6}, \\
        n_{z} &\coloneqq \frac{l^3 + 12l^2 - 12ln + 3n^2 + 5l - 3n}{6}, \ &
        m_{z} &\coloneqq \frac{l^3 + 15l^2 - 12ln + 3n^2 + 20l - 9n + 6}{6}. 
    \end{alignat*}
By taking the wedge product of the basis elements in $H^0(\II_Z(l))$ and $H^0(\II_Z(l+1))$ given above, we obtain the following state polytopes.
\begin{enumerate}
    \item Assume that $a_0 \neq 0$ and $a_l \neq 0$. We can compute
    \begin{align*}
        \State_l(\II_Z) &= \Conv\{(n_x + l, n_y, n_z), (n_x, n_y + l, n_z)\}, \\
        \State_{l + 1}(\II_Z) &= \Conv\{(m_x + 2l + 1, m_y + 1, m_z), (m_x + 1, m_y + 2l + 1, m_z)\}.
    \end{align*}
    \item Assume that $a_0 = 0$, $a_1 \neq 0$, and $a_l \neq 0$. We can compute
    \begin{align*}
        \State_l(\II_Z) &= \Conv\{(n_x + l - 1, n_y + 1, n_z), (n_x, n_y + l, n_z)\}, \\
        \State_{l + 1}(\II_Z) &= \Conv\{(m_x + 2l - 1, m_y + 3, m_z), (m_x + 1, m_y + 2l + 1, m_z)\}.
    \end{align*}
\end{enumerate}
From these state polytopes, the statement is a straightforward computation by pairing the vertices with $\lambda(a,b)$.
     \qedhere
\end{proof}
For the following result, we recall that $\lambda_1$ is the one-parameter subgroup $\diag \{1,1,-2 \}$.
\begin{lem}
\label{lem:wall_for_fat_plus_line}
Given integers $l, n$ with $\tfrac{n}{2} \leq l < \tfrac{2n}{3}$, let $Z$ be the subscheme cut out by the ideal
\[
\II_Z = 
(x, y^{n-l}) \cap (z, r_{l}(x, y)) = (xz, y^{n-l}z, r_{l}(x, y))
\]
where $r_l(x,y)$ is a general polynomial of degree $l$. Then $Z$ is strictly GIT-semistable at 
\[
m_l = \frac{3(n - l)(n - l - 1)}{2(2n - 3l)}
\] 
and destabilized by $\lambda_1$ or $\lambda_1^{-1}$ for all other $m$. 
\end{lem}

\begin{proof}
For the statement to make sense the ideal $\II_Z$ needs to represent a point in the space where $D_{m_l}$ is ample, i.e., $\II_Z$ needs to be Bridgeland-semistable for the corresponding Bridgeland stability condition. By Corollary \ref{cor:ideals_between_-1_-2} we have to show that $m_l \geq l - 1$. This is a consequence of $\tfrac{n}{2} \leq l < \tfrac{2n}{3}$.

Next, we discuss the GIT semistability of our subscheme.
The stabilizer of $Z$ is $\lambda_1$ and by Remark \ref{rmk:tori_to_check} we only need to consider one-parameter subgroups in a special tori
which is not necessarily diagonal. The key idea is that instead of using a non-diagonal one-parameter subgroup, we can apply a change of coordinates  $\bar{x}= x$,  $\bar{y}= sx+y$ and $\bar{z}= z$ with $s \in \C$, so $\lambda$ is now diagonal.  
Within this new coordinate system our ideal has the form
\begin{align*}
\II_Z &= (\bar x, \bar y^{n-l}) \cap (\overline{z}, \bar r_{l}(\bar x, \bar y)) = (\bar x \bar z, \bar y^{n-l}\bar z, \bar r_{l}(\bar x, \bar y)).
\end{align*}
This ideal is of the same form as the one we started with, but there is one issue. For special choices of $s$, we can not assume that $\bar r_l( \bar x, \bar y)$ is general anymore.

If $r_l(x,y)$ is general, then after this base-change we can only be in the two situations described in Lemma \ref{lem:mu_for_fat_plus_line}. The numerical criterion for $\lambda_1$ is the same either way. Together with Lemma \ref{lemma:m0} we get that $\mu_{m_l}(Z, \lambda_1) = 0$ for
\begin{align*}
m_l = \frac{3(n - l)(n - l - 1)}{2(2n - 3l)}.
\end{align*}
Moreover, if $m \neq m_l$, then either $\mu_{m}(Z, \lambda_1) > 0$ or $\mu_{m}(Z, \lambda_1^{-1}) > 0$ and thus, $Z$ is $m$-unstable. 

Next, we calculate $\mu_{m_l}(Z, \lambda(a,b)) \leq 0$ for all $a$ and $b$.
We first observe that 
$D_{m_l} = xD_l + yD_{l+1}$ implies the equations $xl + y(l + 1) = m_l$ and $x + y = 1$. Solving for $x$ and $y$, we obtain
\begin{align*}
x = -\frac{(9l^2 - 10ln + 3n^2 + 9l - 7n)}{2(2n - 3l)}, \
y = \frac{(9l^2 - 10ln + 3n^2 + 3l - 3n)}{2(2n - 3l)}.
\end{align*}
By construction, we have the equality 
$$
    \mu_{m_l}(Z, \lambda(a,b))
    =
    x\mu_{l}(Z, \lambda(a,b))
    +
    y\mu_{l+1}(Z, \lambda(a,b)).
$$
If we assume that case (i) in Lemma \ref{lem:mu_for_fat_plus_line} applies. Then, we obtain
    \begin{align*}
    \mu_{m_l}(Z, \lambda(a,b))
    =
    \begin{cases}
    \frac{(3l^3 - l^2n - 2ln^2 + n^3 - 3l^2 + 3ln - n^2)}{2(2n - 3l)}(a - b)
    & \text{ if $a < b$,}
     \\
    -\frac{(2l - n)(3l^2 - 3ln + n^2 - n)}{2(2n - 3l)}(a - b)
    & \text{ if $a \geq b$}.
    \end{cases}
    \end{align*}
On the  other hand,  if we assume that case (ii) in Lemma \ref{lem:mu_for_fat_plus_line} applies. Then, we obtain
    \begin{align*}
    \mu_{m_l}(Z, \lambda(a,b))
    =
    \begin{cases}
    \frac{(3l^3 - l^2n - 2ln^2 + n^3 - 12l^2 + 13ln - 4n^2 + 3l - n)}{2(2n - 3l)}(a - b)
    & \text{ if $a < b$,}
     \\
    -\frac{(2l - n)(3l^2 - 3ln + n^2 - n)}{2(2n - 3l)}(a - b)
    & \text{ if $a \geq b$}.
    \end{cases}
    \end{align*}
In both cases, the inequalities $\tfrac{n}{2} \leq l < \tfrac{2n}{3}$ imply that $\mu_{m_l}(Z, \lambda(a,b)) \leq 0$ for all $a, b$, and our result follows.
 \qedhere
\end{proof}

\subsection{The largest wall}
\label{sec:ProofThmLargestWall}

In this section, we establish computational statements needed in the proof of  Theorem \ref{thm:largestWall}. To do so, it is important to describe certain monomial ideals that arise as flat limits of our configurations of points. For that purpose, we recall key facts from \cite{OS99:cutting_corners}.

For the moment we will look at monomial ideals $I \subset \C[y,z]$ cutting out subschemes of $\A^2$. For any weight vector $\mathbf{w} = (a, b) \in \R_{\geq 0}^2$ we denote the \emph{initial ideal} of $I$ with respect to the corresponding monomial order by $\initial_{\mathbf{w}}(I)$. We say that two non-negative vectors $\mathbf{w}$ and $\overline{ \mathbf{w} }$ are equivalent if their corresponding initial ideals are the same, that is, 
$\initial_{\mathbf{w}}(I)= \initial_{\overline{\mathbf{w}}}(I)$. The equivalence classes are cones that form a fan covering $\R^2_{\geq 0}$ called the \emph{Gr\"obner fan} of $I$. It is a standard fact that a vector $\mathbf{w}$ lies in an open cell of the Gr\"obner fan if and only if  $\initial_{\mathbf{w}}(I)$ is a monomial ideal.

\begin{defn}
A nonempty finite subset $M$ of the set $\mathbb{N}^2$ of non-negative integer vectors is a staircase if $m \in M$ and $\overline{m} \leq m$ (coordinatewise) implies that $\overline{m} \in M$. A staircase $M$ is called a \emph{corner cut} if for some $\mathbf{w} \in \R^2$ we have $\mathbf{w} \cdot m < \mathbf{w} \cdot m'$ for all $m \in M$ and $m' \in \N^2 \setminus M$.
\end{defn}

In this case, there is $s \in \R_{\geq 0}$ such that $M = \{ m \in \N^2 \; | \; \mathbf{w} \cdot m \leq s \}$ and no other corner cut of length $n$ can be written in this way. Therefore, we denote $M$ as $M_{\mathbf{w}}$ and suppress the $n$ from the notation. For example, for any triangular number $n = 1 + \ldots + s$ we have a corner cut \[
M_{(1, 1)} = \{ (a, b) \in \N^2 \; | \; a + b \leq s \}.
\]
Note that for a fixed length $n$ not any $\mathbf{w}$ induces a corner cut of that length.

\begin{defn}
Let $M$ be corner cut. Then we define the ideal
$I_M \coloneqq \{ y^a z^b \; | \; (a,b) \in \N^2 \setminus M \}$.
\end{defn}

For example, if $M = \{ (0, 0), (1, 0), (0, 1) \}$, then $\II_M = (x^2, xy, y^2)$.

\begin{theorem}[{\cite{OS99:cutting_corners}}]
\label{thm:cutting_corners}
Let $I_n \subset \C[y,z]$ be the ideal of $n$ generic points in $\A^2$ and let $\mathbf{w} \in \R_{\geq 0}^2$ be a weight in an open cone of the Gr\"obner fan of $I_n$. Then  $\initial_{\mathbf{w}}(I_n) = I_{M_{\mathbf{w}}}$.
\end{theorem}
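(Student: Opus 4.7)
The plan is to translate the claim into a statement about standard monomials and then use a genericity argument on evaluation matrices. Since $\mathbf{w}$ lies in the interior of a maximal cone of the Gr\"obner fan, the ideal $J \coloneqq \initial_{\mathbf{w}}(I_n)$ is a monomial ideal, and its set of standard monomials $M \subset \N^2$ (monomials not in $J$) has cardinality $\dim_{\C} \C[y,z]/I_n = n$. I want to show that $M = M_{\mathbf{w}}$.

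First I would refine $\mathbf{w}$ to a monomial order $\prec$, for example by tie-breaking with lex. Because $\mathbf{w}$ is in the interior of a Gr\"obner cone, $\initial_{\prec}(I_n) = \initial_{\mathbf{w}}(I_n) = J$. Enumerate the monomials of $\C[y,z]$ as $m_1 \prec m_2 \prec \cdots$. Since $\prec$ refines $\mathbf{w}$ (and any monomial order refines divisibility), the set $\{m_1,\ldots,m_n\}$ is a lower subset of $\N^2$, and the weight $\mathbf{w}$ separates $\{m_1,\ldots,m_n\}$ from its complement. Thus $\{m_1,\ldots,m_n\} = M_{\mathbf{w}}$; it remains to prove that these are exactly the standard monomials.

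The key ingredient is the following genericity claim: for $n$ generic points $p_1,\ldots,p_n \in \A^2$, the evaluation matrix $E = (m_i(p_j))_{1 \leq i,j \leq n}$ is invertible. The determinant $\det E$ is a polynomial in the $2n$ coordinates of the $p_j$, so it suffices to exhibit one specialization where it is nonzero; placing the $p_j$ on a rational normal curve reduces $\det E$ to a Vandermonde-type determinant, which is nonzero. Once $E$ is invertible, $m_1,\ldots,m_n$ are linearly independent in $\C[y,z]/I_n$ and therefore form a basis, because the quotient has dimension exactly $n$. Consequently, for every $k > n$ there is a relation $m_k - \sum_{i \leq n} c_{i,k} m_i \in I_n$ whose $\prec$-leading term is $m_k$, so $m_k \in J$. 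Conversely, $m_1,\ldots,m_n \notin J$ since they are linearly independent modulo $I_n$. This identifies $M = \{m_1,\ldots,m_n\} = M_{\mathbf{w}}$, hence $\initial_{\mathbf{w}}(I_n) = I_{M_{\mathbf{w}}}$.

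The main obstacle is the nonvanishing of $\det E$ under genericity; once this is in place the rest of the argument is purely combinatorial. A subtler point is that one must rule out any tie, in $\mathbf{w}$-weight, between a standard monomial $m_i$ ($i \leq n$) and a non-standard one $m_k$ ($k > n$), since such a tie would mean no threshold $s$ separates $M$ from its complement. This is guaranteed by $\mathbf{w}$ lying in the \emph{interior} of a Gr\"obner cone: if such a tie existed, small perturbations of $\mathbf{w}$ within the cone would swap $m_i$ and $m_k$ in the ordering, producing two different initial ideals inside one cone, a contradiction.
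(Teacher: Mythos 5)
Your proof is correct, but it takes a genuinely different route from the one in the paper. The paper's proof is essentially a citation: it invokes the bijection from \cite{OS99:cutting_corners} between corner cuts of size $n$ and the vertices of the corner cut polytope $P^2_n$, the identification of the normal fan of $P^2_n$ with the Gr\"obner fan of $I_n$, and the equivalence of conditions $(1)$ and $(7)$ in the proof of Theorem 5.2 of that reference. You instead give a self-contained argument: refine $\mathbf{w}$ to a monomial order $\prec$, observe that the initial segment $\{m_1,\dots,m_n\}$ of $\prec$ is a lower set weakly separated by $\mathbf{w}$, prove by a generic-evaluation-matrix (generalized Vandermonde) argument that this segment is a basis of $\C[y,z]/I_n$ and hence is exactly the set of standard monomials of $\initial_{\prec}(I_n)=\initial_{\mathbf{w}}(I_n)$, and finally upgrade weak to strict separation using that $\mathbf{w}$ is interior to its Gr\"obner cone. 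The individual steps check out: the nonvanishing of $\det E$ follows from your rational-normal-curve specialization (or by induction, expanding along the last column), the identification of a linearly independent initial segment with the standard monomials is the usual Macaulay-basis argument, and your perturbation argument for ruling out ties is valid because the standard monomial set of the fixed ideal $J$ would otherwise be realized as two different initial segments for two orders refining weights in the same open cone. What the paper's route buys is brevity and access to the global combinatorics of \cite{OS99:cutting_corners} (the polytope $P^2_n$, the enumeration of corner cuts, the full fan structure), which is more than the statement actually requires; what your route buys is independence from that machinery and a transparent accounting of exactly where genericity of the points enters, namely in the invertibility of the finitely many evaluation matrices indexed by lower sets of size $n$. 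One small presentational point: when you first write ``Thus $\{m_1,\ldots,m_n\}=M_{\mathbf{w}}$'' you have only established weak separation, so that identification is really only complete after your final paragraph; it would be cleaner to defer it.
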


\begin{proof}
If we fix the the number of elements in the corner cut $n = |M|$, there is is only a finite number of them. By \cite[Thm 2.0]{OS99:cutting_corners}, the map $M_{\mathbf{w}} \to \sum_{m \in M} m$ defines a bijection between the corner cuts and the vertex set of a certain polytope called $P^2_n$. The key point is that by \cite[Thm 5.0]{OS99:cutting_corners}, the normal fan of $P^2_n$ equals the Gr\"obner fan of $I_n$ (that is, each vertex of $P^2_n$ is associated to a maximal cone of the Gr\"obner fan). Therefore, we obtain a bijection between corner cuts $M_{\mathbf{w}}$ and monomial ideals $\initial_{\mathbf{w}}(I_n)$. By the equivalence of $(1)$ and $(7)$ in the proof of \cite[Theorem 5.2]{OS99:cutting_corners}, we obtain that
$\initial_{\mathbf{w}}(I_n) = I_{M_{\mathbf{w}}}$.
\end{proof}
Our first step is to calculate the flat limit of $n$ points in general position with respect to a certain one-parameter subgroups $\lambda_r = \diag\{1,r,-1-r\}.$

\begin{lem}
\label{lem:limit_with_cutting_corners}
Let $Z \subset \P^2$ be a subscheme consisting of $n$ general points. We write $n = \Delta(s) + k$ where $\Delta(s)$ is the triangular number $1 + \ldots + s$ and $0 \leq k \leq s$. For small enough $\varepsilon > 0$ we have
\[
    \lim_{t \to 0} \lambda_{-\tfrac{1}{2} - \varepsilon}(t) \cdot \II_Z = (m_i \; | \; 0 \leq i \leq s),
\]
where
\[
    m_i = \begin{cases}
    y^{s - i} z^i & \text{ if } 0 \leq i \leq (s - k), \\
    y^{s - i} z^{i + 1} & \text{ if } (s - k + 1) \leq i \leq s.
    \end{cases}
\]
Additionally, if $n = \Delta(s)$, i.e., $k = 0$, then
\[
    \lim_{t \to 0} \lambda_{-\tfrac{1}{2}}(t) \cdot \II_Z = (y^s, y^{s - 1}z, \ldots, z^s).
\]
\end{lem}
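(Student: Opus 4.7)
The plan is to recognize the flat limit as an initial ideal computation on the affine ideal of $Z$, and then invoke Theorem \ref{thm:cutting_corners}. Since $Z$ is general we may place it in the affine chart $\{x \neq 0\}$, so $Z$ corresponds to an ideal $J \subset \C[y, z]$ and $\II_Z$ is its $x$-homogenization. The action of $\lambda_{-1/2 - \varepsilon}(t)$ on $\P^2$ sends the point $(1 : y_i : z_i)$ to $(1 : t^{3/2 + \varepsilon} y_i : t^{3/2 - \varepsilon} z_i)$, which converges to $(1:0:0)$ as $t \to 0^+$. Hence the flat limit is supported at $(1:0:0)$, its homogeneous ideal in $\C[x, y, z]$ is $(x)$-saturated, and it is determined by its dehomogenization at $x = 1$.

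Tracking weights, for each homogeneous $f \in \II_Z$ of degree $d$ the monomial $x^{\alpha_1} y^{\alpha_2} z^{\alpha_3}$ has $\lambda(t)$-weight
\[
w(\alpha) = \alpha_1 + \left(-\tfrac{1}{2}-\varepsilon\right)\alpha_2 + \left(-\tfrac{1}{2}+\varepsilon\right)\alpha_3 = d - \tfrac{3}{2}(\alpha_2 + \alpha_3) + \varepsilon(\alpha_3 - \alpha_2).
\]
Minimizing $w$ over monomials of $f$ is equivalent, after discarding $d$ and positively rescaling, to maximizing $(\tfrac{1}{2}+\varepsilon') \alpha_2 + (\tfrac{1}{2}-\varepsilon') \alpha_3$ over monomials of the dehomogenization $\bar f = f(1,y,z) \in J$, where $\varepsilon' > 0$ is small whenever $\varepsilon > 0$ is small. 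Consequently the dehomogenized flat limit equals $\initial_{\mathbf{w}_{\varepsilon'}}(J)$ in max-weight convention for $\mathbf{w}_{\varepsilon'} = (\tfrac{1}{2}+\varepsilon', \tfrac{1}{2}-\varepsilon')$.

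By Theorem \ref{thm:cutting_corners}, for $\varepsilon'$ in the interior of a Gr\"obner cone (which holds for all sufficiently small $\varepsilon' > 0$), $\initial_{\mathbf{w}_{\varepsilon'}}(J) = I_{M_{\mathbf{w}_{\varepsilon'}}}$. Since the $\mathbf{w}_{\varepsilon'}$-weight of $(a, b)$ is $\tfrac{a+b}{2} + \varepsilon'(a - b)$, the $n = \Delta(s) + k$ lattice points of smallest weight are: first all $\Delta(s)$ points with $a + b \leq s - 1$, and then the $k$ smallest-weight points on the anti-diagonal $a + b = s$, namely $(0, s), (1, s-1), \ldots, (k-1, s-k+1)$ (ordered by increasing $a$). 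Computing the staircase corners of this corner cut yields precisely the minimal monomial generators $m_i$ claimed in the lemma, and since these generators already live in $\C[y, z]$, lifting to $\C[x, y, z]$ introduces no power of $x$.

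For the boundary case $k = 0$ with $\lambda_{-1/2}$, the weight $(\tfrac{1}{2}, \tfrac{1}{2})$ is degenerate and $\initial$ picks out the top-$(y,z)$-degree part of each polynomial. For $n = \Delta(s)$ general points, $h^0(\II_Z(s-1)) = 0$, so the map $H^0(\II_Z(s)) \to \C[y, z]_s$ given by $F \mapsto F(0, y, z)$ is injective; since its source has dimension $\binom{s+2}{2} - n = s + 1 = \dim \C[y, z]_s$, it is surjective as well. Therefore the limit ideal contains $(y^s, y^{s-1}z, \ldots, z^s) = (y, z)^s$, and the colength $\Delta(s) = n$ forces equality. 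The main technical subtlety is the careful bookkeeping of signs and conventions when translating the $\lambda$-action in homogeneous coordinates to the max-weight initial ideal framework of Theorem \ref{thm:cutting_corners}.
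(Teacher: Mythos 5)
Your proof is correct and follows essentially the same route as the paper: place the general points in the chart $\{x \neq 0\}$, identify the flat limit with an initial-ideal computation for the weight $(\tfrac{3}{2}+\varepsilon, \tfrac{3}{2}-\varepsilon)$ on $(y,z)$, and invoke Theorem \ref{thm:cutting_corners} to recognize the resulting corner cut, whose staircase generators are exactly the $m_i$. The only (harmless) deviation is the degenerate case $k=0$, $\varepsilon=0$, where you substitute a direct dimension count based on $h^0(\II_Z(s-1))=0$ for the paper's direct application of the corner-cut theorem at the weight $(1,1)$; both arguments work.
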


\begin{proof}
Since $Z$ consists of $n$ general points, we may assume that they lie in the affine open subset $\A^2 = \{ (x:y:z) \in \P^2 \; | \; x \neq 0 \}$. For any $r \in [-2, 1]$ we have
\begin{align*}
\lim_{t \to 0} \lambda_r \cdot (x:y:z) = \lim_{t \to 0} (t^{-1} x: t^{-r} y: t^{r + 1} z) = \lim_{t \to 0} (x: t^{1 - r} y: t^{r + 2} z).
\end{align*}
From this it is not difficult to see that the subscheme cut out by $\lim_{t \to 0} \lambda_r \cdot \II_Z$ is given by the ideal $\initial_{\mathbf{w}} \II_Z$ for $\mathbf{w} = (1 - r, r + 2)$ in $\A^2$. See \cite[Cor 3.5]{BM88:standard_bases} for more details on this relation between the limit and the initial ideals.

For $r = -\tfrac{1}{2} - \varepsilon$ where $\varepsilon \geq 0$, we get
\[
\mathbf{w(\varepsilon)} = \left( \frac{3}{2} + \varepsilon, \frac{3}{2} - \varepsilon \right).
\]
If $n = \Delta(s)$ and $\varepsilon = 0$, then $M_{\mathbf{w(0)}} = \{ (a, b) \in \N^2 \; | \; a + b \leq s \}$ is corner cut of length $n$ and together with Theorem \ref{thm:cutting_corners} we get
\[
\lim_{t \to 0} \lambda_{-\tfrac{1}{2}}(t) \cdot \II_Z = (y^s, y^{s - 1}z, \ldots, z^s).
\]
Now assume that $\varepsilon > 0$. Then we examine the corner cut $M_{\mathbf{w(\varepsilon)}}$ and again Theorem \ref{thm:cutting_corners} implies
\[
    \lim_{t \to 0} \lambda_{-\tfrac{1}{2} - \varepsilon}(t) \cdot \II_Z = (m_i \; | \; 0 \leq i \leq s). \qedhere
\]
\end{proof}

\begin{lem}
\label{lem:numerical_criterion_wall_from_collinear}
Let $Z \subset \P^2$ be the subscheme of length $n = \Delta(s) + k + l$ where $s \geq 1$, $\Delta(s) = 1 + \ldots + s$, $0 \leq k \leq s$, and $l \geq s + 1$ cut out by the ideal
\begin{align*}
    \II_Z &= (m_0, \ldots, m_s) \cap (x, p_l(y, z)) = (xm_0, \ldots, xm_s, p_l(y, z)),
\end{align*}
where $p_l(y, z) \in \C[y, z]$ is a general homogeneous polynomial of degree $l$. Then
\[
    \mu_l \left(Z, \lambda_{-\tfrac{1}{2}}\right) = \frac{s^3 - ls^2 + 3ks - ls - s + l^2 - 2kl}{2}.
\]
\end{lem}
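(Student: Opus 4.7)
The plan is to apply Lemma \ref{lem:numericalFunction}, so the task is to compute the minimum of the $\lambda_{-1/2}$-weight over vertices of the state polytope $\State_l(\II_Z)$. The subscheme $Z$ decomposes as a disjoint union $Z_1 \sqcup Z_2$, where $Z_1$ is the fat point of length $\Delta(s) + k$ at $(1{:}0{:}0)$ cut out by the monomial ideal $J = (m_0, \ldots, m_s) \subset \C[y, z]$, and $Z_2$ is the $l$-point subscheme of $\{x = 0\}$ cut out by $p_l$. A direct inspection identifies the corner cut $M = \{(b, c) \in \N^2 : y^b z^c \notin J\}$ as $\{b + c < s\} \cup \{b + c = s,\ c > s - k\}$, of cardinality $\Delta(s) + k$, and shows that $J$ contains every monomial of degree $\geq s + 1$; in particular $p_l \in J$, which justifies the stated equality of ideals.

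The key step is to describe all monomial bases of $H^0(\OO_Z(l)) = H^0(\OO_{Z_1}(l)) \oplus H^0(\OO_{Z_2}(l))$. Working in the local chart $u = y/x$, $v = z/x$ at $(1{:}0{:}0)$, the monomial $x^{l-b-c} y^b z^c$ for $(b, c) \in M$ restricts to $u^b v^c$ on $Z_1$ and to zero on $Z_2$, giving the standard monomial basis of $\C[u, v]/J$. Any monomial $x^a y^b z^c$ of degree $l$ with $a \geq 1$ and $(b, c) \notin M$ is a multiple of some $xm_i$ and so lies in $\II_Z$. Finally, the $l + 1$ monomials $y^b z^c$ with $b + c = l$ restrict to zero on $Z_1$ (since $l \geq s + 1$ forces $(b, c) \notin M$), while their restrictions to $Z_2$ satisfy the single relation $p_l \equiv 0$, so for generic $p_l$ any $l$ of them form a basis of $\C[y, z]_l/(p_l) = H^0(\OO_{Z_2}(l))$. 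Consequently every monomial basis of $H^0(\OO_Z(l))$ comprises all $\Delta(s) + k$ local monomials together with $l$ of the $l + 1$ linear ones; in particular $\dim H^0(\II_Z(l)) = \binom{l+2}{2} - n$ and the $l$-th Hilbert point of $Z$ is well-defined.

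The $\lambda_{-1/2}$-weight of a degree-$l$ monomial $x^a y^b z^c$ is $(3a - l)/2$, and the sum of these weights over all degree-$l$ monomials vanishes. From Lemma \ref{lem:numericalFunction} together with the Pl\"ucker description of the state polytope, $\mu_l(Z, \lambda_{-1/2})$ equals the negative of the (common) $\lambda_{-1/2}$-weight sum of any monomial basis of $H^0(\OO_Z(l))$: the local monomials are forced and the $l + 1$ candidate linear monomials all carry weight $-l/2$. Adding the contributions gives
\[
    -\mu_l(Z, \lambda_{-1/2}) = \frac{1}{2}\Bigl[2l(\Delta(s) + k) - 3\!\sum_{(b,c) \in M}(b + c) - l^2\Bigr],
\]
and the combinatorial identity $\sum_{(b,c) \in M}(b + c) = \sum_{d=0}^{s-1} d(d+1) + ks = \tfrac{(s-1)s(s+1)}{3} + ks$, obtained by splitting $M$ into its two pieces, yields the stated closed form after elementary algebra. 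The main obstacle is the forced block structure of the monomial bases of $H^0(\OO_Z(l))$, which depends crucially on $l \geq s + 1$ to ensure that no monomial restricts nontrivially to both $Z_1$ and $Z_2$.
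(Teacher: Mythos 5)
Your proof is correct, and it reaches the stated formula by a route that is dual to the paper's. The paper writes down an explicit direct-sum decomposition of $H^0(\II_Z(l))$ itself, extracts the (two-vertex) state polytope $\State_l(\II_Z)$ with its cubic-in-$l$ coordinates $n_x, n_y, n_z$, and evaluates the pairing with $\lambda_{-1/2}$. You instead work with the \emph{complementary} monomial sets, i.e.\ monomial bases of the quotient $H^0(\OO_Z(l))$: the disjoint-support decomposition $Z = Z_1 \sqcup Z_2$ splits the degree-$l$ monomials into those seen only by the fat point (forcing the staircase monomials $x^{l-b-c}y^bz^c$, $(b,c)\in M$) and those seen only by the $l$ collinear points (any $l$ of the $l+1$ monomials $y^bz^c$, all of equal weight $-l/2$), and then the identity ``total weight of all degree-$l$ monomials $=0$'' for a one-parameter subgroup of $\SL_3$ converts the weight of any basis of the quotient into the Hilbert--Mumford index via Lemma \ref{lem:numericalFunction}. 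This buys you a computation that never touches the generators of $H^0(\II_Z(l))$ and reduces everything to the single staircase sum $\sum_{(b,c)\in M}(b+c) = \tfrac{(s-1)s(s+1)}{3}+ks$; what it gives up is the explicit state polytope, which is the reusable datum one would want for evaluating $\mu_l$ against arbitrary $\lambda(a,b)$ rather than just $\lambda_{-1/2}$ (as is done in the neighboring Lemma \ref{lem:mu_for_collinear_points}). All the delicate points are handled: $p_l \in (m_0,\ldots,m_s)$ because the staircase contains no monomial of degree $\geq s+1$ (this is where $l \geq s+1$ enters), the forced block structure of the monomial bases, and the well-definedness of the $l$-th Hilbert point. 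I spot-checked the final formula against the paper's $n_x - \tfrac{1}{2}(n_y+n_z+l)$ in low cases and they agree.
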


\begin{proof}
The global sections $H^0\left( \II_Z(l) \right)$ can be computed as
\begin{align*}
    xy^s \cdot \langle x, y, z \rangle^{l - s - 1} \oplus
    \bigoplus_{i = 1}^{s - k} x y^{s - i} z^i \langle x, z \rangle^{l - s - 1} \oplus
    \bigoplus_{i = s - k + 1}^s x y^{s - i} z^{i + 1} \langle x, z \rangle^{l - s - 2} \oplus
    p_l(y, z) \cdot \C
\end{align*}
If we define
\begin{align*}
    n_x &= \frac{(l - s)(l^2 + ls - 2s^2 - 6k + 3l + 2)}{6}, \\
    n_y &= \frac{l^3 - s^3 - 3k^2 + 3k - l + s}{6}, \\
    n_z &= \frac{l^3 - s^3 + 3k^2 - 6ks - 3k - l + s}{6}, \\
\end{align*}
then the state polytope is given by $\State_l(\II_Z) = \Conv \{ (n_x, n_y + l, n_z), (n_x, n_y, n_z + l) \}$.
The value of the Hilbert-Mumford index is an immediate consequence.
\end{proof}

\subsection{Walls from collinear points}
\label{sec:WallsCollinear}

In this section, we will deal with GIT-walls that destabilize subschemes with $l$ collinear points. As in our previous cases, we first calculate the Hilbert-Mumford index of relevant ideals with positive dimensional stabilizer. 

\begin{lem}
\label{lem:mu_for_collinear_points}
Let $l, n$ be integers such that $n - l$ is the triangular number $\Delta(s) = 1 + \ldots + s$ and $s^2 \leq l < 2\Delta(s)$. Assume further that $p_l(y, z)$ is a homogeneous polynomial of degree $l$ and $Z \subset \P^2$ is the subscheme cut out by the ideal
\[
    \II_Z = (y^s, y^{s-1}z, \cdots, yz^{s-1}, z^{s}) \cap (x, p_l(y, z)) = (xy^s, xy^{s-1}z, \cdots, xyz^{s-1}, xz^{s}, p_l(y, z)).
\]
Then, the Hilbert-Mumford index for the $l$-th and the $(l+1)$-th Hilbert points with respect to $\lambda(a,b) = \diag\{a,b,-a-b\}$
are as follows:
\begin{enumerate}
    \item If $p_l(y, z) = a_0 y^l + \cdot  + a_l z^l$ for $a_0, \ldots, a_l \in C$ with $a_0 \neq 0$ and $a_l \neq 0$, then
    \[
        \mu_l(\II_Z, \lambda(a, b)) =
        \begin{cases}
            \frac{a}{2}(s^3 - ls^2 + l^2 - ls - l - s) - bl & \text{if $a + 2b \geq 0$,} \\
            \frac{a}{2}(s^3 - ls^2 + l^2 - ls + l - s) + bl &  \text{if $a + 2b < 0$,}
        \end{cases}
    \]
    and
    \[
        \mu_{l + 1}(\II_Z, \lambda(a, b)) =
        \begin{cases}
            \frac{a}{2}(s^3 - ls^2 + l^2 - ls - s^2 - l - 2s) - 2bl & \text{if $a + 2b \geq 0$,} \\
            \frac{a}{2}(s^3 - ls^2 + l^2 - ls - s^2 + 3l - 2s) + 2bl & \text{if $a + 2b < 0$.}
        \end{cases}
    \]
    \item If $p_l(y, z) = a_1 y^{l - 1}z + \cdot  + a_l z^l$ for $a_1, \ldots, a_l \in C$ with $a_1 \neq 0$ and $a_l \neq 0$, then
    \[
        \mu_l(\II_Z, \lambda(a, b)) =
        \begin{cases}
            \frac{a}{2}(s^3 - ls^2 + l^2 - ls - l - s) - bl & \text{if $a + 2b \geq 0$,} \\
            \frac{a}{2}(s^3 - ls^2 + l^2 - ls + l - s - 2) + b(l - 2) & \text{if $a + 2b < 0$,}
        \end{cases}
    \]
    and
    \[
        \mu_{l + 1}(\II_Z, \lambda(a, b)) =
        \begin{cases}
            \frac{a}{2}(s^3 - ls^2 + l^2 - ls - s^2 - l - 2s) - 2bl & \text{if $a + 2b \geq 0$,} \\
            \frac{a}{2}(s^3 - ls^2 + l^2 - ls - s^2 + 3l - 2s - 4) + 2b(l - 2) & \text{if $a + 2b < 0$.}
        \end{cases}
    \]
\end{enumerate}
\end{lem}

\begin{proof}
The proof is analogous to that of Lemma \ref{lem:mu_for_fat_plus_line}. The Hilbert points are given by 
\begin{align*}
    H^0\left( \II_Z(l) \right) &=
    xy^s \cdot \langle x, y, z \rangle^{l - s - 1} \oplus 
    xy^{s - 1}z \cdot \langle x, z \rangle^{l - s - 1} \oplus \cdots \oplus
    xz^s \cdot \langle x, z \rangle^{l - s - 1} \oplus 
    p_l(y, z) \cdot \C
    \\
    H^0\left( \II_Z(l + 1) \right) &= 
    xy^s \cdot \langle x, y, z \rangle^{l - s} \oplus 
    xy^{s - 1}z \cdot \langle x, z \rangle^{l - s} \oplus \cdots \oplus 
    xz^s \cdot \langle x, z \rangle^{l - s} \oplus 
    p_l(y, z) \cdot \langle y, z \rangle.
\end{align*}
If we define
    \begin{alignat*}{3}
        n_{x} &\coloneqq \frac{(l + 2s + 2)(l - s + 1)(l - s)}{6}, \ \ &\qquad & 
        m_{x} \coloneqq \frac{(l + 2s + 3)(l - s + 2)(l - s + 1)}{6}, \\
        n_{yz} &\coloneqq \frac{l^3 - l - s^3 + s}{6}, &&
        m_{yz} \coloneqq \frac{l^3 + 3l^2 + 2l - s^3 + s}{6},
    \end{alignat*}
then the state polytopes are given as follows:
\begin{enumerate}
    \item If $p_l(y, z) = a_0 y^l + \cdot  + a_n z^l$ for $a_0, \ldots, a_l \in C$ with $a_0 \neq 0$ and $a_l \neq 0$. Then,
    \begin{align*}
        \State_l(\II_Z) &= \Conv \{(n_x, n_{yz} + l, n_{yz}), (n_x, n_{yz}, n_{yz} + l)\}, \\
        \State_{l + 1}(\II_Z) &= \Conv \{(m_x, m_{yz} + 2l + 1, m_{yz} + 1), (m_x, m_{yz} + 1, m_{yz} + 2l + 1)\}.
    \end{align*}
    \item If $p_l(y, z) = a_1 y^{l - 1}z + \cdot  + a_n z^l$ for $a_1, \ldots, a_l \in C$ with $a_1 \neq 0$ and $a_l \neq 0$. Then, 
    \begin{align*}
        \State_l(\II_Z) &= \Conv \{(n_x, n_{yz} + l - 1, n_{yz} + 1), (n_x, n_{yz}, n_{yz} + l)\}, \\
        \State_{l + 1}(\II_Z) &= \Conv \{(m_x, m_{yz} + 2l - 1, m_{yz} + 3), (m_x, m_{yz} + 1, m_{yz} + 2l + 1)\}.
    \end{align*}
\end{enumerate}
The statement is a direct computation from here. 
\qedhere
\end{proof}

\begin{lem}
\label{lem:bridgeland_for_collinear_points}
Let $l, n$ be integers such that $n - l$ is the triangular number $\Delta(s) = 1 + \ldots + s$ and $s^2 \leq l < 2\Delta(s)$. Assume further that $p_l(y, z)$ is a homogeneous polynomial of degree $l$ and $Z \subset \P^2$ is the subscheme cut out by the ideal
\[
    \II_Z = (y^s, y^{s-1}z, \cdots, yz^{s-1}, z^{s}) \cap (x, p_l(y, z)) = (xy^s, xy^{s-1}z, \cdots, xyz^{s-1}, xz^{s}, p_l(y, z)).
\]
Then $\II_Z$ is Bridgeland-semistable for any stability condition that induces the divisor $D_{m_{l,s}}$ with
\[
m_{l, s} = \frac{s(s + 1)(s - 1)}{s^2 + s - l}.
\] 
\end{lem}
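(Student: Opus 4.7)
The plan is to invoke Corollary \ref{cor:ideals_between_-1_-2}, which characterizes Bridgeland stability of an ideal sheaf of points in terms of the maximal length of a collinear subscheme and whether the subscheme lies on a plane conic. The proof then splits into a geometric analysis of $Z$ and a computational verification that $m_{l,s}$ falls in the expected chamber.

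For the geometric analysis, observe that $(y^s, y^{s-1}z, \ldots, z^s) = (y,z)^s$ cuts out a fat point at $[1:0:0]$ of length $\Delta(s)$ whose intersection with any line through $[1:0:0]$ has length exactly $s$. The remaining part of $Z$ consists of $l$ reduced points on the line $L = \{x = 0\}$. Assuming $s \geq 2$ (the case $s = 1$ is immediate since $Z$ reduces to $l+1$ distinct collinear points), the bound $l \geq s^2 > s+1$ shows that $L$ is the unique line meeting $Z$ in length $\geq l$: any other line through $[1:0:0]$ meets $Z$ in at most $s+1 < l$, and a line avoiding $[1:0:0]$ meets $Z$ in at most one point. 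Moreover, $Z$ is not contained in a conic: the $l \geq 4$ points on $L$ would force the conic to contain $L$, so it must split as $L \cup L'$ with $L'$ through $[1:0:0]$, but $L'$ carries only length $s < \Delta(s)$ of the fat point.

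For the computational part, write $m_{l,s} - (l-1)$ over the common denominator $s^2 + s - l$; the numerator is a quadratic in $l$ that factors cleanly as $(l - s^2)(l - s - 1)$. Under the hypothesis $s^2 \leq l < s(s+1)$ with $s \geq 2$, each of $l - s^2$, $l - s - 1$, and $s^2 + s - l$ is nonnegative, so $m_{l,s} \geq l - 1$ with equality iff $l = s^2$. Combining with Corollary \ref{cor:ideals_between_-1_-2}, if $l > s^2$ then $m_{l,s}$ lies strictly above the wall $D_{l-1}$ in a chamber where $\II_Z$ is Bridgeland-stable, while if $l = s^2$ then $m_{l,s} = l - 1$ places us exactly on the wall, where $\II_Z$ is strictly semistable via the destabilizing sequence
\[
0 \to \II_W(-1) \to \II_Z \to \OO_L(-l) \to 0
\]
from Proposition \ref{prop:wall_between_-1_-2}, with $W$ the residual length $\Delta(s)$ subscheme at $[1:0:0]$. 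The main obstacle is the geometric bookkeeping --- ruling out all potential higher Bridgeland walls, including the conic wall $D_{(n-1)/2}$ --- while the algebraic side reduces to spotting the clean factorization $(l-s^2)(l-s-1)$.
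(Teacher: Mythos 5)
Your proposal is correct and follows exactly the paper's route: the paper's proof is the one-liner ``By Corollary \ref{cor:ideals_between_-1_-2} we have to show that $m_{l,s} \geq l - 1$; this is a consequence of $s^2 \leq l < 2\Delta(s)$.'' You have simply filled in the details the paper leaves implicit --- the verification that the longest collinear subscheme of $Z$ has length exactly $l$ (so the destabilizing Bridgeland wall is precisely $D_{l-1}$) and the factorization of the numerator of $m_{l,s}-(l-1)$ as $(l-s^2)(l-s-1)$ --- both of which check out.
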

\begin{proof}
By Corollary \ref{cor:ideals_between_-1_-2} we have to show that $m_l \geq l - 1$. This is a consequence of $s^2 \leq l < 2\Delta(s)$.
\end{proof}

\begin{lem}
\label{lem:wall_for_collinear_points}
Let $l, n$ be integers such that $n - l$ is the triangular number $\Delta(s) = 1 + \ldots + s$. Assume further that $s^2 \leq l < 2\Delta(s)$. If $p_l(y, z)$ is a general homogeneous polynomial of degree $l$ and $Z \subset \P^2$ is the subscheme cut out by the ideal
\[
    \II_Z = (y^s, y^{s-1}z, \cdots, yz^{s-1}, z^{s}) \cap (x, p_l(y, z)) = (xy^s, xy^{s-1}z, \cdots, xyz^{s-1}, xz^{s}, p_l(y, z)),
\]
then $Z$ is strictly semistable at $m_{l, s} = \frac{s(s + 1)(s - 1)}{s^2 + s - l}$ and destabilized by $\lambda_{-1/2}$ or $\lambda_{-1/2}^{-1}$ for other $m$.
\end{lem}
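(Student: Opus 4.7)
The plan is to mirror the proof of Lemma~\ref{lem:wall_for_fat_plus_line}. First, Lemma~\ref{lem:bridgeland_for_collinear_points} guarantees that $\II_Z$ defines a point in the moduli space on which $D_{m_{l,s}}$ is ample, so the statement of the lemma is meaningful. Next, observe that $\II_Z$ is fixed by the one-parameter subgroup $\lambda_{-1/2} = (1,-\tfrac{1}{2},-\tfrac{1}{2}) \sim (2,-1,-1)$: this is immediate because $(y,z)^s$ is homogeneous of degree $s$ in $y,z$ and $(x, p_l(y,z))$ is homogeneous in the same weighting. This matches case~(iii) of Remark~\ref{rmk:tori_to_check}, so Proposition~\ref{prop:tori_to_check} implies that instability of $\II_Z$ can be detected only by one-parameter subgroups inside tori conjugate to the standard diagonal torus by upper-triangular unipotents acting on the $(y,z)$-block. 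Equivalently, we may apply the change of coordinates $\bar{x}=x$, $\bar{y}=y$, $\bar{z}=ay+z$ for $a \in \C$ and test diagonal one-parameter subgroups against the transformed ideal.

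Under this base change, the ideal $(\bar{y},\bar{z})^s = (y,z)^s$ is preserved, while $p_l(y,z)$ becomes $p_l(\bar{y},\bar{z}-a\bar{y})$, still a degree-$l$ homogeneous polynomial in $\bar{y},\bar{z}$. For generic $p_l$ and arbitrary $a \in \C$, this transformed polynomial satisfies either the hypothesis of case~(i) in Lemma~\ref{lem:mu_for_collinear_points} (both $y^l$ and $z^l$ coefficients nonzero) or case~(ii) (the $y^l$ coefficient vanishes, but the $y^{l-1}z$ and $z^l$ coefficients are nonzero). In either case we obtain explicit formulas for $\mu_l(\II_Z,\lambda(a,b))$ and $\mu_{l+1}(\II_Z,\lambda(a,b))$.

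To identify the wall, specialize to $\lambda_{-1/2} = \lambda(1,-\tfrac{1}{2})$, which satisfies $a+2b=0$ and so falls into the first branch of case~(i). A direct calculation yields
\[
\mu_l(\II_Z,\lambda_{-1/2}) = \tfrac{1}{2}(s^3 - ls^2 + l^2 - ls - s), \qquad \mu_l(\II_Z,\lambda_{-1/2}) - \mu_{l+1}(\II_Z,\lambda_{-1/2}) = \tfrac{1}{2}(s^2+s-l).
\]
Applying Lemma~\ref{lemma:m0} then produces $m_{l,s} = \tfrac{s(s-1)(s+1)}{s^2+s-l}$; the bound $l < 2\Delta(s) = s^2+s$ makes the denominator positive and guarantees $m \mapsto \mu_m(\II_Z,\lambda_{-1/2})$ is linear and nonconstant in $m$. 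Thus the sign of $\mu_m(\II_Z,\lambda_{-1/2})$ flips precisely at $m=m_{l,s}$, and $\lambda_{-1/2}$ or $\lambda_{-1/2}^{-1}$ destabilizes $Z$ for every other $m$.

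It remains to verify semistability at $m = m_{l,s}$, namely that $\mu_{m_{l,s}}(\II_Z,\lambda(a,b)) \leq 0$ for every diagonal $\lambda(a,b)$ after each permissible base change. Writing $\mu_{m_{l,s}}(\II_Z,\lambda(a,b)) = x\mu_l(\II_Z,\lambda(a,b)) + y\mu_{l+1}(\II_Z,\lambda(a,b))$ with $x+y=1$ and $xl+y(l+1)=m_{l,s}$, and substituting the formulas from Lemma~\ref{lem:mu_for_collinear_points}, reduces the problem to a piecewise linear inequality in $(a,b)$ on the four regions cut out by the sign of $a+2b$ in cases~(i) and~(ii). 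The main obstacle is confirming that the resulting coefficients factor into products that are visibly nonnegative under the constraints $s^2 \leq l < 2\Delta(s)$; as in the proof of Lemma~\ref{lem:wall_for_fat_plus_line}, this should reduce to elementary polynomial inequalities in $s$ and $l$ (with the lower bound $l \geq s^2$ controlling one sign and the upper bound $l < s^2+s$ the other).
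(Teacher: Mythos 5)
Your proposal is correct and follows essentially the same route as the paper: reduce to the tori singled out by Remark \ref{rmk:tori_to_check} via the stabilizing subgroup $(2,-1,-1)$, absorb the base change into the ideal so that only the two cases of Lemma \ref{lem:mu_for_collinear_points} occur, locate the wall with Lemma \ref{lemma:m0} (your computed values $\mu_l$, $\mu_l - \mu_{l+1}$, and the resulting $m_{l,s}$ all check out), and verify $\mu_{m_{l,s}}(\II_Z,\lambda(a,b)) \leq 0$ by the piecewise-linear case analysis. The paper likewise leaves that final sign verification as a computation, so nothing essential is missing.
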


\begin{proof}
The strategy of the proof is the exact same as for Lemma \ref{lem:wall_for_fat_plus_line} and we leave some details to the reader. By Lemma \ref{lem:bridgeland_for_collinear_points} the ideal $\II_Z$ is Bridgeland-semistable at the supposed wall, i.e., the statement makes sense to begin with.

The one-parameter subgroup $\lambda_{-1/2}$ stabilizes $Z$, and we can use Remark \ref{rmk:tori_to_check} to reduce which tori to check. Instead, we will again change the coordinates for $Z$ to only deal with diagonal one-parameter subgroups. Since $p_l(y, z)$ is general, everything reduces to the two possibilities in Lemma \ref{lem:mu_for_collinear_points}. Using the values computed there we get $\mu_{m_l}(\II_Z, \lambda(a, b)) \leq 0$. Moreover, $\mu_{m}(\II_Z, \lambda_{-1/2}) = 0$ holds if and only if $m = m_{l, s}$.
\end{proof}

\subsection{Wall from points in a conic}
\label{sec:WallPtsConic}

Lastly, we will establish a few more results used in the proof of Proposition \ref{prop:wall_conic}.

\begin{lem}
\label{lem:wall_conic_even}
Let $Z \subset \P^2$ be the length $n = 2k$ subscheme cut out by $\II_Z = (y^2 + xz, x^k)$ for $k \geq 3$. Then $Z$ is strictly-semistable at $m = k - \tfrac{1}{2}$ and destabilized by $\lambda_0$ or $\lambda_0^{-1}$ for all other $m$.
\end{lem}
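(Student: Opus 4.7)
The plan follows the template of Lemmas \ref{lem:wall_for_fat_plus_line} and \ref{lem:wall_for_collinear_points}: exhibit a one-parameter subgroup stabilizing $\II_Z = (y^2 + xz, x^k)$, use it to locate the wall via Lemma \ref{lemma:m0}, and then verify that no other one-parameter subgroup destabilizes $Z$ at that value. The diagonal one-parameter subgroup $\lambda_0 = (1, 0, -1)$ fixes $\II_Z$ since its two generators are $\lambda_0$-eigenvectors of weights $0$ and $k$. Because $\lambda_0$ has three distinct eigenvalues, Remark \ref{rmk:tori_to_check}(i) reduces the GIT check to diagonal one-parameter subgroups $\lambda(a, b) = (a, b, -a - b)$. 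Moreover, by Proposition \ref{prop:wall_between_-1_-2}(ii) the ideal $\II_Z$ is Bridgeland-semistable at $m = k - \tfrac{1}{2}$, since $Z$ lies on the smooth conic $\{y^2 + xz = 0\}$ and its maximal collinear subscheme (cut out by the tangent line) has length $2 < k + 1$.

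To locate the wall, I would compute $\mu_k(Z, \lambda_0)$ and $\mu_{k+1}(Z, \lambda_0)$ directly from the bases $\{(y^2 + xz) x^{\alpha} y^{\beta} z^{\gamma} : \alpha + \beta + \gamma = k - 2\} \cup \{x^k\}$ of $H^0(\II_Z(k))$ and $\{(y^2 + xz) m : \deg m = k - 1\} \cup \{x^{k+1}, x^k y, x^k z\}$ of $H^0(\II_Z(k+1))$. Because $y^2 + xz$ has $\lambda_0$-weight zero and the $\alpha \leftrightarrow \gamma$ symmetry makes every $(y^2 + xz) \cdot$ contribution vanish, one obtains $\mu_k(Z, \lambda_0) = k$ (from $x^k$) and $\mu_{k+1}(Z, \lambda_0) = (k+1) + k + (k-1) = 3k$. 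Lemma \ref{lemma:m0} then yields $m_0(Z, \lambda_0) = k + k/(k - 3k) = k - \tfrac{1}{2}$, with $\mu_m(Z, \lambda_0) > 0$ for $m > k - \tfrac{1}{2}$ and $\mu_m(Z, \lambda_0^{-1}) > 0$ for $m < k - \tfrac{1}{2}$.

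The main obstacle is the semistability check at $m = k - \tfrac{1}{2}$ for every $\lambda(a, b)$. Here I would describe the state polytope explicitly: each basis element $(y^2 + xz) m$ can be Pl\"ucker-expanded as either $y^2 m$ or $xz m$, and switching this choice changes the contributed weight by $(1, -2, 1)$. Apart from a single external forced choice at $m = k + 1$ (where the $xz$-expansion of $x^{k-1}$ coincides with $x^k z$ from the last three-monomial block), the internal collisions $y^2 (xz m') = xz (y^2 m')$ do not shrink the range of possible total flip counts. Consequently $\State_m(\II_Z)$ is a line segment in direction $(1, -2, 1)$, whose two endpoints correspond to the monomial bases of the initial ideals $(y^2, x^k)$ (realized for $b < 0$) and $(xz, x^k, x^{k-1} y^2)$ (realized for $b > 0$); the extra generator $x^{k-1} y^2 = (y^2 + xz) x^{k-1} - x^k z \in \II_Z$ encodes the forced choice at degree $k + 1$.

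Summing the weights of these monomial bases produces explicit formulas for $\mu_k(Z, \lambda(a, b))$ and $\mu_{k+1}(Z, \lambda(a, b))$ on each side of $b = 0$, and interpolating via $\mu_{k - 1/2}(Z, \lambda) = (3 \mu_k(Z, \lambda) - \mu_{k+1}(Z, \lambda))/2$ yields $\mu_{k - 1/2}(Z, \lambda(a, b)) = -\tfrac{b}{2}[k(k - 2) + 3]$ for $b > 0$ and $b k(k - 2)$ for $b < 0$. Both expressions are strictly negative whenever $b \neq 0$ and $k \geq 3$, with equality only for $b = 0$, i.e., $\lambda$ proportional to $\lambda_0$. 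Therefore $\II_Z$ is strictly $D_{k - 1/2}$-semistable with $\lambda_0$ the unique destabilizer up to positive scalars, and is GIT-unstable for every other value of $m$, destabilized by $\lambda_0^{\pm 1}$.
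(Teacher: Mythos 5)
Your proposal is correct and follows essentially the same route as the paper's proof: reduce to diagonal one-parameter subgroups via the stabilizer $\lambda_0$ and Remark \ref{rmk:tori_to_check}, compute the state polytopes in degrees $k$ and $k+1$ (your description of them as segments in the direction $(1,-2,1)$ with endpoints given by the two initial monomial ideals, including the forced extra generator $x^{k-1}y^2$ in degree $k+1$, matches the paper's explicit convex hulls), locate the wall with Lemma \ref{lemma:m0} from $\mu_k(Z,\lambda_0)=k$ and $\mu_{k+1}(Z,\lambda_0)=3k$, and verify negativity of $\mu_{k-1/2}$ for $b\neq 0$. Your final formulas $-\tfrac{b}{2}(k^2-2k+3)$ for $b>0$ and $bk(k-2)$ for $b<0$ agree exactly with those in the paper.
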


\begin{proof}
Note that by Corollary \ref{cor:ideals_between_-1_-2} the ideal $\II_Z$ is destabilized in Bridgeland stability precisely at the wall that induces the divisor $D_{k - 1/2}$. Therefore, $\II_Z$ represents a point in the space for $m \geq k - \tfrac{1}{2}$ and the statement we are trying to prove makes sense.

The subscheme $Z$ is supported at $(0:0:1)$ and has length $2k$. Its stabilizer contains the one-parameter subgroup $\lambda_0$, and by Remark \ref{rmk:tori_to_check} we only have to check diagonal one-parameter subgroups to establish GIT-stability. We compute the relevant global sections:
\[
    H^0(\II_Z(k)) = (y^2 + xz) \cdot \langle x, y, z \rangle^{k - 2} \oplus \langle x^k \rangle, \
    H^0(\II_Z(k + 1)) = (y^2 + xz) \cdot \langle x, y, z \rangle^{k - 1} \oplus x^k \cdot \langle x, y, z \rangle.
\]
Then $\State_k(\II_Z)$ is given by
\begin{align*}
    \Conv \bigg\lbrace &\left(
    \frac{k(k^2 + 5)}{6}, 
    \frac{k(k - 1)(k - 2)}{6}, 
    \frac{k(k - 1)(k + 1)}{6}
    \right), \\
    &\left(
    \frac{k(k^2 - 3k + 8)}{6},
    \frac{k(k - 1)(k + 4)}{6},
    \frac{k(k - 1)(k - 2)}{6}
    \right) \bigg\rbrace.
\end{align*}
Moreover, $\State_{k + 1}(\II_Z)$ is given by
\begin{align*}
    \Conv \bigg\lbrace &\left(
    \frac{k(k^2 + 3k + 20)}{6}, 
    \frac{k^3 - k + 18}{6},
    \frac{k(k + 1)(k + 2)}{6}
    \right), \\
    &\left(
    \frac{k^3 + 17k + 6}{6},
    \frac{k^3 + 6k^2 + 5k + 6}{6},
    \frac{(k + 2)(k^2 - 2k + 3)}{6}
    \right) \bigg\rbrace.
\end{align*}
From this we can compute the Hilbert-Mumford indices
\begin{align*}
    \mu_k(Z, \lambda(a, b)) &=
    \begin{cases}
        -\frac{1}{2}k(bk - b - 2a) & \text{ if $b \geq 0$,} \\
        k(bk - b + a) & \text{ if $b < 0$,}
    \end{cases} \\
    \mu_{k + 1}(Z, \lambda(a, b)) &=
    \begin{cases}
        -\frac{1}{2} (bk^2 + bk - 6b - 6ak) & \text{ if $b \geq 0$,} \\
        k(bk + b + 3a) & \text{ if $b < 0$.}
    \end{cases}
\end{align*}
For $\lambda_0 = \lambda(1, 0)$, we get $\mu_k(Z, \lambda_0) = k > 0$ and $\mu_{k + 1}(Z, \lambda_0) = 3k > 0$. With Lemma \ref{lemma:m0} we obtain that $\mu_m(Z, \lambda_0) = 0$ if and only if $m = k - \tfrac{1}{2}$. In particular, $Z$ is GIT-unstable for $m \neq k - \tfrac{1}{2}$.

We have $D_{k - \tfrac{1}{2}} = \tfrac{3}{2} D_k - \tfrac{1}{2} D_{k + 1}$, and therefore,
\[
    \mu_{k - \tfrac{1}{2}}(Z, \lambda(a, b)) = 
    \begin{cases}
        -\tfrac{1}{2} b(k^2 - 2k + 3) & \text{ if $b > 0$,} \\
        bk(k - 2) & \text{ if $b < 0$.}
    \end{cases}
\]
In either case, $\mu_{k - \tfrac{1}{2}}(Z, \lambda(a, b)) < 0$.
\end{proof}

\begin{lem}
\label{lem:wall_conic_odd}
Let $Z \subset \P^2$ be the length $n = 2k - 1$ subscheme cut out by the ideal
\[
    \II_Z = (y^2 + xz, x^k, x^{k - 1}y)
\]
for $k \geq 3$. Then $Z$ is strictly-semistable at $m = k - 1$ and destabilized by $\lambda_0$ for all other $m$.
\end{lem}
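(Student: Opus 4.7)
The plan is to mirror the proof of Lemma \ref{lem:wall_conic_even}, adapting it to the odd length $n = 2k - 1$ and the extra generator $x^{k-1}y$. I would first verify the setup. A local computation in $\Spec \C[x/z, y/z]$ confirms that $Z$ has length $2k - 1$ and is supported at $(0:0:1)$, and since $Z$ is contained in the smooth conic $V(y^2 + xz)$, Corollary \ref{cor:ideals_between_-1_-2}(iv) shows that $\II_Z$ is Bridgeland-semistable for every $m \geq k - 1$, so the statement only concerns this range. The subgroup $\lambda_0 = (1, 0, -1)$ fixes each generator up to a scalar, hence lies in $\Stab(Z)$; as its three entries are distinct, Remark \ref{rmk:tori_to_check}(i) reduces the semistability test to diagonal one-parameter subgroups.

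Next I would write down the Hilbert points by a dimension count:
\[
H^0(\II_Z(k-1)) = (y^2+xz)\cdot \langle x, y, z\rangle^{k-3}, \qquad H^0(\II_Z(k)) = (y^2+xz)\cdot \langle x, y, z\rangle^{k-2} \oplus \langle x^k, x^{k-1}y\rangle.
\]
The critical observation is that the two monomial summands $x^iy^{j+2}z^l$ and $x^{i+1}y^jz^{l+1}$ of each generator $(y^2+xz)\,x^iy^jz^l$ share the same $\lambda_0$-weight $i - l$. Consequently the Pl\"ucker lift at degree $k - 1$ is a $\lambda_0$-eigenvector of total weight $\sum_{i+j+l=k-3}(i-l) = 0$, giving $\mu_{k-1}(Z, \lambda_0) = 0$. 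At degree $k$ the additional generators $x^k$ and $x^{k-1}y$ contribute weights $k$ and $k - 1$, so $\mu_k(Z, \lambda_0) = 2k - 1 > 0$. Lemma \ref{lemma:m0} then pins down $m_0(Z, \lambda_0) = k - 1$, and linearity of $\mu_m$ in $m$ yields $\mu_m(Z, \lambda_0) > 0$ for every $m > k - 1$, confirming the destabilization claim.

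It remains to prove $\mu_{k-1}(Z, \lambda) \leq 0$ for every diagonal $\lambda = (\alpha, \beta, -\alpha - \beta)$. Since the two summands of each generator differ in $\lambda$-weight by exactly $3\beta$, a choice of one summand per generator corresponds to a subset $B$ of the indexing set $T \coloneqq \{(i,j,l) \in \N^3 : i + j + l = k - 3\}$. Set $N = \binom{k-1}{2}$ and $S = \tfrac{(k-3)N}{3}$. A bookkeeping calculation shows that every choice yields the coordinate triple $(S + |B|,\ (k-1)N - 2(S + |B|),\ S + |B|)$, depending only on $|B|$, so the state polytope $\State_{k-1}(\II_Z)$ collapses to the line segment with endpoints $(S, S + 2N, S)$ and $(S + N, S, S + N)$; both endpoints are realized by non-vanishing wedges because the families $\{x^iy^{j+2}z^l\}$ and $\{x^{i+1}y^jz^{l+1}\}$ consist of pairwise distinct monomials. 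Evaluating gives $\mu_{k-1}(Z, \lambda) = \min(-\beta N,\, 2\beta N) \leq 0$, with equality precisely when $\beta = 0$, i.e., along the ray of $\lambda_0$.

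The main obstacle is the combinatorial analysis that collapses the state polytope to a segment. Because $\II_Z$ has the non-monomial generator $y^2+xz$, the Pl\"ucker lift is not a single torus eigenvector, and one must verify that no valid mixed choice produces a coordinate triple off the line spanned by the two pure-choice endpoints. The extra generator $x^{k-1}y$ breaks the $(x, z)$-symmetry present in the even case, which is why $\mu_k$ comes out asymmetric in $\alpha$ and $\beta$, but the key weight-$0$ computation at $m = k - 1$ is unaffected. Once this combinatorial step is in hand, the rest of the argument is a routine Hilbert--Mumford calculation paralleling Lemma \ref{lem:wall_conic_even}.
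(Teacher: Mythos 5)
Your argument is correct and reaches the same conclusions as the paper, but the decisive step is organized differently. Both proofs share the setup (Bridgeland-semistability of $\II_Z$ down to the wall inducing $D_{k-1}$ via Corollary \ref{cor:ideals_between_-1_-2}, and reduction to diagonal one-parameter subgroups via $\lambda_0 \in \Stab_{\SL_3}(Z)$ and Remark \ref{rmk:tori_to_check}), and the same mechanism for locating the wall: both monomial summands of $(y^2+xz)x^iy^jz^l$ carry the same $\lambda_0$-weight $i-l$, so $\mu_m(Z,\lambda_0)$ is pinned down by two Hilbert points and Lemma \ref{lemma:m0}; you use degrees $k-1$ and $k$ where the paper uses $k$ and $k+1$, which is immaterial. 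The genuine difference is in proving $\mu_{k-1}(Z,\lambda)\le 0$ for every diagonal $\lambda$: the paper computes the state polytopes in degrees $k$ and $k+1$ and extrapolates via $D_{k-1}=2D_k-D_{k+1}$ and linearity of the Hilbert--Mumford index, whereas you compute $\State_{k-1}(\II_Z)$ directly and observe that it collapses onto the segment from $(S,S+2N,S)$ to $(S+N,S,S+N)$ because the two monomials of each generator always differ by the fixed vector $(1,-2,1)$. Your route is shorter, avoids the degree-$(k+1)$ bookkeeping (the space $V$ and the six extra monomials), and is a clean application of the paper's own Lemma \ref{lem:numericalFunction}; it is legitimate because the $(k-1)$-th Hilbert point of $Z$ is well-defined, i.e.\ $Z$ imposes independent conditions on curves of degree $k-1$ --- this is the one assertion you dismiss as ``a dimension count'' and should spell out (it follows since $Z$ has length $2k-1$ on a smooth conic $C\cong\P^1$ and $2k-1\le h^0(\OO_{\P^1}(2k-2))$). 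One curiosity worth recording: for $b>0$ the two methods return different values, namely $-\tfrac{b}{2}(k-1)(k-2)$ from your direct computation versus $-\tfrac{b}{2}(k^2-3k+8)$ from the paper's extrapolation (at $k=3$ this is $-b$ versus $-4b$). Both are negative, so the conclusion of strict semistability at $m=k-1$ is unaffected either way, but the mismatch illustrates the caveat of Lemma \ref{lem:localHM}: the linear extrapolation across degrees need not agree with the degree-$(k-1)$ state polytope when the relevant limit fails to impose independent conditions in degree $k-1$, and your value is the one naturally attached to the linearization $D_{k-1}=\phi_{k-1}^*\OO(1)$ itself.
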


\begin{proof}
Note that by Corollary \ref{cor:ideals_between_-1_-2} the ideal $\II_Z$ is destabilized in Bridgeland stability precisely at the wall that induces the divisor $D_{k - 1}$. Therefore, $\II_Z$ represents a point in the space for $m \geq k - 1$ and the statement we are trying to prove makes sense.

The subscheme $Z$ is supported at $(0:0:1)$ and has length $2k - 1$. Its stabilizer contains the one-parameter subgroup $\lambda_0$, and by Remark \ref{rmk:tori_to_check} we only have to check diagonal one-parameter subgroups to establish GIT-stability.

We define the vector space
\[
V \coloneqq \langle x^r y^s z^{k - 1 - r - s} : (r, s) \neq (k - 1, 0), (r, s) \neq (k - 2, 1) \rangle.
\]
With this we can describe the relevant global sections:
\begin{align*}
    H^0(\II_Z(k)) &= (y^2 + xz) \cdot \langle x, y, z \rangle^{k - 2} \oplus \langle x^k, x^{k - 1}y \rangle, \\
    H^0(\II_Z(k + 1)) &= (y^2 + xz) \cdot V \oplus \langle x^{k - 2} y^3, x^{k + 1}, x^{k}y, x^{k}z, x^{k - 1}y^2, x^{k - 1}yz \rangle.
\end{align*}
The State polytopes are
\begin{align*}
    \State_k(\II_Z) = \Conv \bigg\lbrace &\left(
    \frac{k^3 + 11k - 1}{6}, 
    \frac{(k + 1)(k^2 - 4k + 6)}{6}, 
    \frac{k(k - 1)(k + 1)}{6}
    \right), \\
    &\left(
    \frac{k^3 - 3k^2 + 14k - 6}{6},
    \frac{k^3 + 3k^2 - 4k + 6}{6},
    \frac{k(k - 1)(k - 2)}{6}
    \right) \bigg\rbrace,
\end{align*}
\begin{align*}
    \State_{k + 1}(\II_Z) = \Conv \bigg\lbrace &\left(
    \frac{k^3 + 3k^2 + 26k - 12}{6}, 
    \frac{k^3 - k + 36}{6}, 
    \frac{k(k + 1)(k + 2)}{6}
    \right), \\
    &\left(
    \frac{k(k^2 + 23)}{6},
    \frac{k^3 + 6k^2 + 5k + 12}{6},
    \frac{k^3 - k + 12}{6}
    \right) \bigg\rbrace.
\end{align*}
From this we can compute the Hilbert-Mumford indices
\begin{align*}
    \mu_k(Z, \lambda(a, b)) &=
    \begin{cases}
        -\tfrac{1}{2}bk^2 + 2ak + \tfrac{1}{2}bk - a + b & \text{ if $b \geq 0$,} \\
        bk^2 + 2ak - bk - a + b & \text{ if $b < 0$,}
    \end{cases} \\
    \mu_{k + 1}(Z, \lambda(a, b)) &=
    \begin{cases}
        -\tfrac{1}{2}bk^2 + 4ak - \tfrac{1}{2}bk - 2a + 6b & \text{ if $b \geq 0$,} \\
        bk^2 + 4ak + bk - 2a & \text{ if $b < 0$.}
    \end{cases}
\end{align*}
For $\lambda_0 = \lambda(1, 0)$, we get $\mu_k(Z, \lambda_0) = 2k - 1 > 0$ and $\mu_{k + 1}(Z, \lambda_0) = 4k - 2 > 0$. With Lemma \ref{lemma:m0} we obtain that $\mu_m(Z, \lambda_0) = 0$ if and only if $m = k - 1$. In particular, $Z$ is GIT-unstable for $m \neq k - 1$.

We have $D_{k - 1} = 2D_k - D_{k + 1}$, and therefore,
\[
    \mu_{k - 1}(Z, \lambda(a, b)) = 
    \begin{cases}
        -\tfrac{1}{2} b(k^2 - 3k + 8) & \text{ if $b > 0$,} \\
        b(k - 1)(k - 2) & \text{ if $b < 0$.}
    \end{cases}
\]
In either case, $\mu_{k - 1}(Z, \lambda(a, b)) < 0$.
\end{proof}

\def\cprime{$'$}

\end{document}